\documentclass[12pt]{amsart}
\oddsidemargin=0in \evensidemargin=0in 
\textwidth=6.5in \textheight=8.5in

\usepackage{amsfonts,amssymb, amscd, latexsym, graphicx, psfrag, color, float}

\usepackage[all,2cell]{xy}
\UseTwocells

\usepackage{mathrsfs}
\usepackage{eucal}

\usepackage[colorlinks,
             linkcolor=black,
             citecolor=blue,
             bookmarksnumbered,hyperindex,hyperfigures,pdfpagelabels]{hyperref}
\usepackage{todonotes}
\usepackage{tikz-cd}
\usepackage{scalerel,stackengine}
\stackMath
\newcommand\reallywidehat[1]{%
\savestack{\tmpbox}{\stretchto{%
  \scaleto{%
    \scalerel*[\widthof{\ensuremath{#1}}]{\kern-.6pt\bigwedge\kern-.6pt}%
    {\rule[-\textheight/2]{1ex}{\textheight}}
  }{\textheight}%
}{0.5ex}}%
\stackon[1pt]{#1}{\tmpbox}%
}
\parskip 1ex

\usepackage{cleveref}

\crefformat{section}{\S#2#1#3} 
\crefformat{subsection}{\S#2#1#3}
\crefformat{subsubsection}{\S#2#1#3}

\newtheorem{dummy}{dummy}[section]
\newtheorem{lemma}[dummy]{Lemma}
\newtheorem{theorem}[dummy]{Theorem}
\newtheorem*{theorem*}{Theorem}

\newtheorem{construction}[dummy]{Construction}
\newtheorem{corollary}[dummy]{Corollary}
\newtheorem{proposition}[dummy]{Proposition}
\theoremstyle{definition}
\newtheorem{definition}[dummy]{Definition}
\newtheorem{example}[dummy]{Example}
\newtheorem{remark}[dummy]{Remark}

\newcommand{\bA}{\mathbb{A}}
\newcommand{\bC}{\mathbb{C}}

\newcommand{\bP}{\mathbb{P}}

\newcommand{\bZ}{\mathbb{Z}}

\newcommand{\Grass}{\mathrm{Grass}}
\newcommand{\KGL}{\mathrm{KGL}}

\newcommand{\B}{\mathsf{B}}
\newcommand{\G}{\mathsf{G}}



\newcommand{\cB}{\mathcal{B}}
\newcommand{\cC}{\mathcal{C}}

\newcommand{\cE}{\mathcal{E}}
\newcommand{\cF}{\mathcal{F}}
\newcommand{\cG}{\mathcal{G}}

\newcommand{\cI}{\mathcal{I}}

\newcommand{\cO}{\mathcal{O}}

\newcommand{\conn}{\mathrm{cn}}
\newcommand{\cS}{\mathcal{S}}

\newcommand{\cU}{\mathcal{U}}
\newcommand{\cV}{\mathcal{V}}
\newcommand{\cX}{\mathcal{X}}
\newcommand{\cY}{\mathcal{Y}}
\newcommand{\cZ}{\mathcal{Z}}

\newcommand{\Mon}{\mathsf{Mon}}
\newcommand{\CMon}{\mathsf{CMon}}
\renewcommand{\S}{\cS\!\operatorname{pc}}

\newcommand{\Shape}{\operatorname{Shape}}
\newcommand{\is}{i_{\Spc_{S,\bA^1}}}
\newcommand{\isa}{i_{S_{\bA^1}}}
\newcommand{\LL}{\mathrm{L}}
\newcommand{\exc}{\mathsf{exc}}
\newcommand{\mot}{\mathsf{mot}}
\newcommand{\Fin}{\mathrm{Fin}}

\newcommand{\sC}{\mathscr{C}}
\newcommand{\sD}{\mathscr{D}}
\newcommand{\sE}{\mathscr{E}}
\newcommand{\sF}{\mathscr{F}}

\newcommand{\sX}{\mathscr{X}}
\newcommand{\sY}{\mathscr{Y}}

\newcommand{\ft}{\mathsf{ft}}

\newcommand{\fin}{\mathsf{fin}}

\newcommand{\Nis}{\mathrm{Nis}}

\newcommand{\Gal}{\operatorname{Gal}}
\newcommand{\Et}{\operatorname{\acute{E}t}}
\newcommand{\DF}{\mathrm{DF}}

\newcommand{\red}{\mathrm{red}}
\newcommand{\Aff}{\mathbf{Aff}}

\newcommand{\Sm}{\mathbf{Sm}}
\newcommand{\Cor}{\mathbf{Cor}}

\newcommand{\Topi}{\mathfrak{Top}_\i}
\newcommand{\Topiy}{\mathfrak{Top}_{\i /\sY}}


\newcommand{\Orb}{\mathsf{Orb}}


\newcommand{\perf}{\mathsf{perf}}



\newcommand{\lex}{\mathsf{lex}}
\newcommand{\stab}{\mathsf{stab}}

\newcommand{\Spec}{\mathrm{Spec}\,}

\newcommand{\Aut}{\operatorname{Aut}}
\newcommand{\st}{\mathrm{st}}
\newcommand{\LKE}{\operatorname{Lan}}

\newcommand{\PShv}{\operatorname{PShv}}
\newcommand{\Shv}{\operatorname{Shv}}

\newcommand{\Cat}{\operatorname{Cat}}
\newcommand{\Mod}{\operatorname{Mod}}
\newcommand{\Spc}{\S}
\newcommand{\Spt}{\cS\!\operatorname{pt}}
\renewcommand{\CMon}{\cC\!\operatorname{Mon}}
\newcommand{\CAlg}{\cC\!\operatorname{Alg}}
\newcommand{\Exc}{\cE\!\operatorname{xc}}
\newcommand{\coExc}{\mathrm{co}\Exc}
\newcommand{\cSpt}{\mathrm{co}\Spt}
\newcommand{\SH}{\cS\!\operatorname{H}}


\newcommand{\Adj}[4]{\xymatrix@1{#2 \ar@<-0.5ex>[r]_-{#4} & #3 \ar@<-0.5ex>[l]_-{#1}}}
\newcommand{\Adjlong}[4]{\xymatrix@C=2cm{#2 \ar@<-0.65ex>[r]_-{#4} & #3 \ar@<-0.65ex>[l]_-{#1}}}

\newcommand{\Hom}{\mathbb{H}\mathrm{om}}
\newcommand{\map}{\mathbb{M}\mathrm{ap}}

\newcommand{\Perf}{\mathcal{P}\mathrm{erf}}

\newcommand{\op}{\mathrm{op}}

\newcommand{\id}{\mathrm{id}}

\newcommand{\et}{\acute{e}t}
\newcommand{\sep}{\mathrm{sep}}
\newcommand{\cofilt}{\mathrm{cofilt}}

\def\Top{\mathbf{Top}}
\def\Sch{\mathbf{Sch}}

\renewcommand{\i}{\infty}

\def\Pro{\operatorname{Pro}}
\def\Ind{\operatorname{Ind}}
\def\Fun{\operatorname{Fun}}

\def\colim{\underrightarrow{\mbox{colim}\vspace{0.5pt}}\mspace{4mu}}

\renewcommand{\lim}{\varprojlim\mspace{3mu}}
\def\blank{\mspace{3mu}\cdot\mspace{3mu}}

\def\Prof{\operatorname{Prof}}

\def\Lan{\operatorname{Lan}}

\def\Top{\mathbf{Top}}
\def\Set{\mathrm{Set}}
\def\St{\mathrm{St}}

\def\Htop{\Topi^{\mathit{Hens.}}}

\def\Aut{\mathbf{Aut}}

\def\cs{\widehat{\left(\blank\right)}_{\Spc_{S,\bA^1}}}

\def\longlongrightarrow{-\!\!\!-\!\!\!-\!\!\!-\!\!\!-\!\!\!-\!\!\!\longrightarrow}
\def\longlonglongrightarrow{-\!\!\!-\!\!\!-\!\!\!-\!\!\!-\!\!\!-\!\!\!\longlongrightarrow}

\newcommand*{\longhookrightarrow}{\ensuremath{\lhook\joinrel\relbar\joinrel\rightarrow}}
\newcommand*{\longlonghookrightarrow}{\ensuremath{\lhook\joinrel\relbar\joinrel\relbar\joinrel\rightarrow}}

\begin{document}

\title[\resizebox{6in}{!}{Relative \'{E}tale Realizations  and Dwyer-Friedlander $K$-Theory of NC Schemes}]{Relative \'{E}tale Realizations of Motivic Spaces and Dwyer-Friedlander $K$-Theory of Noncommutative Schemes}

\author{David Carchedi}
\author{Elden Elmanto}

\maketitle
\begin{abstract} In this paper, we construct a refined, relative version of the \'etale realization functor of motivic spaces, first studied by Isaksen and Schmidt. Denoting by $\Spc\left( S \right)$ the $\infty$-category of motivic spaces over a base scheme $S$, their functor goes from $\Spc\left( S \right)$ to the $\infty$-category of $p$-profinite spaces, where $p$ is a prime which is invertible in all residue fields of $S$. 

In the first part of this paper, we refine the target of this functor to an $\infty$-category where $p$-profinite spaces is a further completion. Roughly speaking, this $\infty$-category is generated under cofiltered limits by those spaces whose associated ``local system" on $S$ is $\bA^1$-invariant. We then construct a new, relative version of their \'etale realization functor which takes into account the geometry and arithmetic of the base scheme $S$. For example, when $S$ is the spectrum of a field $k$, our functor lands in a certain $\Gal\left(k^{\sep}/k\right)$-equivariant $\infty$-category. Our construction relies on a relative version of \'etale homotopy types in the sense of Artin-Mazur-Friedlander, which we also develop in some detail, expanding on previous work of Barnea-Harpaz-Schlank. 

We then stabilize our functor, in the $S^1$-direction, to produce an \'etale realization functor for motivic $S^1$-spectra (in other words, Nisnevich sheaves of spectra which are $\bA^1$-invariant). To this end, we also develop an $\infty$-categorical version of the theory of profinite spectra, first explored by Quick. As an application, we refine the construction of the \'etale $K$-theory of Dwyer and Friedlander, and define its non-commutative extension. This latter invariant should be seen as an $\ell$-adic analog of Blanc's theory of semi-topological $K$-theory of non-commutative schemes. We then formulate and prove an analog of Blanc's conjecture on the torsion part of this theory, generalizing the work of Antieau and Heller.

\end{abstract}

\tableofcontents

\section{Introduction}

\subsubsection{Motivation}

Realization functors are of both practical and theoretical importance in the theory of motives. For example, one often wants to check that a map $f: M \rightarrow N$ of motives is \emph{nontrivial}, i.e., it \emph{does not} factor through the terminal object as $f: M \rightarrow * \rightarrow N$. If the realization functor preserves terminal objects, and it often does, then the realized map being nontrivial implies that the original map must have been nontrivial. This latter check is often easier to perform. 

%

This simple strategy underlies many of the most successful applications of the theory of motives and motivic homotopy theory. For example, in Voevodsky's original plan of attack on the Milnor's conjecture \cite{voevodsky-cobordism}, he proves that a certain map in the motivic stable category is nonzero (see \cite[Proposition 3.24]{voevodsky-cobordism}). In order to do so, he exploits the Betti realization functor together with techniques from classical cobordism theory. The realization strategy also permeates all other approaches to the Milnor/Bloch-Kato conjectures by means of Voevodsky's computation of the motivic Steenrod algebra over fields of characteristic zero \cite{voevodsky2010motivic}, \cite{voevodsky2003reduced}. This computation relies heavily on the Betti realization functor to show that the motivic Steenrod operations span all the operations in motivic cohomology, leveraging the analogous classical statement in topology. An analogous computation in characteristic $p$ (for the mod-$\ell$ motivic Steenrod algebra where $\ell$ is prime to $p$) was carried out in \cite{hoyois2013motivic}, relying on the $\ell$-adic realization functor, going from motivic spaces to the derived category of $\ell$-adic sheaves.

This latter functor is a homological shadow of a homotopical construction due to Isaksen \cite{realization} and Schmidt \cite{schmidt}. Fixing a prime $\ell$ which is coprime to the residue characteristics of a base scheme $S$, Isaksen and Schmidt independently extends the functor which associates to a smooth $S$-scheme the $\ell$-completion of its \'etale homotopy type (in the sense of Artin-Mazur-Friedlander \cite{ArtinMazur}, \cite{Friedlander}) to all motivic spaces over $S$. Over an algebraically closed field, Quick \cite{Qu3} constructed a $\bP^1$-stable version of this \'{e}tale realization functor, going from the category of motivic spectra $\SH(k)$ to a certain completion of the category of profinite spectra. These functors have found further applications in investigating torsion algebraic cycles \cite{Qu4}, sums of squares formulas \cite{duggerisaksen2}, and desuspensions of motivic spaces \cite{Wickelgren}.

\subsubsection{What is done in this paper?} In this paper, we offer a \emph{refinement} and, more interestingly, a \emph{relative version} of the functors of Isaksen and Schmidt. What allows us to do this is, unsurprisingly, the availability of modern technology in the form of $\infty$-categories as invented by Boardman and Vogt \cite{BVogt}, and later developed greatly by Joyal \cite{Joyal} and Lurie \cite{htt}. 

To state our theorem more precisely, we start with a base scheme $S$ and denote by $S_{\bA^1} \subset \Shv\left(S_{\et}\right)$ the full subcategory of the small \'etale $\infty$-topos on $S$ spanned by objects which are $\bA^1$-invariant in the sense of Definition~\ref{defn:a1s}. This inclusion admits a left adjoint, so we may speak of $S_{\bA^1}$-localizations. In Section~\ref{subsec:examples}, we show that this $\infty$-category contains a large class of interesting objects such as Eilenberg-Maclane objects associated to torsion sheaves whose torsion is prime to the residue characteristics of $S$. We prove:

\begin{theorem} \label{thm:main1} Let $S$ be a scheme, then there exists a unique colimit-preserving functor 
\begin{equation} \label{eq:main-functor}
\Et^S_{\bA^1}: \Spc\left( S \right) \rightarrow \Pro\left(S_{\bA^1}\right)
\end{equation} whose value on the motivic localization of a smooth $S$-scheme $X$ is given by the $S_{\bA^1}$-localization of the $S$-relative \'etale homotopy type of $X$: $$\widehat{\Pi^{S,\et}_\i\left(X\right)}_{S_{\bA^1}}.$$ 
\end{theorem}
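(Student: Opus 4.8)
The plan is to construct $\Et^S_{\bA^1}$ by universal property of motivic spaces. Recall that $\Spc(S)$ is obtained from $\PShv(\Sm_S)$ by first taking Nisnevich-local objects and then $\bA^1$-invariant objects; equivalently, $\Spc(S)$ is the initial presentably symmetric monoidal (or just presentable) $\infty$-category receiving a functor from $\Sm_S$ sending Nisnevich squares to pushouts and $\bA^1$-projections to equivalences. Dually, a colimit-preserving functor out of $\Spc(S)$ into any presentable $\infty$-category $\cD$ is the same datum as a functor $\Sm_S \to \cD$ satisfying Nisnevich descent and $\bA^1$-invariance (and this extension, when it exists, is unique). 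So the first step is to verify that $\Pro(S_{\bA^1})$ is presentable — it is the pro-category of an accessible localization of a topos, hence presentable — so that this universal property applies.

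Next I would define the putative functor on smooth $S$-schemes as the composite $X \mapsto \widehat{\Pi^{S,\et}_\i(X)}_{S_{\bA^1}}$, where $\Pi^{S,\et}_\i(-)$ is the $S$-relative \'etale homotopy type developed earlier in the paper (landing in $\Pro(\Shv(S_\et))$), followed by the pro-prolongation of the left adjoint to $S_{\bA^1} \hookrightarrow \Shv(S_\et)$. The key technical verifications are then: (i) this assignment sends Nisnevich distinguished squares to pushout squares in $\Pro(S_{\bA^1})$, and (ii) it inverts $\bA^1$-projections $X \times \bA^1 \to X$. For (i), the relative \'etale homotopy type should already send Nisnevich squares to pushouts — this is essentially excision for the \'etale topos, and should be part of, or a direct consequence of, the properties of $\Pi^{S,\et}_\i$ established earlier; applying the (colimit-preserving) localization $\widehat{(-)}_{S_{\bA^1}}$ preserves pushouts. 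For (ii), the point is precisely that $S_{\bA^1}$ was \emph{defined} so that $\bA^1$-invariant objects of the \'etale topos are the local ones: the relative \'etale homotopy type of $X \times \bA^1$ and of $X$ differ by something that becomes trivial after $S_{\bA^1}$-localization. This is where one uses that $\bA^1$ over any base has the \'etale homotopy type of a point relatively — or more precisely, that the map $\Pi^{S,\et}_\i(X \times \bA^1) \to \Pi^{S,\et}_\i(X)$ is an $S_{\bA^1}$-equivalence by the very definition of $S_{\bA^1}$-locality.

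Granting (i) and (ii), the universal property produces a unique colimit-preserving $\Et^S_{\bA^1}\colon \Spc(S) \to \Pro(S_{\bA^1})$ whose restriction along $\Sm_S \to \Spc(S)$ is the given assignment; since every object of $\Spc(S)$ is a colimit of (motivic localizations of) representables, uniqueness is automatic once colimit-preservation and the values on representables are pinned down. The main obstacle I anticipate is step (ii), the $\bA^1$-invariance: one must be careful that $\bA^1$-invariance in the motivic sense (a Nisnevich-local statement about presheaves on $\Sm_S$) matches $S_{\bA^1}$-locality in the \'etale topos on $S$ under the relative \'etale homotopy type functor. This amounts to checking that the natural transformation $\Pi^{S,\et}_\i(- \times \bA^1) \Rightarrow \Pi^{S,\et}_\i(-)$ of pro-\'etale-sheaves is objectwise an $S_{\bA^1}$-local equivalence, which in turn rests on a relative form of the computation that the \'etale homotopy type of affine space is contractible with $\bZ/n$-coefficients for $n$ prime to the residue characteristics — i.e.\ precisely the statement that motivates the definition of $S_{\bA^1}$ in the first place. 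A secondary subtlety is ensuring that the pro-prolongation of the localization functor interacts correctly with the pro-structure coming from Artin–Mazur–Friedlander, so that the composite genuinely lands in $\Pro(S_{\bA^1})$ and is colimit-preserving there; this should follow formally from the fact that $\Pro$ of a left adjoint between presentable $\infty$-categories is again a left adjoint.
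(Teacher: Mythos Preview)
Your overall strategy is sound and close to the paper's, but there is one genuine error and one point of confusion worth flagging.

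The error: $\Pro(S_{\bA^1})$ is \emph{not} presentable. Pro-categories of presentable $\infty$-categories essentially never are; the paper notes this explicitly in \S\ref{subsec:proobj} as a source of technical difficulty. What saves you is that the universal property of $\Spc(S)$ you invoke only needs the target to have small colimits, and this does hold for $\Pro(S_{\bA^1})$ once you know $S_{\bA^1}$ is accessible with finite limits (Proposition~\ref{prop:limcolim} combined with Proposition~\ref{prop:sa1isbetter}). So your argument can be repaired, but the justification you give is wrong.

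A related confusion: you speak of ``the pro-prolongation of the left adjoint to $S_{\bA^1}\hookrightarrow\Shv(S_{\et})$'' and later of ``$\Pro$ of a left adjoint between presentable $\infty$-categories.'' The localization $\widehat{(\mspace{3mu}\cdot\mspace{3mu})}_{S_{\bA^1}}$ is not constructed by prolonging an existing left adjoint; it is built directly at the pro-level via Proposition~\ref{prop:ladj}, which produces a left adjoint to $\Pro(i)$ from the mere fact that the inclusion $i\colon S_{\bA^1}\hookrightarrow\Shv(S_{\et})$ preserves finite limits. This matters precisely because presentability fails and the usual adjoint functor theorems are unavailable.

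As for the comparison: the paper's route is slightly more economical. Rather than starting from $\Sm_S$ and separately checking Nisnevich descent, it first builds a colimit-preserving functor $\widetilde{\Et^S_{\bA^1}}$ on all of $\Shv_{\Nis}(\Sm_S)$ as the composite $\LL_{\et}$ followed by $\Pi^{S,\et}_\i$ followed by $\widehat{(\mspace{3mu}\cdot\mspace{3mu})}_{S_{\bA^1}}$; each factor is already a left adjoint, so Nisnevich descent is automatic. The only thing to check is then that $\bA^1$-projections go to equivalences, and this is done via the explicit formula of Theorem~\ref{thm:concrete_relative} together with Proposition~\ref{prop:LAinv}, which says $\cF\in S_{\bA^1}$ if and only if $\rho_!(\cF)$ is $\bA^1$-invariant in the big \'etale topos. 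Your sketch of (ii) is essentially this, but the paper's formulation makes clear that no separate cohomological input (your ``relative form of the computation that the \'etale homotopy type of affine space is contractible with $\bZ/n$-coefficients'') is needed at this stage; the definition of $S_{\bA^1}$ is exactly what makes the check tautological.
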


We note the following specializations of our Theorem~\ref{thm:main1}

\begin{enumerate}
\item If $\ell$ is a prime which is invertible in the residue characteristics of $S$, then there is a functor $\Pro\left(S_{\bA^1}\right) \rightarrow \Prof_{\ell}(\Spc)$, where $\Prof_{\ell}(\Spc)$ is the $\infty$-category of $\ell$-profinite spaces. The functors of Isaksen and Schmidt takes the form $$\Et_{\ell}: \Spc\left( S \right) \rightarrow \Prof_{\ell}(\Spc),$$ and the functor of Theorem~\ref{thm:main1} factors $\Et_{\ell}$ through $\Pro\left(S_{\bA^1}\right)$.
\item If $S$ is the spectrum of a field $k$, then $\Pro\left(S_{\bA^1}\right)$ is a full subcategory of  $\cB \Gal\left(k^{\sep}/k\right)$, the classifying $\i$-topos for the absolute Galois group. Hence our functor lands in an $\infty$-category that takes into account the action of the Galois group of $k$. More precisely, our realization functor produces an inverse system of spaces \[\{ X_{k_i} \}_{k_i/k\,\text{finite Galois extensions}} \] where each $X_{k_i}$ is a space equipped with the action of $\Gal\left(k_i/k\right)$ and the transition maps are appropriately coherently equivariant.
\end{enumerate}

Another feature of the functor~\eqref{eq:main-functor} that we find pleasant is that it admits an explicit formula as a pro-object. As we recall in subsection~\ref{subsec:proobj}, if $\sC$ is accessible and  has finite limits, then $\Pro \left( \sC \right)$ can be described as the (opposite) $\infty$-category of functors $F: \sC \rightarrow \Spc$ which preserves finite limits. Given a motivic space $X$ we will describe $\Et^S_{\bA^1}\left( X \right)$ explicitly as a functor from $S_{\bA^1} \rightarrow \Spc$. This feature is a direct consequence of the origins of the functor~\eqref{eq:main-functor} --- the \emph{relative} \'etale realization functor which we now discuss.

\subsubsection{Relative \'etale realization} The relative \'etale realization functor is of the form $$\Pi^{S,\et}_{\i}:\Shv_{\et}\left(\Sch^{\ft}_S\right) \to \Pro\left(\Shv\left(S_{\et}\right)\right).$$ We discuss its constructions and properties in detail in Section~\ref{sect:relative-types-stacks}. The main technical input to our constructions comes from the first author's paper \cite{dave-etale}, where the Artin-Mazur-Friedlander construction of \'{e}tale homotopy types is promoted to a homotopy coherent construction from higher stacks to pro-spaces; this paper provides a relative version of these constructions, expanding on work of Harpaz-Schlank \cite{harpaztomer} and Barnea-Schlank \cite{barneatomer}. We also provide an explicit description of the relative \'etale homotopy type of an $S$-scheme (more generally of an object in $\Shv_{\et}\left(\Sch^{\ft}_S\right)$, i.e., a higher stack on S) as a left exact functor $\Shv\left(S_{\et}\right) \to \Spc$ in Theorem~\ref{thm:concrete_relative}:
\begin{theorem}\label{thm:main2}
Let $X$ be an $S$-scheme. Viewed as a left exact functor $$\Shv\left(S_{\et}\right) \to \Spc,$$ the $S$-relative \'etale homotopy type $\Pi^{S,\et}_{\i}\left(X\right)$ canonically identifies with the functor $$\map_{\Shv_{\et}\left(\Sch^{\ft}_S\right)}\left(\cX,\rho_!\left(\mspace{3mu}\cdot\mspace{3mu}\right)\right),$$ where $\rho_!$ is the fully faithful embedding of sheaves on the small \'etale site into $\Shv_{\et}(\Sch^{\ft}_S)$.
\end{theorem}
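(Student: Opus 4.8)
The plan is to identify $\Pi^{S,\et}_{\i}(X)$ as a pro-object in $\Shv(S_{\et})$ — equivalently, by the discussion in subsection~\ref{subsec:proobj}, as a left exact functor $\Shv(S_{\et}) \to \Spc$ — with the co-representable functor $\map_{\Shv_{\et}(\Sch^{\ft}_S)}(\cX, \rho_!(-))$, and to do so by reducing to the universal property of the relative étale homotopy type construction. First I would recall the definition of $\Pi^{S,\et}_{\i}$ from Section~\ref{sect:relative-types-stacks}: it is obtained by applying the relative "shape" functor to the morphism of $\infty$-topoi $\Shv_{\et}(\Sch^{\ft}_S) \to \Shv(S_{\et})$ induced by $\rho_!$ and its right adjoint $\rho^*$, evaluated at the object $\cX$ representing $X$. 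The key structural input is that $\rho_!$ is fully faithful with a right adjoint $\rho^*$; this exhibits $\Shv(S_{\et})$ as (a localization inside) the larger topos, and the relative shape of $\cX$ is by construction the pro-object of $\Shv(S_{\et})$ that pro-corepresents the functor $\cU \mapsto \map_{\Shv_{\et}(\Sch^{\ft}_S)}(\cX, \rho_!(\cU))$ — but this needs to be checked to actually be left exact in $\cU$ and to be the shape in the precise technical sense of \cite{dave-etale}.

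The key steps, in order, are: (1) verify that $\rho_!: \Shv(S_{\et}) \to \Shv_{\et}(\Sch^{\ft}_S)$ is fully faithful and left exact, so that it is a geometric morphism in the appropriate sense and the relative shape machinery of \cite{dave-etale} applies; (2) unwind the definition of the relative étale homotopy type $\Pi^{S,\et}_{\i}(X)$ as the shape of the object $\cX \in \Shv_{\et}(\Sch^{\ft}_S)$ relative to the topos $\Shv(S_{\et})$ — concretely, as the pro-object $\{\cU_\alpha\}$ such that the pro-object corepresents maps out of $\cX$; (3) observe that, passing to the functor-of-points description $\Pro(\sC) \simeq \Fun^{\lex}(\sC, \Spc)^{\op}$ valid since $\Shv(S_{\et})$ is an $\infty$-topos hence accessible with finite limits, the pro-object $\Pi^{S,\et}_{\i}(X)$ corresponds to the functor $\cU \mapsto \colim_\alpha \map_{\Shv(S_{\et})}(\cU_\alpha, \cU)$; (4) identify this colimit with $\map_{\Shv_{\et}(\Sch^{\ft}_S)}(\cX, \rho_!(\cU))$ using the defining adjunction/corepresentability property from step (2) together with the fact that $\rho_!$ commutes with the relevant colimits and that $\map_{\Shv_{\et}(\Sch^{\ft}_S)}(\cX, \rho_!(-))$ is already left exact (being a composite of the left-exact $\rho_!$ with a corepresentable functor). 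Finally, (5) check left exactness of the resulting functor directly: $\rho_!$ preserves finite limits since it is right adjoint to... — more carefully, $\rho_!$ is left exact because it is the left Kan extension along a continuous functor of sites inducing an equivalence onto a subtopos, and $\map(\cX, -)$ preserves all limits, so the composite preserves finite limits, confirming it defines a genuine object of $\Pro(\Shv(S_{\et}))$.

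The main obstacle I anticipate is step (2)–(4): pinning down that the abstract relative shape functor of \cite{dave-etale}, when specialized to the geometric morphism induced by $\rho_!$, really does produce the naive corepresentable formula, rather than something that only agrees with it after some completion. This requires knowing that the comparison map (shape of $\cX$) $\to$ (corepresented functor) is an equivalence of pro-objects, which in turn hinges on $\rho_!$ being \emph{fully faithful} (so that the counit $\rho^*\rho_! \simeq \id$ collapses the relevant cosimplicial resolutions) — this is precisely where the hypothesis that we are embedding the \emph{small} étale site $S_{\et}$ into $\Sch^{\ft}_S$ is used, as opposed to some coarser site where the embedding would fail to be fully faithful. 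A secondary technical point is ensuring the identification is natural and compatible with the pro-structure, i.e., that we get an equivalence in $\Pro(\Shv(S_{\et}))$ and not merely a levelwise-after-cofinality statement; this should follow formally once the corepresentability in step (2) is established, since both sides are determined by the same functor $\Shv(S_{\et}) \to \Spc$ under the equivalence $\Pro(\sC) \simeq \Fun^{\lex}(\sC,\Spc)^{\op}$.
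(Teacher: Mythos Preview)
Your proposal has a genuine gap at the very first step: you have misidentified the definition of $\Pi^{S,\et}_\i(X)$. By Definition~\ref{defn:rel-et-type}, the $S$-relative \'etale homotopy type of a scheme $p:X\to S$ is the relative shape of the geometric morphism $\Shv(X_{\et})\to\Shv(S_{\et})$ between the \emph{small} \'etale $\infty$-topoi; as a left-exact functor $\Shv(S_{\et})\to\Spc$ it is $\cF\mapsto\Gamma_X(p^*\cF)$. You instead take the definition to be the relative shape of the slice $\Shv_{\et}(\Sch^{\ft}_S)/\cX$ over $\Shv(S_{\et})$ via the geometric morphism $\lambda$ with $\lambda^*=\rho_!$. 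Unwinding that, as you do in steps (2)--(4), does indeed yield $\map(\cX,\rho_!(-))$ --- but this is the \emph{right-hand side} of the identity you are trying to prove, not the left-hand side. Your argument computes one side and then declares it equal to the other by assumption; the actual comparison, which is the content of the theorem, is never made.

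The bridge between the two descriptions is Proposition~\ref{prop:Lff}: for an $S$-scheme $p:X\to S$ and $\cF\in\Shv(S_{\et})$ one has $\map_{\Shv_{\et}(\Sch^{\ft}_S)}(X,\rho_!(\cF))\simeq\Gamma_X(p^*\cF)$. This rests on identifying $\rho_!$ with the functor sending $\cF$ to its associated \'etale Deligne--Mumford $\infty$-stack over $S$, which is not a formal consequence of $\rho_!$ being a fully faithful left adjoint. The paper's argument then runs: both $\cX\mapsto\Pi^{S,\et}_\i(\cX)$ and $\cX\mapsto\map(\cX,\rho_!(-))$ extend to colimit-preserving functors $\Shv_{\et}(\Sch^{\ft}_S)\to\Pro(\Shv(S_{\et}))$ (the second is rewritten as a composite through $\Topi/\Shv(S_{\et})$ to see this); they agree on representable schemes by Proposition~\ref{prop:Lff}; hence they agree everywhere. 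Your instincts about left-exactness and full faithfulness of $\rho_!$ are relevant, but they enter in constructing $\lambda$ as a genuine geometric morphism (Proposition~\ref{prop:infconn}) and in the proof of Proposition~\ref{prop:Lff}, not in the direct way your outline suggests.
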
 
This explicit description also persists to the level of motivic spaces, and we prove this in Proposition~\ref{prop:explicit-a1}.

\subsubsection{Stabilization and Dwyer-Friedlander $K$-theory} The eventual goal for a realization functor is to construct one from the stable motivic $\i$-category $\SH(S)$ to some stable version of pro-objects in \'etale sheaves. When $k$ is separably closed, Quick uses the K\"unneth formula in order to construct a functor to the category of profinite spectra. Without such a formula, it seems that a direct approach is out of reach; in fact there are real limitations to the existence of such functors as discussed in \cite{Kass:2017aa}. With our eyes towards this end of goal, we revisit the theory of profinite spectra as discussed by Quick in \cite{Qu} and \cite{Qu3} in the setting of $\infty$-categories. We find this ``model-independent" treatment quite convenient. In particular, regarding the $\infty$-category of spectra as reduced excisive functors buys us a lot of mileage in setting up this theory, as well as in developing stable versions of the various localizations of pro-spaces. One of the more amusing features of our treatment is the language of \emph{cospectra/cospectrum objects}; this notion was actually discussed in one of the earliest papers about spectra in \cite{lima}.

We then stabilize our \'etale realization functor, in the $S^1$-direction, obtaining a colimit-preserving stable \'etale realization functor (Theorem~\ref{prop:stab-et-real}) from the stable $\infty$-category of \'etale sheaves of spectra on the big site on $S$ to the stabilization of the $\infty$-category of pro-objects on the small \'etale site.
\begin{equation} \label{functor-stab-main}
\Pi^{S,\et}_{\i,\Spt}: \Shv_{\et,\Spt}\left( \Sch^{\ft}_S \right) \rightarrow \Spt\left(\Pro\left(\Shv\left(S_{\et}\right)\right) \right).
\end{equation}
It is then formal to stabilize the functor of~\eqref{eq:main-functor} (Theorem~\ref{thm:stable}) to obtain one coming from the $S^1$-stable motivic homotopy $\infty$-category
\begin{equation} \label{functor-stab-et}
\Et^S_{\bA^1,\Spt}: \SH^{S^1}\left(S \right) \rightarrow \Spt \left( \Pro\left(S_{\bA^1}\right) \right).
\end{equation}
As an application, we reformulate \'etale $K$-theory, in the sense of Dwyer and Friedlander and extend it to the non-commutative setting. In this reformulation, the theory looks closer to Friedlander-Walker's theory of semi-topological $K$-theory  \cite{fw1}, \cite{fw2} and its non-commutative generalization due to \cite{blanc}. In modern language, semi-topological $K$-theory is calculated by applying the stable Betti realization to a Nisnevich sheaf of spectra on $\Sch_{\bC}^{\ft}$ given by ``$\sC$-twisted $K$-theory" where $\sC$ is a $\bC$-linear dg-category; see Construction~\ref{constrct:twist} for a precise definition of this sheaf in a general context. This Nisnevich sheaf of spectra is denoted by $\underline{K}\left(\sC\right)$. The comparison with the original definitions of Friedlander-Walker is given in \cite[Theorem 2.3]{antieau-heller}. 

Using the functor of~\eqref{functor-stab-main} when $S = \Spec\,R$, we define (Definition~\ref{def:df-k}) the \emph{relative Dwyer-Friedlander $K$-theory} of an $R$-linear stable $\infty$-category $\sC$ to be 
\[
K^{\DF}_R\left(\sC\right):= \Pi^{S,\et}_{\i,\Spt}\underline{K}\left( \sC \right),
\]
and its $\bA^1$-invariant analog to be 
\[
K^{\DF}_{R,\bA^1}\left(\sC\right) := \Et^S_{\bA^1,\Spt}\LL_{\bA^1}\underline{K}\left(\sC\right),
\]
where $\LL_{\bA^1}$ is the $\bA^1$-localization functor. One of the payoffs of the relative nature of our constructions is that both invariants are again sensitive to the arithmetic and geometry of the base scheme; for example if $S$ is the spectrum of a field, then $K^{\DF}_S\left(\sC \right)$ admits a natural action of the Galois group.

The original motivation for semi-topological $K$-theory is to interpolate between topological and algebraic $K$-theory, retaining the finer information about algebraic varieties than algebraic $K$-theory sees (but topological $K$-theory does not) while discarding inaccessible ``transcendental" information that algebraic $K$-groups possess (recall that the positive algebraic $K$-groups of $\Spec\,\bC$ are uncountable). We refer to the introduction of \cite{fw2} for a discussion of this theory and how many deep conjectures about algebraic cycles can be reformulated with semi-topological $K$-theory. As mentioned in page 880 \emph{loc. cit.}, there was no ``reasonable definition" of semi-topological $K$-theory over arbitrary fields at that time of that article; it is our hope that our construction now offers a reasonable candidate. 

The signs seems encouraging; one of the desiderata of a semi-topological theory is that it should agree with the original theory up to torsion which is prime to the base field as discussed in \cite[Section 1.5]{fw2} and made precise in \cite[Theorem 30]{fw2}. For semi-topological $K$-theory of dg-categories, this was conjectured by Blanc in \cite{blanc} and recently proved by Antieau-Heller in \cite{antieau-heller}. We prove this for our invariant (Theorem~\ref{thm:agreement}):

\begin{theorem} Let $k$ be a field, $\sC$ a $k$-linear stable $\infty$-category. Let $e^*K\left(\sC\right) \in \Spt\left(\Pro\left(\Shv\left(k_{\et}\right)\right) \right)$ be the constant spectrum object in $\Pro\left(\Shv\left(k_{\et}\right)\right)$ associated to the spectrum $K\left(\sC \right)$. Then there is canonical map
\[
e^*K\left(\sC\right)  \rightarrow K^{\DF}_k\left( \sC \right),
\]
which is an equivalence mod-$n$ where $n$ is invertible in $k$.
\end{theorem}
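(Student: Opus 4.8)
The plan is to reduce the statement to a local, geometric computation that is already available in the literature, using the explicit pro-object description of our realization functors. First I would construct the comparison map. The source $e^*K(\sC)$ is the constant spectrum object, i.e.\ the image of the spectrum $K(\sC)$ under the constant functor $\Spt \to \Spt(\Pro(\Shv(k_{\et})))$. Since $\Et^S_{\bA^1,\Spt}$ and $\Pi^{S,\et}_{\i,\Spt}$ are built from colimit-preserving functors out of $\Shv_{\et,\Spt}(\Sch^{\ft}_k)$, and the constant (global-sections-adjoint) functor factors through the unit of the adjunction attached to the terminal scheme, the map is the realization of the canonical comparison $e^* K(\sC) \to \Pi^{S,\et}_{\i,\Spt}\underline K(\sC)$ coming from the natural transformation (global constant sheaf of spectra) $\to$ (the Nisnevich sheaf $\underline K(\sC)$ evaluated on $\Spec k$), together with the counit exhibiting $\Pi^{S,\et}_{\i,\Spt}$ of a constant sheaf as constant. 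Concretely: $\underline K(\sC)(\Spec k) \simeq K(\sC)$, so there is a map of Nisnevich (hence \'etale, after \'etale sheafification) sheaves of spectra from the constant sheaf $e_* K(\sC)$ to $\underline K(\sC)$; applying $\Pi^{S,\et}_{\i,\Spt}$ and using that it sends the constant sheaf to $e^* K(\sC)$ yields the desired map.

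Next I would reduce the ``equivalence mod-$n$'' claim to a statement about homotopy sheaves. Smashing with the mod-$n$ Moore spectrum $\mathbb S/n$ commutes with the exact functor $\Pi^{S,\et}_{\i,\Spt}$ and with the constant-object functor, so it suffices to show $e^* (K(\sC)/n) \to \Pi^{S,\et}_{\i,\Spt}(\underline K(\sC)/n)$ is an equivalence in $\Spt(\Pro(\Shv(k_{\et})))$. Because the stabilization of a pro-category of an $\i$-topos is detected on stable homotopy pro-sheaves, and because $n$ is invertible in $k$, I would invoke rigidity: the mod-$n$ \'etale homotopy type, in the relative sense of Theorem~\ref{thm:main2}, of a smooth affine $k$-scheme is computed by its \'etale cohomology with $\bZ/n$-coefficients, which by Gabber--Suslin rigidity agrees with the mod-$n$ homotopy of the point's \'etale site in the relevant range after passing to the Henselization/strict localization. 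In sheaf-theoretic terms, $\Pi^{S,\et}_{\i,\Spt}$ applied to $\underline K(\sC)/n$ has stable homotopy pro-sheaves given by the \'etale sheafifications of $\pi_* (\underline K(\sC)/n)$, i.e.\ of the presheaf $U \mapsto \pi_*(K(\sC \otimes_k \cO(U))/n)$, and the point is that this \'etale sheafification is the constant sheaf on $\pi_*(K(\sC)/n)$. This last fact is exactly the input from \cite{antieau-heller} (building on Thomason and the Geisser--Hesselholt / Antieau--Gepner--Heller circle of ideas): \'etale-locally on $\Spec k$, mod-$n$ $K$-theory of $\sC$-twists is insensitive to \'etale extensions, equivalently $\underline K(\sC)/n$ is already \'etale-locally constant. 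I would phrase this as: the unit $e^* K(\sC)/n \to (\text{\'etale sheafification of } \underline K(\sC)/n)$ is an equivalence of \'etale sheaves of spectra on $\Spec k$, which is a reformulation of the Thomason-style descent / rigidity statement used by Antieau--Heller.

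With that in hand, the final step is to feed this back through the explicit formula. By the pro-object description (Theorem~\ref{thm:main2} and its spectral analog underlying~\eqref{functor-stab-main}), $\Pi^{S,\et}_{\i,\Spt}$ of an \'etale sheaf of spectra $\sF$ is, as a left-exact functor out of $\Shv(k_{\et})$, essentially $\map(\rho_!(-), \sF)$ stabilized; when $\sF$ is (stably) \'etale-locally constant on $K(\sC)/n$ this recovers precisely the constant spectrum object $e^*(K(\sC)/n)$, because $\rho_!$ of the point is terminal and sections over the point compute $K(\sC)/n$. Thus the comparison map is an equivalence after $/n$, which is the claim.

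The main obstacle I anticipate is the rigidity/descent input itself: one must show that $\underline K(\sC)/n$, as an \'etale sheaf of spectra on the small site of $\Spec k$, is constant with value $K(\sC)/n$. For a field this amounts to showing that for every finite separable extension $k'/k$ the base-change map $K(\sC)/n \to K(\sC \otimes_k k')/n$ is, after passing to the colimit over the absolute Galois action, an equivalence onto the fixed points --- equivalently that mod-$n$ $K$-theory of $k$-linear categories satisfies Galois descent with no higher contributions, which is the dg-categorical rigidity theorem of Antieau--Heller. Verifying that our relative \'etale realization genuinely produces the homotopy-fixed-point / Galois-equivariant object matching their comparison, rather than some a priori larger completion, is where care is needed; here specialization~(2) after Theorem~\ref{thm:main1} (identifying $\Pro(S_{\bA^1})$ with a subcategory of $\cB\Gal(k^{\sep}/k)$) does the bookkeeping. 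Everything else --- exactness of the functors, commuting with $\mathbb S/n$, and the identification of constant objects --- is formal given the constructions of Sections~\ref{sect:relative-types-stacks} and the stabilization results.
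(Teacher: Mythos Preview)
Your proposal has a genuine gap in the core reduction step. You write that by the pro-object description, $\Pi^{S,\et}_{\i,\Spt}$ of an \'etale sheaf $\sF$ on the big site is ``essentially $\map(\rho_!(-),\sF)$ stabilized'', and therefore depends only on the restriction of $\sF$ to the small \'etale site of $\Spec k$. But Theorem~\ref{thm:concrete_relative} gives the formula the other way around: $\Pi^{S,\et}_{\i}(\cX)$ is the functor $F \mapsto \map_{\Shv_{\et}(\Sch^{\ft}_k)}(\cX,\rho_!(F))$. This maps the big-site object $\cX$ \emph{into} $\rho_!(F)$, and so depends on $\cX$ over all of $\Sch^{\ft}_k$, not just over \'etale $k$-schemes. (A sanity check: for $\cX=\Sigma^\infty_+X$ with $X$ a smooth projective $k$-variety, your reading would say the realization only sees the finite Galois-sets $X(k')$, losing the entire \'etale homotopy type of $X$.) Hence your proposed input --- that $\underline K(\sC)/n$ restricted to the small \'etale site is constant, i.e.\ Galois descent for mod-$n$ $K$-theory along finite separable extensions --- is not what is needed.

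What the paper actually proves is the stronger statement that the comparison map becomes an equivalence of \'etale sheaves of spectra on the \emph{big} site, and this requires two further ingredients you do not address. First, one must reduce from $\Sch^{\ft}_k$ to $\Sm_k$ (Proposition~\ref{prop:sm-res}); this is nontrivial and uses $\ell$dh-descent, Gabber's alterations, and the construction of a structure of traces on $\underline K(\sC)$. Second, with this reduction in hand and after $\bA^1$-localizing (legitimate mod $n$ by Proposition~\ref{lem:basics-df}(4)), one checks the equivalence at the points of the big \'etale site of $\Sm_k$, i.e.\ at strictly Henselian local rings $R$ essentially smooth over $k$: one needs $K(\sC)/n \simeq K(\Perf_R\otimes_k\sC)/n$ for all such $R$. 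This is a genuine rigidity theorem for motivic spectra (the paper invokes \cite{an-dru}), not merely the field-extension case. Your sketch conflates these two scopes of rigidity and omits the smooth reduction entirely.
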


In a sequel to this paper, we plan to develop this invariant further by constructing Chern character maps to periodic cyclic homology, its descent properties, and its relationship with recent developments in topological cyclic homology \cite{nikolaus-scholze} and the approach to $\ell$-adic $K$-theory of categories of Clausen \cite{Clausen:2017aa} via topological cyclic homology.

\subsubsection{The landscape of realization functors} The past few years has seen a proliferation of realization functors in motivic homotopy theory. We will give a (non-exhaustive!) list and mention how our work, and its sequel will fit in.

\begin{enumerate}
\item Morel and Voevodsky introduced two kinds of unstable realization functors in \cite[Section 3.3]{morel-voevodsky}. The first is the \emph{Betti realization functor} which is of the form \[\mathrm{Be}: \Spc^{\bA^1}_{\mathbb{C}} \rightarrow \Spc.\] It takes the motivic space of a smooth $\mathbb{C}$-scheme and sends it to its analytification 
\item If $k$ is a field of characteristic zero, we can base change to $\mathbb{C}$ and then compose with the Betti realization functor to get a functor $\mathrm{Be}: \Spc^{\bA^1}_{k} \rightarrow \Spc$. The Betti realization functor is strong symmetric monoidal and takes $\bP^1$ to $S^2$ and hence, by general formalism, stabilizes to a functor $\mathrm{Be}: \SH(k) \rightarrow \Spt$. This stabilized functor has been described and studied by a number of authors including Voevodsky \cite[Section 3.3]{voevodsky2010motivic}, Riou \cite{MR2439197} and Panin, Pimenov and R\"ondigs \cite{ppr}. 
\item A relative Betti realization for a $\mathbb{C}$-scheme $X$ which takes the form $\SH(X) \rightarrow \sD(X^{\mathrm{an}})$ has been introduced and studied extensively by Ayoub; here $\sD(X^{\mathrm{an}})$ is the derived category of complexes of abelian groups on $X^{\mathrm{an}}$ the analytic space associated to $X$. In \emph{loc. cit} Ayoub also proves the compatibility of Betti realization with the six operations on both sides. A further study of a version of this functor that takes into account the Hodge theory of $X$ has been introduced and studied by Drew in \cite{Drew:2018aa}.
\item The second functor defined by Morel and Voevodsky in \cite[Section 3.3]{morel-voevodsky} takes the form $\mathrm{Be}_{C_2}: \Spc^{\bA^1}_{\mathbb{R}} \rightarrow \Spc_{C_2}$, where $\Spc_{C_2}$ indicates the category of $C_2$-spaces. It takes the motivic space of a smooth $\mathbb{R}$-scheme and sends it to its analytification equipped with the action by complex conjugation. On the level of model categories, this functor lands in the model category of $C_2$-spaces where the equivalences are detected on $C_2$-fixed points, i.e., the model category of genuine $C_2$-spaces.
\item If $k$ is a field which admits an embedding to $\mathbb{R}$, we can base change to $\mathbb{R}$ and compose with the real Betti realization to get a functor $\mathrm{Be}_{C_2}: \Spc^{\bA^1}_{k} \rightarrow \Spc_{C_2}$. Heller and Ormsby \cite{heller-ormsby} stabilized the construction above to obtain the \emph{real Betti realization functor} which is of the form $\mathrm{Be}_{C_2}: \SH(k) \rightarrow \Spt_{C_2}$ where $\Spt_{C_2}$ is the $\infty$-category of \emph{genuine} $C_2$-spectra.
\item We have already mentioned that Isaksen in \cite{realization} and Schmidt \cite{schmidt} constructed \'etale realization functors for fields of arbitrary characteristics landing in spaces completed away from the characteristic. 
\item When $k$ is separably closed, work of Quick \cite{Qu3} stabilizes the \'etale realization functor to obtain a functor $\Et_{\ell}: \SH(k) \rightarrow \Prof_{\ell}(\Spt)$ where $\Prof_{\ell}(\Spt)$ is the category of $\ell$-profinite spectra (we review this formalism in the language of this paper in Section~\ref{sect:stable-prof}).
\item When $k$ is an arbitrary field, the best hope for an \'etale realization functor would be one that is strong symmetric monoidal, lands in the $\infty$-category of genuine $\Gal\left( k^\sep/k \right)$-spectra \footnote{At least a symmetric monoidal stable $\infty$-category such that $\pi_0$ of endomorphism of the unit object is the Burnside ring of $\Gal\left( k^\sep/k \right)$}. The work of Kass and Wickelgren \cite{Kass:2017aa} tells us that this hope is \emph{not possible}.
\item If one contends with \emph{homological} versions of the realization functor, i.e., one that lands in an $R$-linear stable $\infty$-category for some discrete ring $R,$ then the results are as good as it gets. Work of Ayoub \cite{MR3205601} and Cisinski-D\'eglise \cite{etalemotives} gives realization functors from $\SH(X)$ landing in the classical derived category of \'etale sheaves (with appropriate coefficients) on $X$ compatible with the full six functors formalism and the vanishing/nearby cycles formalism. The main input to their work is a version of \emph{Suslin rigidity} in the style of \cite{MR1764197}, which is not available in the spectral setting. However, there has been recent significant breakthroughs made by Tom Bachmann in this direction, and the second author is investigating the same results with a different technique; it would be interesting to understand the implication of these results in our setting.
\end{enumerate}

In view of the above discussion, our work in Sections~\ref{sec:relativeetale} and~\ref{sect:relreal} is generalization of (6). In view of the limitations discussed (8), Section~\ref{sect:stable} is an attempt to get as far as possible with a stable version of the \'etale realization functor, namely, we can construct a relative version of the \'etale realization functor if we only stabilize in the $S^1$-direction. An \'etale realization originating from $\SH(k)$ is the subject of future work. In spite of the limitations in (8), we expect our functor to be strong symmetric monoidal and lands in an stable $\infty$-category which takes into account the geometry and arithmetic of the base. Furthermore, this functor should refine the homological realization functors discussed in (9). 

\subsection{Overview} We will now give a linear overview of this paper. In the preliminary Section~\ref{sec:prelim}, written mainly for the reader's convenience, we review pro-objects in $\infty$-categories in subsection~\ref{subsec:proobj}, and the notions of shapes of  $\infty$-topoi and \'etale homotopy types in subsections~\ref{subsec:absshape} and~\ref{subsec:et-type} respectively. We also set up our notation and conventions on motivic homotopy theory in subsection~\ref{subsec:a1}. 

 In Section~\ref{sec:relativeetale}, we revisit the absolute \'etale realization functor. We begin this subsection~\label{subsec:loc} by defining various localizations of the $\infty$-category of pro-spaces. We define a localization which is the ``finest" target for an absolute \'etale realization from motivic spaces and show that previous considered localizations are just further localizations of this $\infty$-category. We define our absolute \'etale realization functor in subsection~\ref{subsec:absolute}. 
 
 We finally come to relative realizations in Section~\ref{sect:relreal}. We first develop the basic theory of relative shapes in subsection~\ref{subsec:relshape} and define the relative \'etale homotopy type of a schemes and, more generally, a higher algebraic stack. In subsection~\ref{subsec:explicit}, we provide an explicit formula for the relative \'etale homotopy type, generalizing the explicit formula for the \'etale homotopy type of a higher stack given in \cite{dave-etale}. In subsection~\ref{subsec:rel-real} we come to the main construction of this paper: the relative \'etale realization functor for motivic spaces.
 
In the last Section~\ref{sect:stable}, we stabilize the story, at least in the $S^1$-direction. Subsection~\ref{sect:stable-prof} offers a ``model-independent" treatment of stable profinite homotopy theory. Using this framework, we stabilize the relative \'etale realization functor to one from $S^1$-motivic spectra. The next two subsections offer two applications of our theory. The first, in subsection~\ref{subsec:revisit}, we offer a refinement of Dwyer and Friedlander's \'etale K-theory. The second, in subsection~\ref{subsec:noncomm}, we define the Dwyer-Friedlander $K$-theory of an $R$-linear stable $\infty$-category. Using representability of algebraic $K$-theory from motivic homotopy theory, we give a computation of the $\bA^1$-local version of this theory. In subsection~\ref{sec:blancdf}, we prove that, with finite coefficients coprime to the base field, the Dwyer-Friedlander $K$-theory of a $k$-linear stable $\infty$-category agrees with usual $K$-theory. This is an analog in our setting of a conjecture of Blanc and a theorem of Antieau-Heller.

\subsection{Conventions} The current work uses heavily the language of $\infty$-categories. By an $\infty$-category, we mean an $\left(\infty, 1\right)$-category and when pressed, we use the model of quasicategories. We borrow heavily terminology and notation from \cite{htt}. For the reader's convenience, here are some of our conventions:

\begin{enumerate}

\item The $\i$-category of spaces/homotopy types/$\i$-groupoids is denoted by $\Spc$
\item Given an $\infty$-category $\sC$ we write $\sC_0$ for its objects ($0$-simplices) and write $\map_{\sC}\left( X, Y \right)$ for the mapping space between two objects and write $\map\left(X,Y\right)$ if the context is clear.
\item The $\i$-category of presheaves of spaces on an $\infty$-category $\sC$ is denoted by $$\PShv\left(\sC\right) := \Fun\left( \sC^{\op}, \Spc\right).$$ Suppose that $\sC$ has a Grothendieck topology \cite[Section 6.2.2]{htt} $\tau$, then the $\infty$-category of sheaves of spaces with respect to $\tau$ \cite[Definition 6.2.2.6]{htt} is denoted by $\Shv_{\tau}\left(\sC\right)$. For example if $S$ is a base scheme, then the $\infty$-category of sheaves (of spaces) on the (big) \'{e}tale site $\Sch_S$ is denoted by $\Shv_{\et}\left(\Sch_S\right)$. For $X$ a scheme, we denote by $\Shv\left(X_{\et}\right)$ the $\i$-topos of sheaves of spaces on its small \'etale site.
\item Given an $\i$-category $\sC$, we have a Yoneda embedding: $y: \sC \hookrightarrow \PShv\left(\sC\right)$. So we write $y\left(c\right)$ for an object in $\sC$ thought of as an object in $\PShv\left(\sC\right)$ via the Yoneda embedding.
\item In the event that we consider presheaves/sheaves of sets/$0$-truncated homotopy types we add decorations: $\PShv^{\delta}\left(\sC\right)$ and $\Shv^{\delta}\left(\sC\right)$.
\item The $\infty$-category of $\infty$-topoi with geometric morphisms is denoted by $\Topi$: the objects are $\infty$-topoi $\cX$ and the arrows are geometric morphisms $f: \cX \rightarrow \cY$. Such a geometric morphism consists of a pair of adjoint functors $f_* \vdash f^*,$ with $$f_*:\cX \to \cY$$ and $$f^*:\cY \to \cX,$$ such that $f^*$ is left exact. As $f_*$ is uniquely determined up to a contractible space of choices by $f^*,$ we have that $$\map_{\Topi}\left(\cX,\cY\right)\simeq \left(\Fun^{LE}\left(\cY,\cX \right)\right)^{\times},$$ where $\left(\Fun^{LE}\left(\cY,\cX\right)\right)^{\times}$ is the maximal sub Kan complex of the $\i$-category $\Fun^{LE}\left(\cY,\cX\right)$ of left exact colimit preserving functors.
\item The $\infty$-category of presentable $\infty$-category with morphisms being left adjoints is denoted by $\mathfrak{Pr}^L$,  while the $\infty$-category with the same objects but right adjoints are morphisms is denoted by $\mathfrak{Pr}^R$. A thorough discussion of these categories is the subject of \cite[Section 5.5.3]{htt}. 
\item We denote $\Sch$ be the category of schemes and let $\Sch^{\ft}$ be the full subcategory of schemes of finite type. We denote by $\Aff$ the category of affine schemes and let $\Aff^{\ft}$ be the full subcategory of affine schemes of finite type. We always write $\Sm$ to be the subcategory of smooth schemes of finite type.
\item Suppose that $j: \sC \rightarrow \sD$ is a functor $\infty$-categories, let $f: \sC \rightarrow \sE$ be a functor and $\sE$ and a cocomplete $\infty$-category. We denote by $\LKE_jf: \sD \rightarrow \sE$ the left Kan extension of $f$ along $j$ \cite[Section 4.3.2-4.3.3]{htt}. In this paper, we will only employ left Kan extensions along Yoneda embeddings.
\end{enumerate}

\subsection{Acknowledgments} It is our pleasure to thank Tom Bachmann, Marc Hoyois, Jacob Lurie, Tomer Schlank, Jay Shah, Vladimir Sosnilo and Kirsten Wickelgren for helpful discussions on various aspects of this work. We would especially like to thank Marc Hoyois for help with Lemma~\ref{lem:coexc-pres}. We also thank George Mason University and the Max-Planck Institute for Mathematics and Peter Teichner for partially funding EE's travels to visit DC.

EE would additionally like to thank his adviser John Francis for continuous support, and assigning him Eric Friedlander's proof of the Adams conjecture via \'{e}tale homotopy theory as an exam topic.  EE acknowledges further support from NSF grant DMS 1508040.
\section{Preliminaries}\label{sec:prelim}

\subsection{Pro-objects}\label{subsec:proobj} In this section, we will give a rapid introduction to pro-objects in the setting of $\infty$-categories. References for this theory, are \cite[Appendix E]{luriespec}, \cite[Section 3]{dagxiii}, and the paper \cite{prohomotopy} which also does comparisons with approaches using model categories.

The idea of pro-objects in $\i$-categories, just like its classical counterpart, is to adjoin formal cofiltered limits. 

\begin{definition} \label{def:pro} Let $\sC$ be an $\infty$-category, then the $\infty$-category $\Pro\left(\sC\right)$, called the $\infty$-category of \emph{pro-objects} of $\sC$, is an $\infty$-category equipped with a fully faithful functor 
\begin{equation} \label{eq:pro-yoneda}
j: \sC \rightarrow \Pro\left(\sC\right)
\end{equation} satisfying the following universal property: 

$\Pro\left(\sC\right)$ has small cofiltered limits and if $\sD$ is an $\infty$-category admitting small cofiltered limits, then the precomposition with $j$ induces an equivalence of $\infty$-categories:

$$\Fun_{\cofilt}\left(\Pro\left(\sC\right), \sD\right) \rightarrow \Fun\left(\sC, \sD\right)$$
where $\Fun_{\cofilt}\left(\Pro\left(\sC\right), \sD\right)$ is the full subcategory of $\Fun\left(\Pro\left(\sC\right), \sD\right)$ spanned by functors that preserve small cofiltered limits.
\end{definition}

The expected formula for mapping spaces in pro-categories holds: if we write $ X= \underset{i}{\lim} X_i$ and $ Y = \underset{j}{\lim} Y_j$ are objects of $\Pro\left( \sC \right)$ then we have a canonical equivalence

\begin{equation} \label{eq:maps}
\map_{\Pro\left( \sC \right)}(X, Y) \simeq \underset{j}\lim \underset{i}{\colim} \map_{\sC}(X_i, Y_j).
\end{equation}
For a reference see, for example, \cite[Section 5]{prohomotopy}. 

Definition~\ref{def:pro} may seem abstract and difficult to work with, but in some cases it has more explicit description:

\begin{proposition} \label{prop:lex} \cite[Proposition 3.1.6]{dagxiii} Suppose that $\sC$ is accessible and has finite limits, then $\Pro\left(\sC\right)$ is the full subcategory of $\Fun\left(\sC, \Spc\right)^{\op}$ on those functors which are left exact and accessible.
\end{proposition}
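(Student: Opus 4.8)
The plan is to derive this as an instance of the universal property in Definition~\ref{def:pro}. First I would identify a candidate: let $\mathcal{E}\subseteq\Fun(\sC,\Spc)^{\op}$ denote the full subcategory spanned by functors $\sC\to\Spc$ which are left exact and accessible, and let $j\colon\sC\to\mathcal{E}$ be the (opposite of the) Yoneda embedding, which makes sense because corepresentable functors $\map_{\sC}(c,-)$ are left exact (since $\sC$ has finite limits, so mapping spaces out of a fixed object preserve them) and accessible (since $\sC$ is accessible, every object is $\kappa$-compact for some $\kappa$). I would then verify the two halves of the universal property for this $(\mathcal{E},j)$.

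The key steps, in order. (1) Show $\mathcal{E}$ admits small cofiltered limits and that they are computed pointwise in $\Fun(\sC,\Spc)^{\op}$, i.e.\ as filtered colimits of the underlying functors $\sC\to\Spc$; here one uses that a filtered colimit of left exact accessible functors is again left exact (filtered colimits of spaces commute with finite limits) and again accessible (a filtered colimit of $\kappa$-accessible functors is $\kappa'$-accessible for a possibly larger $\kappa'$, using that accessibility is preserved under filtered colimits indexed by small diagrams). (2) Show that under the fully faithful embedding $\mathcal{E}\hookrightarrow\Fun(\sC,\Spc)^{\op}$, every object of $\mathcal{E}$ is a small cofiltered limit of objects of $j(\sC)$: this is the statement that an accessible left exact functor $F\colon\sC\to\Spc$ is a small filtered colimit of corepresentables, which one extracts from the theory of the category of elements, observing that for accessible $F$ the relevant indexing category $(\sC_{/F})^{\op}$ — or rather a small cofinal subcategory of it consisting of the $\kappa$-compact objects — is filtered precisely because $F$ is left exact. (3) Deduce the universal equivalence $\Fun_{\cofilt}(\mathcal{E},\sD)\xrightarrow{\ \sim\ }\Fun(\sC,\sD)$ for any $\sD$ with small cofiltered limits: fullness-and-faithfulness of restriction along $j$ follows from step (2) (a cofiltered-limit-preserving functor is determined on $\mathcal{E}$ by its restriction to $j(\sC)$), and essential surjectivity follows by left Kan extending along $j$ and checking, again via step (2), that the extension preserves cofiltered limits. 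By the uniqueness clause in Definition~\ref{def:pro}, $(\mathcal{E},j)\simeq(\Pro(\sC),j)$.

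I expect the main obstacle to be step (2) — pinning down precisely why an \emph{accessible} left exact functor $F\colon\sC\to\Spc$ is a cofiltered limit (in $\Pro(\sC)$) of representables, with honest control of set-theoretic size. The naive $\sC_{/F}$ is not small, so one must choose a regular cardinal $\kappa$ such that $\sC$ is $\kappa$-accessible and $F$ preserves $\kappa$-filtered colimits, restrict to the $\kappa$-compact objects, and argue this small subcategory is cofinal in $(\sC_{/F})$ and is itself filtered using left exactness of $F$; assembling these facts coherently in the $\infty$-categorical setting is the delicate part. Everything else — closure of the left-exact-and-accessible condition under filtered colimits, the left Kan extension formalism — is routine and can be cited from \cite{htt} or \cite{dagxiii}. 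In fact, since this is exactly \cite[Proposition 3.1.6]{dagxiii}, the cleanest route is simply to cite that reference; but if a self-contained argument is wanted, the outline above is the one I would follow, and I would flag that the cardinality bookkeeping in step (2) is where the real work lies.
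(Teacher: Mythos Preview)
The paper does not give a proof of this proposition at all: it is stated with the citation \cite[Proposition 3.1.6]{dagxiii} and then used as a black box. So your closing remark --- that the cleanest route is simply to cite Lurie --- is precisely what the paper does, and your outline goes well beyond what the paper asks of the reader.

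That said, your sketch is sound and is essentially the argument Lurie gives. The identification of step (2) as the crux is correct: the heart of the matter is that a left exact accessible functor $F:\sC\to\Spc$ is pro-representable, and the set-theoretic bookkeeping (choosing $\kappa$ so that $\sC$ is $\kappa$-accessible and $F$ is $\kappa$-continuous, then restricting to $\kappa$-compact objects to get a small cofinal filtered diagram) is exactly where the care is needed. One small refinement: in step (2) the filteredness of the category of elements uses left exactness of $F$ together with the fact that $\sC$ has finite limits (so that one can build the required cones), which you implicitly use but might state explicitly if writing this out in full.
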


In particular, functor~\eqref{eq:pro-yoneda} takes an object $C \in \sC$ to the left-exact functor $D \mapsto \map_{\sC}(C, D)$ and, using the formula for mapping spaces~\eqref{eq:maps}, the image of $j$ is cocompact.

One technical issue with the $\infty$-category $\Pro\left(\sC\right)$ is that it is \emph{not necessarily} a presentable $\infty$-category, and we are thus deprived of the machinery of adjoint functor theorems, which are useful for constructing localizations. This turns out to be the source for a lot of difficulties with localizing pro-categories, as was encountered in the predecessors of this work \cite{realization,Qu,Qu2}. The main tool we use to build localization functors is the explicit description of pro-objects as left exact functors, which results in:

\begin{proposition} \label{prop:ladj}  Let $f: \sD \rightarrow \sC$ be a functor of $\infty$-categories which are accessible and have all finite limits, and suppose that $f$ preserves finite limits, then the functor $$\Pro\left(f\right): \Pro\left(\sD\right) \rightarrow \Pro\left(\sC\right)$$ has a left adjoint $$f^*: \Pro\left(\sC\right) \rightarrow \Pro\left(\sD\right).$$
\end{proposition}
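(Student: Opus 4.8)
The plan is to use Proposition~\ref{prop:lex} to replace each pro-category by a category of left exact accessible functors, and then to construct the left adjoint $f^*$ by a left Kan extension / localization argument. Concretely, under the identification $\Pro(\sC) \simeq \Fun^{\mathrm{lex},\mathrm{acc}}(\sC,\Spc)^{\op}$ and similarly for $\sD$, the functor $\Pro(f)$ is given by precomposition: it sends a left exact accessible $G : \sD \to \Spc$ (viewed in the opposite category) to $G \circ f : \sC \to \Spc$, which is again left exact (as a composite of left exact functors) and accessible (since $f$ is accessible, being a functor of accessible categories preserving finite limits, and accessible functors compose). So $\Pro(f)^{\op}$ is the restriction-along-$f$ functor $f^* : \Fun^{\mathrm{lex},\mathrm{acc}}(\sD,\Spc) \to \Fun^{\mathrm{lex},\mathrm{acc}}(\sC,\Spc)$, and what we must produce is a \emph{right} adjoint to this restriction functor (which becomes the desired left adjoint $f^*$ on pro-categories after passing to opposites, with a clash of notation that I will be careful about).

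The key steps, in order: First I would recall that restriction along $f$ between the full presheaf categories $\Fun(\sD,\Spc) \to \Fun(\sC,\Spc)$ has a right adjoint given by right Kan extension $\mathrm{Ran}_f$, which exists because $\Spc$ is complete. Second, I would show that $\mathrm{Ran}_f$ carries left exact accessible functors to left exact accessible functors: accessibility of $\mathrm{Ran}_f G$ follows from the pointwise formula $(\mathrm{Ran}_f G)(d) = \lim_{(d \to f(c))} G(c)$ together with the fact that the relevant indexing $\infty$-category (a slice of a comma category) is accessible and the limit is a bounded one, so accessible functors are preserved; left exactness of $\mathrm{Ran}_f G$ is the more delicate point and I would prove it by observing that $f$ preserving finite limits means the comma categories $d \downarrow f$ used to compute the right Kan extension interact well with finite limits in $d$ — more precisely, for a finite limit diagram in $\sD$ the induced maps on these comma categories are cofinal/initial in the appropriate sense, allowing the limit defining $\mathrm{Ran}_f$ to commute with the finite limit. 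Third, I would conclude that $\mathrm{Ran}_f$ restricts to a functor $\Fun^{\mathrm{lex},\mathrm{acc}}(\sC,\Spc) \to \Fun^{\mathrm{lex},\mathrm{acc}}(\sD,\Spc)$ which is right adjoint to restriction, since the adjunction unit and counit for $(\text{restriction},\mathrm{Ran}_f)$ on full presheaf categories automatically restrict to the full subcategories once we know both functors preserve them. Finally, passing to opposite categories turns this right adjoint into the asserted left adjoint $f^* : \Pro(\sC) \to \Pro(\sD)$ of $\Pro(f)$.

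The main obstacle I expect is the second step: verifying that right Kan extension along a finite-limit-preserving functor preserves \emph{left exactness}. This is not formal — it genuinely uses the hypothesis that $f$ preserves finite limits, and getting the cofinality argument right for the comma $\infty$-categories $d \downarrow f$ (as $d$ ranges over a finite limit cone in $\sD$) requires some care with $\infty$-categorical cofinality (Quillen's Theorem A in the form of \cite[Theorem 4.1.3.1]{htt}). An alternative, perhaps cleaner, route to the same step is to use that left exact accessible functors $\sC \to \Spc$ are precisely the \emph{accessible} functors corepresentable by pro-objects, i.e.\ filtered colimits of corepresentables $\map_{\sC}(c,-)$; then $\mathrm{Ran}_f$ can be checked on corepresentables, where $\mathrm{Ran}_f \map_{\sC}(c,-) = \map_{\sD}(?, -)$ need not be corepresentable but is a cofiltered limit of corepresentables precisely because $f$ preserves finite limits and hence the comma category $c \downarrow f$ is cofiltered — this is really the pro-object incarnation of the same computation, and it is arguably the conceptual heart of why the hypothesis is needed. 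A third option is simply to cite the relevant statement from \cite{dagxiii} or \cite{prohomotopy} if it appears there; but since the excerpt has set up all the tools (Propositions~\ref{prop:lex} and the mapping space formula~\eqref{eq:maps}), I would prefer to give the short direct argument via $\mathrm{Ran}_f$ and the comma-category cofinality, flagging the left-exactness verification as the one place real work happens.
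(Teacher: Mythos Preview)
Your proposal has a direction error at the very first step that derails everything afterward. You write that $\Pro(f)$ ``sends a left exact accessible $G : \sD \to \Spc$ \ldots\ to $G \circ f : \sC \to \Spc$,'' but this composition does not even typecheck: $f$ goes $\sD \to \sC$ and $G$ goes $\sD \to \Spc$, so $G \circ f$ is undefined. Precomposition with $f$ is a functor $\Fun(\sC,\Spc) \to \Fun(\sD,\Spc)$, hence (after taking opposites) a functor $\Pro(\sC) \to \Pro(\sD)$ --- the \emph{opposite} direction from $\Pro(f)$.

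In fact, precomposition with $f$ \emph{is} the left adjoint $f^*$ you are trying to construct, not $\Pro(f)$ itself. This is exactly the paper's (one-line) proof: for $F \in \Fun^{\mathrm{lex},\mathrm{acc}}(\sC,\Spc)$, set $f^*F := F \circ f$, which is manifestly left exact (composite of left exact functors) and accessible. Checking the adjunction against $\Pro(f)$ is immediate from the mapping-space formula~\eqref{eq:maps}: for $c \in \sC$ and $Y = \lim_j d_j \in \Pro(\sD)$,
\[
\map_{\Pro(\sD)}\bigl(f^*j(c),\,Y\bigr) \simeq \lim_j \map_\sC(c,f(d_j)) \simeq \map_{\Pro(\sC)}\bigl(j(c),\,\Pro(f)(Y)\bigr),
\]
and the general case follows since every pro-object is a cofiltered limit of representables. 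All of your subsequent work --- right Kan extensions, the delicate cofinality argument for preserving left exactness, the ``conceptual heart'' paragraph --- is aimed at constructing what is essentially $\Pro(f)$ (or a right adjoint to $f^*$), not the left adjoint the proposition asks for. The hypothesis that $f$ preserves finite limits is used only to ensure $F \circ f$ remains left exact; no Kan extension is needed.
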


\begin{proof} The proof can be found in \cite[Remark 3.1.7]{dagxiii} (but note that there is a typo --- $f^*$ should be a left adjoint). The functor $f^*$ is informally given by $$F: \sD \rightarrow \Spc \mapsto F \circ f: \sC \rightarrow \sD \rightarrow \Spc$$
\end{proof}

The following establishes the (co)completeness properties of pro-categories which will be useful later.

\begin{proposition} \label{prop:limcolim}  Suppose that $\sC$ is accessible and has finite limits, then the $\infty$-category $\Pro\left(\sC \right)$ has has small colimits.
\end{proposition}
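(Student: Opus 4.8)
The plan is to reduce the statement to a known colimit-existence result for presentable $\infty$-categories, using the description of $\Pro(\sC)$ from Proposition~\ref{prop:lex}. First I would recall that when $\sC$ is accessible with finite limits, Proposition~\ref{prop:lex} identifies $\Pro(\sC)$ with the full subcategory of $\Fun(\sC,\Spc)^{\op}$ spanned by the accessible left exact functors; equivalently, $\Pro(\sC)^{\op}$ is the full subcategory of $\Fun(\sC,\Spc)$ on accessible left exact functors. So it suffices to show that this subcategory of $\Fun(\sC,\Spc)$ is closed under small limits, since limits in $\Pro(\sC)$ correspond to colimits in $\Pro(\sC)^{\op}$, and I want $\Pro(\sC)$ to have small colimits, i.e.\ $\Pro(\sC)^{\op}$ to have small limits.

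Next I would verify the two closure properties. Closure under small limits for the condition ``left exact'' is formal: limits in functor categories are computed pointwise, and a pointwise limit of functors each preserving finite limits again preserves finite limits, because finite limits commute with arbitrary limits in $\Spc$. Closure under ``accessible'' follows because a small limit of accessible functors valued in a presentable $\infty$-category is again accessible — this is the standard fact that the accessible functors $\sC \to \Spc$ (for $\sC$ accessible) are closed under small limits in $\Fun(\sC,\Spc)$; concretely one can cite \cite[Proposition 5.4.7.3 / Section 5.4.6]{htt} on accessible functors, together with the fact that a small limit of accessible $\infty$-categories and accessible functors is accessible. Combining these, the full subcategory $\Pro(\sC)^{\op} \subseteq \Fun(\sC,\Spc)$ is closed under small limits, and is itself accessible; hence it admits all small limits, computed as in $\Fun(\sC,\Spc)$. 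Dualizing, $\Pro(\sC)$ admits all small colimits.

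The one point requiring a little care — and the step I expect to be the main obstacle — is the \emph{accessibility} clause: unlike left-exactness, it is not literally ``pointwise,'' so I cannot just invoke that limits in functor categories are computed objectwise. The cleanest route is to observe that the inclusion of accessible left exact functors into $\Fun(\sC,\Spc)$ is the inclusion of $\Pro(\sC)^{\op}$, and that $\Pro(\sC)$ is itself an accessible $\infty$-category (indeed $\Pro(\sC)$ is $\kappa$-accessible for a suitable regular cardinal $\kappa$ depending on the accessibility rank of $\sC$), so the opposite has all small limits by the general theory; alternatively one appeals directly to \cite[Remark 3.1.7]{dagxiii} or \cite[Appendix E]{luriespec}, where this completeness is recorded. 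I would state the argument via the pointwise computation of left exact limits and then separately invoke the accessibility stability, citing \cite{htt} for the latter, rather than reproving the accessible-functor machinery here.
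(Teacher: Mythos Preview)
Your proposal is correct, and in fact more direct than the paper's proof. The paper splits into two cases: when $\sC$ is small it identifies $\Pro(\sC)^{\op}$ with $\Ind(\sC^{\op})$ and invokes presentability; when $\sC$ is merely accessible it passes to a larger universe $\cV$ in which $\sC$ is small, applies the small case there, and then checks that the full subcategory of functors with $\cU$-small values is closed under $\cU$-small limits in $\Fun(\sC,\widehat{\Spc})$. Your argument bypasses both the case split and the universe enlargement by verifying directly that accessible left exact functors $\sC \to \Spc$ are closed under small limits in $\Fun(\sC,\Spc)$: left exactness is preserved because limits commute with limits, and accessibility is preserved because for a small diagram $I$ one can choose a single regular cardinal $\kappa$ with $I$ $\kappa$-small and each $F_i$ $\kappa$-accessible, so that $\kappa$-filtered colimits commute with the $\kappa$-small limit in $\Spc$. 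This is exactly the content hidden inside the paper's universe step, made explicit.

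One small correction: in your final paragraph you write that ``$\Pro(\sC)$ is itself an accessible $\infty$-category.'' This is not right as stated---it is $\Pro(\sC)^{\op}$ that one expects to be accessible, and even that is only clear when $\sC$ is small (where it is $\Ind(\sC^{\op})$, hence presentable). Fortunately this aside is not needed for your main argument, which stands on the closure-under-limits verification alone; I would simply delete that sentence and keep the direct argument with the citation to \cite[\S5.4]{htt} for the accessibility of small limits of accessible functors.
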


\begin{proof} Let us first assume that $\sC$ is small. By Proposition~\ref{prop:lex}, the $\infty$-category $\Pro(\sC)^{\op}$ is equivalent to $\Ind(\sC^{\op})$. Therefore, by \cite[Theorem 5.5.1.1.4]{htt} which characterizes presentable $\infty$-categories are those equivalent to $\Ind$ of an $\infty$-category with finite colimits, we see that $\Pro(\sC)^{\op}$ is presentable. Therefore $\Pro(\sC)$ has all small limits and colimits since presentable $\infty$-categories admit them. When $\sC$ is not small, let $\cU$ be the Grothendieck universe of small sets, and $\cV \ni \cU$ be the Grothendieck universe of large sets, such that $\sC$ is $\cV$-small. Denote by $\Pro_{\cV}\left(\sC\right)$ the $\infty$-category of pro-objects in the universe $\cV,$ which can be described as the opposite category of accessible finite limit preserving functors $F:\sC \to \widehat{\Spc},$ where $\widehat{\Spc}$ is the $\infty$-category of $\cV$-small spaces. Then $\Pro\left(\sC\right)$ can be identified with the full subcategory on those functors which take values in $\cU$-small spaces. These are clearly closed under $\cU$-small colimits in $\Fun\left(\sC,\widehat{\Spc}\right)^{op}.$ Hence $\Pro\left(\sC\right)$ is cocomplete.
\end{proof}

\begin{proposition} \label{prop:finlim} Suppose that $\sC$ is accessible and has finite limits, then the $\infty$-category $\Pro\left(\sC \right)$ has all limits and the functor $\sC \hookrightarrow \Pro(\sC)$ preserves finite limits.

\end{proposition}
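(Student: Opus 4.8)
The plan is to obtain completeness of $\Pro(\sC)$ from two partial results: it has small cofiltered limits, by the universal property in Definition~\ref{def:pro}, and it has finite limits, which I will establish directly. These suffice, because an $\infty$-category admits all small limits as soon as it admits finite limits and small cofiltered limits: an arbitrary small product $\prod_{a\in A}X_a$ is the cofiltered limit $\lim_{A'\subseteq A\ \mathrm{finite}}\prod_{a\in A'}X_a$ of its finite subproducts, so finite products together with cofiltered limits yield all small products, and small products together with pullbacks yield all small limits (cf.~\cite[Section~4.4]{htt}). Thus the statement reduces to showing that $\Pro(\sC)$ admits finite limits and that $j\colon\sC\hookrightarrow\Pro(\sC)$ preserves them.

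For the finite limits I would argue that finite diagrams of pro-objects can be resolved \emph{levelwise}. Fix a finite simplicial set $K$; then $\Fun(K,\sC)$ is again accessible (\cite[Section~5.4.4]{htt}) and has finite limits, and the canonical functor
\[
\Pro\big(\Fun(K,\sC)\big)\longrightarrow\Fun\big(K,\Pro(\sC)\big),
\]
which exists because $\Fun(K,\Pro(\sC))$ admits cofiltered limits, is an equivalence — this is the classical reindexing of a finite pro-diagram, valid precisely because a product of finitely many cofiltered $\infty$-categories is again cofiltered, and can be verified using the mapping-space formula~\eqref{eq:maps} (compare the discussion in \cite{dagxiii} and \cite{prohomotopy}). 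Granting it, I would transport the limit functor along $\Pro(-)$: the constant-diagram functor $\mathrm{const}\colon\sC\to\Fun(K,\sC)$ is left adjoint to $\lim_K\colon\Fun(K,\sC)\to\sC$, and since $\Pro(-)$ is a functor of $\infty$-categories it carries this adjunction to $\Pro(\mathrm{const})\dashv\Pro(\lim_K)$. Under the equivalence above, $\Pro(\mathrm{const})$ is identified with the constant-diagram functor $\Pro(\sC)\to\Fun(K,\Pro(\sC))$ — both preserve cofiltered limits and restrict to $j$ on $\sC$, so this is forced by the universal property of $\Pro(\sC)$ — whence $\Pro(\lim_K)$ is right adjoint to it; hence $\Pro(\sC)$ admits $K$-indexed limits, computed by $\Pro(\lim_K^{\sC})$. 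Preservation by $j$ then comes for free: for $d\colon K\to\sC$, the composite $j\circ d$ corresponds under the equivalence to $d$ regarded as an object of $\Pro(\Fun(K,\sC))$, and $\Pro(\lim_K^{\sC})$ restricted to $\Fun(K,\sC)$ is $j\circ\lim_K^{\sC}$, so $\lim_K^{\Pro(\sC)}(j\circ d)\simeq j(\lim_K^{\sC}d)$. (The terminal object is in any case immediate from~\eqref{eq:maps}: writing an arbitrary pro-object as $\lim_i c_i$ with $c_i\in\sC$, one gets $\map_{\Pro(\sC)}(\lim_i c_i, j(\ast))\simeq\colim_i\map_\sC(c_i,\ast)\simeq\ast$ when $\ast$ is terminal in $\sC$.)

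I expect the one real obstacle to be the equivalence $\Pro(\Fun(K,\sC))\simeq\Fun(K,\Pro(\sC))$ for finite $K$: over a point this is the Artin--Mazur--Isaksen levelwise-replacement lemma, but here one must run it $\infty$-categorically and for an arbitrary accessible $\sC$, which amounts to a cofinality argument using~\eqref{eq:maps} together with the stability of cofilteredness under finite products. I would either cite it from the pro-category literature (\cite{dagxiii}, \cite{luriespec}) or spell out the reindexing. A pleasant feature of this route, by contrast with the presentability argument used for Proposition~\ref{prop:limcolim}, is that it is insensitive to the distinction between small and large $\sC$: the finite limit produced above is literally a cofiltered limit of objects of $\sC$, so it lands in $\Pro(\sC)$ on the nose and no universe bookkeeping is needed.
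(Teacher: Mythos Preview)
Your proposal is correct, and in fact supplies far more than the paper does: the paper's own ``proof'' is simply a pointer to \cite[Lemma~2.3]{kerz-saito-tamme}, with no argument given. Your route---reduce to finite limits plus cofiltered limits, and produce finite limits via the levelwise equivalence $\Pro(\Fun(K,\sC))\simeq\Fun(K,\Pro(\sC))$ together with $\Pro$ applied to the $(\mathrm{const}\dashv\lim_K)$ adjunction---is a standard and clean way to prove this statement, and your identification of the reindexing equivalence as the only nontrivial input is accurate. One minor comment: when you say ``$\Pro(-)$ carries this adjunction to $\Pro(\mathrm{const})\dashv\Pro(\lim_K)$'', you are using that $\Pro$ is (pseudo)functorial in functors and natural transformations, i.e.\ a $2$-functor, which is true but perhaps worth saying explicitly. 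Otherwise the argument is complete, and the bonus you point out---that the levelwise limit visibly lands in $\Pro(\sC)$ without universe gymnastics---is a genuine advantage over arguments that pass through presentability of $\Ind(\sC^{\op})$ as in Proposition~\ref{prop:limcolim}.
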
 

\begin{proof} A proof can be found in, for example, \cite[Lemma 2.3]{kerz-saito-tamme} 

\end{proof}

\begin{definition}
The $\i$-category of \textbf{pro-spaces} is the $\i$-category $\Pro\left(\Spc \right).$ Let $\Spc^\pi$ be the full subcategory of spaces on those which have finitely many connected components, and finitely many non-trivial homotopy groups, all of which are finite. Such spaces are called \textbf{$\pi$-finite spaces}. The $\i$-category $\Pro\left(\Spc^\pi \right)$ is the $\i$-category of \textbf{profinite spaces}.
\end{definition}

\subsection{Shapes of $\infty$-topoi}\label{subsec:absshape}
We now review the basic theory of shapes of $\infty$-topoi. The original reference is \cite{ToVe}, but see also \cite[Section 7.1.6]{htt}, \cite[Appendix E.2]{luriespec}, \cite{Hoyois}, and \cite[Section 2]{dave-etale}.

Given a homotopy type $X \in \Spc_0,$ one has its associated slice $\i$-topos $\Spc/X,$ which, when regarding $X$ as an $\i$-groupoid, can be identified with the presheaf $\i$-category $\PShv\left(X\right)$ by \cite[Corollary 5.3.5.4]{htt}. By \cite[Remark 6.3.5.10, Theorem 6.3.5.13, and Proposition 6.3.4.1]{htt}, this construction extends to a fully faithful embedding
$$\Spc/\left(\blank\right):\Spc \hookrightarrow \Topi$$ (which moreover is colimit preserving).
This functor induces a well-defined functor $$\Spc^{pro}/\left(\blank\right):\Pro\left(\cS\right) \to \mathfrak{Top}_\i$$ which sends a representable pro-space $j\left(X\right)$ to $\Spc/X,$ and sends a pro-space of the form $\underset{ i \in \cI} \lim X_i$ to the cofiltered limit of $\i$-topoi $\underset{ i \in \cI} \lim \Spc/X_i.$ By \cite[Remark 7.1.6.15]{htt}, this functor has a left adjoint $$\Shape:\mathfrak{Top}_\i \to \Pro\left( \cS \right).$$

\begin{remark}
The above functor $\Spc^{pro}/\left(\blank\right)$ is \emph{not} fully faithful, however, it is fully faithful when restricted to the full subcategory of pro-spaces spanned by profinite spaces, by \cite[Appendix E.2]{luriespec}.
\end{remark}

More precisely, let $\sX$ be an $\infty$-topos and let $e_{\sX}: \sX \rightarrow \Spc$ be the terminal geometric morphism \cite[Proposition 6.3.4.1]{htt}. 
 In this special case, we typically denote $e_{\sX}^*$ by $\Delta_{\sX},$ as it sends a space $\cG$ to the constant stack with value $\cG$, and we typically denote $e_{\sX*}$ as $\Gamma_{\sX},$ since it sends a stack to its global sections. If the context is clear, we drop the $\sX$ decorations.

\begin{definition} Let $\sX$ be an $\infty$-topos, then the pro-space $$\Shape\left(\sX\right)$$ is the left-exact functor $\Gamma \circ \Delta: \Spc \rightarrow \Spc$. This pro-space is called the \emph{shape} of the $\infty$-topos $\sX$
\end{definition}

By naturality of the terminal geometric morphism, $\Shape\left(-\right)$ assembles into a functor $$\Shape: \Topi \rightarrow \Pro\left(\Spc\right).$$ Even better, this functor  fits into an adjunction:

\begin{proposition} \label{prop:ladjfun} The functor: $\Spc_{/\left(-\right)}: \Spc \rightarrow \Topi$ extends to the functor $$\Spc^{pro}_{/\left(-\right)}: \Pro\left(\Spc\right) \rightarrow \Topi$$ which participates in an adjunction: $$\Shape: \Topi \rightleftarrows \Pro\left(\Spc\right): \Spc^{pro}{/\left(-\right)}.$$
\end{proposition}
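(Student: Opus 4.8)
The plan is to reduce the claimed adjunction to the already-established adjunction $\Shape \dashv \Spc^{pro}/(\blank)$ from \cite[Remark 7.1.6.15]{htt}, together with the fact that $\Pro(\Spc)$ is generated under cofiltered limits by the image of the Yoneda embedding $j\colon \Spc \hookrightarrow \Pro(\Spc)$. First I would recall the construction of the right adjoint: the colimit-preserving fully faithful functor $\Spc/(\blank)\colon \Spc \hookrightarrow \Topi$ from \cite[Remark 6.3.5.10, Theorem 6.3.5.13]{htt} does \emph{not} preserve cofiltered limits in general, so it does not extend along $j$ by the universal property of $\Pro$ directly; instead one defines $\Spc^{pro}/(\blank)$ by sending $\varprojlim_i X_i$ to $\varprojlim_i (\Spc/X_i)$, the cofiltered limit computed in $\Topi$ (which exists, since $\Topi$ has all small limits). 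One must check this is well-defined, i.e. independent of the presentation of the pro-object; this is where one invokes that cofiltered limits in $\Topi$ are computed compatibly with the pro-object structure, as in \cite[Remark 7.1.6.15]{htt}.

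Next I would verify the adjunction itself. The cleanest route is to exhibit, for $\sX \in \Topi$ and a pro-space $Y = \varprojlim_i X_i \in \Pro(\Spc)$, a natural equivalence
\begin{equation*}
\map_{\Topi}\bigl(\sX, \Spc^{pro}/Y\bigr) \simeq \map_{\Pro(\Spc)}\bigl(\Shape(\sX), Y\bigr).
\end{equation*}
On the right-hand side, using the mapping-space formula~\eqref{eq:maps} we have $\map_{\Pro(\Spc)}(\Shape(\sX), Y) \simeq \varprojlim_i \map_{\Pro(\Spc)}(\Shape(\sX), j(X_i)) \simeq \varprojlim_i \map_{\Spc}(\Shape(\sX)\text{-evaluated}, X_i)$; but $\map_{\Pro(\Spc)}(\Shape(\sX), j(X_i)) \simeq \Gamma_{\sX}\Delta_{\sX}(X_i)$ by the very definition of $\Shape(\sX)$ as the left-exact functor $\Gamma\circ\Delta$, which in turn is $\map_{\Spc}(*, \Gamma_{\sX}\Delta_{\sX}(X_i)) \simeq \map_{\sX}(1_{\sX}, \Delta_{\sX}(X_i)) \simeq \map_{\Topi}(\sX, \Spc/X_i)$ using the adjunction $\Delta_{\sX} \dashv \Gamma_{\sX}$ and the standard identification $\map_{\Topi}(\sX, \Spc/X) \simeq \map_{\sX}(1_{\sX}, \Delta_{\sX}(X))$ (e.g. via $\Spc/X \simeq \PShv(X)$ and the universal property of presheaf $\infty$-topoi, \cite[Corollary 5.3.5.4]{htt}). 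On the left-hand side, since $\Spc^{pro}/Y = \varprojlim_i \Spc/X_i$ is a limit in $\Topi$, we get $\map_{\Topi}(\sX, \Spc^{pro}/Y) \simeq \varprojlim_i \map_{\Topi}(\sX, \Spc/X_i)$. Comparing the two cofiltered limits termwise, and checking naturality of the identifications in $i$ and in $\sX$, yields the desired equivalence. The reference \cite[Remark 7.1.6.15]{htt} already packages precisely this adjunction; so a more economical write-up is simply: the functor $\Spc^{pro}/(\blank)$ of Proposition~\ref{prop:ladjfun} agrees with the right adjoint to $\Shape$ constructed there, because both restrict to $\Spc/(\blank)$ along $j$ and both send cofiltered limits of representables to the corresponding limits of $\infty$-topoi — and a right adjoint is determined by these properties since $\Shape$ is determined on representables (the representables $j(X)$ do not generate $\Pro(\Spc)$ under \emph{colimits}, but the adjunction is still pinned down because $\Shape$ preserves cofiltered limits is false — rather, one uses that any functor out of $\Pro(\Spc)$ preserving cofiltered limits is determined by its restriction to $\Spc$, applied to the right adjoint's \emph{transpose}).

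The main obstacle I anticipate is the well-definedness and functoriality of $\Spc^{pro}/(\blank)$: one must be careful that the assignment $\varprojlim_i X_i \mapsto \varprojlim_i \Spc/X_i$ is genuinely functorial in the pro-space (not just on objects), which requires knowing that $\Spc/(\blank)$, while not continuous, interacts well enough with the cofiltered diagrams appearing in pro-objects — concretely, that the functor $\Topi \to \Topi$ "take cofiltered limit" commutes with reindexing along morphisms of pro-objects. This is exactly the content that \cite[Remark 7.1.6.15]{htt} asserts, so I would cite it for this point rather than reprove it, and spend the body of the proof on the mapping-space computation above, which is the part that makes the statement self-contained given the definitions of $\Shape$, $\Delta$, $\Gamma$ recalled in this subsection.
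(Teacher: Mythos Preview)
Your proposal is correct and, since the paper's own proof is literally a one-line citation to \cite[Remark 7.1.6.15]{htt} together with a forward reference to the general relative version (Proposition~\ref{prop:shape-adj}), what you have written is simply an explicit unwinding of that citation. The mapping-space computation you give is exactly the right one and matches what is packaged in Lurie's remark.

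One point of confusion worth cleaning up: you write that $\Spc/(\blank)$ ``does not preserve cofiltered limits in general, so it does not extend along $j$ by the universal property of $\Pro$ directly.'' This is backwards. The universal property of $\Pro$ (Definition~\ref{def:pro}) says precisely that \emph{any} functor $\sC \to \sD$ into an $\infty$-category with small cofiltered limits extends essentially uniquely to a cofiltered-limit-preserving functor $\Pro(\sC) \to \sD$; no continuity hypothesis on the original functor is needed. So the construction $\varprojlim_i X_i \mapsto \varprojlim_i \Spc/X_i$ \emph{is} the universal-property extension, and the well-definedness and functoriality worries in your final paragraph are automatic consequences of that universal property rather than something requiring separate verification or a citation to \cite[Remark 7.1.6.15]{htt}. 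Once you use the universal property cleanly, your write-up shortens considerably: the only content is the termwise identification $\map_{\Topi}(\sX,\Spc/X_i) \simeq \Gamma_{\sX}\Delta_{\sX}(X_i)$, which you establish correctly.
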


\begin{proof} This is the content of \cite[Remark 7.1.6.15]{htt}. We will also recover this proposition in a more general setting later.
\end{proof}

\subsubsection{Some geometric intuition}

We will now explain some geometric intuition behind this construction. Let $X$ be a topological space, and $S$ a set. Then, as long as $X$ is locally path connected, the set $\pi_0\left(X\right)$ can be described by the universal property $$\Hom_{\mathrm{Top}}\left(X,S\right)\cong \Hom\left(\pi_0\left(X\right),\delta\left(S\right)\right),$$ where $$\delta:\Set \hookrightarrow \mathrm{Top}$$ is the canonical fully faithful inclusion. If we restrict to the full subcategory on locally path connected spaces, then we get that $\pi_0 \dashv \delta.$ Denoting by $\mathfrak{Top}$ the $\left(2,1\right)$-category of topoi, provided we restrict to sober topological space (e.g. Hausdorff), we have fully faithful inclusions
$$\Set \stackrel{\delta}{\longhookrightarrow} \mathrm{Top} \stackrel{\Shv^\delta}{\longhookrightarrow} \mathfrak{Top},$$ and since for a set $S,$ $$\Shv^\delta\left(\delta\left(S\right)\right) \cong \Set/S,$$ we deduce for (sober) locally path connected spaces, $$\Hom_{\mathfrak{Top}}\left(\Shv^\delta\left(X\right),\Set/S\right)\cong \Hom_{\Set}\left(\pi_0\left(X\right),S\right).$$ For a general Grothendieck topos $\cE,$ one can \emph{define} $\pi_0\left(\cE\right)$ to be the set such that for all sets $S,$
$$\Hom_{\Set}\left(\pi_0\left(\cE\right),S\right) \cong \Hom_{\mathfrak{Top}}\left(\cE,\Set/S\right).$$ Clearly, if such a $\pi_0\left(\cE\right)$ exists, it is unique, but it need not exist. However, the functor $\Hom_{\mathfrak{Top}}\left(\cE,\Set/\left(\blank\right)\right)$ is always pro-representable; as a left-exact functor, this pro-set is imply $\Gamma_{\cE}\Delta_{\cE}:\Set \to \Set.$ It therefore makes sense to define the pro-set $$\pi_0\left(\cE\right):=\Gamma_{\cE}\Delta_{\cE}.$$ If $\cE$ is a locally connected topos, $\Delta_{\cE}$ has a left adjoint  $\Pi_{\cE}$, and therefore the above functor preserves all limits, and is thus representable by $\Pi_\cE\left(1\right)=\pi_0\left(\cE\right).$

Let us go one step further. Suppose that $X$ is a topological space. We would like to describe the fundamental groupoid $\Pi_1\left(X\right)$ as the groupoid such that for all groupoids $\cG,$ we have an equivalence of groupoids $$\Hom\left(X,\cG\right) \simeq \Hom_{\mathrm{Gpd}}\left(\Pi_1\left(X\right),\cG\right).$$ For the analogous description of $\pi_0\left(X\right),$ we used that sets (i.e. $0$-groupoids) embed fully faithfully into topological spaces, however there is no such embedding of $1$-groupoids. However, both categories embed into $2$-topoi, that is, when restricting to sober topological spaces, we have the embedding $$\Top \hookrightarrow \mathfrak{Top} \hookrightarrow \mathfrak{Top}_2$$ which sends a topological space $X$ to its $2$-topos of stacks of groupoids $\St\left(X\right),$ and we also have the fully faithful embedding $$\mathrm{Gpd} \hookrightarrow \mathfrak{Top}_2$$ sending a groupoid $\cG$ to $\mathrm{Gpd}/\cG.$ If $X$ is not only locally path connected, but also (semi-)locally simply connected, then it is true that for all groupoids $\cG$, there is a natural equivalence of groupoids
$$\Hom_{\mathfrak{Top}_2}\left(St\left(X\right),\mathrm{Gpd}/\cG\right) \simeq \Hom_{\mathrm{Gpd}}\left(\Pi_1\left(X\right),\cG\right).$$ For a general $2$-topos $\cE,$ one can \emph{define} $\Pi_1\left(\cE\right)$ to be the groupoid such that for all groupoids $\cG,$
$$\Hom_{\mathrm{Gpd}}\left(\Pi_1\left(\cE\right),\cG\right) \simeq \Hom_{\mathfrak{Top}_2}\left(\cE,\mathrm{Gpd}/\cG\right),$$ and the functor $\Hom_{\mathfrak{Top}_2}\left(\cE,\mathrm{Gpd}/\left(\blank\right)\right)$ is always pro-representable. Similarly to as before, as a left-exact functor, this pro-groupoid is simply $\Gamma_{\cE}\Delta_{\cE}:\mathrm{Gpd} \to \mathrm{Gpd}.$ This follows since the following is a pullback diagram in $\mathfrak{Top}_2$
$$\xymatrix{\cE/\Delta_{\cE}\left(\cG\right) \ar[d] \ar[r] & \mathrm{Gpd}/\cG \ar[d]\\ \cE \ar[r] & \mathrm{Gpd},}$$ together with the fact that $\mathrm{Gpd}$ is the terminal $2$-topos. It therefore make sense to define the pro-groupoid $$\Pi_1\left(\cE\right):=\Gamma_{\cE}\Delta_{\cE}.$$ 
Given a scheme $S,$ one has associated to $S$ its small \'etale $2$-topos $\St\left(S_{\et}\right)$, which is the $2$-topos of stacks over the small \'etale site $S_{\et}$ of $S.$ The objects of this site consist of \'etale morphisms $U \to S,$ with $U$ another scheme, and the Grothendieck topology is generated by \'etale covering families. If $S$ is moreover connected, the pro-groupoid $\Pi_1\left(\St\left(S_{\et}\right)\right)$ is equivalent to the \'etale fundamental group $\pi_1^{\et}\left(S\right)$, regarded as a pro-groupoid with one object. In particular, it follows that if $A$ is an abelian group, that the groupoid $$\Hom_{\Pro\left(\mathrm{Gpd}\right)}\left(\Pi_1\left(\St\left(S_{\et}\right)\right),\B\Aut\left(A\right)\right) \simeq \Hom_{\Pro\left(\mathrm{Grp}\right)}\left(\pi_1^{\et}\left(S\right),\Aut\left(A\right)\right)$$ is equivalent to the groupoid of $\Aut\left(A\right)$-torsors on $S.$ When $S$ is locally Noetherian, it is well known that the latter is equivalent to the groupoid of local systems on $S$ with coefficients in $A.$

The functor $\Shape$ generalizes both of these constructions, but for $\i$-groupoids.

\begin{remark}
Given the above, one may hope that for a locally contractible topological space $X,$ that $\Shape\left(\Shv\left(X\right)\right)$ is given by its underlying $\i$-groupoid $\Pi_\i\left(X\right),$ but this is not quite true, but this is a subtle issue. This is true if instead of using $\Shv\left(X\right),$ we use its hypercompletion (i.e. we use descent with respect to hypercovers rather than \v{C}ech covers) \cite[Proposition 2.12]{dave-etale}, or if we add additional mild hypothesis to $X,$ such that $X$ has the homotopy type of a CW-complex \cite[Remark A.1.4]{higheralgebra}.
\end{remark}

\subsection{The \'etale homotopy type} \label{subsec:et-type} Given a scheme $S,$ one has associated to $S$ its small \'etale $\i$-topos $\Shv\left(S_{\et}\right).$
This construction extends to a functor $$\Shv\left(\left(\mspace{3mu}\cdot\mspace{3mu}\right)_{\et}\right):\Sch \to \Topi$$ \cite[Theorem 2.2.12]{dagv}. Moreover, this functor extends to the $\i$-topos $\Shv_{\et}\left(\Sch^{\ft}\right)$:

\begin{lemma}\cite[Lemma 2.2.6]{dave-etale}
The above construction extends to a colimit preserving functor
$$\Shv\left(\left(\mspace{3mu}\cdot\mspace{3mu}\right)_{\et}\right):\Shv_{\et}\left(\Sch^{\ft}\right)\to \Topi.$$
\end{lemma}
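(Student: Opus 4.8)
The plan is to realize the claimed functor as the localization of a left Kan extension along the Yoneda embedding. Recall that, by definition \cite[Section~6.2.2]{htt}, the $\i$-topos $\Shv_{\et}\left(\Sch^{\ft}\right)$ is the topological localization of $\PShv\left(\Sch^{\ft}\right)$ at the strongly saturated class $S$ generated by the monomorphisms $R\hookrightarrow y\left(X\right)$, where $R$ ranges over \'etale covering sieves; write $L\colon\PShv\left(\Sch^{\ft}\right)\to\Shv_{\et}\left(\Sch^{\ft}\right)$ for the associated localization functor. So I would proceed in two steps: \textbf{(i)} extend the functor $\Shv\left(\left(\blank\right)_{\et}\right)\colon\Sch^{\ft}\to\Topi$ to a colimit-preserving functor $\widetilde{F}\colon\PShv\left(\Sch^{\ft}\right)\to\Topi$; and \textbf{(ii)} verify that $\widetilde{F}$ carries every morphism in $S$ to an equivalence of $\i$-topoi. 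Granting \textbf{(i)}--\textbf{(ii)}, the universal property of localizations \cite[Section~5.5.4]{htt} yields a unique colimit-preserving functor $\Shv_{\et}\left(\Sch^{\ft}\right)\to\Topi$ through which $\widetilde{F}$ factors; and since the \'etale topology on $\Sch^{\ft}$ is subcanonical, precomposing this functor with the Yoneda embedding of $\Shv_{\et}\left(\Sch^{\ft}\right)$ recovers $\Shv\left(\left(\blank\right)_{\et}\right)$, so it is the asserted extension.

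For step \textbf{(i)} I would invoke that $\Topi$ admits small colimits \cite[Section~6.3]{htt}: the universal property of presheaf $\i$-categories \cite[Theorem~5.1.5.6]{htt} --- equivalently, left Kan extension along $y$ --- then furnishes a colimit-preserving functor
\[
\widetilde{F}\;:=\;\LKE_{y}\!\left(\Shv\left(\left(\blank\right)_{\et}\right)\right)\colon\;\PShv\left(\Sch^{\ft}\right)\longrightarrow\Topi,
\]
characterized up to equivalence by $\widetilde{F}\circ y\simeq\Shv\left(\left(\blank\right)_{\et}\right)$.

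For step \textbf{(ii)}, since $\widetilde{F}$ preserves colimits and $S$ is strongly saturated, it suffices to send the generating monomorphisms $R\hookrightarrow y\left(X\right)$ to equivalences. An \'etale covering sieve on $X$ refines to one generated by a single surjective \'etale morphism $U\to X$ with $U$ of finite type (pass to a finite subcover using quasi-compactness and take the disjoint union), and such a sieve is the image of $y\left(U\right)\to y\left(X\right)$; that is, $R\simeq\underset{[n]\in\Delta^{\op}}{\colim}\,y\!\left(U^{\times_{X}\left(n+1\right)}\right)$, the colimit in $\PShv\left(\Sch^{\ft}\right)$ of the \v{C}ech nerve, which is degreewise representable because $\Sch$ has fiber products and $y$ preserves them. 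Applying the colimit-preserving $\widetilde{F}$, I am reduced to proving that
\[
\underset{[n]\in\Delta^{\op}}{\colim}\;\Shv\!\left(\left(U^{\times_{X}\left(n+1\right)}\right)_{\et}\right)\;\longrightarrow\;\Shv\left(X_{\et}\right)
\]
is an equivalence in $\Topi$. The key geometric input is that, for any scheme $V$ \'etale over $X$, the small \'etale site of $V$ is the slice site $\left(X_{\et}\right)_{/V}$, so that $\Shv\left(V_{\et}\right)\simeq\Shv\left(X_{\et}\right)_{/y_{X}\left(V\right)}$ \cite[Section~6.3.5]{htt}, compatibly with the inverse-image (pullback) functors. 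As each $U^{\times_{X}\left(n+1\right)}\to X$ is \'etale, the displayed simplicial diagram is thereby identified with $[n]\mapsto\Shv\left(X_{\et}\right)_{/c_{n}}$, where $c_{\bullet}$ is the \v{C}ech nerve of $y_{X}\left(U\right)\to 1$ formed inside $\Shv\left(X_{\et}\right)$. Since $\{U\to X\}$ is a cover, $y_{X}\left(U\right)\to 1$ is an effective epimorphism and $\underset{\Delta^{\op}}{\colim}\,c_{\bullet}\simeq 1$; the descent theorem for $\i$-topoi \cite[Theorem~6.1.3.9]{htt} --- that $c\mapsto\cX_{/c}$ carries colimits in an $\i$-topos $\cX$ to limits of $\i$-categories along pullback functors --- together with the computation of colimits in $\Topi$, then gives $\Shv\left(X_{\et}\right)\simeq\underset{[n]\in\Delta^{\op}}{\colim}\,\Shv\left(X_{\et}\right)_{/c_{n}}$, which is precisely the map in question.

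The hard part is this last identification: one must confirm that the colimit taken \emph{in $\Topi$} of the simplicial diagram $[n]\mapsto\Shv\left(X_{\et}\right)_{/c_{n}}$ of \'etale $\i$-topoi over $\Shv\left(X_{\et}\right)$ reproduces $\Shv\left(X_{\et}\right)$ --- that is, that the surjective \'etale geometric morphism $\Shv\left(X_{\et}\right)_{/y_{X}\left(U\right)}\to\Shv\left(X_{\et}\right)$ is of effective descent. This is where \cite[Theorem~6.1.3.9]{htt} must be married to the description of colimits in $\Topi$ (computed as limits of the underlying $\i$-categories along inverse-image functors); it is also worth noting that only \v{C}ech descent enters, matching exactly the passage to $\Shv_{\et}$ rather than to hypersheaves, and that the reduction to a single finite-type \'etale surjection should be stated with some care when the base is not Noetherian (quasi-compactness of finite-type schemes suffices). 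The remaining ingredients --- cocompleteness of $\Topi$, the universal properties of $\PShv\left(\Sch^{\ft}\right)$ and of $L$, subcanonicity of the \'etale topology, and the slice-site description of small \'etale topoi --- are all standard.
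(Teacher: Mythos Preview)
The paper does not supply its own proof of this lemma: it is simply quoted from \cite[Lemma~2.2.6]{dave-etale}, so there is nothing in the present text to compare your argument against line by line. That said, your proposal is correct and is essentially the standard argument one would expect (and is in the same spirit as the argument in \cite{dave-etale}): left Kan extend along Yoneda using cocompleteness of $\Topi$, then check invariance under \'etale covering sieves by reducing to a \v{C}ech nerve and invoking that for $V\to X$ \'etale one has $\Shv\left(V_{\et}\right)\simeq\Shv\left(X_{\et}\right)_{/y_X\left(V\right)}$, together with the fact that the slice functor $\cE\to\Topi$, $E\mapsto\cE/E$, is colimit preserving.

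Two small remarks on citations and hypotheses. First, the last step is cleaner if you cite \cite[Theorem~6.3.5.13]{htt} (or \cite[Proposition~6.3.5.14]{htt}) directly for the statement that $E\mapsto\cE/E$ preserves colimits as a functor $\cE\to\Topi$; your invocation of \cite[Theorem~6.1.3.9]{htt} plus the description of colimits in $\Topi$ as limits along inverse image functors is correct but more roundabout, and the paper itself uses the 6.3.5.x references for exactly this purpose (see e.g.\ the proofs of Proposition~\ref{prop:quotient} and Theorem~\ref{thm:concrete_relative}). Second, your reduction to a single \'etale surjection $U\to X$ of finite type uses quasi-compactness of $X$; this is fine here since schemes of finite type (over $\Spec\bZ$, per the paper's conventions) are quasi-compact, but it is worth saying explicitly, as you note.
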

For a fixed scheme $S,$ the pro-space arising as the shape of $\i$-topos $\Shv\left(S_{\et}\right)$ is what is defined in \cite{dave-etale} as the \'etale homotopy type of $S,$ and is denoted by $\Pi^{\et}_\i\left(S\right)$. It is closely related to the \'etale homotopy type of Artin-Mazur, and also that of Friedlander; see \cite[Remark 3.25]{dave-etale} and \cite[Corollary 3.4]{Hoyois} respectively.

\begin{corollary}\label{cor:shapedesc}
The construction of \'etale homotopy type extends to a colimit preserving functor
$$\Pi^{\et}_\i\left(\mspace{3mu}\cdot\mspace{3mu}\right):\Shv_{\et}\left(\Sch^{\ft}\right) \rightarrow \Pro\left(\Spc\right).$$
\end{corollary}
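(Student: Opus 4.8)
The plan is to realize $\Pi^{\et}_\i\left(\mspace{3mu}\cdot\mspace{3mu}\right)$ as a composite of two colimit-preserving functors. Recall that for a scheme $S$ the \'etale homotopy type is \emph{by definition} the shape $\Shape\left(\Shv\left(S_{\et}\right)\right)$ of its small \'etale $\i$-topos, so the natural candidate for the extension is
$$\Pi^{\et}_\i\left(\mspace{3mu}\cdot\mspace{3mu}\right) \;:=\; \Shape \circ \Shv\left(\left(\mspace{3mu}\cdot\mspace{3mu}\right)_{\et}\right)\colon \Shv_{\et}\left(\Sch^{\ft}\right) \longrightarrow \Topi \longrightarrow \Pro\left(\Spc\right),$$
where the first arrow is the functor supplied by the preceding Lemma and the second is $\Shape$. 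Before anything else I would note that the assertion is even meaningful: $\Shv_{\et}\left(\Sch^{\ft}\right)$ is an $\i$-topos, hence cocomplete, and $\Pro\left(\Spc\right)$ is cocomplete by Proposition~\ref{prop:limcolim} applied with $\sC = \Spc$, which is accessible and has finite limits.

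Carrying this out: the first functor $\Shv\left(\left(\mspace{3mu}\cdot\mspace{3mu}\right)_{\et}\right)\colon \Shv_{\et}\left(\Sch^{\ft}\right) \to \Topi$ preserves small colimits --- this is exactly the content of the Lemma cited above from \cite[Lemma 2.2.6]{dave-etale}, which does the real work of extending the small-\'etale-topos assignment $S \mapsto \Shv\left(S_{\et}\right)$ of \cite[Theorem 2.2.12]{dagv} from $\Sch^{\ft}$ along the Yoneda embedding. The second functor $\Shape\colon \Topi \to \Pro\left(\Spc\right)$ is a left adjoint: by Proposition~\ref{prop:ladjfun} it fits into the adjunction $\Shape \dashv \Spc^{pro}_{/\left(-\right)}$, and a left adjoint preserves all colimits that exist in its source. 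Hence the composite preserves small colimits, which is the claim. Finally one checks compatibility with the original definition, which is automatic: on the essential image of the Yoneda embedding $\Sch^{\ft} \hookrightarrow \Shv_{\et}\left(\Sch^{\ft}\right)$ the functor $\Shv\left(\left(\mspace{3mu}\cdot\mspace{3mu}\right)_{\et}\right)$ restricts to $S \mapsto \Shv\left(S_{\et}\right)$, so post-composing with $\Shape$ recovers $\Pi^{\et}_\i\left(S\right)$ in the sense of \cite{dave-etale}.

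I do not expect any genuine obstacle here: the corollary is a purely formal consequence of the two inputs, with all of the substance located in the Lemma (colimit-preservation of the topos-valued functor, which is the delicate point, as it involves left Kan extension along the Yoneda embedding and the behaviour of $\Shv\left(\left(\mspace{3mu}\cdot\mspace{3mu}\right)_{\et}\right)$ on colimits) and in Proposition~\ref{prop:ladjfun} (the identification of $\Shape$ as a left adjoint). The only point worth a moment's attention is confirming that ``colimit preserving'' is well-posed, i.e. that both source and target are cocomplete, which is disposed of by Proposition~\ref{prop:limcolim} together with the presentability of $\Shv_{\et}\left(\Sch^{\ft}\right)$.
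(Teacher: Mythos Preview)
Your proposal is correct and follows essentially the same approach as the paper: the paper's proof simply observes that $\Pi^{\et}_\i = \Shape \circ \Shv\left(\left(\mspace{3mu}\cdot\mspace{3mu}\right)_{\et}\right)$ is a composite of colimit-preserving functors, the first by the preceding Lemma and the second because $\Shape$ is a left adjoint. Your additional remarks on cocompleteness of the target and agreement on representables are correct but not spelled out in the paper's one-line proof.
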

\begin{proof}
The above functor is simply the composite $\Shape \circ \Shv\left(\left(\mspace{3mu}\cdot\mspace{3mu}\right)_{\et}\right),$ which is colimit preserving since $\Shv\left(\left(\mspace{3mu}\cdot\mspace{3mu}\right)_{\et}\right)$ and $\Shape$ are left adjoints.
\end{proof}

\begin{theorem}\cite[Theorem 2.40]{dave-etale}\label{thm:2.40}
Let $\cX$ be an object of $\Shv_{\et}\left(\Sch^{\ft}\right).$ Then its \'etale homotopy type $$\Pi^{\et}_\i \left(\cX \right):\Spc \to \Spc$$ may be identified with the functor $$V \mapsto \map_{\Shv_{\et}\left(\Sch^{\ft}\right)}\left(\cX,\Delta\left(V\right)\right),$$ where $\Delta\left(V\right)$ is the constant stack on the site $\left(\Sch^{\ft},\et\right)$ with value $V$.
\end{theorem}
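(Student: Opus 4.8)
The plan is to realise each of the two functors of $V$ in the statement as the \emph{shape} of an $\infty$-topos functorially attached to $\cX$, and then to compare those two shapes. For the left-hand side this is essentially built in: by Corollary~\ref{cor:shapedesc} and the definition of shape, $\Pi^{\et}_\i(\cX)$ is the left exact functor $\Gamma\,\Delta\colon\Spc\to\Spc$ of the $\infty$-topos $\Shv((\cX)_{\et})$, where $\Shv((\blank)_{\et})\colon\Shv_{\et}(\Sch^{\ft})\to\Topi$ is the colimit-preserving functor discussed above. For the right-hand side, let $q\colon\Shv_{\et}(\Sch^{\ft})/\cX\to\Shv_{\et}(\Sch^{\ft})$ be the étale projection \cite[\S6.3.5]{htt}. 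Its left adjoint $q_!$ carries the terminal object of the slice to $\cX$, and uniqueness of the terminal geometric morphism gives $q^{*}\Delta(V)\simeq\Delta_{\Shv_{\et}(\Sch^{\ft})/\cX}(V)$; consequently
$$\map_{\Shv_{\et}(\Sch^{\ft})}(\cX,\Delta(V))\;\simeq\;\map_{\Shv_{\et}(\Sch^{\ft})/\cX}(1,q^{*}\Delta(V))\;=\;\Gamma\,\Delta_{\Shv_{\et}(\Sch^{\ft})/\cX}(V)\;=\;\Shape\bigl(\Shv_{\et}(\Sch^{\ft})/\cX\bigr)(V),$$
naturally in $V$ and in $\cX$. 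Hence the theorem is equivalent to the existence of a natural equivalence of pro-spaces $\Shape\bigl(\Shv((\cX)_{\et})\bigr)\simeq\Shape\bigl(\Shv_{\et}(\Sch^{\ft})/\cX\bigr)$.

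I would then reduce this to the case of a representable $\cX=y(T)$, $T\in\Sch^{\ft}$. Both $\cX\mapsto\Shape(\Shv((\cX)_{\et}))=\Pi^{\et}_\i(\cX)$ and $\cX\mapsto\Shape(\Shv_{\et}(\Sch^{\ft})/\cX)$ are colimit-preserving functors $\Shv_{\et}(\Sch^{\ft})\to\Pro(\Spc)$: the first by Corollary~\ref{cor:shapedesc}, the second because the slicing functor $\cX\mapsto\Shv_{\et}(\Sch^{\ft})/\cX$ preserves colimits of $\Topi$ \cite[\S6.3.5]{htt}, because $\Shape$ is a left adjoint (Proposition~\ref{prop:ladjfun}), and because $\Pro(\Spc)$ is cocomplete (Proposition~\ref{prop:limcolim}). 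Since $\Shv_{\et}(\Sch^{\ft})$ is generated under colimits by the (sheafifications of the) representables $y(T)$, it suffices to exhibit the desired equivalence naturally for $\cX=y(T)$.

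The representable case is the heart of the matter: it is a comparison of the small and big étale sites of a scheme $T$. Here $\Shv((y(T))_{\et})=\Shv(T_{\et})$ is the small étale $\infty$-topos of $T$, whereas $\Shv_{\et}(\Sch^{\ft})/y(T)\simeq\Shv_{\et}(\Sch^{\ft}_T)$ is its big étale $\infty$-topos. The inclusion of sites $T_{\et}\hookrightarrow\Sch^{\ft}_T$ induces a geometric morphism $\mu_T\colon\Shv_{\et}(\Sch^{\ft}_T)\to\Shv(T_{\et})$, manifestly natural in $T$, and one must show that $\Shape(\mu_T)$ is an equivalence of pro-spaces; equivalently, that the comparison map $\map_{\Shv(T_{\et})}(1,\Delta(V))\to\map_{\Shv_{\et}(\Sch^{\ft}_T)}(1,\Delta(V))$ is an equivalence for every space $V$, i.e. that the big étale site computes the same cohomology with constant coefficients as the small one. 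For $n$-truncated $V$ this is the classical comparison of big and small étale cohomology; the main obstacle I foresee is promoting it to arbitrary $V$ at the level of the (non-hypercomplete) $\infty$-topos $\Shv_{\et}$ --- precisely the subtlety already discussed in the Remark above. I expect to dispatch it by reducing to the hypercomplete setting, where the statement follows from cohomological descent together with a Postnikov-tower obstruction argument, using the fact established in \cite{dave-etale} (see also \cite{Hoyois}) that hypercompletion does not alter the shape in the cases relevant here. Assembling the three steps gives the theorem.
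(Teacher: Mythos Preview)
The paper does not supply its own proof of this statement; it is quoted from \cite[Theorem~2.40]{dave-etale}. What the paper \emph{does} prove is the relative analogue, Theorem~\ref{thm:concrete_relative}, and the strategy there is essentially yours: show that both sides define colimit-preserving functors out of $\Shv_{\et}(\Sch^{\ft}_S)$ and then check they agree on representable schemes. Your identification of $V\mapsto\map(\cX,\Delta(V))$ with the shape of the slice topos $\Shv_{\et}(\Sch^{\ft})/\cX$, and your reduction to representables, are both correct and match that argument.

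Where you are unnecessarily cautious is in the final step. The big/small comparison does not require any Postnikov-tower or hypercompletion detour. The geometric morphism $\mu_T\colon\Shv_{\et}(\Sch^{\ft}_T)\to\Shv(T_{\et})$ is \emph{$\infty$-connected}: its inverse image functor is fully faithful. (In the paper's notation this is exactly the morphism $\lambda$ of Proposition~\ref{prop:infconn}, with $\lambda^{*}=\rho_{!}$; fully-faithfulness is Proposition~\ref{prop:Lff}.) For any $\infty$-connected $f\colon\cE\to\cF$, full faithfulness of $f^{*}$ means the unit $\mathrm{id}\Rightarrow f_{*}f^{*}$ is an equivalence, hence
\[
\Shape(\cF)\;=\;\Gamma_{\cF}\Delta_{\cF}\;\xrightarrow{\ \sim\ }\;\Gamma_{\cF}f_{*}f^{*}\Delta_{\cF}\;=\;\Gamma_{\cE}\Delta_{\cE}\;=\;\Shape(\cE),
\]
with no truncation hypotheses on $V$ whatsoever. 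This one-line argument replaces your proposed appeal to cohomological comparison and hypercomplete obstruction theory, and is closer in spirit to how the paper handles the representable case in the relative setting (via Proposition~\ref{prop:Lff}, which computes $\map(X,\rho_{!}\cF)\simeq\Gamma_{X}(p^{*}\cF)$ directly).
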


\subsubsection{\'Etale geometric morphisms}
Let $E$ and object of an $\i$-topos $\cE.$ There is a canonically induced geometric morphism $\pi:\cE/E \to \cE$,
\begin{equation*}\label{eq:ettt}
\xymatrix{\cE/E \ar@<-0.5ex>[r]_-{\pi_*} & \cE \ar@<-0.5ex>[l]_-{\pi^*}},
\end{equation*}
such that $\pi^*$ has a left adjoint
\begin{eqnarray*}
\pi_!:\cE/E &\to& \cE\\
f:F \to E &\mapsto& F
\end{eqnarray*}
\begin{definition}\label{dfn:etale}
A geometric morphism $\cF \to \cE$ between $\i$-topoi is \textbf{\'etale}, if it is equivalent to one of the form $$\cE/E \to \cE$$ for some object $E \in \cE.$
\end{definition}

Important properties of \'etale geometric morphisms are as follows:\\
\begin{proposition}(\cite[Corollary 6.3.5.9]{htt})\label{prop:6.3.5.9}
Suppose that a composite of geometric morphisms $$\cE \stackrel{f}{\to} \cF \stackrel{g}{\to} \cG$$ is \'etale, and that $g$ \'etale. Then $f$ is \'etale.
\end{proposition}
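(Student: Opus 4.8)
The plan is to deduce this from the structural description of \'etale geometric morphisms developed in \cite[Section 6.3.5]{htt}. The key input to quote is that, for any $\i$-topos $\cG$, the assignment $E \mapsto \left(\cG/E \to \cG\right)$ extends to a fully faithful functor from $\cG$ into the slice $\i$-category $\mathfrak{Top}_{\i/\cG}$ of $\i$-topoi over $\cG$, whose essential image is exactly the full subcategory spanned by the \'etale geometric morphisms with target $\cG$; in particular this functor carries a morphism $\phi\colon H \to G$ in $\cG$ to the \'etale geometric morphism $\cG/H \to \cG/G$ obtained by regarding $\phi$ as an object of $\cG/G$, forming $\left(\cG/G\right)/\phi \to \cG/G$, and using the canonical identification of iterated slices $\left(\cG/G\right)/\phi \simeq \cG/H$.

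Granting this, the argument is short. Because $g$ is \'etale we may choose $G \in \cG$ and an equivalence $\cF \simeq \cG/G$ carrying $g$ to the canonical projection $\pi_G\colon \cG/G \to \cG$; because $g \circ f$ is \'etale we may choose $H \in \cG$ and an equivalence $\cE \simeq \cG/H$ carrying $g \circ f$ to $\pi_H\colon \cG/H \to \cG$. Transported across these equivalences, the datum of $f$ together with the identification $\pi_G \circ f \simeq \pi_H$ witnessing that it lies over $g \circ f$ amounts precisely to a point of the mapping space $\map_{\mathfrak{Top}_{\i/\cG}}\left(\cG/H,\, \cG/G\right)$, that is, a morphism in $\mathfrak{Top}_{\i/\cG}$ between two objects lying in the essential image of the functor above. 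By full faithfulness this morphism comes from a unique $\phi\colon H \to G$ in $\cG$, and hence $f$ is equivalent, as a morphism over $\cG$, to the \'etale geometric morphism $\left(\cG/G\right)/\phi \to \cG/G$ attached to $\phi$. By Definition~\ref{dfn:etale}, a geometric morphism of this form is \'etale, so $f$ is \'etale.

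The only slightly delicate point is the matching asserted at the end of the first paragraph: that the functor $\cG \to \mathfrak{Top}_{\i/\cG}$ acts on morphisms by the slice construction, so that the element $\phi$ extracted by full faithfulness genuinely exhibits $f$ as an \'etale morphism and not merely as a morphism between \'etale morphisms. This is really part of how that functor is built in \cite[Section 6.3.5]{htt}, and it can be checked by tracking the left adjoints $\pi_!$ and the terminal objects through the slice identifications; in the write-up I would invoke it rather than reprove it. Everything else is the universal property of the slice $\i$-topos together with the definition of an \'etale geometric morphism, so this last coherence is where I would expect the only real work to lie.
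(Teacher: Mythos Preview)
The paper does not supply its own proof of this proposition; it simply records the statement and cites \cite[Corollary 6.3.5.9]{htt}. Your argument is correct and is essentially the proof one finds in HTT: identify $g$ and $g\circ f$ with canonical projections $\cG/G\to\cG$ and $\cG/H\to\cG$, use that morphisms in $\Topi/\cG$ between such objects come from morphisms $\phi\colon H\to G$ in $\cG$, and then invoke the iterated-slice identification $(\cG/G)/\phi\simeq\cG/H$ to see that $f$ is \'etale.

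One small remark on presentation within this paper: you are effectively invoking the content of Proposition~\ref{lem:htt6.3.5.10} (that $\chi_\cG$ is fully faithful and acts on morphisms by slicing) to prove Proposition~\ref{prop:6.3.5.9}, whereas in the paper these are stated in the opposite order and the text between them uses \ref{prop:6.3.5.9} to note that all morphisms in $\Topi^{\et}/\cG$ are \'etale. There is no genuine circularity, since the full faithfulness of $\chi_\cG$ in HTT does not rely on Corollary~6.3.5.9, but if you were writing this up inside the paper you would want to cite HTT directly (as you do) rather than the paper's later Proposition~\ref{lem:htt6.3.5.10}.
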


Let $\Topi^{\et}/\cE$ be the full subcategory of the slice $\i$-category $\Topi/\cE$ spanned by the \'etale geometric morphisms. By the above proposition, all the morphisms in this $\i$-category are \'etale. The assignment $E \mapsto \left(\cE/E \to E\right)$ extends to a canonical functor
$$\chi_{\cE}:\cE \to \Topi^{\et}/\cE.$$
\begin{proposition}\label{lem:htt6.3.5.10} (\cite[Remark 6.3.5.10]{htt})
For every $\i$-topos $\cE,$ the functor $\chi_\cE$ is an equivalence of $\i$-categories.
\end{proposition}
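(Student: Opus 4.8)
The plan is to compute the relevant mapping spaces directly, following \cite[Section 6.3.5]{htt}. Observe first that essential surjectivity is automatic: $\Topi^{\et}/\cE$ is by construction the full subcategory of $\Topi/\cE$ spanned by the \'etale geometric morphisms, and by Definition~\ref{dfn:etale} these are precisely the objects $\cE/E \to \cE$, i.e.\ the objects in the image of $\chi_\cE$. (That $\chi_\cE$ is a genuine functor of $\infty$-categories, not merely an assignment on objects, uses the functoriality of the slice construction $E \mapsto \cE/E$, which is part of the straightening package; one may also exhibit $\chi_\cE$ as a morphism of cocartesian fibrations over $\cE$.) So the entire content is that $\chi_\cE$ is fully faithful.

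The key lemma I would isolate is a ``sections $=$ points'' statement: for any $\infty$-topos $\cY$ and object $V \in \cY$, the space of sections of $\pi_V : \cY/V \to \cY$ in $\Topi/\cY$ is naturally equivalent to $\map_\cY(1_\cY, V)$. This one proves by hand using the explicit formulas for $\pi^*_V, (\pi_V)_*, (\pi_V)_!$ recalled before Definition~\ref{dfn:etale}: a section $s$ yields the point $s^*(\Delta)\colon 1_\cY \to V$ obtained by applying $s^*$ to the diagonal point $1_{\cY/V} \to \pi_V^* V$; conversely a point $v\colon 1_\cY \to V$ yields the geometric morphism whose inverse image is pullback along $v$; and one checks these are mutually inverse, as an equivalence of spaces and naturally in $V$.

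With this in hand, I would assemble full faithfulness. Fix $E, E' \in \cE$. Pulling back the \'etale morphism $\cE/E' \to \cE$ along $\pi_E : \cE/E \to \cE$ gives the \'etale morphism $(\cE/E)/\pi_E^* E' \to \cE/E$ (stability of \'etale geometric morphisms under base change, the base change being computed by the inverse image functor, \cite{htt}); and since $\pi_E^* E' \simeq (E \times E' \xrightarrow{\mathrm{pr}_1} E)$ we have $(\cE/E)/\pi_E^* E' \simeq \cE/(E\times E')$. A geometric morphism $\cE/E \to \cE/E'$ over $\cE$ is exactly a lift of $\pi_E$ through $\pi_{E'}$, which by this pullback square is the same as a section of $\cE/(E\times E') \to \cE/E$ in $\Topi/(\cE/E)$. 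Hence, applying the key lemma with $\cY = \cE/E$ and $V = \pi_E^* E'$,
\[
\map_{\Topi/\cE}(\cE/E,\, \cE/E') \;\simeq\; \{\,\text{sections of } \cE/(E\times E') \to \cE/E\,\} \;\simeq\; \map_{\cE/E}\bigl(1_{\cE/E},\, \pi_E^* E'\bigr),
\]
and finally $\map_{\cE/E}\bigl(1_{\cE/E}, \pi_E^* E'\bigr) = \map_{\cE/E}\bigl((E\xrightarrow{\id}E),\,(E\times E'\xrightarrow{\mathrm{pr}_1}E)\bigr) \simeq \map_\cE(E, E')$ by the universal property of the product in $\cE$. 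Unwinding the construction identifies this composite with the map induced by $\chi_\cE$, and all the equivalences above are natural in $E$ and $E'$, so $\chi_\cE$ is fully faithful.

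The main obstacle is the key lemma of the second paragraph: promoting ``sections of an \'etale geometric morphism correspond to global points'' from a bijection on $\pi_0$ to an equivalence of spaces, and carrying enough naturality through the chain of equivalences in the third paragraph to conclude that it really computes the map on mapping spaces induced by $\chi_\cE$. Everything else is formal manipulation with the explicit descriptions of the adjoint triple attached to an \'etale geometric morphism.
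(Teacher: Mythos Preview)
The paper does not give its own proof of this proposition; it simply records the statement and cites \cite[Remark 6.3.5.10]{htt}. So there is nothing in the paper to compare against beyond the citation.

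Your sketch is sound and is in fact the line of argument behind the result in \cite[Section 6.3.5]{htt}. The essential surjectivity is indeed tautological from Definition~\ref{dfn:etale}, and your reduction of full faithfulness to the ``sections $=$ global points'' lemma via base change of \'etale geometric morphisms (the pullback square $\cE/(E\times E') \simeq (\cE/E)/\pi_E^*E'$) is exactly right; this base change is \cite[Remark 6.3.5.8]{htt}, and the paper itself uses the same pullback square later in the proof of Proposition~\ref{prop:shape-adj}. Your honest identification of the obstacle is also accurate: the ``sections $=$ points'' statement at the level of spaces (not just $\pi_0$), together with the naturality needed to identify the composite with the map induced by $\chi_\cE$, is the genuine content. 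In Lurie's treatment this is packaged as \cite[Proposition 6.3.5.5]{htt}, which gives the universal property of $\cE/E \to \cE$ directly and from which your key lemma and Remark 6.3.5.10 follow. So your sketch is correct, and where you defer (the space-level key lemma), you defer to exactly the right place.
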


\subsection{Motivic homotopy theory}\label{subsec:a1}

We now review Morel-Voevodsky's motivic homotopy theory \cite{morel-voevodsky} mainly to set up some notation. We will be brief; in the language of $\infty$-categories, a thorough treatment is given in \cite{robalo}. Let $S$ be a quasicompact, quasiseparated scheme. We may consider the $\infty$-category of presheaves on smooth schemes of finite type over $S$, $\PShv\left(\Sm_S\right)$. The (reflective) localization that constructs the \emph{unstable motivic homotopy $\infty$-category} is done in two steps.

The first is given by Nisnevich localization. Recall that a pullback diagram of finite type $S$-schemes
    \begin{equation} \label{cd}
        \xymatrix{
            U\times_X V\ar[r]\ar[d] &   V\ar[d]^p\\
            U\ar[r]^i & X
        }
    \end{equation}
is an \emph{elementary distinguished (Nisnevich) square} if $i$ is a Zariski open immersion, $p$ is \'etale, and $p^{-1}\left(X-U\right)\rightarrow\left(X-U\right)$ is an isomorphism of schemes where $X-U$ is equipped with the reduced induced scheme structure.

We let $\Shv_{\Nis,\exc}\left(\Sm_S\right) \subset \PShv\left(\Sm_S\right)$ be the full subcategory of \emph{Nisenvich excisive} presheaves --- those that take any elementary distinguished square to a pullback square and $\emptyset$ to the terminal object. In this generality, the $\infty$-category of Nisnevich excisive presheaves agree with the $\infty$-category of Nisnevich sheaves (with respect to all existing definitions of the Nisnevich topology); see \cite[Proposition A.2]{Bachmann:2017aa}.

Next, we invert the projection morphisms $X \times_S \bA^1 \rightarrow X$. To do so we consider the full subcategory $$\Spc\left( S \right) \hookrightarrow \Shv_{\Nis}\left(\Sm_S\right)$$ of objects $Y$ such that the induced map $\map\left(X, Y\right) \rightarrow \map\left(X \times_S \bA^1, Y\right)$ is an equivalence; objects satisfying the above property will be called \emph{$\bA^1$-invariant}. We call this $\infty$-category the \emph{unstable category of motivic spaces} over $S$; objects in this category will be called \emph{motivic spaces}.

In total, the unstable category of motivic spaces is a two step localization: 
\begin{equation} \label{eq:lmot}
\PShv\left(\Sm_S\right) \stackrel{\LL_{\Nis}}{\longlongrightarrow} \Shv_{\Nis}\left(\Sm_S\right) \stackrel{\LL_{\bA^1}}{\longlongrightarrow}\Spc\left( S \right)
\end{equation}
 and the composite will be denoted by $\LL_{\mot}.$ The universal properties for localization in the $\infty$-categorical setting yields the following universal property; for details the reader can consult \cite[Theorem 2.30]{robalo}:

\begin{theorem} The $\infty$-category $\Spc\left( S \right)$ satisfies the following universal property: for any $\infty$-category $\sD$ with all small colimits, composition with $\LL_{\mot}$ induces a fully faithful functor $$\Fun^{\LL}\left(\Spc\left( S \right), \sD\right) \rightarrow \Fun\left(\Sm_S, \sD\right)$$ and its essential image is spanned by functors which satisfy Nisnevich excision and $\bA^1$-invariance. Here $\Fun^{\LL}\left(-,-\right)$ is the full subcategory of functors which preserves colimits.
\end{theorem}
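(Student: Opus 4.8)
The plan is to deduce this universal property from the standard universal property of presheaf $\infty$-categories together with the two-step localization description \eqref{eq:lmot}. Since $\LL_{\mot} = \LL_{\bA^1} \circ \LL_{\Nis}$ is a composite of Bousfield-type reflective localizations, the key general fact is: if $\sE \xrightarrow{L} \sE'$ is a reflective localization of $\infty$-categories, then for any $\sD$ admitting the relevant colimits, precomposition with $L$ identifies $\Fun^{\LL}(\sE', \sD)$ with the full subcategory of $\Fun^{\LL}(\sE, \sD)$ spanned by those colimit-preserving functors that send the $L$-local equivalences to equivalences. I would first record this as a lemma (it is \cite[Proposition 5.5.4.20]{htt} in essence, combined with the fact that a colimit-preserving functor out of a presentable $\infty$-category automatically preserves small colimits and that its restriction to the localization is again colimit-preserving because $L$ is a left adjoint and hence itself colimit-preserving).

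Next I would run the argument in two stages. Stage one: by the universal property of presheaves \cite[Theorem 5.1.5.6]{htt}, restriction along the Yoneda embedding $\Sm_S \hookrightarrow \PShv(\Sm_S)$ gives an equivalence $\Fun^{\LL}(\PShv(\Sm_S), \sD) \xrightarrow{\sim} \Fun(\Sm_S, \sD)$. Composing with the lemma applied to $\LL_{\Nis}$, we get that $\Fun^{\LL}(\Shv_{\Nis}(\Sm_S), \sD)$ is fully faithful into $\Fun(\Sm_S, \sD)$ with image the functors inverting the Nisnevich-local equivalences; I would then argue that a colimit-preserving functor $F\colon \PShv(\Sm_S) \to \sD$ inverts these equivalences if and only if the restriction $F|_{\Sm_S}$ satisfies Nisnevich excision. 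The ``only if'' direction is immediate since an elementary distinguished square becomes a pushout square in $\Shv_{\Nis}(\Sm_S)$ (this is precisely the content of Nisnevich descent, cf.\ \cite[Proposition A.2]{Bachmann:2017aa}) and $\emptyset$ maps to the initial object; the ``if'' direction uses that the Nisnevich-local equivalences are generated under colimits (in the arrow category) by the maps associated to elementary distinguished squares together with $\emptyset \to \ast$-type identifications, so a colimit-preserving functor killing the generators kills them all. Stage two: apply the same lemma to $\LL_{\bA^1}\colon \Shv_{\Nis}(\Sm_S) \to \Spc(S)$ and observe that the $\bA^1$-local equivalences are generated under colimits by the projections $X \times_S \bA^1 \to X$, so a colimit-preserving functor on $\Shv_{\Nis}(\Sm_S)$ descends to $\Spc(S)$ iff it inverts those projections, i.e.\ iff the corresponding functor on $\Sm_S$ is $\bA^1$-invariant. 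Chaining the two equivalences gives the claim.

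The main obstacle, and the step deserving the most care, is the identification ``colimit-preserving $F$ inverts all local equivalences $\iff$ $F$ inverts the explicit generating set.'' This requires knowing that the class of local equivalences is the strongly saturated class generated by the elementary distinguished squares (resp.\ the $\bA^1$-projections) — that is, the smallest class of morphisms closed under pushouts, transfinite composition, and retracts, equivalently generated under colimits in $\Fun(\Delta^1, \sE)$. For Nisnevich localization this is a nontrivial but standard input (it is how the localization is constructed: one formally inverts that set and the resulting local objects are exactly the Nisnevich-excisive presheaves), and for $\bA^1$-localization it is built into the definition of $\Spc(S)$ as the localization inverting the projections. I would cite \cite[Section 5.5.4]{htt} for the general machinery of localizations at a set of morphisms and \cite[Theorem 2.30]{robalo} for the precise statement in the motivic setting, rather than reproving it; the remaining verifications — that the Yoneda image of $\Sm_S$ generates under colimits, that $\LL_{\Nis}$ and $\LL_{\bA^1}$ are colimit-preserving, and that full faithfulness of the composite follows from full faithfulness of each factor — are then formal.
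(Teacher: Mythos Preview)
Your proposal is correct and follows the standard argument; the paper itself gives no proof of this statement at all, simply citing \cite[Theorem 2.30]{robalo} for details. Your sketch is precisely the kind of argument one finds there (universal property of presheaves via \cite[Theorem 5.1.5.6]{htt}, then two applications of \cite[Proposition 5.5.4.20]{htt} for the successive localizations), so there is nothing to compare: you have supplied more detail than the paper does, and you even cite the same reference it defers to.
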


For use in Section~\ref{sect:stable}, the stable $\infty$-category of \emph{$S^1$-motivic spectra} is the stabilization of $\Spc\left(S \right)$:
\[
\SH^{S^1}(S):= \Spt\left( \Spc\left(S \right) \right);
\]
we give a rapid review of stabilization in subsection~\ref{sect:stable-prof}. Since $\Spc\left(S \right), \Shv_{\Nis}\left(\Sm_S\right), \PShv\left(\Sm_S\right)$ are presentable $\infty$-category, their stabilizations are computed as tensoring with $\Spt$ in $\Pr^L$ \cite[Example 4.8.1.23]{higheralgebra}. This is one quick way to see that the localization functors of~\eqref{eq:lmot} stabilize to localizations fitting in the top row of the commutative diagram of left adjoints below

\begin{equation} \label{eq:stab-mot}
\xymatrix{
\Spt\left(\PShv\left(\Sm_S\right) \right) \ar[r]^{\LL_{\Nis}} & \Spt\left(\Shv_{\Nis}\left(\Sm_S\right) \right) \ar[rr]^-{\LL_{\bA^1}} & & \SH^{S^1}(S)\\
\PShv\left(\Sm_S\right)  \ar[r]^{\LL_{\Nis}} \ar[u]_{\Sigma^{\infty}_+}& \Shv_{\Nis}\left(\Sm_S\right)  \ar[rr]^-{\LL_{\bA^1}} \ar[u]_{\Sigma^{\infty}_+} & &\Spc\left( S \right) \ar[u]_{\Sigma^{\infty}_+}.
}
\end{equation}
We also note that $\Spt\left(\PShv\left(\Sm_S\right) \right)$ canonically identifies with the stable $\infty$-category of presheaves of spectra on $\Sm_S$ and $\SH^{S^1}\left(S \right)$ (resp. $\Spt\left(\Shv_{\Nis}\left(\Sm_S\right) \right)$) identifies as the full subcategory of $\bA^1$-invariant Nisnevich sheaves of spectra (resp. Nisnevich sheaves of spectra). This is true for any Grothendieck topology $\tau$ and true if we replace $\Sm_S$ by a small subcategory of $\Sch^{\ft}_S$. Henceforth, if $\tau$ is a topology and $\sC \subset \Sch^{\ft}_S$, we denote by $\Shv_{\tau,\Spt}\left(\sC \right)$ the stable $\infty$-category of $\tau$-sheaves of spectra, whence $\Spt\left(\Shv_{\tau}\left(\Sm_S\right) \right) \simeq \Shv_{\tau,\Spt}\left(\sC \right)$.

For later use, we need to know the Suslin-Voevodsky formula for $\LL_{\bA^1}$-localization. We have the standard cosimplicial scheme $\Delta^{\bullet}_S$ where $\Delta^{n}_S := \Delta^n_{\bZ} \times_{\bZ} S$ and $\Delta^n_{\bZ} := \Spec\,\bZ[T_0, \cdots T_n]/ (T_0 + \cdots +T_n = 1)$. Then, if $F$ is a presheaf of spectra the $\bA^1$-localization functor is computed as 
\begin{equation} \label{eq:sv-loc}
\LL_{\bA^1}F \simeq \colim F^{\Delta^{\bullet}},
\end{equation} as proved in \cite[Section 2.3]{morel-voevodsky}. The following fact is standard

\begin{proposition} \label{prop:la1-lnis} Let $F$ be a Nisnevich sheaf of spectra on $\Sch^{\ft}_S$, then $\LL_{\bA^1}F$ is a Nisnevich sheaf.
\end{proposition}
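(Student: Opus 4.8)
The plan is to use the Suslin--Voevodsky formula~\eqref{eq:sv-loc} together with the fact that $\bA^1$ is a ``contractible'' geometric input in the Nisnevich-local sense, and then invoke the elementary observation that a filtered colimit of Nisnevich sheaves of spectra is again a Nisnevich sheaf. Concretely, given a Nisnevich sheaf of spectra $F$ on $\Sch^{\ft}_S$, we have $\LL_{\bA^1}F \simeq \colim_{[n]\in\Delta^{\op}} F^{\Delta^n_S}$, so it suffices to show two things: first, that each term $F^{\Delta^n_S}$ (i.e.\ the presheaf $X \mapsto F(X\times_S \Delta^n_S)$) is again a Nisnevich sheaf of spectra; second, that the (sifted, in particular filtered-in-each-direction) colimit over $\Delta^{\op}$ preserves the property of being a Nisnevich sheaf. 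The second point is standard: the full subcategory $\Shv_{\Nis,\Spt}(\Sch^{\ft}_S) \subset \PShv_{\Spt}(\Sch^{\ft}_S)$ is closed under filtered colimits because the Nisnevich topos is hypercomplete with a basis of finite-dimensional, finitely-generated covers — equivalently, because Nisnevich excision is a finite-limit condition (a square goes to a pullback square, $\emptyset \mapsto *$) and finite limits of spectra commute with filtered colimits; since $\Delta^{\op}$ is sifted it is in particular filtered-cofinal enough, or one simply notes geometric realizations of simplicial objects are filtered colimits of finite ones. So the only real content is the first point.

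For the first point, I would argue as follows. The functor $F \mapsto F^{\Delta^n_S}$ is precisely $(p_n)_*\,(p_n)^\sharp F$ where $p_n : \Delta^n_S \times_S (-) \to (-)$, or more elementarily it is restriction along the endofunctor $X \mapsto X \times_S \Delta^n_S$ of $\Sch^{\ft}_S$. Now an elementary distinguished Nisnevich square~\eqref{cd} with vertices $U\times_X V, V, U, X$ pulls back along $\times_S \Delta^n_S$ to the square with vertices $(U\times_X V)\times_S\Delta^n_S, V\times_S\Delta^n_S, U\times_S\Delta^n_S, X\times_S\Delta^n_S$; this is again an elementary distinguished Nisnevich square, because $-\times_S \Delta^n_S$ is base change along the smooth (indeed affine-space-bundle, as $\Delta^n_S \cong \bA^n_S$) morphism $\Delta^n_S \to S$, and being a Zariski open immersion, being \'etale, and the condition on $p^{-1}(X-U)$ are all stable under flat base change. (Here $X - U$ with reduced structure base-changes correctly because the complement of a Zariski open, with reduced structure, pulls back under flat morphisms to the complement of the corresponding open with reduced structure.) Also $\emptyset \times_S \Delta^n_S = \emptyset$. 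Therefore, if $F$ sends every elementary distinguished square to a pullback square and $\emptyset$ to $*$, so does $F^{\Delta^n_S}$; hence $F^{\Delta^n_S}$ is Nisnevich-excisive, which by \cite[Proposition A.2]{Bachmann:2017aa} means it is a Nisnevich sheaf.

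Putting the pieces together: each $F^{\Delta^n_S}$ is a Nisnevich sheaf of spectra by the base-change-stability argument, the colimit over $\Delta^{\op}$ is a filtered (sifted) colimit and the subcategory of Nisnevich sheaves of spectra is closed under such colimits since the sheaf condition is a condition involving only finite limits in $\Spt$, and so $\LL_{\bA^1}F \simeq \colim_n F^{\Delta^n_S}$ is a Nisnevich sheaf of spectra. I expect the main obstacle — really the only point requiring care — to be verifying cleanly that the class of elementary distinguished Nisnevich squares (including the reduced-induced-structure condition on $X - U$) is stable under the base change $-\times_S \Delta^n_S$; once that is granted, everything else is formal. One could alternatively bypass the square bookkeeping entirely by citing that $\LL_{\bA^1}$ on Nisnevich sheaves is given by the Nisnevich-local homotopy-invariant replacement and that $\bA^1$-homotopy-invariantization preserves Nisnevich descent by \cite[Section 2.3]{morel-voevodsky}, but the argument above is self-contained given the results already recalled in the excerpt.
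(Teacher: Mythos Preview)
Your proposal is correct and follows essentially the same approach as the paper: both use the Suslin--Voevodsky formula, verify that each $F^{\Delta^n_S}$ is Nisnevich-excisive (you make the base-change stability of elementary distinguished squares explicit, which the paper leaves implicit), and then pass to the colimit. The one difference is in how the colimit step is justified: the paper simply observes that in the stable setting Cartesian $=$ coCartesian, so Nisnevich excision becomes a \emph{colimit} condition and is automatically preserved under \emph{any} colimit (``colimits commute with one another''); your detour through ``filtered colimits commute with finite limits'' and the skeletal filtration of $\Delta^{\op}$ reaches the same conclusion but is unnecessarily roundabout---the stable Cartesian/coCartesian swap handles all colimits at once.
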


\begin{proof} Suppose that $Q \in \Sch_S^{\ft,\Delta^1 \times \Delta^1}$ is an elementary distinguished square, then we need to show that $\LL_{\bA^1}F(Q)$ is Cartesian. Since we are in a stable setting it suffices to prove that $\LL_{\bA^1}F(Q)$ is coCartesian, but this follows from the formula in~\eqref{eq:sv-loc} and the fact that colimits commute with one another.
\end{proof}

The $\bP^1$-stable $\infty$-category of motivic spectra will be discussed and used only in the last section of this paper; this is the stable presentably symmetric monoidal $\infty$-category obtained from $\SH^{S^1}(S)$ by inverting $\Sigma^{\infty}(\bP^1, \infty)$:
\[
\SH(S):= \SH^{S^1}(S)[\Sigma^{\infty}(\bP^1, \infty)^{\otimes -1}]. 
\]
For its universal property, and more details on this construction, we refer to \cite[Corollary 1.2]{robalo}; see also \cite[Lemma 4.1]{Bachmann:2017aa}. One of the key theorems of the $\bP^1$-stable category is that invariants representable in this stable $\infty$-category satisfy \emph{cdh} descent  as proved in \cite{cisinski-desc}. Just like the Nisnevich topology, having cdh descent is equivalent to being excisive for certain squares --- the Nisnevich distinguished squares and the \emph{abstract blow up squares} (see \cite{vv-jpaa}). The latter is defined via the diagram~\eqref{cd}: one requires that $p$ is proper, $i$ is a closed immersion and $p^{-1}\left(X-U\right)\rightarrow\left(X-U\right)$ is an isomorphism. We will use this topology in subsection~\ref{sec:blancdf}.

\section{Absolute \'{E}tale Realization of Motivic Spaces Revisited}\label{sec:relativeetale}

Let $S$ be a fixed scheme. In this section, we begin by constructing an $\infty$-categorical version of the \'{e}tale realization functor of motivic spaces of Isaksen's \cite{realization}. However, there is already a slight improvement in our construction: our functor, constructed in Theorem~\ref{thm:absolute_realization}, is of the form: $$\Et_{\bA^1}: \Spc\left( S \right) \rightarrow \Pro\left(\Spc_{S,\bA^1}\right)$$ where $\Pro\left(\Spc_{S,\bA^1}\right)$ is a certain localization of the $\infty$-category of pro-spaces (see Theorem~\ref{thm:spc-s-contains} for the class of pro-spaces that it contains). This category refines the target of Isaksen's functor which lands in a completion away from the residue characteristics that appear in $S$ \cite[Corollary 2.7]{realization}, in the sense that the latter is a further localization of the former (see Remark~\ref{rmk:isaksen}).



\subsection{Localizations of pro-spaces}\label{subsec:loc} We begin with generalities on localizations of pro-spaces.

\subsubsection{$p$-profinite spaces and Isaken's model structure on pro-spaces}

\begin{definition}
Let $p$ be a prime. A space $X$ is \textbf{$p$-finite} if $\pi_0\left(X\right)$ is finite and each $\pi_i\left(X\right)$ is a finite $p$-group. Denote the full subcategory of $\Spc$ on $p$-finite spaces by $\Spc^{p\mbox{-}\pi}.$
\end{definition}

\begin{proposition}
$\Spc^{p\mbox{-}\pi}$ has finite limits and is idempotent complete.
\end{proposition}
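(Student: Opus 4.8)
The plan is to verify the two claims separately, both of which are elementary once one recalls the relevant closure properties of $\Spc^{p\mbox{-}\pi}$ inside $\Spc$.

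For finite limits, since $\Spc^{p\mbox{-}\pi}$ is a full subcategory of $\Spc$, it suffices to check that it contains a terminal object and is closed under pullbacks in $\Spc$. The point is trivially $p$-finite. For a pullback $X \times_Z Y$ of $p$-finite spaces, I would use the long exact sequence of homotopy groups (more precisely, the Mayer--Vietoris/fiber sequence $F \to X \times_Z Y \to X \times Y$ with fiber $F = \Omega Z$, or the fibration $X\times_Z Y \to Y$ with fiber $\mathrm{fib}(X \to Z)$) to see that the homotopy groups of $X \times_Z Y$ are built via finite exact sequences from the homotopy groups of $X$, $Y$, and $Z$. Since finite $p$-groups are closed under subgroups, quotients, and extensions, and since an exact sequence of pointed sets with finite terms has finite middle term, each $\pi_i(X \times_Z Y)$ is a finite $p$-group and $\pi_0$ is finite. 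One has to be slightly careful at the bottom of the exact sequence where one deals with pointed sets rather than groups, but $\pi_0$ of the fiber product injects into a set built out of $\pi_0(X) \times \pi_0(Y)$ and $\pi_1(Z)$-orbits, all of which are finite, so $\pi_0(X \times_Z Y)$ is finite as well. Thus $\Spc^{p\mbox{-}\pi}$ has all finite limits and the inclusion into $\Spc$ preserves them.

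For idempotent completeness, recall that a retract of a $p$-finite space is again $p$-finite: if $Y$ is a retract of $X$ in $\Spc$, then $\pi_i(Y)$ is a retract of $\pi_i(X)$ in groups (resp. pointed sets) for each $i$, and a retract of a finite $p$-group is a finite $p$-group while a retract of a finite set is finite. Hence $\Spc^{p\mbox{-}\pi}$ is closed under retracts in the idempotent-complete $\infty$-category $\Spc$. Since a full subcategory of an idempotent-complete $\infty$-category that is closed under retracts is itself idempotent complete (every idempotent in $\Spc^{p\mbox{-}\pi}$ splits in $\Spc$, and the splitting object, being a retract of a $p$-finite space, lies in $\Spc^{p\mbox{-}\pi}$), the result follows.

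I do not anticipate a serious obstacle here; the only mild subtlety is the bookkeeping at the level of $\pi_0$ and $\pi_1$ in the fiber-sequence argument, where the exactness statements are of pointed sets and one must argue finiteness by hand rather than invoking closure properties of the class of finite $p$-groups directly. Everything else is a direct application of the stability of the class of finite $p$-groups under subobjects, quotients, and extensions.
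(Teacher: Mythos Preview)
Your proposal is correct and follows essentially the same approach as the paper: the paper's proof also invokes the long exact sequence in homotopy groups for the finite-limits claim and the fact that a retract of a finite $p$-group is a finite $p$-group for idempotent completeness. Your write-up is simply a more detailed elaboration of the same argument, including the $\pi_0/\pi_1$ bookkeeping that the paper leaves implicit.
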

\begin{proof}
The fact that $\Spc^{p\mbox{-}\pi}$ has finite limits follows from the long exact sequence in homotopy groups arising from a homotopy pullback. To see that $\Spc^{p\mbox{-}\pi}$ is idempotent complete, it suffices to observe that retracts are preserved by all functors, and a retract of a finite $p$-group is again a finite $p$-group.
\end{proof}

\begin{definition} \label{def:prof-compl}
The $\i$-category of \textbf{$p$-profinite} spaces is the $\i$-category of pro-objects $\Prof_p\left(\Spc\right):=\Pro\left(\Spc^{p\mbox{-}\pi}\right).$ 
\end{definition}

Denote by $$i_p:\Spc^{p\mbox{-}\pi} \hookrightarrow \Spc$$ the fully faithful inclusion. Then $i_p$ is accessible and preserves finite limits, hence the canonical fully faithful functor $$\Pro\left(i_p\right):\Prof_p\left(\Spc\right) \hookrightarrow \Pro\left(\Spc\right)$$ has a left adjoint $\left(i_p\right)^*,$ by Proposition \ref{prop:ladj}.

\begin{definition} \label{def:p-prof}
We denote $\left(i_p\right)^*$ by $\widehat{\left(\blank\right)}_p,$ and refer to $\widehat{X}_p,$ for $X$ a pro-space, as its \textbf{$p$-profinite completion}.
\end{definition}

\begin{theorem}\cite[Theorem 2.5]{realization}
There is a model structure on the category $\Pro\left(\Set^{\Delta^{op}}\right)$ of pro-simplicial sets, for which the weak equivalences are precisely those maps which induce an isomorphism in all cohomology groups with $\mathbb{Z}/p\mathbb{Z}$-coefficients.
\end{theorem}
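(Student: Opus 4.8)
The plan is to realize this model structure as a left Bousfield localization of the \emph{strict} model structure on pro-simplicial sets, carrying out the localization by hand since $\Pro\left(\Set^{\Delta^{op}}\right)$ is not combinatorial and the localization theorems of Smith and Hirschhorn do not apply off the shelf.

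I would begin from the fact that $\Set^{\Delta^{op}}$ with the Kan--Quillen model structure is left proper and combinatorial, and recall from the theory of strict model structures on pro-categories (Edwards-Hastings, Isaksen) that $\Pro\left(\Set^{\Delta^{op}}\right)$ carries a left proper model structure --- the \emph{strict} model structure --- in which a map is a weak equivalence precisely when it is an essentially levelwise Kan--Quillen equivalence and the cofibrations are the essentially levelwise monomorphisms. The canonical embedding $\Set^{\Delta^{op}} \hookrightarrow \Pro\left(\Set^{\Delta^{op}}\right)$ is left Quillen for this structure, whose underlying $\infty$-category is $\Pro\left(\Spc\right)$.

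Next I would introduce the class $W_p$ of maps inducing an isomorphism on $H^*\left(\blank;\mathbb{Z}/p\right)$, the mod-$p$ cohomology of a pro-space $X=\lim_i X_i$ being the filtered colimit $\operatorname{colim}_i H^*\left(X_i;\mathbb{Z}/p\right)$. Since cohomology is functorial, $W_p$ satisfies two-out-of-three, is closed under retracts, and contains every strict weak equivalence, so the only candidate is the structure with the strict cofibrations as cofibrations, $W_p$ as weak equivalences, and the fibrations forced to be the maps with the right lifting property against cofibrations in $W_p$. With this choice the lifting of trivial cofibrations against fibrations is tautological, and the remaining content is entirely in the two factorization axioms --- exactly where non-combinatoriality bites --- and in checking afterwards that the trivial fibrations one produces coincide with the strict trivial fibrations.

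To supply the factorizations I would construct a functorial \emph{mod-$p$ completion} of pro-spaces, $X\mapsto X^{\wedge}_p$, realized as a cofiltered limit of $p$-finite spaces --- for instance a reindexed levelwise Bousfield--Kan cosimplicial resolution along $\mathbb{Z}/p$, or equivalently a tower of principal fibrations with fibres of the form $K\left(\mathbb{Z}/p,n\right)$ --- together with a natural map $X\to X^{\wedge}_p$ lying in $W_p$ whose target is ``$\mathbb{Z}/p$-fibrant''. Applying this relatively over the target of a given map $f\colon X\to Y$ (factor $f$ strictly, then complete the intermediate object over $Y$) produces a factorization $X\rightarrowtail Z\twoheadrightarrow Y$ into a strict cofibration in $W_p$ followed by a $W_p$-fibration, the dual factorization being the strict one; and the outstanding axiom reduces, via the usual retract argument, to a pro-space ``$\mathbb{Z}/p$-local Whitehead theorem'': a $W_p$-equivalence between $\mathbb{Z}/p$-fibrant pro-spaces is a strict weak equivalence. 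This Whitehead statement is the step I expect to be the main obstacle; it should follow from the fact that for pro-objects of $p$-finite spaces mod-$p$ cohomology detects the pro-homotopy type --- no nilpotence hypothesis being needed, since homotopy ``groups'' are now permitted to be pro-$p$-groups --- and with it the remaining axioms fall into place. The resulting homotopy theory is a model for the $\infty$-category $\Prof_p\left(\Spc\right)$ of $p$-profinite spaces of Definition~\ref{def:prof-compl}, with fibrant replacement corresponding to $p$-profinite completion.
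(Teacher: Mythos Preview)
The paper does not prove this theorem; it is stated with a citation to Isaksen's paper \cite[Theorem 2.5]{realization} and no further argument is given. So there is no in-paper proof to compare your proposal against.

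That said, your outline is essentially Isaksen's strategy: begin with the strict model structure on $\Pro\left(\Set^{\Delta^{op}}\right)$ and left Bousfield localize at the $\mathbb{Z}/p$-cohomology equivalences, producing the factorizations by hand because the usual combinatorial localization machinery is unavailable. Your identification of the two genuine difficulties --- the factorization axioms and the mod-$p$ Whitehead statement for $\mathbb{Z}/p$-fibrant pro-spaces --- is accurate, and the device of a functorial $p$-completion built from towers of $p$-finite Postnikov pieces is the right tool. One point to be careful about: your proposed Whitehead theorem (``a $W_p$-equivalence between $\mathbb{Z}/p$-fibrant pro-spaces is a strict weak equivalence'') is exactly the nontrivial input, and it does not follow formally from the setup; you need the actual argument that mod-$p$ cohomology detects equivalences between pro-objects in $p$-finite spaces, which in Isaksen's treatment goes through an analysis of the fibrant objects and their pro-homotopy groups. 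Your parenthetical that ``no nilpotence hypothesis is needed'' is correct for the pro-$p$-finite target but deserves justification, since the analogous statement for ordinary spaces is false without nilpotence. Otherwise the plan is sound and matches what the cited reference does.
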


The following result is due to Barnea-Harpaz-Horel:
\begin{theorem}\cite[Corollary 7.3.7]{prohomotopy}\label{thm:compareisak}
The $\i$-category associated to the above model category is equivalent to the $\i$-category $\Prof_p\left(\Spc\right)$ of $p$-profinite spaces.
\end{theorem}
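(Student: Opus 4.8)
The strategy is to exhibit Isaksen's model structure as an instance of the Barnea--Schlank construction of a model structure on a pro-category, and then invoke the comparison theorem of Barnea--Harpaz--Horel identifying the underlying $\infty$-category of such a pro-model-structure with the pro-category of the underlying $\infty$-category of the input. I will assume the input facts already recorded: that $\Spc^{p\mbox{-}\pi}$ has finite limits and is idempotent complete, and the universal property of $\Pro(-)$.

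First I would recall that $\Set^{\Delta^{\op}}$, equipped with the class $\mathcal{W}_p$ of maps inducing isomorphisms on cohomology with $\bZ/p\bZ$-coefficients together with a suitable class of fibrations (Kan fibrations whose homotopy fibres have $p$-finite homotopy, so that the fibrant objects are built from finitely many principal fibrations with fibres $K(\bZ/p\bZ,n)$), forms a weak fibration category $\mathcal{C}$ in the sense of Barnea--Schlank \cite{barneatomer}. The Barnea--Schlank machine then produces a model structure on $\Pro(\mathcal{C}) = \Pro\left(\Set^{\Delta^{\op}}\right)$, and \cite[Theorem 2.5]{realization} is exactly the assertion that this is a model structure whose weak equivalences are the maps of pro-simplicial sets that become isomorphisms on $\bZ/p\bZ$-cohomology, where one sets $H^*\left(\{X_i\};\bZ/p\bZ\right) := \operatorname{colim}_i H^*\left(X_i;\bZ/p\bZ\right)$. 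So the first real point is to check that Isaksen's model structure \emph{is} the Barnea--Schlank model structure of $\mathcal{C}$: this amounts to matching the two descriptions of the weak equivalences --- the cohomological one versus the ``essentially levelwise in $\mathcal{C}$'' one coming from the weak fibration structure --- together with the standard observation that a $\bZ/p\bZ$-cohomology isomorphism between $p$-finite spaces is already a weak homotopy equivalence (by obstruction theory along Postnikov towers, since such spaces are nilpotent, $p$-complete, and assembled from Eilenberg--MacLane spaces $K(\bZ/p\bZ,n)$ on which the universal coefficient theorem applies).

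Next I would identify the underlying $\infty$-category $\mathcal{C}_\infty$ of $\mathcal{C}$. By the remark just made, localising simplicial sets at $\bZ/p\bZ$-cohomology isomorphisms and restricting to the $p$-finite ones yields precisely the $\infty$-category $\Spc^{p\mbox{-}\pi}$ of $p$-finite spaces, which has finite limits. The key external input is then the theorem of Barnea--Harpaz--Horel, \cite[Corollary 7.3.7]{prohomotopy} (resting on the general comparison developed in \emph{loc.\ cit.}): for a weak fibration category $\mathcal{C}$ satisfying the relevant hypotheses --- in particular that $\mathcal{C}_\infty$ admits finite limits --- the underlying $\infty$-category of the Barnea--Schlank model structure on $\Pro(\mathcal{C})$ is canonically equivalent to $\Pro\left(\mathcal{C}_\infty\right)$. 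Applying this with $\mathcal{C}_\infty \simeq \Spc^{p\mbox{-}\pi}$ shows that the $\infty$-category associated to Isaksen's model structure is $\Pro\left(\Spc^{p\mbox{-}\pi}\right) = \Prof_p\left(\Spc\right)$, which is the claim.

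The main obstacle is the middle identification: verifying that Isaksen's cofibrations, fibrations, and weak equivalences genuinely agree with those of the Barnea--Schlank structure on $\Pro(\mathcal{C})$, and that $\mathcal{C}$ meets the hypotheses of \cite[Corollary 7.3.7]{prohomotopy}. The subtle point is that the fibrant objects of Isaksen's structure need not be levelwise $p$-finite --- for example a $\bZ/p\bZ$-completion may have homotopy groups such as $\bZ_p$ --- so one must check that such an object is nonetheless weakly equivalent in $\Pro\left(\Set^{\Delta^{\op}}\right)$ to an honest pro-object of $p$-finite spaces; this is exactly what passing to pro-objects provides, since $K(\bZ_p,n) \simeq \lim_m K(\bZ/p^m\bZ, n)$ is a cofiltered limit of $p$-finite spaces. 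Once this bookkeeping is settled, the equivalence follows formally from the Barnea--Harpaz--Horel comparison.
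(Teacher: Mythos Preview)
The paper does not give a proof of this statement: it is recorded with a bare citation to \cite[Corollary 7.3.7]{prohomotopy} and nothing more. There is therefore no ``paper's own proof'' to compare your proposal against; what you have written is a sketch of the argument in the cited reference itself.

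That said, your outline is broadly faithful to the strategy of Barnea--Harpaz--Horel. One point of imprecision: you take $\mathcal{C} = \Set^{\Delta^{\op}}$ with $\bZ/p$-cohomology equivalences as the input weak fibration category, but then the underlying $\infty$-category $\mathcal{C}_\infty$ is not $\Spc^{p\mbox{-}\pi}$ --- it is the Bousfield $H\bZ/p$-localization of all of $\Spc$, which is much larger. The actual argument in \cite{prohomotopy} instead passes through an intermediate model, comparing Isaksen's structure to Quick's profinite model structure and then to a pro-model-structure built from a genuinely small weak fibration category whose underlying $\infty$-category \emph{is} $\Spc^{p\mbox{-}\pi}$. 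Your final paragraph gestures at this mismatch (fibrant objects need not be levelwise $p$-finite), but the resolution is not the cofiltered-limit trick you suggest; rather, one must change the input category $\mathcal{C}$ so that the comparison theorem applies on the nose.
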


\subsubsection{$\bA^1$-invariant spaces with respect to a base scheme.}
Throughout this subsection, we fix a base scheme $S$. Denote by $\Sch^{\ft}_S$ the category of $S$-schemes of finite type, equip it with the \'etale topology and denote by $\Shv_{\et}\left(\Sch^{\ft}_S\right)$ the $\i$-topos of $\i$-sheaves on the resulting site.

\begin{definition} \label{defn:sa1} 
An object $\cY$ of $\Shv_{\et}\left(\Sch^{\ft}_S\right)$ is \textbf{$\bA^1$-invariant} if for all objects $\cX$ of $\Shv_{\et}\left(\Sch^{\ft}_S\right),$ the canonical map
$$\map\left(\cX,\cY\right) \to \map\left(\cX \times \bA^1_S,\cY\right)$$ is an equivalence.
Consider the unique geometric morphism 
\begin{equation}\label{eq:geomterm}
\xymatrix@C=2cm{\Shv_{\et}\left(\Sch^{\ft}_S\right) \ar@<-0.65ex>[r]_-{\Gamma} & \Spc. \ar@<-0.65ex>[l]_-{\Delta}}
\end{equation}
A space $V$ will be called \textbf{$\mathbb{A}^1$-invariant relative to $S$}  if $\Delta\left(V\right)$ is.
Let $\Spc_{S,\bA^1}$ be the full subcategory of $\Spc$ spanned by the $\bA^1$-invariant spaces.
\end{definition}


\begin{proposition} \label{prop:sa1-ft}
An object $\cY \in \Shv_{\et}\left(\Sch^{\ft}_S\right)$ is $\bA^1$-invariant, if and only if for all $S$-schemes $Z$ of finite type, the canonical map $$\cY\left(Z\right) \to \cY\left(Z \times \bA^1_S\right)$$ is an equivalence.
\end{proposition}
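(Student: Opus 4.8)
The plan is to prove the two implications separately, with the forward direction being essentially trivial and the reverse direction requiring the bulk of the work.

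For the forward implication, suppose $\cY$ is $\bA^1$-invariant in the sense of Definition~\ref{defn:sa1}. Then for every $S$-scheme $Z$ of finite type, I would apply the defining condition with $\cX = y(Z)$, the representable sheaf. Since $y$ commutes with products (the Yoneda embedding into sheaves preserves the limits that exist), we have $y(Z) \times \bA^1_S \simeq y(Z \times \bA^1_S)$, and by the Yoneda lemma $\map(y(Z), \cY) \simeq \cY(Z)$ and $\map(y(Z \times \bA^1_S), \cY) \simeq \cY(Z \times \bA^1_S)$. The canonical map $\cY(Z) \to \cY(Z \times \bA^1_S)$ is then identified with the map $\map(y(Z), \cY) \to \map(y(Z)\times \bA^1_S, \cY)$, which is an equivalence by hypothesis. (One should check the maps agree: both are induced by the projection $Z \times \bA^1_S \to Z$, so this is a naturality statement, not a computation.)

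**The reverse implication.**

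For the converse, suppose $\cY(Z) \to \cY(Z \times \bA^1_S)$ is an equivalence for every finite-type $S$-scheme $Z$; I must show $\map(\cX, \cY) \to \map(\cX \times \bA^1_S, \cY)$ is an equivalence for \emph{arbitrary} $\cX \in \Shv_{\et}(\Sch^{\ft}_S)$. The key structural fact is that every object of the $\infty$-topos $\Shv_{\et}(\Sch^{\ft}_S)$ is a colimit of representables: $\cX \simeq \colim_{i} y(Z_i)$ for some diagram of finite-type $S$-schemes $Z_i$. The argument then proceeds in two steps. First, both functors $\cX \mapsto \map(\cX, \cY)$ and $\cX \mapsto \map(\cX \times \bA^1_S, \cY)$ send colimits in $\cX$ to limits (the first because $\map(-,\cY)$ always does; the second because $(-) \times \bA^1_S$ preserves colimits in a topos — it has a right adjoint — and then $\map(-,\cY)$ turns that colimit into a limit). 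Second, the natural transformation between these two functors, induced by the projection $\bA^1_S \to S$, is an equivalence on each representable $y(Z_i)$ precisely by the hypothesis (using the product formula $y(Z_i) \times \bA^1_S \simeq y(Z_i \times \bA^1_S)$ again). Since a natural transformation of limit-preserving functors $\Shv_{\et}(\Sch^{\ft}_S)^{\op} \to \Spc$ that is an equivalence on representables is an equivalence everywhere — write $\map(\cX,\cY) \simeq \lim_i \map(y(Z_i),\cY) \simeq \lim_i \map(y(Z_i)\times\bA^1_S,\cY) \simeq \lim_i\map(y(Z_i\times\bA^1_S),\cY) \simeq \map(\cX\times\bA^1_S,\cY)$ — we conclude.

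**Main obstacle.**

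I expect the main subtlety to be the claim that $(-) \times \bA^1_S$ preserves colimits in $\Shv_{\et}(\Sch^{\ft}_S)$, hence that $\colim_i\bigl(y(Z_i) \times \bA^1_S\bigr) \simeq \cX \times \bA^1_S$. In an $\infty$-topos, products distribute over colimits because the topos is Cartesian closed, so $(-) \times \bA^1_S$ has a right adjoint $(-)^{\bA^1_S}$; this is standard (e.g.\ \cite[Section 6.1]{htt}), but it is the one place where one genuinely uses that we are working in a topos and not merely a presheaf category — although for presheaves the same holds by computing colimits objectwise. A secondary point requiring care is the compatibility of the various canonical maps: one must confirm that the equivalence $\map(\cX,\cY) \simeq \lim_i\map(y(Z_i),\cY)$ intertwines the $\bA^1$-restriction maps with the colimit-indexed ones, which is a diagram-chase through the universal property of colimits and causes no real difficulty. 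Everything else is a formal consequence of the Yoneda lemma and the generation of a sheaf $\infty$-topos by representables.
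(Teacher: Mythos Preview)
Your proposal is correct and follows essentially the same route as the paper: write an arbitrary $\cX$ as a colimit of representables, use Yoneda on each term, and then pull the product $(-)\times\bA^1_S$ through the colimit. The paper justifies this last step by invoking universality of colimits in an $\infty$-topos rather than Cartesian closedness, but these are equivalent here and the argument is otherwise identical.
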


\begin{proof}
The only if direction is trivial. Conversely, suppose that the above condition holds for all schemes of finite type. Let $\cX$ be an arbitrary stack. Then it can be written canonically as a colimit of representables:
$$\cX = \underset{Z \to \cX} \colim Z.$$ Then:
\begin{eqnarray*}
\map\left(\cX,\cY\right) &\simeq& \underset{Z \to \cX} \lim \map\left(Z,\cY\right)\\
&\simeq& \underset{Z \to \cX} \lim \map\left(Z \times \bA^1_S,\cY\right)\\
&\simeq&  \map\left(\underset{Z \to \cX} \colim\left(Z \times \bA^1_S\right),\cY\right)\\
&\simeq&  \map\left(\left(\underset{Z \to \cX} \colim Z\right) \times \bA^1_S,\cY\right)\\
&\simeq& \map\left(\cX \times \bA^1_S,\cY\right),
\end{eqnarray*}
where the second equivalence follows from the Yoneda Lemma, and the fourth since colimits are universal in any $\i$-topos.
\end{proof}

\begin{corollary}\label{cor:repsgood}
A space $V$ is $\bA^1$-invariant relative to $S$ if and only if for all $S$-schemes of finite type $Z,$ the canonical map $$\Delta\left(V\right)\left(Z\right) \to \Delta\left(V\right)\left(Z \times \bA^1_S\right)$$ is an equivalence.
\end{corollary}

 \begin{example} If $A$ is locally constant sheaf of torsion abelian group, whose torsion is prime to the residue characteristics of $S$ then $A$ is \emph{strictly $\mathbb{A}^1$-invariant} in the sense that the presheaf of abelian groups on $\Sch^{\ft}_S$ given by $U \mapsto H^i_{\et}(U, A)$ is $\bA^1$-invariant for all $i \geq 0$. relative to $S$ \cite[XV Corollarie 2.2]{SGA4}. Hence, for such groups if $A$ is a group whose torsion is prime to the residue characteristics, the $n$-fold deloopings $K( A, n)$ are spaces which are $\bA^1$-invariant relative to $S$. We will see generalizations of this example later in Theorem~\ref{thm:spc-s-contains}.
 \end{example}

We denote the canonical fully faithful embedding by 
$$\is: \Spc_{S,\bA^1} \hookrightarrow \Spc.$$

\begin{proposition}  \label{prop:sa1good} The $\infty$-category $\Spc_{S,\bA^1}$ has finite limits and is accessible.
\end{proposition}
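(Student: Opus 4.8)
The plan is to present $\Spc_{S,\bA^1}$ as a fiber product of accessible $\infty$-categories along accessible functors, and separately to check that it is closed under finite limits inside $\Spc$. The first move is to repackage the $\bA^1$-invariant objects downstairs. Let $\sT \subseteq \Shv_{\et}\left(\Sch^{\ft}_S\right)$ be the full subcategory of $\bA^1$-invariant objects. By Proposition~\ref{prop:sa1-ft}, an object is $\bA^1$-invariant precisely when it is local with respect to the \emph{small} set of maps $W = \left\{\, y\left(Z\right) \times \bA^1_S \to y\left(Z\right) \,\right\}_{Z \in \Sch^{\ft}_S}$ (equivalently, $\sT$ is the full subcategory of objects right-orthogonal to $W$). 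Hence $\sT$ is the left Bousfield localization of the $\infty$-topos $\Shv_{\et}\left(\Sch^{\ft}_S\right)$ at a small set of morphisms, so it is a presentable reflective subcategory (\cite[\S5.5.4]{htt}). In particular $\sT$ has all limits, it is accessible, and the inclusion $\iota \colon \sT \hookrightarrow \Shv_{\et}\left(\Sch^{\ft}_S\right)$ is accessible and preserves limits.

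Next I would record the relevant exactness of the geometric morphism. The functor $\Delta \colon \Spc \to \Shv_{\et}\left(\Sch^{\ft}_S\right)$ of~\eqref{eq:geomterm} is the inverse image $e^*$ of the terminal geometric morphism; it therefore preserves finite limits (left-exactness) and all small colimits (it is a left adjoint), and in particular it is accessible. By Definition~\ref{defn:sa1}, $\Spc_{S,\bA^1} = \left\{\, V \in \Spc : \Delta\left(V\right) \in \sT \,\right\}$; since $\sT$ is a \emph{full} subcategory, this full subcategory of $\Spc$ coincides with the $\infty$-categorical fiber product $\Spc \times_{\Shv_{\et}\left(\Sch^{\ft}_S\right)} \sT$ formed along $\Delta$ and $\iota$ (the projection to $\Spc$ being fully faithful because $\iota$ is).

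With this setup, both assertions follow. For finite limits: given a finite diagram in $\Spc_{S,\bA^1}$, form its limit $V$ in $\Spc$; since $\Delta$ preserves finite limits, $\Delta\left(V\right)$ is the corresponding finite limit of objects of $\sT$, and a reflective subcategory is closed under limits, so $\Delta\left(V\right) \in \sT$, i.e. $V \in \Spc_{S,\bA^1}$. As $\is$ is fully faithful, this limit cone is also a limit in $\Spc_{S,\bA^1}$; hence $\Spc_{S,\bA^1}$ has finite limits and $\is$ preserves them. For accessibility: $\Spc$, $\Shv_{\et}\left(\Sch^{\ft}_S\right)$ and $\sT$ are accessible, and $\Delta$ and $\iota$ are accessible functors, so $\Spc_{S,\bA^1} \simeq \Spc \times_{\Shv_{\et}\left(\Sch^{\ft}_S\right)} \sT$ is accessible by the stability of accessible $\infty$-categories under pullback along accessible functors (\cite[\S5.4.6]{htt}). (Alternatively, one can avoid introducing $\sT$: by Proposition~\ref{prop:sa1-ft}, $\Spc_{S,\bA^1}$ is the full subcategory of $\Spc$ on which the natural transformations $\map_{\Shv_{\et}\left(\Sch^{\ft}_S\right)}\left(y\left(Z\right), \Delta\left(-\right)\right) \to \map_{\Shv_{\et}\left(\Sch^{\ft}_S\right)}\left(y\left(Z\right) \times \bA^1_S, \Delta\left(-\right)\right)$ — a small family of natural transformations between accessible functors $\Spc \to \Spc$ — are equivalences, which is again an accessible subcategory.)

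The only genuinely delicate point, i.e. the main obstacle, is the reduction that makes $\sT$ a localization at a \emph{small} set of maps: the literal condition of $\bA^1$-invariance in Definition~\ref{defn:sa1} quantifies over the proper class of all objects $\cX$, and one cannot invoke the presentability/accessibility machinery until this has been cut down to a set, which is exactly what Proposition~\ref{prop:sa1-ft} provides. Everything after that is formal bookkeeping, the one thing to be careful about being that $\Delta$ preserves only finite limits — which is, however, precisely what is needed here.
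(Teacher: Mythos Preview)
Your proof is correct. The finite-limits argument is essentially the paper's: both use that $\Delta$ is left exact and that $\bA^1$-invariant objects are closed under limits. For accessibility, however, the paper takes a different route. Rather than presenting $\Spc_{S,\bA^1}$ as a fiber product of accessible $\infty$-categories along accessible functors, the paper shows directly that $\Spc_{S,\bA^1}$ is closed under retracts in $\Spc$ (using that equivalences are closed under retracts in the arrow category) and then invokes \cite[Corollary~5.4.3.6]{htt} to conclude accessibility from idempotent completeness. Your fiber-product argument is arguably more robust here: the cited corollary in \cite{htt} is stated for \emph{small} $\infty$-categories, and it is not a priori clear that $\Spc_{S,\bA^1}$ is essentially small, whereas the stability of accessible $\infty$-categories under limits along accessible functors (\cite[\S5.4.6]{htt}) needs no such hypothesis. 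On the other hand, the paper's retract argument is elementary and avoids introducing the auxiliary presentable localization $\sT$; and the retract-closure itself is independently useful (it reappears, for instance, in the parallel proof of Proposition~\ref{prop:sa1isbetter}).
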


\begin{proof} It obviously has a terminal object, so, to prove it has finite limits, it suffices to prove that it has pullbacks. Suppose that 

$$\xymatrix{X \times_Y Z \ar[r] \ar[d] & Z \ar[d]\\ X \ar[r] & Y}$$
is a Cartesian square where $X, Y, Z \in \Spc_{S,\bA^1}$, we claim that $X \times_Y Z$ is in $\Spc_{S,\bA^1}$. The functor $\Delta$, being part of a geometric morphism, preserves finite limits and thus preserves the above pullback square. It then follows that $\Delta\left(X \times_Y Z\right)$ is $\bA^1$-invariant since $\map\left(T, -\right)$ preserves limits for all $T$.

To prove that the category is accessible it suffices to prove that it is idempotent complete \cite[Corollary 5.4.3.6]{htt}. To do so, we need only prove that $\Spc_{S,\bA^1}$ is closed under retracts (in $\Spc$). Let $r: X \rightarrow Y$ be a retract such that $X \in \Spc_{S,\bA^1}$. Consider the diagram in $\Shv_{\et}\left(\Sch^{\ft}_S\right)$:

$$\xymatrix{
\map\left(T, \Delta\left(X\right)\right) \ar[r] \ar[d] & \map\left(T \times \bA^1, \Delta\left(X\right)\right) \ar[d]\\
\map\left(T, \Delta\left(Y\right)\right) \ar[r]  & \map\left(T, \times \bA^1, \Delta\left(Y\right)\right).}$$
The diagram witnesses the bottom arrow as a retract of the top arrow in the arrow category. The top arrow is an equivalence by hypothesis and the claim follows from the fact that equivalences are closed under retracts in any $\infty$-category.
\end{proof}

\begin{corollary} \label{prop:sa1isloc} The fully faithful embedding: $\is: \Spc_{S,\bA^1} \hookrightarrow \Spc$ induces a fully faithful functor $$\Pro\left(i\right): \Pro\left(\Spc_{S,\bA^1}\right) \hookrightarrow \Pro\left(\Spc\right)$$ which admits a left adjoint $$\is^*: \Pro\left(\Spc\right) \rightarrow \Pro\left(\Spc_{S,\bA^1}\right).$$
Hence $\is^*$ is a localization.
\end{corollary}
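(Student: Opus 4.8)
The statement is a formal consequence of results already in place, so the plan is short. First I would note that, in the course of proving Proposition~\ref{prop:sa1good}, we showed that $\Spc_{S,\bA^1}$ is stable under finite limits inside $\Spc$ --- it contains the terminal object and is closed under pullbacks --- so the inclusion $\is$ \emph{preserves} finite limits. Combined with Proposition~\ref{prop:sa1good}, which says that $\Spc_{S,\bA^1}$ is accessible and has finite limits, and with the fact that $\Spc$ is itself accessible with finite limits, this places us exactly in the situation of Proposition~\ref{prop:ladj} applied to $f = \is$. That proposition then yields the left adjoint
$$\is^* \colon \Pro\left(\Spc\right) \to \Pro\left(\Spc_{S,\bA^1}\right)$$
to $\Pro\left(\is\right)$.

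It then remains only to check that $\Pro\left(\is\right)$ is fully faithful; together with the existence of the left adjoint $\is^*$, this says precisely that $\is^*$ exhibits $\Pro\left(\Spc_{S,\bA^1}\right)$ as a reflective localization of $\Pro\left(\Spc\right)$, i.e. that $\is^*$ is a localization functor. For full faithfulness I would use the mapping-space formula~\eqref{eq:maps}: writing two objects of $\Pro\left(\Spc_{S,\bA^1}\right)$ as cofiltered limits of representables, $X = \underset{i}{\lim}\, X_i$ and $Y = \underset{j}{\lim}\, Y_j$, and using that $\Pro\left(\is\right)$ preserves cofiltered limits by construction, one computes
$$\map_{\Pro\left(\Spc_{S,\bA^1}\right)}\left(X,Y\right) \simeq \underset{j}{\lim}\,\underset{i}{\colim}\,\map_{\Spc_{S,\bA^1}}\left(X_i,Y_j\right) \simeq \underset{j}{\lim}\,\underset{i}{\colim}\,\map_{\Spc}\left(\is X_i,\is Y_j\right) \simeq \map_{\Pro\left(\Spc\right)}\left(\Pro\left(\is\right)X,\Pro\left(\is\right)Y\right),$$
the middle equivalence because $\is$ is fully faithful.

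There is no real obstacle here: essentially all of the content has been front-loaded into Proposition~\ref{prop:sa1good} and into Proposition~\ref{prop:ladj} (whose proof in turn rests on the description of $\Pro$ as left-exact accessible functors, Proposition~\ref{prop:lex}). The one point that deserves a moment's attention is that Proposition~\ref{prop:ladj} demands that $\is$ \emph{preserve} finite limits, not merely that source and target possess them --- and that is exactly what the closure of $\Spc_{S,\bA^1}$ under finite limits in $\Spc$, recorded in the proof of Proposition~\ref{prop:sa1good}, provides.
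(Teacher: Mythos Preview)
Your proposal is correct and follows essentially the same approach as the paper: invoke Proposition~\ref{prop:sa1good} together with Proposition~\ref{prop:ladj} for the left adjoint, and the universal property of $\Pro$ for full faithfulness. You are simply more explicit than the paper --- in particular, you rightly flag that Proposition~\ref{prop:ladj} requires $\is$ to \emph{preserve} finite limits, and you spell out the full faithfulness via the mapping-space formula rather than leaving it to ``universal properties of $\Pro$.''
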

\begin{proof} The induced functor exists and is fully faithful due to the universal properties of $\Pro$. The fact that it admits a left adjoint is due to Proposition \ref{prop:sa1good} and Proposition \ref{prop:ladj}.
\end{proof}

We now introduce some notation:

\begin{definition}
We denote the localization functor $\is^*$ by $\cs$.
\end{definition}

\subsubsection{Objects of $\Spc_{S,\bA^1}$}\label{subsec:objects}

Our realization functor will land in the $\infty$-category $\Spc_{S,\bA^1}$, hence it will be paramount to know that this $\infty$-category contains interesting objects. We will name some objects in this category and show that it is a finer category than just the completion of pro-spaces away from primes occurring in the local rings of $S$ as considered by \cite{morel1} and \cite{christensen-isaksen}. 

\begin{definition}\label{defn:spcp} Let $\mathcal{P}$ denote a collection of prime numbers. Denote by $\Spc_\mathcal{P}$ the full subcategory of $\Spc$ on those spaces $X$ such that for all $p$ in $\mathcal{P}$:

\begin{enumerate}
\item For each connected component $X_{\alpha}$ of $X,$ the space $X_{\alpha}$ has only finitely many nonzero homotopy groups, each of which is finite.
\item For $i\geq1,$ each $|\pi_i\left(X_{\alpha}\right)|$ is coprime to $p$.
\item For $i >2,$ each $|\Aut\left(\pi_i\left(X_{\alpha}\right)\right)|$ is also coprime to $p.$
\end{enumerate}
\end{definition}



The motivation for condition (3) in Definition \ref{defn:spcp} comes from the following \cite{dave-etale}:

\begin{proposition} \cite[Proposition 3.7]{dave-etale}\label{prop:univfib} Let $\Aut\left(K\left(A,n\right)\right)$ denote the space of self-homotopy equivalences of $K\left(A,n\right)$. The space $\B\Aut\left(K\left(A,n\right)\right)$ classifies $K\left(A,n\right)$-fibrations and the universal $K\left(A, n\right)$ fibration is given by $K\left(\Aut\left(A\right), 1\right) \rightarrow \B\Aut \left(K\left(A, n\right)\right).$ 
\end{proposition}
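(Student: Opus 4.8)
The plan is to deduce both assertions from the general classification of fibrations together with an obstruction-theoretic computation of the homotopy automorphisms of an Eilenberg--MacLane space; throughout, $A$ is an abelian group and $n \geq 1$. For the first assertion I would recall that for any space $F$ the grouplike monoid $\Aut(F) \subseteq \map(F,F)$ of self-homotopy equivalences admits a classifying space $\B\Aut(F)$, and that $F$-fibrations over a space $X$ are classified by $\map(X,\B\Aut(F))$; in the $\infty$-categorical language used here this is just the statement that $\B\Aut(F)$ is the connected component of $F$ inside the core $\infty$-groupoid of $\Spc$, and that straightening/unstraightening identifies $F$-fibrations over $X$ with maps $X \to \B\Aut(F) \subseteq \Spc$ (one may alternatively invoke May's classification theorem). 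This gives the first sentence of the statement. The universal $F$-fibration is obtained by unstraightening the inclusion $\B\Aut(F) \hookrightarrow \Spc$: concretely, its total space is the $\infty$-groupoid of \emph{pointed} spaces of the homotopy type of $F$, i.e. $\B\Aut_*(F)$, where $\Aut_*(F) \subseteq \map_*(F,F)$ is the space of pointed self-equivalences. Equivalently, and this is the formulation I would carry out: since $F$ is connected, evaluation at the basepoint is an $\Aut(F)$-equivariant fibration $\Aut_*(F) \to \Aut(F) \to F$, so the Borel construction $F_{h\Aut(F)} \simeq \mathrm{E}\Aut(F) \times_{\Aut(F)} F$ --- the total space of the universal fibration --- is equivalent to $\mathrm{E}\Aut(F)/\Aut_*(F) \simeq \B\Aut_*(F)$, with projection to $\B\Aut(F)$ induced by $\Aut_*(F) \hookrightarrow \Aut(F)$.

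It then remains to identify $\Aut_*(K(A,n))$. Here obstruction theory --- the Hurewicz theorem and universal coefficients, combined with the $(n-1)$-connectivity of $K(A,n)$ --- shows that $\map_*(K(A,n),K(A,n))$ is homotopy discrete: its set of components is $[K(A,n),K(A,n)]_* \cong \widetilde H^n(K(A,n);A) \cong \mathrm{Hom}(A,A)$, and at any basepoint $\pi_k \map_*(K(A,n),K(A,n)) \cong \widetilde H^{n-k}(K(A,n);A) = 0$ for $k \geq 1$. Passing to invertible components identifies $\Aut_*(K(A,n))$ with the discrete group $\Aut(A)$. By the previous paragraph the universal $K(A,n)$-fibration therefore has total space $\B\Aut_*(K(A,n)) \simeq \B\Aut(A) = K(\Aut(A),1)$, with projection to $\B\Aut(K(A,n))$ given by $\B$ of the map $\Aut(A) = \Aut_*(K(A,n)) \hookrightarrow \Aut(K(A,n))$; this is exactly the asserted universal fibration. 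As a consistency check one computes by the same evaluation fibration that $\Aut(K(A,n))$ has $\pi_0 = \Aut(A)$ and identity component $\simeq K(A,n)$, so $\B\Aut(K(A,n))$ has precisely $\pi_1 = \Aut(A)$ and $\pi_{n+1} = A$; the long exact sequence of the map $K(\Aut(A),1) \to \B\Aut(K(A,n))$ then exhibits its fibre as $K(A,n)$, as it must be.

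The step I expect to need the most care --- though it is bookkeeping rather than a genuine difficulty --- is pinning down the two universal objects precisely enough to match the statement: verifying that the total space of the universal $F$-fibration is honestly the pointed classifying space $\B\Aut_*(F)$, and that the resulting projection $\B\Aut_*(K(A,n)) \to \B\Aut(K(A,n))$ is the map appearing in the proposition. The two other ingredients --- the classification of $F$-fibrations by $\B\Aut(F)$, and the homotopy-discreteness of $\map_*(K(A,n),K(A,n))$ --- are entirely standard.
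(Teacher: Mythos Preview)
The paper does not prove this proposition; it is cited from \cite[Proposition 3.7]{dave-etale} and simply stated without argument. Your proof is correct and follows the standard line: the classification of $F$-fibrations by $\B\Aut(F)$ via May or straightening/unstraightening, the identification of the total space of the universal fibration with $\B\Aut_*(F)$, and the obstruction-theoretic computation showing $\Aut_*(K(A,n))\simeq\Aut(A)$ is discrete. This is exactly the argument one would expect to find in the cited reference, and there is nothing to compare against in the present paper.
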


\begin{remark}
The $\i$-categories $\Spc^{\ell\mbox{-}\pi}$ and $\Spc_\mathcal{P}$ are related when $\ell \notin \mathcal{P}.$ However, $\Aut\left(\bZ/\ell\right)\cong \bZ/\!\!\left(\ell-1\right),$ so $K\left(\bZ/\ell,n\right)$ does not satisfying condition (3) of Definition \ref{defn:spcp} when $\ell \equiv 1 \mod p.$
\end{remark}

\begin{definition}\label{defn:spcpnil} Let $\mathcal{P}$ denote a collection of prime numbers. Denote by $\Spc_\mathcal{P}^{nil}$ the full subcategory of $\Spc$ on those spaces $X$ such that for all $p$ in $\mathcal{P}$:
\begin{enumerate}
\item For each connected component $X_{\alpha}$ of $X,$ the space $X_{\alpha}$ has only finitely many nonzero homotopy groups, each of which is finite.
\item For $i\geq1,$ each $|\pi_i\left(X_{\alpha}\right)|$ is coprime to $p$.
\item Each $\pi_1\left(X_\alpha\right)$ is nilpotent, and each $\pi_i\left(X_\alpha\right)$ is a nilpotent $\pi_1\left(X_\alpha\right)$-module.
\end{enumerate}
\end{definition}

\begin{remark}
$\Spc_\mathcal{P}^{nil}$ contains all simply connected spaces with finitely many connected components, all of whose homotopy groups are finite of order coprime to $p$, for all $p \in \mathcal{P}.$
\end{remark}

The following proposition follows immediately from \cite[Proposition 7.3.4]{prohomotopy}:
\begin{proposition}\label{prop:nilbig}
$\Spc_\mathcal{P}^{nil}$ contains $\Spc^{\ell\mbox{-}\pi}$ for all $\ell \notin \mathcal{P}.$
\end{proposition}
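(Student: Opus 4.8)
The plan is to verify directly that every $\ell$-finite space satisfies the three conditions of Definition~\ref{defn:spcpnil}, so that $\Spc^{\ell\mbox{-}\pi} \subseteq \Spc_\mathcal{P}^{nil}$ whenever $\ell \notin \mathcal{P}$. First I would reduce to the connected case: if $X \in \Spc^{\ell\mbox{-}\pi}$ then $\pi_0(X)$ is finite, so $X$ has finitely many connected components, each of which is again $\ell$-finite and connected; since the three conditions of Definition~\ref{defn:spcpnil} are phrased componentwise, it suffices to treat a connected $\ell$-finite $X$.

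Next I would dispatch conditions (1) and (2), which are immediate from the definitions: an $\ell$-finite space is truncated with finite homotopy groups, giving (1), and for $i \geq 1$ the group $\pi_i(X)$ is a finite $\ell$-group, so $|\pi_i(X)|$ is a power of $\ell$ and hence coprime to every $p \in \mathcal{P}$ (using $\ell \notin \mathcal{P}$), giving (2).

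The substantive point is condition (3). Here $\pi_1(X)$ is a finite $\ell$-group, hence nilpotent, and for $i \geq 2$ one must show that the finite abelian $\ell$-group $\pi_i(X)$, with its action of $G := \pi_1(X)$, is a nilpotent $G$-module. I would argue this from the standard fact that $\bF_\ell[G]$ is local with nilpotent augmentation ideal $I$ (because $G$ is an $\ell$-group): filtering $\pi_i(X)$ by the subgroups $\ell^k \pi_i(X)$ and noting that $I$ acts nilpotently on each $\bF_\ell[G]$-module subquotient, one concludes that the augmentation ideal of $\bZ[G]$ acts nilpotently on $\pi_i(X)$ itself, which is exactly nilpotence of the $G$-module. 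This is precisely the computation in \cite[Proposition 7.3.4]{prohomotopy}, so in the write-up I would simply cite it rather than reprove it. With (1)--(3) in hand, $X \in \Spc_\mathcal{P}^{nil}$, establishing the inclusion.

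There is no real obstacle here: everything reduces to the elementary group theory of $\ell$-groups acting on $\ell$-groups, which is already isolated in the cited proposition. The only thing to be careful about is to check that the truncatedness implicit in ``$\ell$-finite'' really does supply condition (1) of Definition~\ref{defn:spcpnil}; once that bookkeeping is made explicit the argument is essentially a matching of definitions.
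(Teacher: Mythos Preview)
Your proposal is correct and takes essentially the same approach as the paper: the paper simply records that the result follows immediately from \cite[Proposition 7.3.4]{prohomotopy}, and your argument is precisely an unpacking of why that citation suffices, with conditions (1) and (2) being immediate bookkeeping and condition (3) being the content of the cited proposition.
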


Fix a scheme $Z.$ The morphism $\pi: Z \times \bA^1 \rightarrow Z$ induces a morphism in $\Topi$: $$\Shv_{\et}\left(\pi\right): \Shv_{\et}\left(Z \times \bA^1\right) \rightarrow \Shv_{\et}\left(Z\right)$$ whence we get a morphism:  $$\Pi^{\et}_\i\left(Z\right) \rightarrow \Pi^{\et}_\i\left(Z\times \bA^1\right)$$ in $\Pro\left(\Spc\right)$. Regarding both of these pro-spaces as functor from spaces to spaces, we have the following lemma:

\begin{lemma}\label{lem:smalliscooltoo}
A space $V$ is in $\Spc_{S,\bA^1}$ if and only if for all $S$-schemes $Z$ of finite type, the canonical map 
\begin{equation}\label{eq:z}
\Pi^{\et}_\i\left(Z\right)\left(V\right) \to \Pi^{\et}_\i\left(Z \times \bA^1\right)\left(V\right)
\end{equation}
is an equivalence.
\end{lemma}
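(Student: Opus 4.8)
The plan is to unwind the definitions so that the statement becomes a comparison between the global sections functor of the small étale $\infty$-topos of $Z$ and a mapping-space description of the étale homotopy type, and then use the explicit formula for $\Pi^{\et}_\i$ of Theorem~\ref{thm:2.40}. Recall that, viewing a pro-space as a left-exact functor $\Spc \to \Spc$, the pro-space $\Pi^{\et}_\i(Z)$ is by definition the composite $\Gamma_{\Shv(Z_{\et})} \circ \Delta_{\Shv(Z_{\et})}$, so $\Pi^{\et}_\i(Z)(V) \simeq \Gamma_{\Shv(Z_{\et})}\Delta_{\Shv(Z_{\et})}(V)$, the global sections over $Z$ of the constant sheaf with value $V$ on the small étale site of $Z$. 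Equivalently, by Theorem~\ref{thm:2.40} applied to the representable sheaf $y(Z) \in \Shv_{\et}(\Sch^{\ft})$, we have $\Pi^{\et}_\i(Z)(V) \simeq \map_{\Shv_{\et}(\Sch^{\ft})}\bigl(y(Z), \Delta(V)\bigr)$, where now $\Delta(V)$ is the constant sheaf on the \emph{big} étale site.

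First I would replace the base $S$ by a dummy, i.e. note that since all schemes in sight are of finite type over $S$, we may work inside $\Shv_{\et}(\Sch^{\ft}_S)$ rather than $\Shv_{\et}(\Sch^{\ft})$; the constant sheaf and the representables behave the same way, so Theorem~\ref{thm:2.40} still gives $\Pi^{\et}_\i(Z)(V) \simeq \map_{\Shv_{\et}(\Sch^{\ft}_S)}(y(Z), \Delta_S(V))$, where $\Delta_S$ is the $\Delta$ of the geometric morphism~\eqref{eq:geomterm}. Under this identification, the map~\eqref{eq:z} induced by $\pi: Z \times \bA^1 \to Z$ is precisely the restriction map
\[
\map_{\Shv_{\et}(\Sch^{\ft}_S)}\bigl(y(Z), \Delta_S(V)\bigr) \longrightarrow \map_{\Shv_{\et}(\Sch^{\ft}_S)}\bigl(y(Z \times \bA^1_S), \Delta_S(V)\bigr) = \map_{\Shv_{\et}(\Sch^{\ft}_S)}\bigl(y(Z) \times \bA^1_S, \Delta_S(V)\bigr),
\]
using that the Yoneda embedding preserves products (or, more precisely, that $y(Z\times_S\bA^1_S) \simeq y(Z)\times \bA^1_S$ in the sheaf $\infty$-topos). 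Thus~\eqref{eq:z} is an equivalence for all finite-type $Z$ if and only if $\Delta_S(V)$ is $\bA^1$-invariant in the sense of Definition~\ref{defn:sa1} tested against representables — but by Proposition~\ref{prop:sa1-ft} (and its Corollary~\ref{cor:repsgood}), testing $\bA^1$-invariance of $\Delta_S(V)$ against all representables $y(Z)$, $Z$ of finite type, is equivalent to $\Delta_S(V)$ being $\bA^1$-invariant against \emph{all} objects $\cX$, which is exactly the statement that $V \in \Spc_{S,\bA^1}$.

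To assemble this: for the ``only if'' direction, if $V \in \Spc_{S,\bA^1}$ then $\Delta_S(V)$ is $\bA^1$-invariant, so in particular $\map(y(Z),\Delta_S(V)) \to \map(y(Z)\times\bA^1_S,\Delta_S(V))$ is an equivalence for every $Z$, which under the above identification is~\eqref{eq:z}. For the ``if'' direction, if~\eqref{eq:z} is an equivalence for all finite-type $Z$, then $\Delta_S(V)(Z) \to \Delta_S(V)(Z\times\bA^1_S)$ is an equivalence for all such $Z$ (this is the evaluation of the previous mapping spaces, using that $\map(y(Z),-)$ is evaluation at $Z$ by Yoneda), so by Corollary~\ref{cor:repsgood} we conclude $V$ is $\bA^1$-invariant relative to $S$, i.e. $V \in \Spc_{S,\bA^1}$. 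The main point to get right — and the one subtlety I would be careful about — is the identification of the map~\eqref{eq:z} with the restriction-along-$\pi$ map under the dictionary of Theorem~\ref{thm:2.40}: one must check that the functoriality of $\Pi^{\et}_\i$ in $Z$ (coming from the geometric morphism $\Shv_{\et}(\pi)$ and the shape functor) matches the functoriality of $\map_{\Shv_{\et}(\Sch^{\ft}_S)}(y(-),\Delta_S(V))$ in $Z$, which is really the content of the naturality in Theorem~\ref{thm:2.40} together with the compatibility of $\Shv((-)_{\et})$ on $\Sch^{\ft}_S$ with the Yoneda embedding recorded in the lemma preceding Corollary~\ref{cor:shapedesc}. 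Everything else is a formal chain of adjunctions and Yoneda, so this is where I would spend the words.
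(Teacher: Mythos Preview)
Your proposal is correct and follows exactly the paper's approach: the paper's proof is the one-liner ``This follows immediately from \cite[Theorem 2.40]{dave-etale} and Corollary~\ref{cor:repsgood},'' and you have simply unpacked what those two citations say. The naturality check you flag at the end is the only real content not made explicit in the paper, but it is indeed routine.
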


\begin{proof}
This follows immediately from \cite[Theorem 2.40]{dave-etale} and Corollary \ref{cor:repsgood}.
\end{proof}

\begin{definition}
Fix a scheme $Z$. Denote by $\mathcal{P}_Z$ the collection of primes that occur as the residue characteristic of the local ring of a stalk of the structure sheaf of $Z.$
\end{definition}

\begin{remark}\label{rmk:ZS}
Let $f:Y \to Z$ be any morphism of schemes. Then $$\mathcal{P}_Y \subseteq \mathcal{P}_Z.$$ This follows since if $y$ is a point of $Y,$ then the induced map between stalks $\left(\mathcal{O}_Y\right)_y \to \left(\mathcal{O}_Z\right)_{f\left(y\right)}$ is a map of local rings, hence the characteristics of the respective residue fields must agree.
\end{remark}

\begin{lemma}\label{lem:1} Let $Z$ be a connected locally Noetherian scheme. Denote by $\Spc_Z$ the collection of spaces for which 
the map (\ref{eq:z}) is an equivalence. Then $\Spc_Z$ contains
\begin{itemize}
\item[a)] $\Spc_{\mathcal{P}_Z}$
\item[b)] $\Spc_{\mathcal{P}_Z}^{nil}$
\item[c)] $\Spc^{\ell\mbox{-}\pi}$ for all $\ell \notin \mathcal{P}_Z.$
\end{itemize}
\end{lemma}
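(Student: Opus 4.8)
The plan is to reduce all three claims to known computations about the $\et$-homotopy type of $\bA^1$ over a point, together with the homotopy-pullback/long-exact-sequence behavior of the pro-spaces involved, regarded as left-exact functors $\Spc \to \Spc$. First I would observe that by Lemma~\ref{lem:smalliscooltoo} (and its proof via \cite[Theorem 2.40]{dave-etale} and Corollary~\ref{cor:repsgood}) it suffices to show that for each $S$-scheme $Z$ of finite type and each $V$ in the relevant class, the map
\[
\Pi^{\et}_\i\left(Z\right)\left(V\right) \to \Pi^{\et}_\i\left(Z \times \bA^1\right)\left(V\right)
\]
is an equivalence; and since $Z \times \bA^1 \to Z$ is an affine bundle, the classical Artin-Mazur-type computation tells us that the induced map on $\et$-homotopy types becomes an equivalence after completion away from $\mathcal{P}_Z$. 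More precisely, for each point of $Z$ the base change $\bA^1_{\kappa(z)}$ over a field of characteristic in $\mathcal{P}_Z$ has $\et$-homotopy type whose mod-$\ell$ cohomology (for $\ell \notin \mathcal{P}_Z$) is trivial — this is the standard affine line homotopy invariance for torsion prime to the characteristic (cf. the reference to \cite[XV Corollaire 2.2]{SGA4} used in the Example). The key input is therefore that $\Pi^{\et}_\i(Z \times \bA^1) \to \Pi^{\et}_\i(Z)$ is a "mod-$\ell$ cohomology equivalence" for all $\ell \notin \mathcal{P}_Z$, or more precisely induces an equivalence after the relevant pro-completion.

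For part (c), where $V$ ranges over $\ell$-finite spaces with $\ell \notin \mathcal{P}_Z$, I would argue by induction up the Postnikov tower of (each connected component of) $V$. The base case is $V = K(\bZ/\ell, n)$ for various $n$, where $\Pi^{\et}_\i(W)(K(\bZ/\ell,n)) \simeq \map_{\Spc}(\Pi^{\et}_\i(W), K(\bZ/\ell,n))$ computes (via the identification of $\Pi^\et_\i$ as a limit of $\pi$-finite spaces and the standard relation between maps into Eilenberg–MacLane spaces and cohomology) the mod-$\ell$ \'etale cohomology $H^n_{\et}(W,\bZ/\ell)$; so invariance here is exactly the affine-bundle invariance of mod-$\ell$ \'etale cohomology cited above. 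For the inductive step I would use that $V$ sits in a fiber sequence $K(\pi_n V, n) \to \tau_{\leq n} V \to \tau_{\leq n-1} V$ with a twist classified by a map $\tau_{\leq n-1}V \to \B\Aut(K(\pi_n V, n))$; applying the left-exact functor $\Pi^{\et}_\i(-)$ (which preserves finite limits, in particular the homotopy pullbacks building up $V$ from its Postnikov pieces) and chasing the resulting map of fiber sequences for $Z$ versus $Z \times \bA^1$, the five-lemma / long exact sequence reduces the claim for $V$ to the claim for the lower stages and for the fiber — here one needs that $|\pi_n V|$ and, for the twisting to be controllable, the relevant automorphism groups are prime to $\mathcal{P}_Z$, which is where Definitions~\ref{defn:spcp}/\ref{defn:spcpnil} enter. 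For part (a), $\Spc_{\mathcal{P}_Z}$, the same Postnikov induction works, with condition (3) of Definition~\ref{defn:spcp} ($|\Aut(\pi_i)|$ coprime to $p$) exactly ensuring the twisting classifying space $\B\Aut(K(\pi_i,i))$ has homotopy groups prime to $\mathcal{P}_Z$ (using Proposition~\ref{prop:univfib} to identify that classifying space) so the inductive hypothesis applies to it as well. For part (b), $\Spc^{nil}_{\mathcal{P}_Z}$, instead of splitting off the full automorphism group one uses the nilpotent refinement of the Postnikov tower: a nilpotent space admits a principal refinement of its Postnikov tower with each stage a principal $K(M,n)$-fibration with $M$ an abelian group killed by primes outside $\mathcal{P}_Z$, so the same fiber-sequence argument applies with $\B K(M,n) = K(M, n+1)$ in place of $\B\Aut$, avoiding condition (3) entirely. (Part (c) can also be deduced from part (b) directly via Proposition~\ref{prop:nilbig}, which already shows $\Spc^{\ell\text{-}\pi} \subset \Spc^{nil}_{\mathcal{P}_Z}$ when $\ell \notin \mathcal{P}_Z$.)

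The main obstacle I anticipate is making the Postnikov/fiber-sequence induction precise at the level of \emph{pro-spaces viewed as left-exact functors}: one must check that evaluating $\Pi^{\et}_\i(Z)(-)$ on the fiber sequences that present $V$ yields fiber sequences of spaces to which a five-lemma argument applies, and that the comparison map between the towers for $Z$ and for $Z\times\bA^1$ is compatible with all the connecting data. Because $\Pi^\et_\i(-)$ is left exact it does preserve the relevant finite homotopy limits, so this is really a bookkeeping issue rather than a genuine difficulty, but it needs care; an alternative, perhaps cleaner, route is to pass to the $\Prof_{\mathcal{P}_Z^c}$-completion first (where $\mathcal{P}_Z^c$ denotes primes not in $\mathcal{P}_Z$) and invoke that $\bA^1$-invariance of mod-$\ell$ cohomology for all such $\ell$, plus a profinite Whitehead/convergence theorem for the classes in question, upgrades the cohomology equivalence to an equivalence of the relevant completions — then one only needs each $V$ in classes (a)–(c) to be built out of the pieces that this completion does not destroy, which is precisely what the coprimality-to-$\mathcal{P}_Z$ hypotheses guarantee.
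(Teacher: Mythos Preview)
Your overall strategy matches the paper's: closure of $\Spc_Z$ under finite limits, the base case of Eilenberg--MacLane spaces via $\bA^1$-invariance of \'etale cohomology with torsion coefficients prime to $\mathcal{P}_Z$, the nilpotent principal refinement for (b), the pullback square of Proposition~\ref{prop:univfib} for (a), and the implication (b) $\Rightarrow$ (c) via Proposition~\ref{prop:nilbig}. Two points, however, need attention.

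First, your induction for (a) is circular as stated. At the stage where you assume the claim for $(n-1)$-truncated spaces in $\Spc_{\mathcal{P}_Z}$ and try to prove it for an $n$-truncated $V$, the pullback square
\[
\xymatrix{V \ar[r] \ar[d] & K(\Aut(\pi_n V),1) \ar[d]\\ V_{n-1} \ar[r] & \B\Aut(K(\pi_n V, n))}
\]
has lower-right corner $\B\Aut(K(\pi_n V, n))$, which by Proposition~\ref{prop:univfib} is $(n+1)$-truncated (with $\pi_1=\Aut(\pi_n V)$ and $\pi_{n+1}=\pi_n V$). So ``the inductive hypothesis applies to it as well'' does not go through. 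The paper avoids this by establishing $\B\Aut(K(A,n)) \in \Spc_Z$ \emph{directly}, before the induction, using the full strength of \cite[Corollary VI.4.20]{milne-book}: $\bA^1$-invariance of \'etale cohomology with coefficients in any $A$-\emph{local system}, not just constant coefficients. Condition~(3) of Definition~\ref{defn:spcp} is what makes $K(\Aut(A),1)$ lie in $\Spc_Z$ (via \cite[Proposition 4.3.1]{morel-voevodsky} for non-abelian $H^1$), but $\B\Aut(K(A,n))$ itself needs the separate local-system argument.

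Second, you pass to connected components of $V$ without justification. The classes in Definitions~\ref{defn:spcp} and~\ref{defn:spcpnil} allow arbitrary $\pi_0$, so one must know $\Spc_Z$ is closed under coproducts. This is where the hypotheses \emph{connected} and \emph{locally Noetherian} on $Z$ are actually used: they make $\Shv(Z_{\et})$ and $\Shv((Z\times\bA^1)_{\et})$ locally connected, so that $\Gamma$ preserves coproducts (see \cite[Proposition~3.26]{dave-etale}), and hence $\Spc_Z$ is coproduct-closed.

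Finally, your ``main obstacle'' is not one: since $\Pi^{\et}_\i(Z)$ and $\Pi^{\et}_\i(Z\times\bA^1)$ are by definition left-exact functors $\Spc\to\Spc$, they send pullback squares to pullback squares, and the class of maps $V$ for which (\ref{eq:z}) is an equivalence is automatically closed under finite limits. No five-lemma or tower-comparison bookkeeping is needed; the paper just records this closure at the outset and then checks the building blocks.
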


\begin{proof}
Firstly, by Proposition \ref{prop:nilbig}, $b) \Rightarrow c),$ so we will start by proving $b).$ Note that, by an analogous argument to that of Proposition \ref{prop:sa1good}, $\Spc_Z$ has finite limits and is idempotent complete. We will first show that it contains all Eilenberg-Maclane spaces $K\left(\bZ/\ell,n\right),$ for $n >0,$ and $\ell \notin \mathcal{P}_Z$ a prime. In fact, let us prove something stronger, namely that for $n \geq 1$ that $K\left(G, n\right)$ is in $\Spc_Z$ where $G$ is a finite (not necessarily abelian when $n=1$) group of cardinality coprime to $p,$ for all $p \in \mathcal{P}_Z.$ From \cite[Lemma 4.10]{dave-etale}, this follows from the $\bA^1$-invariance cohomology groups: $$H^i\left(Z,G\right) \rightarrow H^i\left(Z \times \bA^1,G\right)$$ for the above $G$, c.f. \cite[Corollary VI.4.20]{milne-book} for $G$ abelian and $i \geq 1$ and \cite[Proposition 4.3.1]{morel-voevodsky} for $i =1$ and $G$ not necessarily abelian. Secondly, since $Z$ is connected and locally Noetherian, the $\i$-topos $\Shv\left(Z_{\et}\right)$ is connected and locally connected, and similarly for $\Shv\left(\left(Z\times\bA^1\right)_{\et}\right).$ Therefore, by \cite[Proposition 3.26]{dave-etale}, $\Gamma_Z$ and $\Gamma_{Z\times \bA^1}$ both preserve coproducts, and hence $\Spc_Z$ is closed under coproducts. Hence, to prove that $\Spc_Z$ contains all spaces in $b),$ it suffices to prove so for connected such spaces. (Note also it clearly contains the terminal object and hence contains all $0$-truncated spaces). We will proceed by induction on homotopy dimension. Suppose that the claim is established for all connected spaces in $b)$ which are $n$-truncated. Let $X$ be an $\left(n+1\right)$-truncated space in $b).$ Such a space is nilpotent, so by \cite[Proposition 6.2]{goerss-jardine}, we can refine the map $$\tau_{\leq n-1}:X \to X_n$$ in the Postnikov tower of $X$ to a finite composition
$$X=Y_k \to Y_{k-1} \to \cdots Y_1 \to Y_0=X_{n},$$
such that for all  $i,$ we have a pullback square
$$\xymatrix{Y_i \ar[r] \ar[d] & \ast \ar[d]\\
Y_{i-1} \ar[r] & K\left(A_i,n+2\right),}$$
where $A_i$ is an abelian subquotient of a homotopy group of $X.$ Observe that the order of $A_i$ is coprime to $p$ for all $p \in \mathcal{P},$ so $K\left(A_i,n+2\right)$ is in $\Spc_Z.$ Notice that when $i=1,$ we have a pullback square
$$\xymatrix{Y_1 \ar[r] \ar[d] & \ast \ar[d]\\
X_{n} \ar[r] & K\left(A_1,n+2\right),}$$
and since $X_n,$ $\ast$ and $K\left(A_1,n+2\right)$ are all in $\Spc_Z,$ so is $Y_1.$ Continuing by induction, we conclude that $Y_k=X$ is also in $\Spc_Z.$ This establishes that $\Spc_Z$ contains all the spaces in $b).$

Next, we will prove that $\Spc_Z$ contains $\Spc_{\mathcal{P}_Z}.$ We will proceed in several steps. Firstly, we note that if $A$ is any finite group such that $\Aut\left(A\right)$ is of cardinality coprime to $p$ for all $p \in \mathcal{P}_Z$, then $K\left(\Aut\left(A\right), 1\right)$ is in $\Spc_Z$, by \cite[Proposition 4.3.1]{morel-voevodsky}. Secondly, we claim that for $n \geq 1$ and $A$ as above, $\B\Aut K\left(A, n\right)$ is also in $\Spc_Z$. Indeed, by an analogous argument to \cite[Proposition 4.11]{dave-etale}, in order to show that each $\B\Aut\left(K\left(A, n\right)\right)$ is in $\Spc_Z,$ where $A$ is a finite abelian group of cardinality coprime to $p$ for all $p \in \mathcal{P}_Z,$ it suffices to note that cohomology with coefficients in any $A$-local system is $\bA^1$-invariant, which follows by the full strength of \cite[Corollary VI.4.20]{milne-book}. Thirdly, we claim that $\Spc_Z$ contains all connected spaces that are in $\Spc_{\mathcal{P}_Z}.$ We proceed by induction on the homotopy dimension:

For the base case, we note that a connected, $1$-truncated space satisfying Conditions 1,2 and 3 of~\ref{defn:spcp} can only be a $K\left(G, 1\right)$ where $G$ is of cardinality prime to $p$ for all $p \in \mathcal{P}_Z$; this space is already in $\Spc_Z$ as we saw above. Suppose that that $\Spc_Z$ contains all connected $\left(n-1\right)$-truncated spaces satisfying conditions (1)-(3) of Definition~\ref{defn:spcp}, and let $V$ be another such connected space which is $n$-truncated. We will show that $V$ is also in $\Spc_Z.$ Let $$\tau_{\leq n-1}: V \rightarrow V_{n-1}$$ be the $\left(n-1\right)^{\mbox{\tiny st}}$-truncation, which has homotopy fiber equivalent to $K\left(\pi_n\left(V\right), n\right)$ where $\pi_nV$ is abelian. Then we get the following pullback square by Proposition~\ref{prop:univfib}:

$$\xymatrix{
V \ar[d]_{\tau_{\leq n-1}} \ar[r] & K\left(\Aut\left(\pi_nV\right), 1\right) \ar[d]\\
V_{n-1} \ar[r] & \B\Aut\left(K\left(\pi_nV, n\right)\right)}$$
Hence, since $\B\Aut\left(K\left(\pi_nV, n\right)\right)$ is indeed in $\Spc_Z,$ we are done since $V_{n-1}$ is also by induction hypothesis, and $K\left(\Aut\left(\pi_nV\right),1\right)$ is too by the first part of the proof, and $\Spc_Z$ is closed under finite limits. Finally, since $\Spc_Z$ is closed under coproducts, it contains a space if and only if it contains each of its connected components, hence it contains all of $\Spc_{\mathcal{P}_Z}.$
\end{proof}

\begin{corollary}\label{cor:1}
Let $Z$ be a locally Noetherian scheme, which is not necessarily connected. Then the conclusion of Lemma \ref{lem:1} still holds.
\end{corollary}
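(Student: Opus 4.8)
\emph{Proof proposal.} The plan is to reduce to the connected case already handled in Lemma~\ref{lem:1} by writing $Z$ as the disjoint union of its connected components and then checking that the comparison map~\eqref{eq:z} decomposes as a product indexed by those components.

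First I would record that a locally Noetherian scheme is locally connected: it admits an open cover by Noetherian affines, each of which is a Noetherian topological space and hence has only finitely many connected components, all clopen. It follows that the connected component $Z_\alpha$ through any point is an open subscheme, so $Z=\coprod_\alpha Z_\alpha$ with each $Z_\alpha$ a connected, locally Noetherian scheme (an open subscheme of a locally Noetherian scheme again being locally Noetherian). Since an \'etale morphism to a disjoint union of schemes is itself the disjoint union of its restrictions to the pieces, the small \'etale $\infty$-topos decomposes as a product, $\Shv\left(Z_{\et}\right)\simeq\prod_\alpha\Shv\left(\left(Z_\alpha\right)_{\et}\right)$, compatibly with the analogous decomposition of $\Shv\left(\left(Z\times\bA^1\right)_{\et}\right)$ and with the geometric morphisms induced by the projections $Z\times\bA^1\to Z$. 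For a small product of $\infty$-topoi, the terminal geometric morphism computes $\Delta$ componentwise and $\Gamma$ as the corresponding product of global-sections functors; hence, viewing $\Pi^{\et}_\i\left(-\right)$ as the left exact functor $\Gamma\Delta\colon\Spc\to\Spc$, we obtain for every space $V$ a natural identification
\[
\Pi^{\et}_\i\left(Z\right)\left(V\right)\;\simeq\;\prod_\alpha\Pi^{\et}_\i\left(Z_\alpha\right)\left(V\right),\qquad\Pi^{\et}_\i\left(Z\times\bA^1\right)\left(V\right)\;\simeq\;\prod_\alpha\Pi^{\et}_\i\left(Z_\alpha\times\bA^1\right)\left(V\right),
\]
under which the map~\eqref{eq:z} for $Z$ becomes the product over $\alpha$ of the maps~\eqref{eq:z} for the $Z_\alpha$.

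Now take $V$ in any of $\Spc_{\mathcal{P}_Z}$, $\Spc_{\mathcal{P}_Z}^{nil}$, or $\Spc^{\ell\mbox{-}\pi}$ with $\ell\notin\mathcal{P}_Z$. By Remark~\ref{rmk:ZS} the inclusion $Z_\alpha\hookrightarrow Z$ gives $\mathcal{P}_{Z_\alpha}\subseteq\mathcal{P}_Z$; since the conditions defining $\Spc_{\mathcal{P}}$ and $\Spc_{\mathcal{P}}^{nil}$ (Definitions~\ref{defn:spcp} and~\ref{defn:spcpnil}) are quantified over the primes in $\mathcal{P}$, enlarging $\mathcal{P}$ shrinks the subcategory, so $\Spc_{\mathcal{P}_Z}\subseteq\Spc_{\mathcal{P}_{Z_\alpha}}$, $\Spc_{\mathcal{P}_Z}^{nil}\subseteq\Spc_{\mathcal{P}_{Z_\alpha}}^{nil}$, and $\ell\notin\mathcal{P}_Z$ forces $\ell\notin\mathcal{P}_{Z_\alpha}$. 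Thus $V$ lies in the corresponding class for every connected locally Noetherian $Z_\alpha$, so Lemma~\ref{lem:1} shows that~\eqref{eq:z} for $Z_\alpha$ is an equivalence. A small product of equivalences is an equivalence, so~\eqref{eq:z} for $Z$ is an equivalence, i.e.\ $V\in\Spc_Z$. Hence $\Spc_Z$ contains $\Spc_{\mathcal{P}_Z}$, $\Spc_{\mathcal{P}_Z}^{nil}$ and $\Spc^{\ell\mbox{-}\pi}$ for all $\ell\notin\mathcal{P}_Z$, which is the conclusion of Lemma~\ref{lem:1}.

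The only step requiring genuine care is the decomposition $\Shv\left(Z_{\et}\right)\simeq\prod_\alpha\Shv\left(\left(Z_\alpha\right)_{\et}\right)$ together with the resulting product formula for $\Pi^{\et}_\i\left(Z\right)\left(V\right)$ made compatible with the structure map~\eqref{eq:z}; in particular, one must allow the (small) index set of connected components to be infinite and verify that the terminal geometric morphism of a product $\infty$-topos is computed componentwise. Everything else is bookkeeping with Lemma~\ref{lem:1} and Remark~\ref{rmk:ZS}. When $Z$ is of finite type over the base scheme one can alternatively read off the product formula directly from Theorem~\ref{thm:2.40}, using $\map\left(\coprod_\alpha Z_\alpha,\Delta\left(V\right)\right)\simeq\prod_\alpha\map\left(Z_\alpha,\Delta\left(V\right)\right)$.
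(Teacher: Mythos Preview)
Your proposal is correct and follows essentially the same strategy as the paper: decompose $Z$ into its connected components $Z_\alpha$, use that each $Z_\alpha$ is connected and locally Noetherian to invoke Lemma~\ref{lem:1}, and assemble the conclusion via a product formula for $\Pi^{\et}_\i\left(Z\right)\left(V\right)$. The only packaging difference is that the paper derives the product formula by writing $\Gamma_Z\Delta_Z\left(V\right)=\map\left(\coprod_\alpha Z_\alpha,\Delta_Z\left(V\right)\right)\simeq\prod_\alpha\map\left(Z_\alpha,\Delta_Z\left(V\right)\right)$ inside the single $\infty$-topos $\Shv\left(Z_{\et}\right)$, whereas you phrase it via the product decomposition $\Shv\left(Z_{\et}\right)\simeq\prod_\alpha\Shv\left(\left(Z_\alpha\right)_{\et}\right)$; these are equivalent, and your explicit invocation of Remark~\ref{rmk:ZS} to handle the inclusions $\mathcal{P}_{Z_\alpha}\subseteq\mathcal{P}_Z$ is a point the paper leaves implicit.
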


\begin{proof}
The proof is similar to \cite[Proposition 4.12]{dave-etale}: Firstly, write $$Z=\underset{\alpha} \coprod Z_{\alpha},$$ with each $Z_{\alpha}$ a connected scheme, which is possible since $Z$ being locally Noetherian implies locally connected. Let $V$ be a space satisfying the hypothesis of Lemma \ref{lem:1}. Then, since each scheme $Z_\alpha$ is also locally Noetherian and connected, we have that the canonical map $$\Pi^{\et}_\i\left(Z_\alpha\right)\left(V\right) \to  \Pi^{\et}_\i\left(Z_\alpha \times \bA^1\right)\left(V\right)$$ is an equivalence. Unwinding the definitions, this means that the canonical map
$$\map_{\Shv\left(Z_{\et}\right)}\left(Z_\alpha,\Delta_{Z_\alpha}\left(V\right)\right) \to \map_{\Shv\left(\left(Z_\alpha \times \bA^1\right)_{\et}\right)}\left(Z_\alpha \times \bA^1,\Delta_{Z_\alpha \times \bA^1}\left(V\right)\right)$$ 
is an equivalence. Now consider the following natural string of equivalence:
\begin{eqnarray*}
\Gamma_{Z}\Delta_Z\left(V\right)&=& \map_{\Shv\left(Z_{\et}\right)}\left( \underset{\alpha} \coprod Z_{\alpha},\Delta_{Z}\left(V\right)\right)\\
&\simeq& \underset{\alpha} \prod \map_{\Shv\left(Z_{\et}\right)}\left(Z_\alpha,\Delta_{Z}\left(V\right)\right)\\
&\simeq& \underset{\alpha} \prod \map_{\Shv\left(\left(Z_\alpha\right)_{\et}\right)}\left(Z_\alpha,\Delta_{Z_\alpha}\left(V\right)\right)\\
&\simeq& \underset{\alpha} \prod \map_{\Shv\left(\left(Z_\alpha \times \bA^1\right)_{\et}\right)}\left(Z_\alpha \times \bA^1,\Delta_{Z_\alpha\times \bA^1}\left(V\right)\right)\\
&\simeq& \map_{\Shv\left(\left(Z_\alpha \times \bA^1\right)_{\et}\right)}\left(\underset{\alpha} \coprod Z_\alpha \times \bA^1,\Delta_{Z \times \bA^1}\left(V\right)\right)\\
&=& \Gamma_{Z \times \bA^1}\Delta_{Z \times \bA^1}\left(V\right).
\end{eqnarray*}
\end{proof}

Finally we see that the $\infty$-category $\Spc_{S,\bA^1}$ contains a lot of interesting objects. 

\begin{theorem} \label{thm:spc-s-contains}
Let $S$ be a locally Noetherian scheme. Then $\Spc_{S,\bA^1}$ is closed under finite limits and retracts in $\Spc$ and contains
\begin{itemize}
\item[a)] $\Spc_{\mathcal{P}_S}$
\item[b)] $\Spc_{\mathcal{P}_S}^{nil}$
\item[c)] $\Spc^{\ell\mbox{-}\pi}$ for all $\ell \notin \mathcal{P}_S.$
\end{itemize}
\end{theorem}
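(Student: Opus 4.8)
The plan is to deduce Theorem~\ref{thm:spc-s-contains} from Corollary~\ref{cor:1} by relating the condition ``$V \in \Spc_{S,\bA^1}$'' to the family of conditions ``$V \in \Spc_Z$ for all $Z$ of finite type over $S$''. By Lemma~\ref{lem:smalliscooltoo}, a space $V$ lies in $\Spc_{S,\bA^1}$ precisely when, for every $S$-scheme $Z$ of finite type, the map $\Pi^{\et}_\i(Z)(V) \to \Pi^{\et}_\i(Z\times\bA^1)(V)$ is an equivalence --- that is, exactly when $V \in \Spc_Z$ for all finite-type $S$-schemes $Z$. Hence
\[
\Spc_{S,\bA^1} = \bigcap_{Z/S \text{ finite type}} \Spc_Z .
\]

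First I would record that each finite-type $S$-scheme $Z$ is itself locally Noetherian when $S$ is (finite type morphisms are of finite presentation locally, and this preserves the locally Noetherian property), so Corollary~\ref{cor:1} applies to every such $Z$. Next I would invoke Remark~\ref{rmk:ZS}: the structure morphism $Z \to S$ gives $\mathcal{P}_Z \subseteq \mathcal{P}_S$. Since $\Spc_{\mathcal{P}'} \supseteq \Spc_{\mathcal{P}}$ and $\Spc_{\mathcal{P}'}^{nil} \supseteq \Spc_{\mathcal{P}}^{nil}$ whenever $\mathcal{P}' \subseteq \mathcal{P}$ (the defining conditions become weaker as the prime set shrinks), we get $\Spc_{\mathcal{P}_S} \subseteq \Spc_{\mathcal{P}_Z}$ and $\Spc_{\mathcal{P}_S}^{nil} \subseteq \Spc_{\mathcal{P}_Z}^{nil}$ for every finite-type $Z$; likewise $\ell \notin \mathcal{P}_S$ forces $\ell \notin \mathcal{P}_Z$ only when... actually here one must be slightly careful, since $\mathcal{P}_Z \subseteq \mathcal{P}_S$ means $\ell \notin \mathcal{P}_S \Rightarrow \ell \notin \mathcal{P}_Z$, which is the direction we want. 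So for each finite-type $Z$, Corollary~\ref{cor:1} gives $\Spc_{\mathcal{P}_S} \subseteq \Spc_{\mathcal{P}_Z} \subseteq \Spc_Z$, and similarly for parts (b) and (c). Intersecting over all $Z$ yields $\Spc_{\mathcal{P}_S} \subseteq \Spc_{S,\bA^1}$, and likewise (b) and (c).

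Finally, the closure statements: $\Spc_{S,\bA^1}$ is closed under finite limits and retracts in $\Spc$. This is essentially Proposition~\ref{prop:sa1good} (whose proof shows $\Spc_{S,\bA^1}$ has finite limits computed as in $\Spc$ and is closed under retracts, using that $\Delta$ preserves finite limits and that equivalences are stable under retract); I would just cite it, or re-run the one-line argument that $\Delta$ and all $\map(T,-)$ preserve the relevant limits.

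The main obstacle is really bookkeeping rather than mathematics: one must make sure Corollary~\ref{cor:1} is being applied to the right schemes (all finite-type $S$-schemes, not just $S$ itself) and that the inclusion $\mathcal{P}_Z \subseteq \mathcal{P}_S$ is pushed through the monotonicity of the assignments $\mathcal{P} \mapsto \Spc_{\mathcal{P}}$, $\mathcal{P} \mapsto \Spc_{\mathcal{P}}^{nil}$ in the correct (order-reversing) direction. Once the identification $\Spc_{S,\bA^1} = \bigcap_{Z} \Spc_Z$ via Lemma~\ref{lem:smalliscooltoo} is in hand, everything else is immediate.
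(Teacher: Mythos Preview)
Your proposal is correct and follows essentially the same approach as the paper: cite Proposition~\ref{prop:sa1good} for the closure statements, use Lemma~\ref{lem:smalliscooltoo} to identify $\Spc_{S,\bA^1}$ with $\bigcap_Z \Spc_Z$, note that each finite-type $Z$ over a locally Noetherian $S$ is locally Noetherian, use Remark~\ref{rmk:ZS} for $\mathcal{P}_Z \subseteq \mathcal{P}_S$, and then invoke Corollary~\ref{cor:1}. You are in fact more explicit than the paper about the monotonicity step $\Spc_{\mathcal{P}_S} \subseteq \Spc_{\mathcal{P}_Z}$, which the paper leaves implicit.
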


\begin{proof}
The first statement is a restatement of Proposition \ref{prop:sa1good}. To prove the rest, by Lemma \ref{lem:smalliscooltoo} and Remark\ref{rmk:ZS}, it suffices to prove that $\Spc_{S,\bA^1}$ contains $\Spc_Z$ and $\Spc_{\mathcal{P}_Z}^{nil}$ for all $S$-schemes $Z$ of finite type. Note that such a $Z$ is automatically also locally Noetherian by Lemma 28.14.6 of \cite[Tag 01T6]{stacks-project}. The result now follows from Corollary \ref{cor:1}. 
\end{proof}




\subsubsection{Absolute \'{e}tale realization}\label{subsec:absolute}






We are now ready to define the \'{e}tale realization functor in the absolute situation:

\begin{theorem} Let $S$ be a scheme, then there exists a colimit preserving functor $$\Et_{\bA^1}: \Spc\left( S \right) \rightarrow \Pro\left(\Spc_{S,\bA^1} \right)$$ whose value on $\LL_{\mot}\left(X\right)$ where $X$ is a smooth $S$-scheme is given by the $\Spc_{S,\bA^1}$-localization of the \'etale homotopy type of $X$: $$\widehat{\Pi^{\et}_\i\left(X\right)}_{\Spc_{S,\bA^1}}.$$
\end{theorem}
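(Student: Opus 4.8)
The plan is to obtain $\Et_{\bA^1}$ by composing the \'etale realization on representables with the $\Spc_{S,\bA^1}$-localization and then invoking the universal property of the motivic localization $\LL_{\mot}$. First I would start with the functor $\Sm_S \to \Pro\left(\Spc\right)$ sending a smooth $S$-scheme $X$ to its \'etale homotopy type $\Pi^{\et}_\i\left(X\right)$; this is the restriction along $\Sm_S \hookrightarrow \Shv_{\et}\left(\Sch^{\ft}_S\right)$ of the colimit-preserving functor $\Pi^{\et}_\i\left(\cdot\right)$ of Corollary~\ref{cor:shapedesc}. Postcomposing with the localization functor $\cs = \is^* : \Pro\left(\Spc\right) \to \Pro\left(\Spc_{S,\bA^1}\right)$ from Corollary~\ref{prop:sa1isloc} gives a functor $F : \Sm_S \to \Pro\left(\Spc_{S,\bA^1}\right)$ with $F\left(X\right) = \widehat{\Pi^{\et}_\i\left(X\right)}_{\Spc_{S,\bA^1}}$. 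Since $\Pro\left(\Spc_{S,\bA^1}\right)$ is cocomplete (Proposition~\ref{prop:limcolim}, using Proposition~\ref{prop:sa1good}), the universal property of $\LL_{\mot}$ (the theorem following \eqref{eq:lmot}, after~\cite[Theorem 2.30]{robalo}) says that $F$ extends to a colimit-preserving functor $\Et_{\bA^1}: \Spc\left(S\right) \to \Pro\left(\Spc_{S,\bA^1}\right)$ \emph{provided} $F$ satisfies Nisnevich excision and $\bA^1$-invariance. It is automatic from the construction that $\Et_{\bA^1}\left(\LL_{\mot}X\right) = F\left(X\right)$ on representables, which is the asserted formula.

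So the content of the proof is verifying the two localizing conditions for $F$. For Nisnevich excision: the underlying functor $\Pi^{\et}_\i\left(\cdot\right)$ is colimit-preserving on $\Shv_{\et}\left(\Sch^{\ft}_S\right)$, and an elementary distinguished square becomes a pushout square after Nisnevich (hence \'etale) sheafification — this is the standard fact that Nisnevich distinguished squares are cocartesian in $\Shv_{\et}\left(\Sch^{\ft}_S\right)$, together with the fact that $\Pi^{\et}_\i$ sends $\emptyset$ to the initial pro-space; applying the colimit-preserving $\cs$ then preserves the resulting pushout, so $F$ takes elementary distinguished squares to pushout squares, which is Nisnevich excision for a functor valued in a cocomplete $\infty$-category. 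For $\bA^1$-invariance: I need the canonical map $F\left(X \times_S \bA^1\right) \to F\left(X\right)$ to be an equivalence, i.e. $\cs\Pi^{\et}_\i\left(X \times \bA^1\right) \to \cs\Pi^{\et}_\i\left(X\right)$ is an equivalence in $\Pro\left(\Spc_{S,\bA^1}\right)$. This is precisely where the definition of $\Spc_{S,\bA^1}$ was engineered: by Corollary~\ref{prop:sa1isloc} a map of pro-spaces becomes an equivalence after applying $\is^*$ iff it induces an equivalence on mapping spaces out of every object of $\Spc_{S,\bA^1}$, and by Lemma~\ref{lem:smalliscooltoo} every $V \in \Spc_{S,\bA^1}$ has the property that $\Pi^{\et}_\i\left(Z\right)\left(V\right) \to \Pi^{\et}_\i\left(Z \times \bA^1\right)\left(V\right)$ is an equivalence for all finite-type $S$-schemes $Z$. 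Unwinding: for $V \in \Spc_{S,\bA^1}$, $\map\left(\cs\Pi^{\et}_\i\left(X\times\bA^1\right), j(V)\right) \simeq \map\left(\Pi^{\et}_\i\left(X\times\bA^1\right), j(V)\right) \simeq \Pi^{\et}_\i\left(X\times\bA^1\right)\left(V\right)$ via Proposition~\ref{prop:lex}, and likewise for $X$, and the map between these is the equivalence furnished by Lemma~\ref{lem:smalliscooltoo} (with $Z = X$). Hence $F$ is $\bA^1$-invariant.

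I expect the main obstacle to be a minor but genuine bookkeeping point: the universal property of $\LL_{\mot}$ is phrased for functors out of $\Sm_S$, whereas $\Pi^{\et}_\i\left(\cdot\right)$ is naturally defined on all of $\Shv_{\et}\left(\Sch^{\ft}_S\right)$ and I must be careful that the restriction to $\Sm_S$, its composition with $\cs$, and the identification of $F$ with the presheaf-level left Kan extension all agree — in particular that ``Nisnevich excision'' in the statement of~\cite[Theorem 2.30]{robalo} matches the elementary-distinguished-square condition I verify, and that $\map\left(\cX, j(V)\right) \simeq \Pi^{\et}_\i(\cX)(V)$ holds at the level needed (this is Theorem~\ref{thm:2.40}, but one should check $V \in \Spc_{S,\bA^1}$ rather than an arbitrary space causes no trouble — it does not, since $\Spc_{S,\bA^1} \hookrightarrow \Spc$). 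A secondary subtlety is ensuring $\Pro\left(\Spc_{S,\bA^1}\right)$ really is cocomplete so that the universal property applies: this needs $\Spc_{S,\bA^1}$ accessible with finite limits, which is exactly Proposition~\ref{prop:sa1good}, feeding into Proposition~\ref{prop:limcolim}. Once these compatibilities are pinned down, uniqueness of $\Et_{\bA^1}$ among colimit-preserving functors is immediate from the fully faithfulness clause in the universal property of $\LL_{\mot}$.
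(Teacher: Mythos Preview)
Your proposal is correct and lands on the same invariant, but the route differs in packaging from the paper's. The paper does not invoke Robalo's one-step universal property of $\Spc(S)$ out of $\Sm_S$; instead it first builds an explicit left adjoint $\LL_{\et}:\Shv_{\Nis}(\Sm_S)\to\Shv_{\et}(\Sch^{\ft}_S)$ (using that \'etale sheaves restrict to Nisnevich sheaves, so Nisnevich excision comes for free), proves $\LL_{\et}$ is left exact so that $\LL_{\et}(\cX\times\bA^1)\simeq\LL_{\et}(\cX)\times\bA^1$, composes with $\Pi^{\et}_\i$ and $\cs$, and then uses \cite[Proposition~5.5.4.20]{htt} to descend through the $\bA^1$-localization by checking that $\bA^1$-local equivalences go to equivalences for \emph{all} Nisnevich sheaves $\cX$ (via Theorem~\ref{thm:2.40} and the definition of $\Spc_{S,\bA^1}$). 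Your approach checks Nisnevich excision and $\bA^1$-invariance only on representables and lets Robalo's universal property do the rest. What the paper's route buys is the explicit formula $\Et_{\bA^1}(\cX)\simeq\reallywidehat{\Pi^{\et}_\i(\LL_{\et}i(\cX))}_{\Spc_{S,\bA^1}}$ for arbitrary motivic spaces (Remark~\ref{rmk:Li1}), not just representables; your approach is slightly cleaner conceptually but leaves this formula implicit in the colimit extension. The $\bA^1$-invariance verification is the same in both: it reduces to the defining property of $\Spc_{S,\bA^1}$, either directly or through Lemma~\ref{lem:smalliscooltoo}.
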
\label{thm:absolute_realization}


\begin{proof}
Consider the fully faithful inclusion $$q:\Sm_S \hookrightarrow \Sch^{\ft}_S$$ of smooth finite type $S$-schemes into all finite type $S$-schemes, and consider furthermore its composite with the Yoneda embedding $$y \circ q: \Sm_S \hookrightarrow \Shv_{\et}\left(\Sch^{\ft}_S\right).$$

The left Kan extension of this composite along the Yoneda embedding $$y_{\Sm}:\Sm_S\hookrightarrow \PShv\left(\Sm_S\right),$$ namely $$\LKE_{y_{\Sm}}\left(y\circ q\right):\PShv\left(\Sm_S\right) \to \Shv_{\et}\left(\Sch^{\ft}_S\right)$$ has a right adjoint $R_q$ given by restriction along $q,$ i.e. for an object $\cY$ of $\Shv_{\et}\left(\Sch^{\ft}_S\right)$ and a smooth $S$-scheme $X,$ $R_q\left(\cY\right)\left(X\right)=\cY\left(X\right).$ Any such $R_q\left(\cY\right)$ satisfies \'etale descent, and therefore also Nisnevich descent, and hence there is an induced adjunction $\LL_{\et} \dashv R_{\et},$ with $$\LL_{\et}:\Shv_{\Nis}\left(\Sm_S\right) \to \Shv_{\et}\left(\Sch^{\ft}_S\right).$$ The functor $\LL_{\et},$ by construction, sends any smooth $S$-scheme to itself; in fact, it is the composite $$\Shv_{\Nis}\left(\Sm_S\right) \hookrightarrow \PShv\left(\Sm_S\right) \stackrel{\LKE_{y_{\Sm}}\left(y\circ q\right)}{\longlonglongrightarrow} \Shv_{\et}\left(\Sch^{\ft}_S\right).$$ We claim that $\LL_{\et}$ is furthermore left-exact. For this, it suffices to prove that the left Kan extension $\LKE_{y_{\Sm}}\left(y\circ q\right)$ is. Since the category $\Sm_S$ has finite limits, appealing to \cite[Proposition 6.1.5.2]{htt}, it suffices to observe that these finite limits are preserved by the functor $y\circ q.$ In particular, we conclude that for any Nisnivech sheaf of spaces $\cX,$ 
\begin{equation}\label{eq:oknis}
\LL_{\et}\left(\cX \times \mathbb{A}^1\right) \simeq \LL_{\et}\left(\cX\right) \times \mathbb{A}^1.
\end{equation}
Denote by $\widetilde{\Et_{\bA^1}}$ the composite of colimit preserving functors
$$\xymatrix@C=2cm{\Shv_{\Nis}\left(\Sm_S\right) \ar[r]^-{\LL_{\et}} & \Shv_{\et}\left(\Sch^{\ft}_S\right) \ar[r]^-{\Pi^{\et}_\i} & \Pro\left(\Spc\right) \ar[r]^-{\cs} & \Pro\left(\Spc_{S,\bA^1}\right).}$$
Recall that the $\i$-category $\Spc\left(S\right)$ of motivic spaces over $S$ is the localization $$\xymatrix@1{\Spc\left(S\right)\mspace{4mu} \ar@{^{(}->}[r]<-0.9ex> & \Shv_{\Nis}\left(\Sm_S\right) \ar@<-0.5ex>[l]_-{\LL_{\mathbb{A}^1}}}$$
of the presentable $\i$-category $\Shv_{\Nis}\left(\Sm_S\right)$ at the $\mathbb{A}^1$-local equivalences. Hence, by \cite[Proposition 5.5.4.20]{htt}, composition with $\LL_{\mathbb{A}^1}$ induces a fully faithful functor $$\Fun^{\LL}\left(\Spc\left(S\right),\Pro\left(\Spc_{S,\bA^1}\right)\right) \hookrightarrow \Fun^{\LL}\left(\Shv_{\Nis}\left(\Sm_S\right),\Pro\left(\Spc_{S,\bA^1}\right)\right)$$ whose essential image is precisely those colimit preserving functors $\Shv_{\Nis}\left(\Sm_S\right) \to \Pro\left(\Spc_{S,\bA^1}\right)$ which send $\mathbb{A}^1$-local equivalences to equivalences. We claim that there is an essentially unique colimit preserving functor $\Et_{\bA^1}$ for which the following diagram commutes up to equivalence:
$$\xymatrix{\Spc\left(S\right) \ar[r]^-{\Et_{\bA^1}} & \Pro\left(\Spc_{S,\bA^1}\right).\\
\Shv_{\Nis}\left(\Sm_S\right) \ar[u]^-{\LL_{\mathbb{A}^1}} \ar[ru]_-{\widetilde{\Et_{\bA^1}}} &}$$
By the above, it suffices to show that $\widetilde{\Et_{\bA^1}}$ sends $\mathbb{A}^1$-local equivalences to equivalences, i.e. we need to show that for all $\cX$ in $\Shv_{\Nis}\left(\Sm_S\right),$ the induced map $$\widetilde{\Et_{\bA^1}}\left(\cX\times \mathbb{A}^1\right) \to \widetilde{\Et_{\bA^1}}\left(\cX\right)$$ is an equivalence. In light of (\ref{eq:oknis}), this means we need to show that the induced map $$\Pi^{\et}_\i\left(\LL_{\et}\left(\cX\right) \times \mathbb{A}^1\right) \to \Pi^{\et}_\i\left(\LL_{\et}\left(\cX\right) \right)$$ is an equivalence after $\Spc_{S,\mathbb{A}^1}$-localization. Using Theorem \ref{thm:2.40} and unwinding definitions, this is equivalent to checking that for all $V$ in $\Spc_{S,\mathbb{A}^1},$ the induced map
$$\map_{\Shv_{\et}\left(\Sch^{\ft}_S\right)}\left(\LL_{\et}\left(\cX\right),\Delta\left(V\right)\right) \to \map_{\Shv_{\et}\left(\Sch^{\ft}_S\right)}\left(\LL_{\et}\left(\cX\right) \times \mathbb{A}^1,\Delta\left(V\right)\right)$$ is an equivalence of spaces, which is true since, by definition of $\Spc_{S,\mathbb{A}^1},$ $\Delta\left(V\right)$ is $\mathbb{A}^1$-invariant.
\end{proof}

\begin{remark} \label{rmk:isaksen}

We remark that this approach lets us substantially enlarge the target of the realization functor; Isaksen's realization functor in \cite{realization} lands in the $p$-complete category for a fixed prime $p$ which is invertible in the residue characteristics of $S$. Indeed, if $p$ is a fixed prime which is not in $\mathcal{P}_S$ then point (c) of Theorem~\ref{thm:spc-s-contains} gives us an inclusion
\[
\Spc^{p\mbox{-}\pi} \hookrightarrow \Spc_{S,\bA^1}.
\]
By Proposition \ref{prop:ladj} we have a functor 
\begin{equation} \label{eq:at-p}
\Pro\left(\Spc_{S,\bA^1}\right) \rightarrow \Pro\left( \Spc^{p\mbox{-}\pi} \right)
\end{equation} with a fully faithful right adjoint. The functor constructed in~\cite[Corollary 2.7]{realization} is then obtained by postcomposing the functor in Theorem~\ref{thm:absolute_realization} with~\eqref{eq:at-p}.
\end{remark}

\begin{remark}\label{rmk:Li1}
Unwinding the definitions, we see that for $\cX$ a motivic space, $$\Et_{\bA^1}\left(\cX\right)=\reallywidehat{\Pi_{\i}^{\et}\left(\LL^{\et}i\left(\cX\right)\right)}_{\Spc_{S,\bA^1}},$$ where $$i:\Spc\left( S \right) \hookrightarrow \Shv_{\Nis}\left(\Sm_S\right)$$ is the canonical inclusion.
\end{remark}






\section{Relative \'Etale Realization of Motivic Spaces} \label{sect:relreal}

While the above section refines Isaksen's result, we will now produce a new functor whose existence was anticipated in \cite{realization}. In the case that the base scheme $S = \Spec\,k$, the functor goes from the $\i$-category of motivic spaces to (a localization of) the $\i$-category of pro-objects in $\Gal\left(k^{\sep}/k\right)$-spaces, where $\Gal\left(k^{\sep}/k\right)$ is the profinite absolute Galois group. 

We first discuss a convenient approach to relative \'{e}tale realization via $\infty$-categories. The original approach \cite{harpaztomer}, which was developed for varieties over a field in order to study obstructions to the existence of rational points, was close in spirit to the original (non-relative) construction of Artin-Mazur \cite{ArtinMazur}. It was later explained more succinctly as a derived functor in \cite{barneatomer}, and in the greater generality of a map of topoi; it was explained how a map of topoi $f:\cE \to \cF$ naturally gives rise to a pro-object in the hypercomplete $\infty$-topos associated  to $\cF,$ using the language of model categories. In subsection~\ref{subsec:relshape}, we give a uniform approach by developing the general theory of relative shapes of $\i$-topoi; here it becomes quite convenient using the language of $\i$-categories.

In subsection~\ref{subsec:explicit} we offer an explicit formula for the relative \'etale homotopy type as a pro-object. Finally, in subsection~\ref{subsec:rel-real}, we discuss the main construction of this paper --- the relative \'etale realization of motivic spaces.

\subsection{Relative shapes}\label{subsec:relshape}

\subsubsection{The relative $\Shape$ functor}\label{subsec:relshapefunctor}

We will now extend the construction of the shape of an $\i$-topos (over the terminal $\i$-topos $\Spc$) to the setting of an arbitrary base. This construction is probably well known, but we could not find it it in its general form in the literature. Explicitly, for $\cE$ an $\i$-topos, we will construct an adjunction (Proposition~\ref{prop:shape-adj})
$$\Shape_{\cE}:\Topi/\cE \rightleftarrows \Pro\left(\cE\right):\cE/\left(\mspace{3mu}\cdot\mspace{3mu}\right)$$ which reduces to that of Proposition \ref{prop:ladjfun} when $\cE=\Spc.$

Using the universal property of the $\i$-category $\Pro\left(\cE\right),$ the composite
$$\cE \stackrel{\chi_\cE}{\longlongrightarrow} \Topi^{\et}/\cE \to \Topi/\cE$$
extends to an essentially unique cofiltered limit preserving functor $$\cE/\left(\mspace{3mu}\cdot\mspace{3mu}\right):\Pro\left(\cE\right) \to \Topi/\cE.$$

\begin{proposition} \label{prop:shape-adj}
The above functor $\cE/\left(\mspace{3mu}\cdot\mspace{3mu}\right)$ has a left adjoint $\Shape_{\cE}$ described as follows:\\
Let $$f: \sE \rightarrow \sF$$ be a geometric morphism corresponding to the adjunction: 
$$\Adj{f^*}{\cE}{\cF}{f_*},$$ with $f^* \dashv f_*.$
Then $\Shape_{\cE}\left(f\right)$ can be identified with the left exact functor 
\begin{equation} \label{eq:shape-def}
\cE \stackrel{f^*}{\longrightarrow} \cF \stackrel{\Gamma_{\cF}}{\longlongrightarrow} \Spc.
\end{equation}
\end{proposition}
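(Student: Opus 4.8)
The plan is to verify the adjunction $\Shape_{\cE} \dashv \cE/(\cdot)$ by exhibiting, for a geometric morphism $f\colon \sF \to \cE$ over $\cE$ (so an object of $\Topi/\cE$) and a pro-object $P \in \Pro(\cE)$, a natural equivalence
\[
\map_{\Pro(\cE)}\bigl(\Shape_{\cE}(f), P\bigr) \;\simeq\; \map_{\Topi/\cE}\bigl(f,\; \cE/P\bigr).
\]
First I would reduce to the case where $P = j(E)$ is representable by an object $E \in \cE$, since both sides send cofiltered limits in $P$ to limits: the left-hand side by the universal property of $\Pro(\cE)$ (Definition~\ref{def:pro}), and the right-hand side because $\cE/(\cdot)$ was \emph{defined} to preserve cofiltered limits and $\map_{\Topi/\cE}(f, -)$ preserves all limits. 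So it suffices to treat $P = j(E)$, in which case $\cE/P = \chi_{\cE}(E) = (\cE/E \to \cE)$ is the étale geometric morphism classified by $E$.

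Next I would compute both sides explicitly for $P = j(E)$. On the left, by Proposition~\ref{prop:lex} a pro-object of $\cE$ is a left exact accessible functor $\cE \to \Spc$, and under this description $j(E)$ is the functor $\map_{\cE}(E, -)$ while $\Shape_{\cE}(f)$ is $\Gamma_{\cF} \circ f^* = \map_{\cF}(1_{\cF}, f^*(-))$. Since maps in $\Pro(\cE)$ go the opposite way to maps of the corresponding left exact functors, we get
\[
\map_{\Pro(\cE)}\bigl(\Shape_{\cE}(f), j(E)\bigr) \;\simeq\; \map_{\Fun^{LE}(\cE,\Spc)}\bigl(\map_{\cE}(E,-),\; \Gamma_{\cF} f^*(-)\bigr),
\]
and by the (enriched) Yoneda lemma the right-hand side is $\Gamma_{\cF}(f^* E) = \map_{\cF}(1_{\cF}, f^* E)$. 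On the other side, $\map_{\Topi/\cE}(f, \chi_{\cE}(E))$ is, by Proposition~\ref{lem:htt6.3.5.10} identifying $\Topi^{\et}/\cE$ with $\cE$, the space of lifts of $f$ along $\cE/E \to \cE$; a standard fact about étale geometric morphisms (the pullback square in \S on étale morphisms, together with the fact that $\cE/E \to \cE$ is the classifying map of $E$) identifies this space of lifts with $\map_{\cF}(1_{\cF}, f^*E)$, i.e. the space of "global sections of $f^*E$". Comparing, both sides are canonically $\Gamma_{\cF}(f^*E)$, and one checks naturality in $E$ and in $f$.

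The main obstacle I expect is the second identification: writing down, coherently and functorially in $f$, the equivalence between $\map_{\Topi/\cE}(f, \cE/E \to \cE)$ and the global sections $\Gamma_{\cF}(f^*E)$. The underlying point is that for an étale geometric morphism $p\colon \cE/E \to \cE$ and any $g\colon \cF \to \cE$, a lift of $g$ through $p$ is the same datum as a section of $g^*$ applied to the "universal" object, because étale morphisms are classified by objects and base change along $g$ is computed by $g^*$ — this is essentially the content of \cite[Remark 6.3.5.10]{htt} combined with the compatibility of the formation of slice topoi with pullback of $\infty$-topoi. I would cite \cite{htt} for the relevant stability and classification statements rather than reprove them, and spend the writing effort on checking that the resulting equivalence is natural in $f$ (so that it upgrades to an equivalence of functors on $\Topi/\cE$) and recovers Proposition~\ref{prop:ladjfun} when $\cE = \Spc$, where $f^* = \Delta$, $\Gamma_{\cF} = \Gamma$, and $\cE/(\cdot)$ becomes $\Spc/(\cdot)$ extended to pro-spaces.
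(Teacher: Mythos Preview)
Your proposal is correct and follows essentially the same route as the paper: reduce to representable $P=j(E)$ using that both sides carry cofiltered limits to limits, identify the $\Pro$-side with $\Gamma_{\cF}(f^*E)$ via Yoneda, and identify the $\Topi/\cE$-side with $\Gamma_{\cF}(f^*E)$ via the pullback square $\cF/f^*E \simeq \cF \times_{\cE} \cE/E$ together with Proposition~\ref{prop:6.3.5.9} and Proposition~\ref{lem:htt6.3.5.10}. The only point the paper spells out that you leave implicit is how to upgrade the pointwise corepresentability to an actual functor $\Shape_{\cE}$ (via a Yoneda-embedding diagram into presheaves valued in very large spaces), but this is routine once the identification is in hand.
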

\begin{proof}
Firstly, remark that both functors participating in any geometric morphism must be accessible, hence $\Gamma_{\cF} \circ f^*$ is left exact and accessible, therefore it can be identified with a pro-object. We proceed by defining $\Shape_{\cE}$ via the formula in~\eqref{eq:shape-def} and verifying that it participates in the claimed adjunction.

Indeed, suppose that $f:\cF \to \cE$ is a geometric morphism. Let $T = \underset{j} \lim E_j$ be a pro-object in $\cE.$ Then we have the following string of natural equivalences:
\begin{eqnarray*}
\map_{\Topi/\cE}\left(f,\cE/T\right) &\simeq& \underset{i} \lim  \map_{\Topi/\cE}\left(f,\cE/E_i\right)\\
&\simeq& \underset{i} \lim \map_{\Topi/\cF}\left(\cF,\cF/f^*E_i\right)\\
&=& \underset{i} \lim \map_{\Topi^{\et}/\cF}\left(\cF,\cF/f^*E_i\right)\\
&\simeq& \underset{i} \lim \map_{\cF}\left(1_\cF,f^*E_i\right)\\
&\simeq& \underset{i} \lim \Gamma_{\cF} \left(f^*E_i\right)\\
&\simeq&  \underset{i} \lim \map_{\Pro\left(\cE\right)}\left(\Shape_{\cE}\left(f\right),j\left(E_i\right)\right)\\
&\simeq&  \map_{\Pro\left(\cE\right)}\left(\Shape_{\cE}\left(f\right),T\right).
\end{eqnarray*}
Indeed, the second equivalence follows since we have a pullback diagram of $\i$-topoi \cite[Remark 6.3.5.8]{htt}:
$$\xymatrix{ \cF/f^*E_i \ar[d] \ar[r] & \cE/E_i \ar[d]\\ \cF \ar[r]^-{f} & \cE},$$ the third equivalence follows since any morphism in $\Topi/\cE$ between \'etale geometric morphisms must be \'etale \cite[Corollary 6.3.5.9]{htt}, and finally, the second to last equivalence follows from the Yoneda lemma.

Consequently, we conclude that the functor
$$\map_{\Topi/\cE}\left(f,\cE/\left(\mspace{3mu}\cdot\mspace{3mu}\right)\right):\Pro\left(\cE\right) \to \widehat{\Spc},$$ is corepresentable by the pro-object $\Gamma_{\cF} \circ f^*,$ where $\widehat{\Spc}$ is the $\i$-category of large spaces. It follows that the dotted functor below exists
$$\xymatrix@C=2.5cm@R=2.5cm{\Topi/\cE \ar@{^{(}->}[r]^-{y} \ar@{-->}[rrd]^{\Shape_{\cE}} & \left(\Fun\left(\Topi/\cE,\widehat{\widehat{\Spc}}\right)\right)^{op}  \ar[r]^-{\left(\left(\cE/\left(\mspace{3mu}\cdot\mspace{3mu}\right)\right)^*\right)^{op}} & \left(\Fun\left(\Pro\left(\cE\right),\widehat{\widehat{\Spc}}\right)\right)^{op}\\
&  & \Pro\left(\cE\right) \ar@{^{(}->}[u]^-{y},}$$
where $\widehat{\widehat{\Spc}}$ denotes the $\i$-category of very large spaces.

Finally, there is a natural transformation $$\eta:id_{\Topi/\cE} \Rightarrow \cE/\Shape_{\cE}$$ whose component at $f$ under the equivalence
$$\map_{\Pro\left(\cE\right)}\left(\Shape_{\cE}\left(f\right),\Shape_{\cE}\left(f\right)\right) \simeq \map_{\Topi/\cE}\left(f,\cE/\Shape_{\cE}\left(f\right)\right)$$ corresponds to $id_{\Shape_{\cE}\left(f\right)}.$ We conclude that $\eta$ is a unit transformation exhibiting the desired adjunction.
\end{proof}

\begin{definition} Given a geometric morphism $f: \cF \rightarrow \cE$, we define the \emph{relative shape} of $f$ as the pro-object in $\cE$ $$\Shape_{\cE}\left(f\right).$$
\end{definition}
The case of greatest interest for the rest of the paper is when $\cE = \Shv \left( S_{\et} \right)$ where $S$ is a scheme. We shall denote the relative shape of a geometric morphism $f: \cF \rightarrow \Shv \left( S_{\et} \right)$ simply as $\Shape_S \left( f\right)$.

\begin{proposition} The functor $f^*: \cE \rightarrow \cF$  induces a functor $$\Pro\left(f^*\right): \Pro\left(\cE\right) \rightarrow \Pro \left(\cF\right).$$

This functor admits a left adjoint: $$\Pro\left(f_!\right): \Pro\left(\cF\right) \rightarrow \Pro \left(\cE\right)$$ and a right adjoint: $$\Pro\left(f_*\right): \Pro\left(\cF\right) \rightarrow \Pro\left(\cE\right).$$ 
\end{proposition}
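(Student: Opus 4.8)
The plan is to produce all three functors and adjunctions from the universal property of pro-categories together with the basic adjoint-functor facts already established in \S\ref{subsec:proobj}. First I would observe that since $f \colon \cF \to \cE$ is a geometric morphism of $\infty$-topoi, the functors $f^* \colon \cE \to \cF$ and $f_* \colon \cF \to \cE$ are both accessible and $f^*$ is left exact; moreover $f^*$ also has a left adjoint $f_! \colon \cF \to \cE$ — wait, this is only true when $f$ is \'etale. In general $f^*$ need not admit a left adjoint, so the statement as written must be read with the standing assumption in force (the cases we apply it to, e.g.\ $\cF = \Shv(S'_{\et}) \to \Shv(S_{\et}) = \cE$ induced by an \'etale or smooth morphism $S' \to S$, or more generally whenever $f$ is \'etale) so that $f_!$ exists; I would state this hypothesis explicitly at the start of the proof. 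Granting it, $f_!$, $f^*$, $f_*$ is a triple of adjoints $f_! \dashv f^* \dashv f_*$ on $\infty$-topoi, and all three are accessible.

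The construction of $\Pro(f^*)$ is immediate: since $\cE$ is an $\infty$-topos it is presentable, in particular accessible with all finite limits, and likewise for $\cF$; since $f^*$ is left exact, the universal property of Definition~\ref{def:pro} (functoriality of $\Pro(-)$, or equivalently Proposition~\ref{prop:lex} — precompose a left exact accessible functor $F \colon \cE \to \Spc$ with nothing, or rather postcompose appropriately) produces the cofiltered-limit-preserving functor $\Pro(f^*) \colon \Pro(\cE) \to \Pro(\cF)$ extending $f^*$. Concretely under Proposition~\ref{prop:lex} it sends a left exact accessible $F \colon \cF \to \Spc$ to $F \circ f^* \colon \cE \to \Spc$, which is again left exact and accessible because $f^*$ is. The two adjoints are then obtained by applying Proposition~\ref{prop:ladj} twice: that proposition says that whenever $g \colon \sD \to \sC$ is a finite-limit-preserving functor between accessible $\infty$-categories with finite limits, $\Pro(g)$ has a left adjoint $g^\ast$. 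Applying it to $g = f_! \colon \cF \to \cE$ — which is accessible and, being a left adjoint between $\infty$-topoi, preserves finite limits (indeed all limits) — shows $\Pro(f_!)$ has a left adjoint; but $\Pro(f^*) = \Pro((f_!)^{\text{something}})$—more cleanly: $\Pro(f_!)$ is the functor whose \emph{right} adjoint I want to identify with $\Pro(f^*)$. Let me reorganize: apply Proposition~\ref{prop:ladj} to $f^* \colon \cE \to \cF$ itself (valid since $f^*$ is left exact and accessible) to get that $\Pro(f^*) \colon \Pro(\cE) \to \Pro(\cF)$ has a left adjoint; by uniqueness of adjoints and the explicit formula this left adjoint restricts to $f_!$ on representables, so it is $\Pro(f_!)$. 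Next apply Proposition~\ref{prop:ladj} to $f_! \colon \cF \to \cE$ (left exact, accessible) to get that $\Pro(f_!) \colon \Pro(\cF) \to \Pro(\cE)$ has a left adjoint; this left adjoint restricts to $f^*$, hence equals $\Pro(f^*)$ — consistent with the previous sentence, giving the adjunction $\Pro(f_!) \dashv \Pro(f^*)$. Finally apply Proposition~\ref{prop:ladj} once more to $f_* \colon \cF \to \cE$ — which is accessible and left exact (it is a right adjoint) — to get that $\Pro(f_*) \colon \Pro(\cF) \to \Pro(\cE)$ exists as $\Pro$ of a left-exact accessible functor and has a left adjoint restricting to $f^*$, i.e.\ again $\Pro(f^*)$, giving $\Pro(f^*) \dashv \Pro(f_*)$.

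In summary I would present the proof as: (1) note $\cE,\cF$ are presentable so accessible with finite limits; (2) invoke $f_! \dashv f^* \dashv f_*$ with all three accessible and $f^*, f_!, f_*$ left exact (the existence of $f_!$ being guaranteed by the étale hypothesis on $f$); (3) functoriality of $\Pro$ on left-exact accessible functors gives $\Pro(f^*)$, $\Pro(f_!)$, $\Pro(f_*)$; (4) three applications of Proposition~\ref{prop:ladj} produce left adjoints, and matching them on the essential image of the fully faithful $\cF \hookrightarrow \Pro(\cF)$ — where they agree with $f_!$ and $f^*$ respectively — identifies these adjoints, via uniqueness of adjoints, as $\Pro(f_!)$ (left adjoint to $\Pro(f^*)$) and $\Pro(f^*)$ (left adjoint to $\Pro(f_*)$). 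The only genuine subtlety — and the step I expect to be the main obstacle to a clean writeup — is the existence of $f_!$: this forces either restricting the statement to étale (or more generally locally-of-finite-presentation-type) geometric morphisms, or citing the relevant exactness property of the morphisms $\Shv(S'_{\et}) \to \Shv(S_{\et})$ actually used later in the paper; once that is pinned down, everything else is a formal consequence of Propositions~\ref{prop:lex} and~\ref{prop:ladj}.
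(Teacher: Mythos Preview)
Your proposal contains the right core idea but is obscured by an unnecessary and partly incorrect detour through a topos-level $f_!$. The proposition holds for an \emph{arbitrary} geometric morphism $f\colon\cF\to\cE$; no \'etale hypothesis is needed. The paper makes this explicit: it warns that ``despite the notation, $\Pro(f_!)$ is not, in general, the functor $\Pro$ applied to a left adjoint of $f^*$''---the symbol $\Pro(f_!)$ is only a \emph{name} for the left adjoint of $\Pro(f^*)$, constructed directly at the pro-level. Your single correct application of Proposition~\ref{prop:ladj} to the left-exact accessible functor $f^*\colon\cE\to\cF$ already produces this left adjoint (explicitly: a pro-object $G\colon\cF\to\Spc$ goes to $G\circ f^*\colon\cE\to\Spc$), and this is all that is required; the subsequent attempts to reapply Proposition~\ref{prop:ladj} to a putative $f_!$ are both superfluous and flawed, since even in the \'etale case $f_!\colon\cE/E\to\cE$ fails to preserve the terminal object and is therefore not left exact.

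The paper's own argument takes a different, more geometric route to the same left adjoint: it first defines $\cF\to\Pro(\cE)$ by sending $F$ to the relative shape $\Shape_{\cE}$ of the composite $\cF/F\to\cF\to\cE$, then extends by cofiltered limits and verifies the adjunction by a direct chain of equivalences. Unwinding, this sends $j(F)$ to $E\mapsto\map_{\cF}(F,f^*E)$, which is exactly $j(F)\circ f^*$, so the two constructions agree. The paper's presentation buys the reader the explicit slice-topos formula used immediately afterward (Remark~\ref{remark:shriek} and Proposition~\ref{prop:composable}); your version, once stripped of the $f_!$ discussion, is shorter but would still need to record that formula separately.
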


\begin{proof} By functoriality of the $\Pro$ construction, the right adjoint $f_*$ of $f^*$ induces a functor $$\Pro\left(f_*\right): \Pro\left(\cF\right) \rightarrow \Pro\left(\cE\right)$$ which is easily verified to be right adjoint to $\Pro\left(f^*\right).$

Unlike above, despite the notation, $\Pro\left(f_!\right)$ is not, in general, the functor $\Pro$ applied to a left adjoint of $f^*$ (but it will be when such a left adjoint exists). We will describe the construction of this left adjoint by starting with the construction of a functor $\cF \rightarrow \Pro\left(\cE\right):$

$$\xymatrix@C=2.1cm{
\cF \ar[r]^-{\chi_{\cF}} & \Topi^{\et}{/\cF} \ar[r] & \Topi{/\cF} \ar[r]^-{\hat{f}} & \Topi{/\cE} \ar[r]^-{\Shape_{\cE}} & \Pro\left(\cE\right), 
}$$
where  $\chi_{\cF}$ is induced by taking slice topoi and $\hat{f}$ is induced by composition with $f$. Unwinding definitions, this sends an object $F$ of $\cF$ to the pro-object corresponding the left exact functor 
\begin{eqnarray*}
\cE & \rightarrow & \Spc\\
 E & \mapsto & \Gamma_{\sF}\left(\pi_{F}^*\left(f^*E\right)\right),
\end{eqnarray*}
where $\pi_F: \cF/F \rightarrow \cF$ is the canonical \'{e}tale geometric morphism. 

By the universal property of $\Pro\left(\cF\right)$, this extends essentially uniquely to a cofiltered limit preserving functor $$\Pro\left(f_!\right): \Pro\left(\cF\right) \rightarrow \Pro\left(\cE\right).$$
We claim that this is the desired left adjoint. To see this, we consider the following sequence of equivalences which are natural in each variable: write $T \in \Pro\left(\cF\right)_0$ as $T =\underset{i} \lim j\left(F_i\right)$ where each $F_i$ is an object of $\cF$ and $Z = \underset{k}\lim j\left(E_k\right)$ be a pro-object in $\cE$, then
\begin{eqnarray*}
\map_{\Pro\left(\cE\right)}\left(\Pro\left(f_!\right)\left(T\right), Z\right) & \simeq & \underset{k} \lim \map_{\Pro\left(\cE\right)}\left(\Pro\left(f_!\right)\left(T\right), j\left(E_k\right)\right)\\
 & \simeq & \underset{i} \colim \underset{k} \lim \map_{\Pro\left(\cE\right)}\left(\Pro\left(f_!\right)\left(j\left(F_i\right)\right), j\left(E_k\right)\right)\\
& \simeq & \underset{i} \colim \underset{k} \lim \left(\Pro\left(f_!\right)\left(j\left(F_i\right)\right)\left(E_k\right)\right)\\
& \simeq & \underset{i} \colim \underset{k} \lim \Gamma_{\sF}\left(\pi_{F_i}^*\left(f^*E_k\right)\right)\\
& \simeq &  \underset{i} \colim \underset{k} \lim \map_{\cF/F}\left(1, \pi^*_{F_i}f^*E_k\right)\\
& \simeq & \underset{i} \colim \underset{k} \lim \map_{\cF/F}\left(id_{F_i}, F_i \times f^*\left(E_k\right) \to F_i\right)\\
& \simeq & \underset{i} \colim \underset{k} \lim \map_{\cF}\left(F_i, f^*E_k\right) \\
& \simeq & \map_{\Pro\left(\cF\right)}\left(T, \Pro\left(f^*\right)\left(Z\right)\right).
\end{eqnarray*}
\end{proof}

\begin{remark} \label{remark:shriek} By construction, given a geometric morphism $f: \cF \rightarrow \cE$, 
$$\Pro\left(f_! \right)\left(1_{\cF} \right) \simeq \Shape_{\cE}\left(f\right).$$ In particular, when $f^*$ admits a left adjoint $f_!$ (i.e. when the geometric morphism $f$ is locally $\i$-connected), then the relative shape of $f$ is corepresented by the object $f_!\left(1\right)$ of $\cE.$
\end{remark}

We will now describe how relative shapes interact with each other.

\begin{proposition} \label{prop:composable} Given composable geometric morphisms $\cG \stackrel{g}{\rightarrow} \cF \stackrel{f}{\rightarrow} \cE$, there is an equivalence $$\Shape_{\cE}\left(f \circ g\right) \simeq \Pro\left(f_!\right)\left(\Shape_{\cF}\left(g\right)\right).$$
\end{proposition}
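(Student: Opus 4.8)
The plan is to deduce the equivalence by comparing the functors on $\Pro(\cE)$ corepresented by the two sides. Fix the composable pair $g\colon \cG \to \cF$, $f\colon \cF \to \cE$ and an arbitrary pro-object $Z \in \Pro(\cE)$. On one side, the adjunction $\Shape_{\cE} \dashv \cE/(\cdot)$ of Proposition~\ref{prop:shape-adj} gives $\map_{\Pro(\cE)}(\Shape_{\cE}(f\circ g), Z) \simeq \map_{\Topi/\cE}(f\circ g, \cE/Z)$; and since the structure morphism of $f\circ g$ factors through $f$, the universal property of the pullback $\cF\times_{\cE}(\cE/Z)$ in $\Topi$ (which exists, as $\Topi$ has all small limits) identifies this with $\map_{\Topi/\cF}(g, \cF\times_{\cE}(\cE/Z))$. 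On the other side, the adjunction $\Pro(f_!) \dashv \Pro(f^*)$ constructed above, followed by $\Shape_{\cF} \dashv \cF/(\cdot)$, gives $\map_{\Pro(\cE)}(\Pro(f_!)\Shape_{\cF}(g), Z) \simeq \map_{\Pro(\cF)}(\Shape_{\cF}(g), \Pro(f^*)Z) \simeq \map_{\Topi/\cF}(g, \cF/\Pro(f^*)Z)$.

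It therefore suffices to produce an equivalence $\cF\times_{\cE}(\cE/Z) \simeq \cF/\Pro(f^*)(Z)$ natural in $Z$; granting this, chaining the equivalences above and applying the Yoneda lemma on $\Pro(\cE)$ yields $\Shape_{\cE}(f\circ g) \simeq \Pro(f_!)\Shape_{\cF}(g)$. To get it, I would observe that both $Z \mapsto \cF\times_{\cE}(\cE/Z)$ and $Z \mapsto \cF/\Pro(f^*)(Z)$ are functors $\Pro(\cE) \to \Topi/\cF$ preserving cofiltered limits --- the first because $\cE/(\cdot)$ is cofiltered-limit preserving by construction and base change along $f$ is a right adjoint, the second because $\Pro(f^*)$ preserves cofiltered limits by functoriality of $\Pro$ and $\cF/(\cdot)$ preserves them by construction. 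By the universal property of $\Pro(\cE)$ (Definition~\ref{def:pro}) it then suffices to produce the equivalence, naturally, on representables $j(E)$ with $E \in \cE$; and there both sides equal $\cF/f^*E$, the first by the pullback square of $\infty$-topoi $\cF/f^*E \simeq \cF\times_{\cE}(\cE/E)$ already used in the proof of Proposition~\ref{prop:shape-adj} (see \cite[Remark 6.3.5.8]{htt}), and the second because $\Pro(f^*)(j(E)) = j(f^*E)$.

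I do not expect a serious obstacle: the only non-formal input is the pullback identity $\cF/f^*E \simeq \cF\times_{\cE}(\cE/E)$, which is already available. The point requiring care is the bookkeeping --- keeping track of which functors in each chain are left versus right adjoints, and verifying that the representable-level comparison is natural in $E$ so that the universal property of $\Pro$ applies. As a consistency check, taking $g = \id_{\cF}$ recovers Remark~\ref{remark:shriek}, since $\Shape_{\cF}(\id_{\cF})$ is the representable pro-object $1_{\cF}$ and $\Pro(f_!)(1_{\cF}) \simeq \Shape_{\cE}(f)$.
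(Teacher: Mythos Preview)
Your argument is correct. You compute the functor corepresented by each side on an arbitrary $Z\in\Pro(\cE)$, reduce the comparison to an equivalence $\cF\times_{\cE}(\cE/Z)\simeq \cF/\Pro(f^*)(Z)$ in $\Topi/\cF$, and verify this by checking both sides are cofiltered-limit preserving in $Z$ and agree on representables via the pullback square $\cF/f^*E\simeq \cF\times_{\cE}(\cE/E)$. All steps are sound; the naturality in $E$ is automatic from the naturality of that pullback square and of $\Pro(f^*)\circ j\simeq j\circ f^*$.

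The paper's proof takes a different and much shorter route. It does not pass through mapping spaces at all: by Remark~\ref{remark:shriek} one has $\Shape_{\cE}(f\circ g)\simeq \Pro((f\circ g)_!)(1_{\cG})$ and $\Pro(f_!)(\Shape_{\cF}(g))\simeq \Pro(f_!)\Pro(g_!)(1_{\cG})$, and then one simply observes that $\Pro((f\circ g)_!)$ and $\Pro(f_!)\circ\Pro(g_!)$ are both left adjoint to $\Pro((f\circ g)^*)=\Pro(f^*)\circ\Pro(g^*)$, so they agree by uniqueness of adjoints. In effect, the paper packages your entire Yoneda computation into the single fact that left adjoints compose. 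Your approach has the virtue of making the role of the slice-topos functors $\cE/(\cdot)$ and the pullback square explicit, and of not requiring one to have internalized the formula $\Shape_{\cE}(f)=\Pro(f_!)(1_{\cF})$; the paper's approach is quicker once that formula is in hand.
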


\begin{proof} By Remark \ref{remark:shriek}, $$\Shape_{\cE}\left(f \circ g\right) \simeq \Pro\left( \left(f \circ g \right)_! \right)\left(1_{\cG}\right),$$ and $$\Pro\left(f_!\right)\left(\Shape_{\cF}\left(g\right)\right) \simeq \Pro\left( f_! \right) \left( \Pro\left( g_! \right)\left(1_{\cG} \right)\right).$$ 
Notice that $\Pro\left(f_!\right) \circ \Pro\left(g_!\right)$ and $\Pro\left( \left(f\circ g\right)_! \right)$ are both left adjoints to $\Pro\left( \left(f \circ g\right)^* \right).$ The result now follows by uniqueness of adjoints. 
\end{proof}

\subsubsection{Relative \'etale homotopy types of (higher) algebraic stacks}\label{sect:relative-types-stacks}
For a base scheme $S,$ recall that $\Sch^{\ft}_S$ is the category of $S$-schemes of finite type. The colimit preserving functor
$$\Shv\left(\left(\mspace{3mu}\cdot\mspace{3mu}\right)_{\et}\right):\Shv_{\et}\left(\Sch^{\ft}_S\right)\to \Topi$$ of \cite[Lemma 2.2.6]{dave-etale}, is used in an essential way to construct the \'etale homotopy type functor of \cite[Definition 2.3.1]{dave-etale}:
$$\Pi^{\et}_\i:\Shv\left(\left(\mspace{3mu}\cdot\mspace{3mu}\right)_{\et}\right) \to \Pro\left(\Spc\right),$$
namely it is simply $\Shape \circ \Shv\left(\left(\mspace{3mu}\cdot\mspace{3mu}\right)_{\et}\right)$\footnote{In \cite{dave-etale}, it is assumed that $S$ is affine, but this is assumed merely for simplicity, and is not necessary.}. Even though $S$ is the terminal $S$-scheme, there is in general no reason for $\Shv\left(S_{\et}\right)$ to be the terminal $\i$-topos $\Spc.$ Therefore the above construction can be refined using the relative shape functor with respect to the $\i$-topos $\Shv\left(S_{\et}\right)$. For example, if $S=\Spec\left(k\right),$ for $k$ a field, then $\Shv\left(S_{\et}\right) \simeq \cB \Gal\left(k^{\sep}/k\right)$ is the classifying $\i$-topos for the absolute Galois group of $k.$ In particular, this $\i$-topos is only terminal if $k$ is separably closed. By functoriality, the functor $\Shv\left(\left(\mspace{3mu}\cdot\mspace{3mu}\right)_{\et}\right)$ naturally induces a colimit preserving functor
$$\Shv\left(\left(\mspace{3mu}\cdot\mspace{3mu}\right)_{\et}\right)/S:\Shv_{\et}\left(\Sch^{\ft}_S\right)\to \Topi/\Shv\left(S_{\et}\right).$$
Explicitly, given an $\i$-sheaf $\cX$ on the big \'etale site of $\Sch^{\ft}_S,$ $\cX$ gets sent to the image of the unique morphism $\cX \to S$ under the functor $\Shv\left(\left(\mspace{3mu}\cdot\mspace{3mu}\right)_{\et}\right).$

\begin{definition} \label{defn:rel-et-type}
The \textbf{$S$-relative \'etale homotopy type functor}
$$\Pi^{S,\et}_{\i}:\Shv_{\et}\left(\Sch^{\ft}_S\right) \to \Pro\left(\Shv\left(S_{\et}\right)\right)$$
is defined to be $\Shape_S \circ \Shv\left(\left(\mspace{3mu}\cdot\mspace{3mu}\right)_{\et}\right)/S.$ 

When $S=\Spec\left(k\right)$ for a field $k,$ we denote this functor by
$$\Pi^{Gal_k,\et}_{\i}:\Shv_{\et}\left(\Sch^{\ft}_k\right) \to \Pro\left(\mathcal{B}Gal\left(k^{\sep}/k\right)\right),$$ and it will be called the \textbf{Galois-equivariant \'etale homotopy type functor} (over $k.$)
\end{definition}

\begin{remark}\label{rmk:recover}
Since all algebraic Artin stacks locally of finite type over $S$ are naturally objects in $\Shv_{\et}\left(\Sch^{\ft}_S\right),$ the functor $\Pi^{S,\et}_{\i}$ naturally associates to such algebraic stacks a pro-object in $\Shv\left(S_{\et}\right).$ By Proposition \ref{prop:composable}, one can recover the ordinary \'etale homotopy type from this pro-object, so this definition gives a more refined invariant than the non-relative \'etale homotopy type of an algebraic stack. For example if $p: X \rightarrow S$ is a morphism of schemes, the composite of the geometric morphisms $$\Shv\left(X_{\et} \right) \stackrel{p}{\longrightarrow} \Shv\left( S_{\et} \right) \stackrel{e_S}{\longrightarrow} \Spc,$$ where $e_S$ is the essentially unique geometric morphism from $\Shv\left( S_{\et} \right)$ to the terminal $\infty$-topos, is the essentially unique geometric morphism $e_X$. Proposition~\ref{prop:composable} then tells us that

\begin{eqnarray*}
\Pro \left(e_S\right)_! \left( \Pi^{S,\et}_{\i}\left(X\right) \right) & =  & \Pro\left(e_S\right)_!\left(\Shape_S\left(p\right)\right)\\
& \simeq & \Shape \left(\Shv\left(X_{\et} \right) \right) \\
&  = & \Pi^{\et}_{\i}\left(X\right).
\end{eqnarray*}
\end{remark}

\subsubsection{The case of a field}
When the base scheme $S$ is a field $k$, the $S$-relative \'etale homotopy type functor encodes an action of the absolute Galois group $\Gal\left(k^{\sep}/k\right).$ In this subsection, we will explain in what way this action is encoded, and in what sense the ordinary \'etale homotopy type of a scheme $X$ is a homotopy quotient of its relative \'etale homotopy type by this action. 

The origin of the action boils down to the classical equivalence of categories $$\Gal\left(k^{\sep}/k\right)\mbox{-}\Set \simeq \Shv^{\delta}\left(k_{\et}\right),$$ which we will now recall. Fix an embedding $\varphi:k \hookrightarrow k^{sep}$ of $k$ into a separable closure. The group $\Gal\left(k^{\sep}/k\right)$ is the cofiltered limit of $\Gal\left(k_i/k\right)$ with $i$ ranging over all finite Galois extensions of $k,$ and hence is profinite. To simplify notation, we will denote this profinite group simply by $G_k$ for the rest of this subsection. The category $G_k\mbox{-}\Set$ is the category of sets equipped with a continuous $G_k$-action, where the sets are endowed with the discrete topology, i.e. each stabilizer group of the actions must be open with respect to the profinite topology. This category is the \emph{classifying topos} for the profinite group $G_k.$ Given any $k$-scheme $X,$ one may consider the set of $k^{sep}$-points $X\left(k^{sep}\right),$ which caries a canonical continuous $G_k$-action. This construction restricts to a functor $$\omega:k_{\et} \to G_k\mbox{-}\Set$$ from the small \'etale site of $\Spec\left(k\right)$ to the classifying topos of its absolute Galois group. This functor induces an adjoint equivalence of categories. The right adjoint, which we will simply denote by $$R:G_k\mbox{-}\Set \to \Shv^{\delta}\left(k_{\et}\right),$$ is such that for a $G_k$-set $Y,$ $R\left(Y\right)$ is the presheaf on $k_{\et}$
$$R\left(Y\right)\left(X\right) = \Hom_{G_k\mbox{-}\Set}\left(X\left(k^{sep}\right),Y\right),$$ which can be verified to be a sheaf by reducing to covers by finite Galois extensions of $k.$ By the Yoneda lemma, $R$ is right adjoint to the left Kan extension $\Lan_y \omega,$ where $y$ is the  Yoneda embedding of $\Shv^{\delta}\left(k_{\et}\right).$ For further details see \cite[Section 03QW]{stacks-project}.

There is another description of the classifying $\i$-topos $\cB\Gal\left(k^{\sep}/k\right),$ which we now describe. Let $\Orb\left(G_k\right)$ be full subcategory of $G_k\mbox{-}\Set$ on those $G_k$-sets of the form $G_k/U,$ for $U$ an open subgroup. Equip it with the atomic Grothendieck topology, i.e. all non-empty sieves are covering sieves. By \cite[III.9, Theorem 1]{macmoer}, there is a canonical equivalence of categories $$G_k\mbox{-}\Set \simeq \Shv^{\delta}\left(\Orb\left(G_k\right)\right).$$ It is not obvious however that there is an induced equivalence at the level of sheaves of spaces, since the classifying $\i$-topos $\cB \Gal\left(k^{\sep}/k\right)$ is often not hypercomplete, and $\Orb\left(G_k\right)$ is not closed under finite limits, so we prove this by hand. First, we need an important simple observation:

\begin{lemma}\label{lem:gamma}
Let $U$ be an open subgroup of $G_k,$ and let $\left(k^{sep}\right)^U$ be the fixed field of $U.$ Then $$R\left(G_k/U\right)\cong y\left(\Spec\left(\left(k^{sep}\right)^U\right)\right) \in \Shv^{\delta}\left(k_{\et}\right).$$
\end{lemma}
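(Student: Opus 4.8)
\emph{Proof plan.} Write $Z := \Spec\left(\left(k^{sep}\right)^U\right)$. The plan is to reduce to the classical orbit description of the $G_k$-set $\omega\left(Z\right)$ and then transport it through the equivalence $\Lan_y\omega \dashv R$. First note that $Z$ really is an object of the small \'etale site $k_{\et}$: since $U$ is an open subgroup of the profinite group $G_k$ it contains $\Gal\left(k^{sep}/L\right)$ for some finite Galois extension $L/k$, hence $\left(k^{sep}\right)^U$ is a subfield of $L$ and so is finite separable over $k$. Both $R\left(G_k/U\right)$ and the representable $y\left(Z\right)$ lie in $\Shv^{\delta}\left(k_{\et}\right)$ --- the former as noted when $R$ was defined, the latter since the \'etale topology on $k_{\et}$ is subcanonical --- so it suffices to exhibit an isomorphism of sheaves, which I will obtain by applying the equivalence $R$ to a computation of $G_k$-sets.

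The one step with genuine content is the identification of the $G_k$-set
\[
\omega\left(Z\right) = Z\left(k^{sep}\right) = \Hom_{k\mbox{-}\mathrm{alg}}\left(\left(k^{sep}\right)^U, k^{sep}\right),
\]
equipped with its post-composition $G_k$-action. This action is transitive: since $k^{sep}/k$ is normal, any two $k$-algebra embeddings of the subfield $\left(k^{sep}\right)^U$ into $k^{sep}$ differ by an element of $G_k$. Taking the distinguished point to be the inclusion $\iota:\left(k^{sep}\right)^U \hookrightarrow k^{sep}$, its stabilizer is $\left\{g \in G_k : g|_{\left(k^{sep}\right)^U} = \id\right\} = \Gal\left(k^{sep}/\left(k^{sep}\right)^U\right)$, which by the Galois correspondence for the (possibly infinite) extension $k^{sep}/k$ equals the closure of $U$; and $U$ is closed, being an open subgroup of a profinite group. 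Hence $g U \mapsto g\circ\iota$ is an isomorphism $G_k/U \xrightarrow{\sim} \omega\left(Z\right)$ of $G_k$-sets (note $\omega\left(Z\right)$ is finite, consistent with $G_k/U$ carrying the discrete topology).

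To conclude I would apply $R$. Since $\Lan_y\omega$ is an equivalence with quasi-inverse $R$, and $\Lan_y\omega\circ y \simeq \omega$ by the standard formula for a left Kan extension along the Yoneda embedding, the previous step yields
\[
R\left(G_k/U\right) \simeq R\left(\omega\left(Z\right)\right) \simeq R\left(\Lan_y\omega\left(y\left(Z\right)\right)\right) \simeq y\left(Z\right) = y\left(\Spec\left(\left(k^{sep}\right)^U\right)\right),
\]
naturally. Everything here is formal once the orbit computation is in hand; the only point that requires care is the profinite bookkeeping in that computation --- one must use that $U$ is \emph{closed} (so the stabilizer is exactly $U$, not merely its closure $\overline{U}$) and that $U$ is \emph{open} (so $\left(k^{sep}\right)^U$ is finite over $k$, i.e.\ an honest object of $k_{\et}$, and $\omega\left(Z\right)$ is a discrete $G_k$-set).
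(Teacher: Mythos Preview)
Your proof is correct and follows essentially the same approach as the paper: reduce to showing $\omega\left(\Spec\left(\left(k^{sep}\right)^U\right)\right)\cong G_k/U$ as $G_k$-sets via the orbit-stabilizer description, then transport through the equivalence $R$. The paper's proof is simply a terse version of yours, stating the transitivity and stabilizer claims without the profinite bookkeeping you (rightly) spell out.
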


\begin{proof}
It suffices to prove that $$\omega\left(\Spec\left(\left(k^{sep}\right)^U\right)\right) \cong G_k/U.$$ Note that this $G_k$-set is nothing but $\Hom_{k}\left(\left(k^{sep}\right)^U,k^{sep}\right)$ with the canonical $G_k$-action, where the subscript denotes morphisms preserving the ground field $k,$ or more precisely restricting to the distinguished embedding $\varphi:k \hookrightarrow k^{sep}.$ The canonical $G_k$-action on this set is transitive with stabilizer isomorphic to $U$. The result now follows.
\end{proof}

In particular, since $R\left(G_k/U\right)$ is in the image of the Yoneda embedding, the functor $R$ restricts to a fully faithful functor $$\gamma:
\Orb\left(G_k\right) \hookrightarrow k_{\et}.$$

\begin{proposition}
The functor $\gamma$ induces an equivalence of $\i$-categories
$$\gamma^*:\Shv\left(k_{\et}\right) \to \Shv\left(\Orb\left(G_k\right)\right).$$
\end{proposition}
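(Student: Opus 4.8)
The plan is to recognize $\gamma$ as the inclusion of the \emph{connected} objects of $k_{\et}$ and $k_{\et}$ itself as their finite-coproduct completion, and then to see that the étale topology on $k_{\et}$ is assembled from the atomic topology on $\Orb(G_k)$ together with the rule that a finite coproduct is covered by its summands. The inverse of $\gamma^*$ will then be the right Kan extension $\gamma_*$ along $\gamma^{\op}$, which concretely sends a presheaf $G$ on $\Orb(G_k)$ to the presheaf $\coprod_i C_i \mapsto \prod_i G(C_i)$ on $k_{\et}$.

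\emph{Step 1: pinning down the site.} By Lemma~\ref{lem:gamma}, $\gamma(G_k/U) \simeq \Spec\left((k^{sep})^U\right)$, the spectrum of a finite separable extension of $k$, in particular a connected object of $k_{\et}$; conversely, any connected object $\Spec L$ of $k_{\et}$ has $L/k$ finite separable, and a choice of $k$-embedding $L \hookrightarrow k^{sep}$ extending $\varphi$ identifies $\Spec L$ with $\gamma(G_k/U)$ for $U = \mathrm{Aut}(k^{sep}/L)$. Since $\gamma$ is fully faithful it is therefore an equivalence onto the full subcategory of connected objects. Moreover every finite étale $k$-algebra is a finite product of finite separable field extensions, so every object of $k_{\et}$ is canonically a finite coproduct of objects of $\gamma(\Orb(G_k))$, and, because a map out of a connected scheme factors through a single summand, $\map_{k_{\et}}$ splits accordingly: thus $k_{\et}$ is the free finite-coproduct completion of $\gamma(\Orb(G_k))$, and in particular $k_{\et}$ has all finite limits.

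\emph{Step 2: decomposing the topology.} I would show the étale topology on $k_{\et}$ is generated by (i) the ``summand covers'' $\{C_i \hookrightarrow \coprod_i C_i\}$ and (ii) the nonempty families of maps of connected objects into a connected object: decomposing the schemes in an arbitrary étale cover $\{V_\alpha \to U\}$ into connected components, it is refined by a summand cover of $U$ followed by families of type (ii) over each summand, and (i)--(ii) are stable up to such refinements under composition and base change. It follows that a presheaf $F$ on $k_{\et}$ is an étale sheaf if and only if (a) it carries finite coproducts to finite products, and (b) its restriction $\gamma^*F$ is a sheaf for the atomic topology on $\Orb(G_k)$. In verifying (b) one must compare the sieve-theoretic sheaf condition on $\Orb(G_k)$ — which has no fibre products, and whose sheaf $\infty$-topos $\cB G_k$ need not be hypercomplete — with Čech descent on the finitely complete site $k_{\et}$: the point is that a single map of connected objects is a field extension, hence faithfully flat, hence already an étale cover generating the atomic topology it induces, and condition (a) lets $F$ evaluate the ensuing Čech nerves inside $k_{\et}$.

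\emph{Step 3: conclusion.} By Step~2, $\gamma^*$ lands in $\Shv(\Orb(G_k))$, and its right Kan extension $\gamma_*$ along $\gamma^{\op}$ is computed as $\gamma_*G\left(\coprod_i C_i\right) \simeq \prod_i G(C_i)$ — the comma category $\Orb(G_k)_{/\coprod_i C_i}$ splits as $\coprod_i \Orb(G_k)_{/C_i}$ and each $\Orb(G_k)_{/C_i}$ has a terminal object — so $\gamma_*G$ satisfies (a) by construction and has $\gamma^*\gamma_*G \simeq G$ an atomic sheaf, whence $\gamma_*G$ is an étale sheaf and $\gamma^*\gamma_* \simeq \id$. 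Conversely, for an étale sheaf $F$ the unit $F \to \gamma_*\gamma^*F$ is, on $\coprod_i C_i$, the map $F\left(\coprod_i C_i\right) \to \prod_i F(C_i)$, an equivalence by (a); hence $\gamma_*\gamma^* \simeq \id$. The main obstacle is precisely Step~2(b): the bookkeeping matching descent on the non-finitely-complete, possibly non-hypercomplete site $\Orb(G_k)$ with genuine Čech descent on $k_{\et}$. An alternative is to invoke the $\infty$-categorical comparison lemma for the fully faithful, colimit-generating functor $y \circ \gamma \colon \Orb(G_k) \to \Shv(k_{\et})$ — using that the étale topology is subcanonical and $y\left(\coprod_i C_i\right) \simeq \coprod_i y(C_i)$ — and to check that the topology it induces on $\Orb(G_k)$ (a sieve covers iff the associated map of representables is an effective epimorphism) is exactly the atomic one, which again reduces to the observation that a single morphism of connected objects is an effective epimorphism while the empty sieve on a nonempty object is not.
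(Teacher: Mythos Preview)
Your argument is correct and takes a genuinely different route from the paper's. You work with the right adjoint $\gamma_*$ and a direct characterization of \'etale sheaves on $k_{\et}$ (your conditions (a) and (b)), then verify both triangle identities of $\gamma^* \dashv \gamma_*$ by hand. The paper instead constructs a \emph{left} adjoint $\gamma_!$ by left Kan extension of $y\circ\gamma$, so that $\gamma^*$ has adjoints on both sides and in particular preserves colimits; it then checks the unit and counit of $\gamma_! \dashv \gamma^*$ only on representables, invoking \cite[Proposition~5.5.4.20 and Theorem~5.1.5.6]{htt} to propagate to all sheaves. This sidesteps your Step~2 entirely---there is no need to characterize sheaves on either site beyond knowing that representables are sheaves. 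That said, the paper also opens by asserting without proof that ``precomposition with $\gamma$ sends sheaves to sheaves,'' which is exactly the forward direction of your (b); your discussion is more forthcoming about why this requires an argument (atomic-sieve descent on a site without pullbacks versus \v{C}ech descent on the finitely complete $k_{\et}$), and your proposed alternative via the $\i$-categorical comparison lemma is indeed the cleanest way to handle it. What the paper's route buys is brevity and a reduction to a Yoneda computation on generators; what yours buys is an explicit structural picture of the sheaf categories on both sites.
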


\begin{proof}
Precomposition with $\gamma$ sends sheaves to sheaves, so there is such a functor $\gamma^*$. We will show that this functor has both a left and right adjoint. Note that the essential image of $\gamma$ is all finite Galois field extensions $$\Spec\left(k_i\right) \to \Spec\left(k\right).$$ Lets denote this essential image by $S\left(k_{\et}\right)$ and, by abuse of notation, denote by $\gamma$ its canonical inclusion into $k_{\et}$. First, lets make the observation that any \'etale $k$-scheme $X \to \Spec\left(k\right)$ in $k_{\et}$ is isomorphic to a coproduct of finite Galois field extensions $$\underset{i}\coprod \Spec\left(k_i\right) \to \Spec\left(k\right),$$ and moreover each inclusion into the coproduct is \'etale, so this coproduct is preserved by the Yoneda embedding into sheaves of spaces. We claim that the right adjoint $\gamma_*$ is then defined by
$$\gamma_*\left(F\right)\left(X\right):=\underset{i}\prod F\left(\Spec\left(k_i\right)\right).$$
It is easy to see that if $F$ is a sheaf for the atomic topology on $S\left(k_{\et}\right)$ then $\gamma_*F$ is a sheaf for the \'etale topology, since every cover in $k_{\et}$ factors as a coproduct of atomic covers.

Let $\tilde \gamma^*:\PShv\left(k_{\et}\right) \rightarrow \PShv\left(\Orb\left(G_k\right)\right)$ be the functor on \emph{presheaves} induced by precomposition with $\gamma.$ Then, $\tilde \gamma^*$ has a right adjoint $\tilde \gamma_*$ given by the formula
$$\tilde \gamma_*\left(F\right)\left(X\right)=\map_{\PShv\left(k_{\et}\right)}\left(\tilde \gamma^*y\left(X\right),F\right),$$ where $F$ and the mapping space are taken in presheaves of spaces. If we can show that $\tilde \gamma_*$ sends sheaves to sheaves and restricts to our functor $\gamma_*,$ this will establish the claim.

Suppose that $F$ is now a sheaf, and denote by $$a:\PShv\left(S\left(k_{\et}\right)\right) \to \Shv\left(S\left(k_{\et}\right)\right)$$ the sheafification functor, left adjoint to the inclusion $i.$

Then
\begin{eqnarray*}
\tilde \gamma_*\left(iF\right)\left(X\right) &\simeq& \map_{\PShv\left(k_{\et}\right)}\left(y\left(X\right),\tilde \gamma_*iF\right)\\
&\simeq& \map_{\PShv\left(S\left(k_{\et}\right)\right)}\left(\tilde \gamma^*y\left(X\right),iF\right)\\
&\simeq& \map_{\Shv\left(S\left(k_{\et}\right)\right)}\left(a\tilde \gamma^*y\left(X\right),F\right)\\
&\simeq& \map_{\Shv\left(S\left(k_{\et}\right)\right)}\left(\underset{i}\coprod y\left(\Spec\left(k_i\right)\right),F\right)\\
&\simeq& \underset{i}\prod F\left(\Spec\left(k_i\right)\right)\\
&=& \gamma_*\left(F\right)\left(X\right),
\end{eqnarray*}
and since we know that $\gamma_*$ sends sheaves to sheaves, this establishes our claim.

Denote by $y$ the Yoneda embedding $$y:k_{\et}\hookrightarrow \Shv\left(k_{\et}\right),$$ then the composite $$S\left(G_k\right) \stackrel{\gamma}{\longhookrightarrow} k_{\et} \stackrel{y}{\longhookrightarrow} \Shv\left(k_{\et}\right)$$ extends to a unique colimit preserving functor $$\Lan_y\left(y\circ \gamma\right):\PShv\left(S\left(G_k\right)\right) \to \Shv\left(k_{\et}\right),$$ whose right adjoint is the restriction of $\tilde \gamma^*$ to sheaves, which lands in sheaves. Hence, $\Lan_y\left(y\circ \gamma\right)$ restricts to a colimit preserving functor $$\gamma_!:\Shv\left(S\left(G_k\right)\right) \to \Shv\left(k_{\et}\right)$$ which is left adjoint to $\gamma^*.$

We conclude that both $\gamma_!$ and $\gamma^*$ preserve colimits. We will show they form an adjoint equivalence by showing that the components of the unit and co-unit along representables are equivalences. Indeed,
\begin{eqnarray*}
\gamma^*\gamma_!\left(y\left(\Spec\left(k_i\right)\right)\right)\left(\Spec\left(k_j\right)\right) &\simeq &
\gamma_!\left(y\left(\Spec\left(k_i\right)\right)\right)\left(\gamma\left(\Spec\left(k_j\right)\right)\right)\\
&\simeq& \left(\gamma\left(y\left(\Spec\left(k_i\right)\right)\right)\right)\left(\gamma\left(\Spec\left(k_j\right)\right)\right)\\
&\simeq& \map_{k_{\et}}\left(\gamma\left(\Spec\left(k_j\right)\right),\gamma\left(\Spec\left(k_i\right)\right)\right)\\
&\simeq& \map_{S\left(k_{\et}\right)}\left(\Spec\left(k_j\right),\Spec\left(k_i\right)\right)\\
&\simeq& y\left(\Spec\left(k_i\right)\right)\left(\Spec\left(k_j\right)\right),
\end{eqnarray*}
and hence the component of the unit $\eta:id \Rightarrow \gamma^*\gamma_!$ along $\Spec\left(k_i\right)$ is an equivalence, and from \cite[Proposition 5.5.4.20 and Theorem 5.1.5.6]{htt}, we conclude that $\eta$ is an equivalence.

Now, let $X=\underset{i}\coprod \Spec\left(k_i\right)$ be in $k_{\et}$. Then
\begin{eqnarray*}
 \gamma_!\gamma^*y\left(X\right) &\simeq& \underset{i} \coprod \gamma_!\gamma^*y\left(\Spec\left(k_i\right)\right)\\
 &\simeq& \underset{i} \coprod y\left(\Spec\left(k_i\right)\right)\\
 &\simeq& y\left(X\right).
 \end{eqnarray*}
 So, we conclude that the component of $$\varepsilon:\gamma_!\gamma^* \to id$$ along $X$ is also an equivalence, and hence, again by \cite[Proposition 5.5.4.20 and Theorem 5.1.5.6]{htt}, we conclude that $\varepsilon$ is an equivalence. Hence, in particular, $\gamma^*$ is an equivalence of $\i$-categories.
\end{proof}

Since for sheaves of sets, we have $$\G_k\mbox{-}\Set\simeq \Shv^{\delta}\left(\Orb\left(G_k\right)\right),$$ it is tempting to think of the $\i$-topos $\Shv\left(\Orb\left(G_k\right)\right)$ as a good definition for the $\i$-category of spaces with a ``continuous'' action of $G_k.$ In fact, one can describe the objects of $\cB G_k \simeq \Shv\left(\Orb\left(G_k\right)\right)$ as compatible collections of spaces $X^{U_i}$ with actions of $G_k/U_i,$ where $U_i$ ranges over open normal subgroups of $G_k$. Since $G_k$ is compact with the profinite topology, each of these subgroups is of finite index, and hence each $X^{U_i}$ is a space with the action of a finite group. The compatibility condition
is that if $U_i \subseteq U_j$, then $X^{U_j}$ is the homotopy fixed points for the action of $U_j/U_i$ on $X^{U_i}.$ This can be read off from the description of $G_k$ as the cofiltered limit of the finite groups $G_k/U_i.$ However, we argue that this is not a good definition of a space with a continuous $G_k$-action, since the natural candidate for an underlying space functor, is often not conservative. Consider a sheaf $X$ of sets on the small \'etale site of $k.$ Then, to extract a set with a continuous $G_k$-action from $X,$ one takes the set $X\left(\Spec\left(k^{sep}\right)\right),$ and endows it with the canonical action. Topos-theoretically, under the equivalence $$\Set \simeq \Shv^{\delta}\left(k^{sep}_{\et}\right),$$ the set $X\left(\Spec\left(k^{sep}\right)\right)$ can be identified with the stalk of $X$ along the canonical geometric morphism $$\Set \to \Shv^{\delta}\left(k_{\et}\right)$$ induced by $$\Spec\left(k^{sep}\right) \to \Spec\left(k\right).$$ In fact, this is the only geometric morphism to $\Shv^{\delta}\left(k_{\et}\right),$ that is, it is the unique point of the topos. Hence, the natural candidate for the underlying space of an object $X$ of $\cB G_k$ is the stalk of $X$ at the unique point $$\Spc \to \cB G_k.$$ However, since the $\i$-topos $\cB G_k$ is often not hypercomplete, it is possible for the space arising as this stalk, which can be computed as $\underset{i} \colim X^{U_i},$ to be contractible when each of the spaces $X^{U_i}$ are not.

\begin{remark}
 We argue however that the $\i$-topos $\mathbb{H}\mathrm{ypShv}\left(\Orb\left(G_k\right)\right)$ of \emph{hypersheaves} is a good candidate $\i$-category of spaces with an action of $G_k,$ since the stalk functor $$\mathbb{H}\mathrm{ypShv}\left(\Orb\left(G_k\right)\right) \to \Spc$$ is conservative. To see this, one can model the $\i$-category $ \mathbb{H}\mathrm{ypShv}\left(\Orb\left(G_k\right)\right)$ by the local model structure on the category of simplicial presheaves, where weak equivalences are stalk-wise weak equivalences, since the small \'etale site of any scheme has enough points \cite[p. 12]{localize}, and then use the fact that the site also has a \emph{unique} point, since $k$ is a field.
\end{remark}

Given any sheaf $X$ of spaces on $\Orb\left(G_k\right),$ one can always take its hypersheafification, and by the previous remark, obtain a space with an action of $G_k.$ However, if $X$ is not a hypercomplete object, this might result in loss of information, in that $X$ cannot be recovered from its hypercompletion. Hence, one may think of objects of $\Shv\left(\Orb\left(G_k\right)\right)$ as generalized $G_k$-spaces which may contain more information. Given such a generalized $G_k$-space, it is natural to contemplate a good definition of the pro-space arising as its ``homotopy quotient'' by $G_k$. Lets first consider the case when $X$ is a $0$-truncated object, so it can be identified with an actual $G_k$-set. If the action is moreover transitive, with stabilizer $G_i,$ then the homotopy quotient should clearly be the pro-space $BG_i$. Note however, all of the transitive $G_k$-sets are of the form $G_k/U$ for $U$ an open subgroup, i.e. are objects in the site $\Orb\left(G_k\right).$ Since, moreover, taking homotopy quotients should be colimit preserving, this uniquely defines such a functor, provided it exists, that is, if there exists a colimit preserving functor

$$\left(\blank\right)//G_k:\Shv\left(\Orb\left(G_k\right)\right) \to \Pro\left(\Spc\right)$$ such that 
$$\left(G_k/U\right)//G_k \simeq BU,$$ then it is unique.
We claim that not only does such a colimit preserving functor exist, but is computed simply by taking the shape of slice topoi, that is, the functor

$$\cB G_k \stackrel{\chi_{\cB G_k}}{\longlonghookrightarrow} \Topi^{\et}/\cB G_k \to \Topi \stackrel{\Shape}{\longlongrightarrow} \Pro\left(\Spc\right),$$
which sends an object $X$ in the classifying $\i$-topos $\cB G_k$ to the shape of its slice topos, i.e., to $\Shape\left(\cB G_k/X\right).$ Lets denote this functor by $$\left(\blank\right)//G_k.$$

\begin{proposition}\label{prop:quotient}
The functor $\left(\blank\right)//G_k$ is colimit preserving and for all open subgroups $U,$ $$\left(G_k/U\right)//G_k \simeq BU.$$
\end{proposition}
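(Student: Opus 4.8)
The plan is to unwind $\left(\blank\right)//G_k$ into an explicit pro-object and then treat the two assertions separately. The key preliminary step is to record that, for $X\in\cB G_k=\Shv\left(\Orb\left(G_k\right)\right)$ regarded as a left exact functor $\Spc\to\Spc$,
\[
\left(X//G_k\right)\left(V\right)\;=\;\Shape\left(\cB G_k/X\right)\left(V\right)\;\simeq\;\map_{\cB G_k}\left(X,\,\Delta_{\cB G_k}(V)\right),
\]
where $\Delta_{\cB G_k}=e^*$ for $e\colon\cB G_k\to\Spc$ the terminal geometric morphism. This drops straight out of the definition of $\Shape$: if $\pi_X\colon\cB G_k/X\to\cB G_k$ denotes the canonical \'etale geometric morphism, then $\Delta_{\cB G_k/X}=\pi_X^*\Delta_{\cB G_k}$ and $1_{\cB G_k/X}=\pi_X^*(1_{\cB G_k})$, so $\Shape(\cB G_k/X)(V)=\map_{\cB G_k/X}(1_{\cB G_k/X},\Delta_{\cB G_k/X}V)\simeq\map_{\cB G_k}((\pi_X)_!(1_{\cB G_k/X}),\Delta_{\cB G_k}V)$ by the $(\pi_X)_!\dashv\pi_X^*$ adjunction, while $(\pi_X)_!(1_{\cB G_k/X})=(\pi_X)_!(\id_X)=X$ by the very definition of $(\pi_X)_!$. (Equivalently, this functor is $\Pro(e_!)(j(X))$ by Proposition~\ref{prop:composable} and Remark~\ref{remark:shriek}.)

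Granting this formula, colimit preservation is almost automatic. Given a small colimit $X\simeq\underset{\alpha}{\colim}\,X_\alpha$ in $\cB G_k$, the functor $V\mapsto\map_{\cB G_k}(X,\Delta_{\cB G_k}V)$ is the pointwise limit $\underset{\alpha}{\lim}\,\bigl(V\mapsto\map_{\cB G_k}(X_\alpha,\Delta_{\cB G_k}V)\bigr)$ in $\Fun(\Spc,\Spc)$, since mapping out of a colimit turns it into a limit and $\Delta_{\cB G_k}$ is independent of $\alpha$. On the other hand, by the proof of Proposition~\ref{prop:limcolim}, small colimits in $\Pro(\Spc)$ are computed precisely as the pointwise limits of the corresponding left exact functors. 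Hence $\underset{\alpha}{\colim}\,\Shape(\cB G_k/X_\alpha)\simeq\Shape(\cB G_k/X)$ in $\Pro(\Spc)$; that is, $\left(\blank\right)//G_k$ preserves small colimits.

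For the value on $G_k/U$, by construction $\left(G_k/U\right)//G_k=\Shape\bigl(\cB G_k/(G_k/U)\bigr)$, so it is enough to show (i) $\cB G_k/(G_k/U)\simeq\cB U$, the classifying $\infty$-topos of the profinite group $U$, and (ii) $\Shape(\cB U)\simeq BU$. For (i): the object $G_k/U$ lies in the site $\Orb(G_k)$, so the slice of the associated representable sheaf is $\cB G_k/(G_k/U)\simeq\Shv\bigl(\Orb(G_k)/(G_k/U)\bigr)$ for the induced Grothendieck topology, which is again atomic; and $\Orb(G_k)/(G_k/U)\simeq\Orb(U)$ by the standard induction/restriction equivalence (a $G_k$-map $G_k/V\to G_k/U$ is sent to its fibre over the base point $eU$, a transitive $U$-set with open stabilizer, with inverse $U/W\mapsto(G_k/W\to G_k/U)$), so $\cB G_k/(G_k/U)\simeq\Shv(\Orb(U))=\cB U$. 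For (ii): writing $U\simeq\underset{i}{\lim}\,\Gamma_i$ over the finite quotients $\Gamma_i=U/U_i$ by open normal subgroups, one has $\cB U\simeq\underset{i}{\lim}\,\cB\Gamma_i$ in $\Topi$, and for finite $\Gamma_i$ the classifying $\infty$-topos is $\cB\Gamma_i\simeq\PShv(B\Gamma_i)\simeq\Spc/B\Gamma_i$; hence $\cB U\simeq\underset{i}{\lim}\,\Spc/B\Gamma_i=\Spc^{pro}/(BU)$, where $BU:=\underset{i}{\lim}\,B\Gamma_i$ is a profinite space (each $B\Gamma_i$ being $\pi$-finite). Since $\Spc^{pro}/(\blank)$ is right adjoint to $\Shape$ (Proposition~\ref{prop:ladjfun}) and is fully faithful when restricted to profinite spaces, its counit $\Shape\bigl(\Spc^{pro}/Y\bigr)\to Y$ is an equivalence at $Y=BU$; thus $\Shape(\cB U)\simeq BU$, and (i) together with (ii) yield $\left(G_k/U\right)//G_k\simeq BU$.

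The step demanding the most care is (i): passing from the orbit-category equivalence $\Orb(G_k)/(G_k/U)\simeq\Orb(U)$ to an equivalence of \emph{$\infty$-sheaf} topoi. Since $\cB G_k$ is generally not hypercomplete (as emphasized in the discussion preceding this proposition), one must genuinely invoke the slice formula $\Shv(\mathcal C)/y(c)\simeq\Shv(\mathcal C/c)$ at the level of $\infty$-topoi, together with the observation that the induced topology on the slice site remains atomic; an argument on underlying $1$-topoi or on hypercompletions would not suffice. Everything else---the adjunction $(\pi_X)_!\dashv\pi_X^*$, the explicit description of colimits in $\Pro(\Spc)$, the presentation of $\cB U$ as a cofiltered limit of the $\cB\Gamma_i$, and the fully faithfulness of $\Spc^{pro}/(\blank)$ on profinite spaces---is already available in the excerpt, so the remainder is bookkeeping.
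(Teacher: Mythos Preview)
Your proof is correct but follows a genuinely different path from the paper's.

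For colimit preservation, the paper simply observes that the composite $\cB G_k \to \Topi^{\et}/\cB G_k \to \Topi$ preserves colimits by \cite[Proposition 6.3.5.14]{htt}, and that $\Shape$ is a left adjoint. Your argument via the explicit formula $\left(X//G_k\right)(V)\simeq\map_{\cB G_k}\left(X,\Delta_{\cB G_k}V\right)$ together with the pointwise description of colimits in $\Pro(\Spc)$ is equally valid and perhaps more transparent, though it reproves a special case of what the cited HTT proposition already gives.

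For the computation at $G_k/U$, the approaches diverge more sharply. The paper exploits the algebro-geometric side: under the equivalence $\cB G_k\simeq\Shv(k_{\et})$, Lemma~\ref{lem:gamma} identifies $G_k/U$ with $\Spec\left((k^{\sep})^U\right)$, the slice topos is then the small \'etale $\i$-topos of that field, and its shape is the \'etale homotopy type $B\Gal\left(k^{\sep}/(k^{\sep})^U\right)=BU$. Your route stays entirely inside the classifying-topos picture: you slice the orbit category to get $\cB G_k/(G_k/U)\simeq\cB U$, then compute $\Shape(\cB U)$ by writing $\cB U\simeq\underset{i}{\lim}\,\cB\Gamma_i$ and invoking full faithfulness of $\Spc^{pro}/(\blank)$ on profinite spaces. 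This is a clean categorical argument that avoids any appeal to \'etale homotopy types of fields; its cost is that the identification $\cB U\simeq\underset{i}{\lim}\,\cB\Gamma_i$ in $\Topi$, while true and standard, is not actually established in the paper (contrary to your closing remark), so you would need to supply or cite it separately. The paper's version is shorter precisely because it recycles Lemma~\ref{lem:gamma} and the known \'etale homotopy type of a field, both already in hand.
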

\begin{proof}
By \cite[Proposition 6.3.5.14]{htt}, the composite $$\cB G_k \to \Topi$$ preserve colimits. Moreover, $\Shape$ is a left adjoint, hence $\left(\blank\right)//G_k$ is colimit preserving. Now, suppose that $U$ is an open normal subgroup of $G_k.$ Then by Lemma \ref{lem:gamma}, under the equivalence $$\cB G_k \simeq \Shv\left(k_{\et}\right),$$ $G_k/U$ corresponds to $\Spec\left(\left(k^{sep}\right)^U\right).$ Moreover, there is a canonical equivalence $$\Shv\left(k_{\et}\right)/\Spec\left(\left(k^{sep}\right)^U\right) \simeq \Shv\left(\left(\left(k^{sep}\right)^U\right)_{\et}\right).$$ Hence, the shape of this slice topos is the \'etale homotopy type of $\Spec\left(\left(k^{sep}\right)^U\right),$ which is simply $$B \Gal\left(k^{\sep}/\left(k^{sep}\right)^U\right) = B U.$$
\end{proof}

Unwinding definitions, if $$e_k:\Shv\left(k_{\et}\right) \to \Spc$$ is the unique geometric morphism, then $\Pro\left(e_k\right)_!$ is the unique cofiltered limit preserving functor $$\Pro\left(\Shv\left(k_{\et}\right)\right) \to \Pro\left(\Spc\right)$$ which restricts to $$\left(\blank\right)//G_k$$ along the inclusion $$j:\Shv\left(k_{\et}\right) \hookrightarrow \Pro\left(\Shv\left(k_{\et}\right)\right).$$ By abuse of notation, we may write $\Pro\left(e_k\right)_!=\left(\blank\right)//G_k.$
Hence, Remark \ref{rmk:recover} together with Proposition \ref{prop:quotient} implies that for any scheme $X$ (or any sheaf on the big \'etale site $\Sch^{\ft}$)

$$\Pi^{Gal_k,\et}_{\i}\left(X\right)//G_k \simeq \Pi^{\et}_{\i}\left(X\right),$$
i.e. its ordinary \'etale homotopy type can be recovered, at least morally, as the homotopy quotient of its Galois-equivariant \'etale homotopy type by the absolute Galois group of $k.$

\subsection{An explicit description of the relative \'{e}tale homotopy type} \label{subsec:explicit} The goal is of this section is to prove Theorem~\ref{thm:concrete_relative}, which gives an explicit description of the relative \'{e}tale homotopy type of any $S$-scheme as a left exact functor from the small \'etale site of $S$ to spaces (which works more generally for any sheaf $\cX \in \Shv_{\et}\left( \Sch^{\ft} \right)$). This description persists for the relative realization of motivic spaces, see Proposition~\ref{prop:explicit-a1}.

Informally, if $X$ is an $S$-scheme of finite type, the functor $\Pi^{S,\et}_{\i}\left(X \right)$ from $\Shv_{\et}\left( \Sch^{\ft} \right)$ to spaces is one ``corepresented by $X$" as one could guess from the description of the shape functor as a pro-left adjoint (see Remark~\ref{remark:shriek}). To obtain this explicit formula, we will describe two geometric morphisms
$$\rho:\Shv\left(S_{\et}\right) \to \Shv_{\et}\left(\Sch_S^{\ft}\right)$$ 
$$\lambda:\Shv_{\et}\left(\Sch_S^{\ft}\right) \to \Shv\left(S_{\et}\right),$$ between the small \'etale site and the big \'etale site of $S,$ such that $\rho \circ \lambda \simeq id$ (see Proposition~\ref{sec:small-vs-big}), i.e., the geometric morphism $\lambda$ is a section of the geometric morphism $\rho$.

\subsubsection{Deligne-Mumford schemes and their functor of points} We first review the theory of Deligne-Mumford stacks (see Definition~\ref{defn:dm-stk}) in general, from the point of view of its associated $\infty$-topos (called Deligne-Mumford schemes) versus its functor of points.  The former is the discussed by Lurie in~\cite[Chapter I.1]{luriespec}, in the spectral setting.

Let $X$ be a scheme, and consider its small \'etale site $X_{\et}$ of affine schemes \'etale over $X.$ There is a canonical sheaf of rings on this site $\mathcal{O}^{\et}_X$: to an \'etale map $\Spec\left(A\right) \to X,$ $\mathcal{O}^{\et}_X$ assigns the ring $A.$ This sheaf can be canonically identified with a sheaf of rings on the small \'etale $\i$-topos $\Shv\left(X_{\et}\right),$ which we will also denote by $\mathcal{O}^{\et}_X.$ The stalk of $\mathcal{O}^{\et}_X$ at any geometric point of $X$ is a strictly Henselian local ring. By \cite[Theorem 2.2.12]{dagv}, this construction restricted to affine schemes gives rise to a fully faithful functor
$$\Spec_{\et}:\Aff \hookrightarrow \Htop$$
from the category of affine schemes to the $\i$-category of $\i$-topoi locally ringed in strictly Henselian algebras (see \cite[Definition 1.2.2.5]{luriespec}). For an affine scheme $\Spec\,R$, we call $\Spec_{\et}(\Spec\,R) = (\Shv(R_{\et}), \mathcal{O}^{\et}_{\Spec\,R})$ the \'{e}tale spectrum of $R$.

\begin{definition}
A morphism $f:\left(\cE,\mathcal{O}_\cE\right) \to \left(\cF,\mathcal{O}_\cF\right)$ of $\i$-topoi locally ringed in Henselian algebras, is \textbf{\'etale} if the underlying geometric morphism of $\i$-topoi $\cE \to \cF$ is \'etale, and the morphism of sheaves $f^*\mathcal{O}_\cF \to \mathcal{O}_\cE$ is an isomorphism.
A collection of \'etale morphisms $\left(f_\alpha:\left(\cE_{\alpha},\mathcal{O}_\alpha\right) \to \left(\cE,\mathcal{O}_{\cE}\right)\right)$ is said to \textbf{cover} $\cE$ if under the equivalence of $\i$-categories

$$\chi_{\cE}:\cE \stackrel{\sim}{\longrightarrow} \Topi^{\et}/\cE$$ of Proposition \ref{lem:htt6.3.5.10}, the maps $f_\alpha$ correspond to objects $E_\alpha$ of $\cE$ such that the unique morphism $$\underset{\alpha} \coprod E_\alpha \to 1_{\cE}$$ is an epimorphism.
\end{definition}

Just as a scheme is locally the spectrum of a ring, a Deligne-Mumford scheme is locally the \'{e}tale spectrum of a ring. 

\begin{definition} \label{dm:scheme}
Let $\left(\cE,\mathcal{O}_\cE\right)$ be an $\i$-topos locally ringed in strictly Henselian algebras. It is a \textbf{Deligne-Mumford scheme} if there exists an \'etale covering family $\left(f_\alpha:\left(\cE_{\alpha},\mathcal{O}_\alpha\right) \to \left(\cE,\mathcal{O}_{\cE}\right)\right)$ such that for each $\alpha$ there is a commutative algebra $A_\alpha$ and an equivalence $$\left(\cE_{\alpha},\mathcal{O}_\alpha\right) \simeq \Spec_{\et}\left(A_\alpha\right).$$ Denote the full subcategory of $\Htop$ on the Deligne-Mumford schemes by $\mathfrak{DM}_\i$.
\end{definition}

The following theorem says that a Deligne-Mumford scheme can be recovered by its functor of points.

\begin{theorem} \label{thm:higherdave} \cite[Theorem 2.4.1]{dagv},\cite[Theorem 6.2.1]{higherdave}
Given a Deligne-Mumford scheme $\left(\cE,\mathcal{O}_\cE\right),$ and an affine scheme $X,$ the space $\map\left(\Spec_{\et}\left(X\right),\left(\cE,\mathcal{O}_\cE\right)\right)=:Y\left(\left(\cE,\mathcal{O}_\cE\right)\right)\left(X\right)$ is essentially small, and the corresponding functor $$Y\left(\left(\cE,\mathcal{O}_\cE\right)\right):\Aff^{op} \to \Spc$$ satisfies \'etale descent.

This construction assembles into a fully faithful functor
\begin{equation} \label{scheme-to-fop}
Y:\mathfrak{DM}_\i \hookrightarrow \Shv\left(\Aff,\et\right),
\end{equation} 
and the $1$-truncated objects (i.e. stacks of $1$-groupoids) in the essential image are precisely the classical Deligne-Mumford stacks (with no separation conditions). In particular, the essential image contains all schemes: if $X$ is a scheme, $X \simeq Y\left(\left(Shv\left(X_{\et}\right),\mathcal{O}^{\et}_X\right)\right).$
\end{theorem}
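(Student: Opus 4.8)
Since this statement collects results of Lurie \cite[Theorem 2.4.1]{dagv} and the first author \cite[Theorem 6.2.1]{higherdave}, the plan is to indicate the shape of the argument rather than reproduce it in full; the guiding principle is to reduce everything to the affine case and then propagate along \'etale covers. On affines, $\Spec_{\et}$ is fully faithful by \cite[Theorem 2.2.12]{dagv}, so for affine schemes $X$ and $\Spec A$ the space $\map\left(\Spec_{\et}(X),\Spec_{\et}(\Spec A)\right)$ is the discrete set $\Hom_{\Aff}(X,\Spec A)$; in particular $Y\left(\Spec_{\et}(\Spec A)\right)$ is the representable \'etale sheaf $y(\Spec A)$, and the restriction of $Y$ to affine schemes is just the Yoneda embedding $\Aff \hookrightarrow \Shv(\Aff,\et)$, which is already known to be fully faithful.

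Next I would handle essential smallness and descent. Given a Deligne--Mumford scheme $(\cE,\mathcal{O}_\cE)$, choose an \'etale covering family by affine \'etale spectra as in Definition~\ref{dm:scheme}. Under the equivalence $\chi_\cE$ of Proposition~\ref{lem:htt6.3.5.10} this presents $1_\cE$ as the geometric realization of the \v{C}ech nerve of the covering map, hence presents $(\cE,\mathcal{O}_\cE)$ as a colimit in $\Htop$ of a simplicial diagram of Deligne--Mumford schemes, each of which is again covered by affine \'etale spectra. Since $\Spec_{\et}$ carries \'etale covers of affines to effective epimorphisms in $\Htop$, the functor $\map\left(\Spec_{\et}(-),(\cE,\mathcal{O}_\cE)\right)$ carries \'etale covers of an affine scheme to limit diagrams; applying descent on the source then shows at once that $Y\left((\cE,\mathcal{O}_\cE)\right)$ takes essentially small values (limits of essentially small spaces) and satisfies \'etale descent, so that $Y$ indeed lands in $\Shv(\Aff,\et)$ as claimed.

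The main work is full faithfulness, and here I would argue as follows. Using the colimit presentation above, I would check that $Y$ carries the canonical presentation of a Deligne--Mumford scheme as an \'etale-local colimit of affine \'etale spectra to the corresponding colimit of representable sheaves --- crucially, since effective epimorphisms are preserved, no further sheafification enters --- so that $Y$ preserves these colimits; dually, $\map_{\Htop}\left((\cE,\mathcal{O}_\cE),(\cF,\mathcal{O}_\cF)\right)$ is the totalization of the cosimplicial space obtained by mapping the \v{C}ech nerve of a cover of $(\cE,\mathcal{O}_\cE)$ into $(\cF,\mathcal{O}_\cF)$. Because on an affine one has tautologically $\map_{\Htop}\left(\Spec_{\et}(\Spec A),(\cF,\mathcal{O}_\cF)\right) = Y\left((\cF,\mathcal{O}_\cF)\right)(\Spec A) \simeq \map_{\Shv}\left(y(\Spec A),Y\left((\cF,\mathcal{O}_\cF)\right)\right)$, comparing the two totalizations termwise yields the desired equivalence $\map_{\Htop}\left((\cE,\mathcal{O}_\cE),(\cF,\mathcal{O}_\cF)\right)\simeq \map_{\Shv}\left(Y(\cE),Y(\cF)\right)$. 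The hard part is exactly the input I am taking for granted from \cite{dagv,higherdave}: that a Deligne--Mumford scheme is genuinely recovered as an \'etale-local colimit of its affine \'etale spectra inside $\Htop$ --- equivalently that gluing strictly Henselian locally ringed $\i$-topoi along \'etale equivalences behaves as one expects --- and that this gluing is compatible with $Y$.

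Finally I would read off the last two assertions. For a scheme $X$, the pair $(\Shv(X_{\et}),\mathcal{O}^{\et}_X)$ is a Deligne--Mumford scheme (cover $X$ by affine opens), and gluing the affine identity $Y\left(\Spec_{\et}(\Spec A)\right)\simeq y(\Spec A)$ along such a cover gives $Y\left((\Shv(X_{\et}),\mathcal{O}^{\et}_X)\right)\simeq y(X)$, so schemes lie in the essential image. That the $1$-truncated objects in the essential image are exactly the classical Deligne--Mumford stacks (with no separation conditions) I would obtain by matching the present definition against the classical groupoid-valued functor of points (e.g.\ as in \cite{stacks-project}), using that $1$-truncatedness of $(\cE,\mathcal{O}_\cE)$ corresponds to $\cE$ being a $1$-localic $\i$-topos.
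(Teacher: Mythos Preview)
The paper does not supply its own proof of this theorem: it is stated with the citations \cite[Theorem 2.4.1]{dagv} and \cite[Theorem 6.2.1]{higherdave} and then used as a black box, with no proof environment following. So there is nothing in the paper to compare your argument against.

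That said, your sketch is a faithful outline of the strategy in those references: reduce to affines where $\Spec_{\et}$ is fully faithful, present a general Deligne--Mumford scheme as an \'etale-local colimit of affine pieces via the \v{C}ech nerve of a cover, and check that $Y$ preserves these colimits so that full faithfulness propagates. Two places where your writeup is a bit loose: the phrase ``limits of essentially small spaces'' for essential smallness needs the indexing diagram to be small as well (which it is here, being a cosimplicial object), and the assertion that $Y$ ``carries \'etale covers of an affine scheme to limit diagrams'' on the source side is really the statement that \'etale covers in $\Aff$ become effective epimorphisms in $\Htop$ under $\Spec_{\et}$, which is exactly the nontrivial input from \cite{dagv} you flag later. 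But as a roadmap to the cited proofs your outline is accurate.
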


\begin{corollary}\label{cor:schff}
The small \'etale spectrum functor extends to a fully faithful functor $$\widetilde{\Spec_{\et}}:\Sch \hookrightarrow \Htop,$$ sending a scheme $X$ to $\left(Shv\left(X_{\et}\right),\mathcal{O}^{\et}_X\right).$
\end{corollary}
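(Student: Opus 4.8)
The plan is to deduce this from Theorem~\ref{thm:higherdave} together with the characterization of Deligne-Mumford schemes among locally strictly Henselian ringed $\i$-topoi. First I would recall that Theorem~\ref{thm:higherdave} already tells us that the functor of points functor $Y:\mathfrak{DM}_\i \hookrightarrow \Shv\left(\Aff,\et\right)$ is fully faithful, and that for a scheme $X$ in the classical sense, the object $\left(\Shv\left(X_{\et}\right),\mathcal{O}^{\et}_X\right)$ is a Deligne-Mumford scheme whose image under $Y$ is (isomorphic to) the functor of points of $X$ restricted to affines. Thus there is a commutative triangle: the classical embedding $\Sch \hookrightarrow \Shv\left(\Aff,\et\right)$ (sending a scheme to its functor of points on affines, which is a sheaf since schemes are \'etale sheaves) factors through $\mathfrak{DM}_\i$ via the assignment $X \mapsto \left(\Shv\left(X_{\et}\right),\mathcal{O}^{\et}_X\right)$.

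The key step is to check that this assignment $X \mapsto \left(\Shv\left(X_{\et}\right),\mathcal{O}^{\et}_X\right)$ is actually functorial on all of $\Sch$, i.e.\ that it extends the functor $\Spec_{\et}$ defined only on affines, and that morphisms of schemes induce morphisms of the associated locally ringed $\i$-topoi. This is where the bulk of the (routine) work lies: given $f:X \to Y$, one must produce a geometric morphism $\Shv\left(X_{\et}\right) \to \Shv\left(Y_{\et}\right)$ together with a map $f^*\mathcal{O}^{\et}_Y \to \mathcal{O}^{\et}_X$ which is an isomorphism on strict henselizations, and verify that this respects composition. The cleanest way to organize this is to \emph{define} $\widetilde{\Spec_{\et}}$ as the composite
$$\Sch \hookrightarrow \Shv\left(\Aff,\et\right)$$
landing in the essential image of $Y$ (by the last sentence of Theorem~\ref{thm:higherdave}), and then transport the functoriality back along the fully faithful $Y$. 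Concretely: the classical Yoneda-style embedding of $\Sch$ into sheaves on affine schemes is fully faithful (schemes embed fully faithfully into \'etale sheaves on affines), its image lands in $\mathfrak{DM}_\i$ under the identification of Theorem~\ref{thm:higherdave}, and since $Y$ is fully faithful we get an induced fully faithful functor $\widetilde{\Spec_{\et}}:\Sch \hookrightarrow \mathfrak{DM}_\i \hookrightarrow \Htop$, whose value on objects is $X \mapsto \left(\Shv\left(X_{\et}\right),\mathcal{O}^{\et}_X\right)$ precisely because that is what $Y^{-1}$ does to the functor of points of $X$.

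I would then just need to confirm fully faithfulness of the composite $\Sch \hookrightarrow \Htop$: full faithfulness of $\Sch \hookrightarrow \Shv\left(\Aff,\et\right)$ is classical (it is the statement that a morphism of schemes is the same as a natural transformation of functors of points on affine schemes, which holds because every scheme is covered by affine opens and these covers are \'etale, hence the descent is seen by the topology), and full faithfulness of $Y$ on $\mathfrak{DM}_\i$ is exactly Theorem~\ref{thm:higherdave}; composing two fully faithful functors gives a fully faithful functor. The main obstacle — really the only non-formal point — is matching up the two descriptions of the image of a scheme: that the functor-of-points sheaf of a scheme $X$ on affine schemes coincides, under $Y$, with the locally ringed $\i$-topos $\left(\Shv\left(X_{\et}\right),\mathcal{O}^{\et}_X\right)$. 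But this is precisely the content of the last sentence of Theorem~\ref{thm:higherdave} (``if $X$ is a scheme, $X \simeq Y\left(\left(\Shv\left(X_{\et}\right),\mathcal{O}^{\et}_X\right)\right)$''), so the corollary follows formally once that identification is invoked. I expect the proof to be short, essentially a two-line deduction citing Theorem~\ref{thm:higherdave} and the classical fact that $\Sch \hookrightarrow \Shv\left(\Aff,\et\right)$ is fully faithful.
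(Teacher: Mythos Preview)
Your proposal is correct and is exactly the intended deduction: the paper states this as an immediate corollary of Theorem~\ref{thm:higherdave} with no further proof, and your argument---factoring the classical fully faithful embedding $\Sch \hookrightarrow \Shv\left(\Aff,\et\right)$ through the fully faithful $Y$ using the identification $X \simeq Y\left(\left(\Shv\left(X_{\et}\right),\mathcal{O}^{\et}_X\right)\right)$---is precisely what makes it a corollary.
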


\begin{definition}
A \textbf{Deligne-Mumford $\i$-stack} is an object of $\Shv\left(\Aff,\et\right)$ in the essential image of $Y$ as in~\eqref{scheme-to-fop} above. We denote the corresponding $\i$-category by $\mathfrak{DMST}_\i.$ Let $S$ be a scheme. Denote by $\left(\mathfrak{DMST}_\i\right)_S$ the slice category $\mathfrak{DMST}_\i/S,$ and refer to the objects as \textbf{$S$-Deligne-Mumford $\i$-stacks}.
\end{definition}
Hence, by definition, we have an equivalence of $\infty$-categories between Deligne-Mumford schemes and Deligne-Mumford stacks.
\begin{definition} \label{defn:dm-stk}
A map $\cX \to \cY$ between Deligne-Mumford $\i$-stacks will be called \textbf{\'etale} if it is equivalent to $Y\left(\varphi\right),$ for $\varphi$ an \'etale map of strictly Henselian ringed $\i$-topoi. 
\end{definition}

\begin{definition}
Let $S$ be a scheme, and let $\left(\cE,\mathcal{O}_\cE\right) \to \widetilde{\Spec_{\et}}\left(S\right)$ be a map whose codomain is Deligne-Mumford scheme. Then $\left(\cE,\mathcal{O}_\cE\right)$ is \textbf{locally of finite type over $S$} if there exists an \'etale covering family $$\left(f_\alpha:\left(\cE_\alpha,\mathcal{O}_\alpha\right)_\alpha \to \left(\cE,\mathcal{O}_\cE\right)\right)_\alpha$$ such that for each $\alpha$ there is an equivalence $$\left(\cE_\alpha,\mathcal{O}_\alpha\right)\simeq \widetilde{\Spec_{\et}}\left(X_\alpha\right),$$ for $X_\alpha$ a scheme, such that the induced map $\widetilde{\Spec_{\et}}\left(X_\alpha\right) \to \widetilde{\Spec_{\et}}\left(S\right)$ corresponds under Corollary \ref{cor:schff} to a morphism $X_\alpha \to S$ of schemes which is locally of finite type. Denote by $\mathfrak{DM}^{lft/S}_\i$ the full subcategory of the slice category $\mathfrak{DM}_\i/\widetilde{\Spec_{\et}}\left(S\right)$ on those objects which are locally of finite type over $S.$
\end{definition}

Since $\widetilde{\Spec_{\et}}$ is fully faithful, there is an induced fully faithful functor
$$\varphi:\Sch^{\ft}/S \hookrightarrow \mathfrak{DM}^{lft/S}_\i.$$ The following proposition follows by a similar argument to that of \cite[Theorem 2.4.1]{dagv}:
\begin{proposition}\label{prop:phifff}
The composite $$\mathfrak{DM}^{lft/S}_\i \stackrel{y}{\longhookrightarrow} \PShv\left(\mathfrak{DM}^{lft/S}_\i\right) \stackrel{\varphi^*}{\longrightarrow} \PShv\left(\Sch^{\ft}_S\right)$$ is fully faithful and factors through the inclusion $$\Shv_{\et}\left(\Sch_S^{\ft}\right) \hookrightarrow \PShv\left(\Sch_S^{\ft}\right).$$
\end{proposition}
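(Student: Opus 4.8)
The plan is to identify $\varphi^{*}\circ y$ with the restricted-Yoneda functor attached to $\varphi$, and to deduce both assertions from the functor-of-points reconstruction of Theorem~\ref{thm:higherdave}, transported along the fully faithful embedding $Y\colon\mathfrak{DM}_\i\hookrightarrow\Shv(\Aff,\et)$ of~\eqref{scheme-to-fop}. This is exactly the mechanism of \cite[Theorem 2.4.1]{dagv}, run in the relative, finite-type-over-$S$ setting, as indicated before the statement. Concretely, for $\cX\in\mathfrak{DM}^{lft/S}_\i$ and $T\in\Sch^{\ft}_S$ one has $(\varphi^{*}y\cX)(T)\simeq\map_{\mathfrak{DM}^{lft/S}_\i}(\varphi(T),\cX)$; since $\mathfrak{DM}^{lft/S}_\i=(\mathfrak{DM}_\i)_{/\widetilde{\Spec_{\et}}(S)}$ with $\varphi=\widetilde{\Spec_{\et}}(-)$ over $\widetilde{\Spec_{\et}}(S)$, applying the fully faithful $Y$ and passing to the slice yields $(\varphi^{*}y\cX)(T)\simeq\map_{\Shv(\Aff,\et)_{/YS}}(h_{T},Y\cX)$, where $h_{T}$ is the \'etale sheaf on $\Aff$ represented by $T$ and $YS:=Y\widetilde{\Spec_{\et}}(S)$. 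In particular $\varphi^{*}y$ is the composite of the fully faithful $Y$ landing in $\Shv(\Aff,\et)_{/YS}$ with the functor $\mathcal{F}\mapsto(T\mapsto\map(h_{T},\mathcal{F}))$.

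First I would prove that $\varphi^{*}y$ lands in $\Shv_{\et}(\Sch^{\ft}_S)$. Fix $\cX$ and an \'etale covering sieve on some $T\in\Sch^{\ft}_S$; as $T$ is quasi-compact we may refine it to a finite cover by affines, with \v{C}ech nerve $T_\bullet$, all of whose terms again lie in $\Sch^{\ft}_S$. The cover induces an effective epimorphism $\coprod_i h_{T_i}\twoheadrightarrow h_{T}$ in the $\infty$-topos $\Shv(\Aff,\et)$, so $h_{T}\simeq\underset{[n]}\colim h_{T_\bullet^{[n]}}$. This colimit lies in the essential image of $Y$ — it equals $Y\widetilde{\Spec_{\et}}(T)$ by Theorem~\ref{thm:higherdave} — hence it is computed in $\mathfrak{DM}_\i$, and passing to the slice gives $\varphi(T)\simeq\underset{[n]}\colim\varphi(T_\bullet^{[n]})$ in $\mathfrak{DM}^{lft/S}_\i$. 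Applying $\map_{\mathfrak{DM}^{lft/S}_\i}(-,\cX)$ then shows $(\varphi^{*}y\cX)(T)\simeq\underset{[n]}\lim\,(\varphi^{*}y\cX)(T_\bullet^{[n]})$; since every \'etale covering sieve on a finite type $S$-scheme is refined by one coming from a finite affine \'etale cover, $\varphi^{*}y\cX$ satisfies \'etale descent.

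Next I would prove full faithfulness. Recall from \cite{htt} that the restricted-Yoneda functor $\varphi^{*}y=\varphi^{*}\circ y_{\mathfrak{DM}}$ is fully faithful if and only if, for every $\cX\in\mathfrak{DM}^{lft/S}_\i$, the canonical map $\underset{(\varphi(T)\to\cX)}\colim\varphi(T)\to\cX$ — the colimit taken over the comma category $\Sch^{\ft}_S\times_{\mathfrak{DM}^{lft/S}_\i}(\mathfrak{DM}^{lft/S}_\i)_{/\cX}$ — is an equivalence; indeed, for any $\cY$ the domain and codomain of the comparison map are respectively $\map(\cX,\cY)$ and $\underset{(\varphi(T)\to\cX)}\lim\,\map(\varphi(T),\cY)$. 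Every $T\in\Sch^{\ft}_S$ has an affine open cover by schemes of finite type over $S$, and the local finite type hypothesis produces such covers compatibly over $\cX$, so the affine $T\in\Aff^{\ft}_S$ are cofinal in that comma category; it therefore suffices to show $\cX\simeq\underset{(\widetilde{\Spec_{\et}}(T)\to\cX),\,T\in\Aff^{\ft}_S}\colim\widetilde{\Spec_{\et}}(T)$ in $(\mathfrak{DM}_\i)_{/\widetilde{\Spec_{\et}}(S)}$. Transported along $Y$, this reads $Y\cX\simeq\underset{(h_{T}\to Y\cX)}\colim h_{T}$ in $\Shv(\Aff,\et)$, which holds because every sheaf is the colimit of the representables over it and, by Theorem~\ref{thm:higherdave} together with the finite-type hypothesis over $S$, the affine finite-type-over-$S$ such $T$ are cofinal; as before the colimit lies in the essential image of $Y$ and is reflected back to $\mathfrak{DM}_\i$ and its slice.

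The hard part is entirely in these transport-along-$Y$ steps: one must verify that the colimits involved — \v{C}ech nerves of covers, and ``a sheaf as the colimit of its representable points'' — genuinely lie in the essential image of $Y$, so that they are computed in $\mathfrak{DM}_\i$ and survive passage to the slice over $\widetilde{\Spec_{\et}}(S)$, and that the finite-type-over-$S$ conditions are preserved by the relevant fiber products and stay cofinal. None of this is deep once Theorem~\ref{thm:higherdave} is available — it amounts to rerunning the proof of \cite[Theorem 2.4.1]{dagv} while keeping track of the base $S$ and the finite-type constraints — but it is where the care is required.
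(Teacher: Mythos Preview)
Your proposal is correct and follows essentially the same approach the paper indicates: the paper does not give a detailed proof but simply notes that it ``follows by a similar argument to that of \cite[Theorem 2.4.1]{dagv},'' and your sketch is precisely that argument run in the relative, finite-type-over-$S$ setting. One minor remark: in your descent step you do not actually need the \v{C}ech colimit to be \emph{computed} in $\mathfrak{DM}^{lft/S}_\i$ --- since $Y$ is fully faithful, it suffices that the colimit $h_T\simeq\underset{[n]}\colim h_{T_\bullet^{[n]}}$ holds in $\Shv(\Aff,\et)$ and that each $h_{T_\bullet^{[n]}}$ lies in the essential image, so that mapping out of it into $Y\cX$ can be rewritten as a limit of mapping spaces computed in $\mathfrak{DM}^{lft/S}_\i$.
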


\begin{definition}
Denote by $\mathfrak{DMST}^{lft/S}_\i$ the essential image of $\mathfrak{DM}^{lft/S}_\i$ in $\Shv_{\et}\left(\Sch_S^{\ft}\right)$ and refer to its objects as $S$-Deligne-Mumford $\i$-stacks locally of finite type.
\end{definition}

\begin{proposition}\label{prop:et-ft}
Every \'etale map $\left(\cE,\mathcal{O}_\cE\right) \to \widetilde{\Spec_{\et}}\left(S\right)$ is locally of finite type.
\end{proposition}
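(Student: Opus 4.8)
The statement asserts that any étale map of strictly Henselian ringed $\infty$-topoi $\left(\cE,\mathcal{O}_\cE\right) \to \widetilde{\Spec_{\et}}\left(S\right)$ is locally of finite type over $S$. First I would reduce to the affine case: since $\widetilde{\Spec_{\et}}\left(S\right)$ is obtained by gluing $\widetilde{\Spec_{\et}}\left(\Spec\,A\right)$ along an étale cover, and being locally of finite type is a local condition on the source by definition, I may assume $S = \Spec\,A$ is affine. Then, because $\left(\cE,\mathcal{O}_\cE\right) \to \Spec_{\et}\left(A\right)$ is étale, Proposition~\ref{lem:htt6.3.5.10} (the equivalence $\chi_{\cE}: \cE \xrightarrow{\sim} \Topi^{\et}/\cE$) applied over $\Spec_{\et}\left(A\right)$ lets me write $\cE \simeq \Shv\left((\Spec\,A)_{\et}\right)/E$ for some object $E$, with the structure sheaf pulled back from $\mathcal{O}^{\et}_{\Spec\,A}$. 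Writing $E$ as a colimit of representables coming from affine schemes $\Spec\,B_\alpha$ étale over $\Spec\,A$, the induced family of slice topoi $\Shv\left((\Spec\,A)_{\et}\right)/y(\Spec\,B_\alpha) \simeq \Shv\left((\Spec\,B_\alpha)_{\et}\right)$ forms an étale cover of $\cE$, each piece being $\Spec_{\et}\left(B_\alpha\right)$ with its canonical structure sheaf.

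The crux is then purely algebraic: if $\Spec\,B \to \Spec\,A$ is an étale morphism of affine schemes, then it is locally of finite type (indeed of finite presentation). This is a standard fact — étale morphisms are by definition smooth of relative dimension zero, hence locally of finite presentation — so each $\widetilde{\Spec_{\et}}\left(B_\alpha\right) \to \widetilde{\Spec_{\et}}\left(\Spec\,A\right)$ corresponds under Corollary~\ref{cor:schff} to a morphism of schemes $\Spec\,B_\alpha \to \Spec\,A$ that is locally of finite type. Composing with the (locally of finite type, since finite presentation is preserved under composition — actually here one just needs that $S$ itself is arbitrary and $\Spec\,A \to S$ was a chart, which is affine étale hence l.f.t.) chart maps $\Spec\,A \to S$, I get that the covering family $\left(\Spec_{\et}\left(B_\alpha\right) \to \widetilde{\Spec_{\et}}\left(S\right)\right)$ exhibits $\left(\cE,\mathcal{O}_\cE\right)$ as locally of finite type over $S$ in the sense of the preceding definition.

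**Main obstacle.** The only subtlety — and the step I would be most careful about — is the bookkeeping around pullbacks of étale covers: I need that an étale cover of $\Spec_{\et}\left(A\right)$ by étale $\Spec\,B_\alpha$, together with the étale cover of $\cE$ by slices of $\cE$, can be refined to a single étale cover of $\cE$ by affine étale schemes over $A$. This is where one uses that a composite of étale geometric morphisms is étale (Proposition~\ref{prop:6.3.5.9}) and that pullback of an object along an étale geometric morphism of small étale topoi $\Shv\left((\Spec\,A)_{\et}\right)/E \to \Shv\left((\Spec\,A)_{\et}\right)$ corresponds, under the identification with schemes, to fiber product — which stays in the category of affine schemes étale over $A$ after passing to an affine open cover. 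Once that refinement is in hand, the verification that the composite to $S$ is l.f.t. on schemes is immediate from the transitivity of "locally of finite type" for schemes, so no genuine difficulty remains; the proof is essentially a translation between the topos-theoretic and the scheme-theoretic notions of "étale locally affine of finite type."
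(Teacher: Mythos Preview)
Your proposal is correct and follows essentially the same approach as the paper: identify the \'etale object over $\widetilde{\Spec_{\et}}(S)$ with a sheaf $E$ on the small \'etale site, cover $E$ by representables, and use that \'etale morphisms of schemes are locally of finite presentation. The paper avoids your reduction to the affine case by working directly with $\Shv(S_{\et})$ via the ringed equivalence $\Shv(S_{\et}) \simeq \mathfrak{DM}^{\et}_\i/\widetilde{\Spec_{\et}}(S)$ (\cite[Proposition 2.3.5, (5)]{dagv}), whose representable objects are already \'etale $S$-schemes; this makes your ``main obstacle'' about reassembling affine charts disappear entirely.
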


\begin{proof}
As any \'etale map of schemes is locally of finite presentation and hence locally of finite type, it suffices to prove that there exists an \'etale covering family $$\left(f_\alpha:\left(\cE_\alpha,\mathcal{O}_\alpha\right)_\alpha \to \left(\cE,\mathcal{O}_\cE\right)\right)_\alpha$$ with equivalences $$\left(\cE_\alpha,\mathcal{O}_\alpha\right)\simeq \widetilde{\Spec_{\et}}\left(X_\alpha\right),$$ for $X_\alpha$ a scheme, such that the induced map $\widetilde{\Spec_{\et}}\left(X_\alpha\right) \to \widetilde{\Spec_{\et}}\left(S\right)$ corresponds to an \'etale morphism $X_\alpha \to S$ of schemes. By \cite[Proposition 2.3.5, (5)]{dagv}, there is an equivalence of $\i$-categories
\begin{eqnarray*}
\Shv\left(S_{\et}\right) &\to& \mathfrak{DM}^{\et}_\i/\widetilde{\Spec_{\et}}\left(S\right)\\
F &\mapsto& \left(\left(\Shv\left(S_{\et}\right)/F,\pi_F^*\left(\mathcal{O}^{\et}_S\right)\right) \to \left(\Shv\left(S_{\et}\right),\mathcal{O}^{\et}_S\right)\right)
\end{eqnarray*}
where $\mathfrak{DM}^{\et}_\i/\widetilde{\Spec_{\et}}\left(S\right)$ is the full subcategory of $\mathfrak{DM}_\i/\widetilde{\Spec_{\et}}\left(S\right)$ spanned by the \'etale maps, and where $\pi_F:\Shv\left(S_{\et}\right)/F \to \Shv\left(S_{\et}\right)$ is the canonical \'etale geometric morphism associated to $F.$ Therefore, it suffices to show that for any sheaf $F$ in the small \'etale $\i$-topos $\Shv\left(S_{\et}\right),$ there exists representable objects $X_\alpha$ and an epimorphism $$\underset{\alpha} \coprod X_\alpha \to F$$ in $\Shv\left(S_{\et}\right).$ (Here by a representable object, we mean a representable sheaf on the small \'etale site $S_{\et}.$) To this end, denote by $S_{\et}/F$ the full subcategory of $\Shv\left(S_{\et}\right)/F$ spanned by arrows with representable codomain. Then, by \cite[Proposition 5.1.5.3]{htt}, $F$ is the colimit of the composite
$$S_{\et}/F \hookrightarrow \Shv\left(S_{\et}\right)/F \to \Shv\left(S_{\et}\right).$$ The result now follows from \cite[Lemma 6.2.3.13]{htt}.
\end{proof}

\begin{definition}
Let $\mathfrak{DMST}^{\et/S}_\i$ denote the full subcategory of $\mathfrak{DMST}^{lft/S}_\i$ on those $S$-Deligne Mumford $\i$-stacks whose essentially unique map to $S$ is \'etale.
\end{definition}

\subsubsection{Geometric morphisms between small and big sites} 
Classically, there is an equivalence of $1$-categories between the small \'etale shaves (of sets) on $S$ and the category of \'etale algebraic spaces over $S$ --- see \footnote{The reference proves the theorem only for locally Noetherian schemes and locally separated algebraic spaces, but the proof generalizes to the claimed equivalence.}, for example \cite[Section V.I, Theorem 1.5]{milne-book}. The following is the analogue for $\infty$-topoi.

\begin{proposition}\label{prop:Lff}
There is a canonical fully faithful embedding
$$\rho_!:\Shv\left(S_{\et}\right) \hookrightarrow \Shv_{\et}\left(\Sch_S^{\ft}\right),$$ which restricts to an equivalence of $\i$-categories $$\Shv\left(S_{\et}\right)\stackrel{\sim}{\longrightarrow}\mathfrak{DMST}^{\et/S}_\i.$$
Moreover, if $F \in\Shv\left(S_{\et}\right)_0,$ then for $p:X \to S$ an $S$-scheme, 
$$\map\left(X,\rho_!\left(F\right)\right) \simeq \Gamma\left(p^*F\right).$$
\end{proposition}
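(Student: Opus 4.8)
The plan is to obtain $\rho_!$ by restricting the embedding of Deligne--Mumford $\i$-stacks into the big \'etale $\i$-topos, using the previously developed machinery. First I would observe that the classical picture has an $\i$-categorical counterpart built from the results of the previous subsection: we have the equivalence of $\i$-categories $\Shv\left(S_{\et}\right) \simeq \mathfrak{DM}^{\et}_\i/\widetilde{\Spec_{\et}}\left(S\right)$ from \cite[Proposition 2.3.5]{dagv} (used already in the proof of Proposition~\ref{prop:et-ft}), and by Proposition~\ref{prop:et-ft} together with the equivalence of Deligne--Mumford schemes with Deligne--Mumford $\i$-stacks (Theorem~\ref{thm:higherdave}), this identifies with $\mathfrak{DMST}^{\et/S}_\i$, the full subcategory of $\mathfrak{DMST}^{lft/S}_\i$ on objects whose structure map to $S$ is \'etale. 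The embedding $\mathfrak{DMST}^{lft/S}_\i \hookrightarrow \Shv_{\et}\left(\Sch^{\ft}_S\right)$ is provided by Proposition~\ref{prop:phifff}. Composing gives a fully faithful functor $\rho_!:\Shv\left(S_{\et}\right)\hookrightarrow \Shv_{\et}\left(\Sch^{\ft}_S\right)$ whose essential image is exactly $\mathfrak{DMST}^{\et/S}_\i$; this establishes the first two assertions essentially by unwinding definitions and stringing together the cited equivalences.

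For the last assertion, I would trace through what $\rho_!$ does to a $0$-truncated sheaf $F$. Under the equivalence $\Shv\left(S_{\et}\right)\simeq \mathfrak{DM}^{\et}_\i/\widetilde{\Spec_{\et}}\left(S\right)$, the sheaf $F$ corresponds to the \'etale geometric morphism $\pi_F:\Shv\left(S_{\et}\right)/F \to \Shv\left(S_{\et}\right)$ equipped with the pulled-back structure sheaf $\pi_F^*\left(\mathcal{O}^{\et}_S\right)$; and $\rho_!(F)$ is the functor of points of the associated Deligne--Mumford $\i$-stack restricted to $\Sch^{\ft}_S$. By Theorem~\ref{thm:higherdave}, for an affine $S$-scheme $X = \Spec\,A$ with structure map $p:X \to S$,
\[
\map\left(X,\rho_!\left(F\right)\right) \simeq \map_{\mathfrak{DM}_\i/\widetilde{\Spec_{\et}}\left(S\right)}\left(\widetilde{\Spec_{\et}}\left(X\right), \left(\Shv\left(S_{\et}\right)/F, \pi_F^*\mathcal{O}^{\et}_S\right)\right).
\]
Now the key computation: since the target is an \emph{\'etale} object of $\mathfrak{DM}_\i/\widetilde{\Spec_{\et}}\left(S\right)$, a map from $\widetilde{\Spec_{\et}}\left(X\right)$ into it over $\widetilde{\Spec_{\et}}\left(S\right)$ is the same datum as a lift of the geometric morphism $p:\Shv\left(X_{\et}\right) \to \Shv\left(S_{\et}\right)$ through $\pi_F$ (the structure-sheaf condition being automatic because $\pi_F$ is \'etale and $p$ is induced by a morphism of schemes, so $p^*\mathcal{O}^{\et}_S \simeq \mathcal{O}^{\et}_X$). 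By the universal property of the slice $\i$-topos, lifts of $p$ through the \'etale morphism $\pi_F$ are classified by the mapping space $\map_{\Shv\left(S_{\et}\right)}\left(1, p_*p^*F\right)$ in $\Shv(S_\et)$, equivalently $\map_{\Shv(X_\et)}(1, p^*F) = \Gamma\left(p^*F\right)$. This uses exactly the adjunction characterizing \'etale geometric morphisms recalled in \S\ref{subsec:absshape} (the pullback square $\cF/f^*E \to \cE/E$ over $\cF \to \cE$ and Proposition~\ref{lem:htt6.3.5.10}), in the same spirit as the computation in the proof of Proposition~\ref{prop:shape-adj}.

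Finally I would remove the affineness hypothesis on $X$: both sides of $\map\left(X,\rho_!\left(F\right)\right) \simeq \Gamma\left(p^*F\right)$ turn colimits in $X$ into limits and send \'etale covers of $X$ to their associated \v{C}ech descent diagrams, so writing an arbitrary $S$-scheme $X$ of finite type as a colimit of affines in $\Shv_{\et}\left(\Sch^{\ft}_S\right)$ and using that $\rho_!(F)$ is a sheaf, together with \'etale descent for $\Gamma\left(p^*F\right)$ (equivalently, the fact that $\Gamma$ is a right adjoint and $p^*$ commutes with the relevant pullbacks), reduces the general case to the affine one. The main obstacle I anticipate is the bookkeeping in the third step: verifying cleanly that a map of Deligne--Mumford $\i$-stacks into an \'etale object really is the same as a lift of the underlying geometric morphism, i.e., that the structure-sheaf compatibility is genuinely automatic and the space of such data coincides with $\Gamma\left(p^*F\right)$ — this is where one has to be careful with \cite[Proposition 2.3.5]{dagv} and the precise definition of morphisms of ringed $\i$-topoi rather than with any hard new input.
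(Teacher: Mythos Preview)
Your approach is essentially the same as the paper's: both construct $\rho_!$ as the composite
\[
\Shv\left(S_{\et}\right)\ \stackrel{\sim}{\longrightarrow}\ \left(\mathfrak{DM}^{\et}_\i\right)/\widetilde{\Spec_{\et}}\left(S\right)\ \hookrightarrow\ \mathfrak{DM}^{lft/S}_\i\ \hookrightarrow\ \Shv_{\et}\left(\Sch_S^{\ft}\right),
\]
and both compute $\map\left(X,\rho_!\left(F\right)\right)$ as the space of lifts of $p$ through the \'etale morphism $\pi_F$, then identify that space with $\Gamma\left(p^*F\right)$.

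Two points are worth flagging. First, the sentence ``$p^*\mathcal{O}^{\et}_S \simeq \mathcal{O}^{\et}_X$'' is not correct for a general morphism $p$ of schemes (it holds precisely when $p$ is \'etale); the reason the structure-sheaf condition is automatic is rather that $\pi_F$ is \'etale, so the structure sheaf on the target is \emph{pulled back} from $S$, and hence any lift of the underlying geometric morphism automatically determines a compatible map of structure sheaves over the one already given for $p$. The paper makes this transparent by invoking the pullback square of strictly Henselian ringed $\i$-topoi from \cite[Remark 2.3.4]{dagv}, which reduces the space of lifts directly to the space of sections of $\left(\Shv\left(X_{\et}\right)/p^*F,\cO^{\et}_X|_{p^*F}\right)\to\left(\Shv\left(X_{\et}\right),\cO^{\et}_X\right)$, and then \cite[Proposition 2.3.5(5)]{dagv} identifies this with $\Gamma\left(p^*F\right)$. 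Second, your reduction to affine $X$ followed by a descent argument is unnecessary: the pullback-square argument works uniformly for any $S$-scheme $X$, so you can drop the last paragraph entirely. (Also, a notational remark: in this paper $\Shv\left(S_{\et}\right)_0$ means the objects of the $\i$-category, not the $0$-truncated objects; nothing in the argument uses $0$-truncatedness.)
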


\begin{proof}
Using Propositions~\ref{prop:et-ft} and~\ref{prop:phifff}, the $\i$-category $\mathfrak{DMST}^{\et/S}_\i$ can be canonically identified with the essential image of the following composite of fully faithful functors:
$$\Shv\left(S_{\et}\right) \stackrel{\sim}{\longrightarrow} \left(\mathfrak{DM}^{\et}_\i\right)/\widetilde{\Spec^{\et}\left(S\right)} \hookrightarrow \mathfrak{DM}^{lft/S}_\i \hookrightarrow \Shv_{\et}\left(\Sch_S^{\ft}\right).$$This establishes the first claim. Hence, for $p:X \to S$ an $S$-scheme locally of finite type, the space $\map\left(X,\rho_!\left(F\right)\right)$ can be identified with the space of lifts
$$\xymatrix{ & \left(\Shv\left(S_{et}\right)/F,\cO^{et}_S|_F\right) \ar[d]\\
\left(\Shv\left(X_{\et}\right),\cO^{\et}_X\right) \ar[r]^{p} \ar@{-->}[ru] & \left(\Shv\left(S_{\et}\right),\cO^{\et}_S\right).}$$
By \cite[Remark 2.3.4]{dagv}, there is a pullback diagram 
$$\xymatrix{\left(\Shv\left(X_{et}\right)/p^*F,\cO^{et}_X|_{p^*F}\right) \ar[d]_-{pr_1} \ar[r]^-{pr_2}& \left(\Shv\left(S_{et}\right)/F,\cO^{et}_S|_F\right) \ar[d]\\
\left(\Shv\left(X_{\et}\right),\cO^{\et}_X\right) \ar[r]^{p}& \left(\Shv\left(S_{\et}\right),\cO^{\et}_S\right),}$$ hence $\map\left(X,\rho_!\left(F\right)\right)$ can be identified with the space of sections of the map $pr_1$ above. By \cite[Proposition 2.3.5 (5)]{dagv}, this can in turn be identified with the space of maps $$\map_{\Shv\left(X_{\et}\right)}\left(1,p^*F\right)=:\Gamma\left(p^*F\right).$$
\end{proof}

\begin{lemma}\label{lemma:Lcolim}
The fully faithful embedding $$\rho_!:\Shv\left(S_{\et}\right) \hookrightarrow \Shv_{\et}\left(\Sch_S^{\ft}\right)$$ of Proposition \ref{prop:Lff} preserves small colimits.
\end{lemma}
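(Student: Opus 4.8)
The plan is to identify $\rho_!$ with the left adjoint of the restriction functor $u^*\colon \Shv_{\et}(\Sch^{\ft}_S)\to\Shv(S_{\et})$ along the inclusion of sites $u\colon S_{\et}\hookrightarrow \Sch^{\ft}_S$; being a left adjoint, that functor preserves all small colimits, which is the assertion.

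First I would note that $u$, as well as each inclusion of a small étale site $X_{\et}\hookrightarrow\Sch^{\ft}_S$, is a fully faithful morphism of sites which is continuous (étale covers remain étale covers) and cocontinuous (any étale covering of an object is refined by an affine étale covering living in the small site), and that the induced topology agrees with the given one. Consequently $u^*$ has a left adjoint $u_!$, given on presheaves by left Kan extension and then sheafified; for an affine $X\in\Sch^{\ft}_S$ one computes $\widetilde{u_!}(F)(X)\simeq\underset{(W,\,X\to W)}{\colim}F(W)\simeq\underset{(V,\,V\to F)}{\colim}\Hom_S(X,V)$, the first colimit running over the comma category of objects of $S_{\et}$ under $X$, the second being a co-Yoneda rewriting along the presentation $F\simeq\underset{(V,\,V\to F)}{\colim}y(V)$ in $\Shv(S_{\et})$. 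Because sheafification commutes with restriction along the continuous, cocontinuous inclusion $X_{\et}\hookrightarrow\Sch^{\ft}_S$, the value $u_!(F)(X)$ is the sheafification over $X_{\et}$ of the presheaf $W\mapsto\underset{(V,\,V\to F)}{\colim}\Hom_S(W,V)$, evaluated at $X$.

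Next I would compare with the explicit description of $\rho_!$ from Proposition~\ref{prop:Lff}. For $p\colon X\to S$ in $\Sch^{\ft}_S$ with $X$ affine, it gives $\rho_!(F)(X)\simeq\map_{\Shv_{\et}(\Sch^{\ft}_S)}(y(X),\rho_!F)\simeq\Gamma_X(p^*F)$. The inverse image $p^*\colon\Shv(S_{\et})\to\Shv(X_{\et})$ of the geometric morphism induced by $p$ sends $y(V)$ to $y(X\times_S V)$, so $p^*F\simeq\underset{V}{\colim}\,y(X\times_S V)$, which is the sheafification over $X_{\et}$ of the presheaf $W\mapsto\underset{V}{\colim}\Hom_X(W,X\times_S V)=\underset{V}{\colim}\Hom_S(W,V)$. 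Since $X$ is affine it is the terminal object of $X_{\et}$, so $1_X=y(X)$ and the Yoneda lemma in $\Shv(X_{\et})$ gives $\Gamma_X(p^*F)=(p^*F)(X)$, which is precisely the value at $X$ of the sheafification just described --- the same expression as for $u_!(F)(X)$. As this is natural in the affine $X$ and in $F$, and both $\rho_!(F)$ and $u_!(F)$ are sheaves on $\Sch^{\ft}_S$, hence determined by their values on affine schemes of finite type over $S$, we conclude $\rho_!\simeq u_!$; in particular $\rho_!$ preserves small colimits.

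The only content-bearing ingredient is the formula $\rho_!(F)(X)\simeq\Gamma_X(p^*F)$ of Proposition~\ref{prop:Lff}; everything else is the calculus of continuous and cocontinuous morphisms of sites. I expect the fiddliest point to be keeping the various sheafifications bookkept --- over the big site $\Sch^{\ft}_S$ on the one hand, and over the small sites $X_{\et}$ as $X$ ranges over affines on the other --- together with the observation that sheafification commutes with restriction from the big to a small étale site; but this requires no new idea.
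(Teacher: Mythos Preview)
Your proof is correct and takes a genuinely different route from the paper's. The paper argues by factoring $\rho_!$ through the $\i$-category of Deligne--Mumford schemes \'etale over $S$, passing to the auxiliary big site $\Sch/S$, and ultimately invoking \cite[Lemma 2.4.13]{dagv} (that the functor-of-points embedding of Deligne--Mumford schemes into \'etale sheaves preserves colimits). You instead identify $\rho_!$ directly with the left adjoint $u_!$ to restriction along $u\colon S_{\et}\hookrightarrow\Sch^{\ft}_S$: you compute both $\rho_!(F)(X)=\Gamma_X(p^*F)$ (via Proposition~\ref{prop:Lff}) and $u_!(F)(X)$ (via sheafified left Kan extension) and match them on affines. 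Your approach is more elementary, trading the structured-spaces machinery for standard site-theoretic bookkeeping, and as a bonus it establishes the adjunction $\rho_!\dashv\rho^*$ --- the paper's subsequent proposition, which there is \emph{deduced from} this lemma --- along the way. The only point needing care, which you correctly flag, is that big-site sheafification commutes with restriction to $X_{\et}$; this is the standard consequence of the inclusion $X_{\et}\hookrightarrow\Sch^{\ft}_S$ being both continuous and cocontinuous.
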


\begin{proof}
Denote by $r:\Sch^{\ft}_S \hookrightarrow \Sch/S$ the canonical fully faithful inclusion. The restriction functor $r^*:\Shv_{\et}\left(\Sch/S\right) \to \Shv_{\et}\left(\Sch^{\ft}\right)$ has a left adjoint $r_!,$ given by left Kan extension; it is the unique colimit preserving functor sending each $S$-scheme of finite type to itself (regarded as an object in $\Shv_{\et}\left(\Sch/S\right)$). Since $r$ is a map of sites, $r^*$ also has a right adjoint $$r_*:\Shv_{\et}\left(\Sch^{\ft}\right) \to \Shv_{\et}\left(\Sch/S\right).$$ Explicitly, for an $S$-scheme $Z,$ $$r_*\left(\cX\right)\left(Z\right)\simeq \map_{\Shv_{\et}\left(\Sch^{\ft}\right)}\left(r^*y\left(Z\right),\cX\right),$$ where $y$ denotes the Yoneda embedding. Notice that the unit map $$\eta:id_{\Shv_{\et}\left(\Sch^{\ft}\right)} \Rightarrow r^*r_!$$ is an equivalence along representables, and as $id_{\Shv_{\et}\left(\Sch^{\ft}\right)}$ and $r^*r_!$ are colimit preserving, it follows from \cite[Theorem 5.1.5.6 and Proposition 5.5.4.20]{htt}, that $\eta$ is an equivalence, and hence we conclude that $r_!$ is fully faithful. Therefore, it suffices to prove that 
$$\Shv\left(S_{\et}\right) \stackrel{\sim}{\longrightarrow} \left(\mathfrak{DM}^{\et}_\i\right)/\widetilde{\Spec^{\et}\left(S\right)} \hookrightarrow \mathfrak{DM}^{lft/S}_\i \hookrightarrow \Shv_{\et}\left(\Sch_S^{\ft}\right) \stackrel{r_!}{\longlongrightarrow} \Shv_{\et}\left(\Sch/S\right)$$ preserves colimits.

Notice that the following diagram commutes:
$$\xymatrix{\left(\mathfrak{DM}^{\et}_\i\right)/\widetilde{\Spec^{\et}\left(S\right)} \ar[d] \ar[r] & \mathfrak{DM}^{lft/S}_\i \ar[r] & \Shv_{\et}\left(\Sch_S^{\ft}\right)  \ar[d]^-{r_!}\\
\mathfrak{DM}^{\et}_\i \ar[d]^-{j} \ar[r] & \Shv_{\et}\left(\Sch\right) & \Shv_{\et}\left(\Sch/S\right), \ar[l]\\
\mathfrak{DM}_\i \ar[ru]^-{\tilde y} & &}$$
where the functor $\Shv_{\et}\left(\Sch/S\right) \to \Shv_{\et}\left(\Sch\right),$ under the canonical equivalence $$\Shv_{\et}\left(\Sch/S\right) \simeq \Shv_{\et}\left(\Sch\right)/S,$$ (see \cite[Remark 2.4]{higherdave}) corresponds to the forgetful functor. By \cite[Proposition 1.2.13.8]{htt}, it suffices to prove that the composite $$\left(\mathfrak{DM}^{\et}_\i\right)/\widetilde{\Spec^{\et}\left(S\right)} \to \mathfrak{DM}^{\et}_\i \to \Shv_{\et}\left(\Sch\right)$$ preserves colimits. By the same proposition, we reduce to observing the the latter functor preserves colimits, which is the content of \cite[Lemma 2.4.13]{dagv}.
\end{proof}

\begin{proposition}
Denote by $\rho:S_{\et} \hookrightarrow \Sch_S^{\ft}$ the canonical fully faithful embedding. Then (by abuse of notation) $\rho$ induces a geometric morphism $$\rho:\Shv\left(S_{\et}\right) \to \Shv_{\et}\left(\Sch_S^{\ft}\right),$$ and one has that $\rho_!$ is left adjoint to $\rho^*.$
\end{proposition}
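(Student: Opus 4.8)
The plan is to take as input the fully faithful embedding $\rho_!: \Shv(S_{\et})\hookrightarrow\Shv_{\et}(\Sch^{\ft}_S)$ of Proposition~\ref{prop:Lff}, which preserves small colimits by Lemma~\ref{lemma:Lcolim}. Since both sides are presentable $\infty$-topoi, the adjoint functor theorem \cite[Corollary 5.5.2.9]{htt} provides a right adjoint $\rho^*$ to $\rho_!$; this already gives the assertion that $\rho_!$ is left adjoint to $\rho^*$, and it makes $\rho^*$ left exact for free, since any right adjoint preserves all limits. Everything thus reduces to showing that $\rho^*$ itself admits a right adjoint $\rho_*$ --- equivalently, both sides being presentable, that $\rho^*$ preserves small colimits; one then sets $\rho:=(\rho^*,\rho_*)$, a geometric morphism $\Shv(S_{\et})\to\Shv_{\et}(\Sch^{\ft}_S)$.

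To that end, the first step is to identify $\rho^*$ with restriction along $\rho$. For $G\in\Shv_{\et}(\Sch^{\ft}_S)$ and $U\in S_{\et}$, the Yoneda lemma together with $\rho_!\dashv\rho^*$ gives $\rho^*(G)(U)\simeq\map_{\Shv(S_{\et})}(y(U),\rho^*G)\simeq\map_{\Shv_{\et}(\Sch^{\ft}_S)}(\rho_!y(U),G)\simeq G(U)$, where the last equivalence uses that $\rho_!$ carries the sheaf represented by an étale $S$-scheme $U$ to the sheaf represented by $U$ on $\Sch^{\ft}_S$ (read off from the construction of $\rho_!$ in Proposition~\ref{prop:Lff} together with Theorem~\ref{thm:higherdave}). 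Hence $\rho^*$ is honest restriction of $\infty$-sheaves along the inclusion $\rho$, and this restriction does land in $\Shv(S_{\et})$, as every covering family in $S_{\et}$ is in particular a covering family in $\Sch^{\ft}_S$.

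It remains to check that restriction along $\rho$ preserves small colimits of $\infty$-sheaves. On presheaves, restriction $\PShv(\Sch^{\ft}_S)\to\PShv(S_{\et})$ preserves all colimits (computed objectwise), so the real point is that it descends compatibly to the $\et$-localizations, i.e.\ that restriction commutes with $\et$-sheafification. This is the standard comparison of the small and big étale sites, and the geometric input is the cocontinuity of $\rho$: if $U\to S$ is étale and $\{V_j\to U\}$ is an étale covering family in $\Sch^{\ft}_S$, then each $V_j$ --- covered by affine opens if necessary --- is again étale over $S$, so the family is refined by one lying in $S_{\et}$, and any base change $V_j\times_U W$ with $W\in S_{\et}$ likewise lies in $S_{\et}$. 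Consequently restriction carries the generators of the class of $\et$-local equivalences on $\Sch^{\ft}_S$ --- the \v{C}ech descent maps $\colim\check{C}(\coprod_j V_j)\to y(U)$ --- to $\et$-local equivalences on $S_{\et}$; since this class is strongly saturated and presheaf-restriction preserves colimits, all $\et$-local equivalences are sent to equivalences, and restriction descends to a colimit-preserving functor $\Shv_{\et}(\Sch^{\ft}_S)\to\Shv(S_{\et})$, which by the objectwise formula above coincides with $\rho^*$. Then the adjoint functor theorem again produces the right adjoint $\rho_*$, and we are done. The single point requiring genuine care --- and the only place the specific geometry of the étale topology enters --- is this compatibility of restriction with sheafification; alternatively one can simply remark that $S_{\et}\hookrightarrow\Sch^{\ft}_S$ is a continuous and cocontinuous morphism of sites and quote the general construction of a geometric morphism in that situation.
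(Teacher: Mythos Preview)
Your approach is correct but follows a different route from the paper's. The paper defines $\rho^*$ as restriction, then constructs $\rho_*$ directly by the formula $\rho_*(F)(X)=\map(\rho^*y(X),F)$ and verifies this lands in sheaves by checking that $\coprod_\alpha\rho^*y(U_\alpha)\to\rho^*y(X)$ is an effective epimorphism for every \'etale cover of \emph{every} $X\in\Sch^{\ft}_S$; it then separately checks $\rho_!\dashv\rho^*$ on representables and extends via Lemma~\ref{lemma:Lcolim}. Your route---AFT to produce $\rho^*$ from the known $\rho_!$, identify it with restriction, then show $\rho^*$ is colimit-preserving so AFT gives $\rho_*$---is more uniformly categorical, and your closing appeal to ``continuous and cocontinuous morphism of sites'' is exactly the right abstract package and would suffice on its own.

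There is, however, a small gap in your explicit middle argument. The generating \v{C}ech equivalences for the \'etale topology on $\Sch^{\ft}_S$ are indexed by covers of \emph{all} $X\in\Sch^{\ft}_S$, not only of those $U$ which are \'etale over $S$. Your cocontinuity check (``each $V_j$ is again \'etale over $S$'') only treats the latter case, so it does not by itself show that presheaf restriction sends \emph{every} generating local equivalence to one on $S_{\et}$. The missing case is filled exactly as in the paper: for arbitrary $X$ and any $T\in S_{\et}$ with a map $T\to X$, pull the cover back to obtain a cover of $T$ in $S_{\et}$; this gives the effective epimorphism, and since presheaf restriction preserves limits it also identifies the restricted simplicial object with the \v{C}ech nerve in $\Shv(S_{\et})$. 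With that amendment both arguments rest on the same geometric fact; yours organizes the adjunctions more cleanly, while the paper's is more hands-on and avoids citing a general continuous-plus-cocontinuous result for $\infty$-sites.
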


\begin{proof}
The inverse image functor of the geometric morphism $\rho$ is the restriction functor $$\rho^*:\Shv_{\et}\left(\Sch_S^{\ft}\right) \to \Shv\left(S_{\et}\right).$$ The direct image functor $\rho_*$ is defined by the equation $$\rho_*\left(F\right)\left(X\right)=\map\left(\rho^*y\left(X\right),F\right),$$ for all $S$-schemes $X.$ This is automatically right adjoint to $\rho^*$ as soon as it is well defined, i.e. as soon as one shows that for all sheaves $F,$ the presheaf $\rho_*F$ is a sheaf. We claim that this is the case. It suffices to show that if $\left(U_\alpha \to X\right)_\alpha$ is an \'etale covering family, then $$\underset{\alpha}\coprod \rho^*y\left(U_\alpha\right) \to \rho^*y\left(X\right)$$ is an epimorphism. In other words, one needs to show that for all \'etale $S$-schemes $T,$ and for all $g:T \to \rho^*y\left(X\right),$ there exists an \'etale covering family $\left(T_\beta \to T\right)_\beta$ and maps $$g_\beta:T_\beta \to \underset{\alpha}\coprod \rho^*y\left(U_\alpha\right)$$ such that the following diagram commutes:
$$\xymatrix{T_\beta \ar[d] \ar[r]^-{g_\beta} & \underset{\alpha}\coprod \rho^*y\left(U_\alpha\right) \ar[d]\\
T \ar[r]^-{g}& \rho^*y\left(X\right).}$$
Notice that the map $g,$ by the Yoneda lemma, corresponds uniquely to a map of $S$-schemes $\tilde g:T \to X.$ One can then pullback the \'etale cover of $X$ via $\tilde g$ to get the desired cover of $T,$ and the maps $g_\beta$ are determined by the pullback diagram itself. Hence, $\rho^* \vdash \rho_*$ is a geometric morphism.

We will now show that $\rho_!$ is left adjoint to $\rho^*.$ Let $T$ be an $S$-scheme and $F$ an \'etale sheaf on $\Sch_S^{\ft}$. Then
\begin{eqnarray*}
\map\left(\rho_!y\left(T\right),F\right) &\simeq& \map\left(y\left(T\right),F\right)\\
&\simeq& F\left(T\right)\\
&\simeq& \rho^*F\left(T\right)\\
&\simeq& \map\left(y\left(T\right),\rho^*F\right).
\end{eqnarray*}
Since every sheaf is a colimit of representables, we are done by Lemma~\ref{lemma:Lcolim}. 
\end{proof}


\begin{definition}
A geometric morphism $f:\cE \to \cF$ between $\i$-topoi is \textbf{$\i$-connected} if the inverse image functor $f^*$ is fully faithful.
\end{definition}

\begin{proposition}\label{prop:infconn}
The functor $$\rho_!:\Shv\left(S_{\et}\right) \hookrightarrow \Shv_{\et}\left(\Sch_S^{\ft}\right),$$ is the inverse image functor of a canonical $\i$-connected geometric morphism $$\lambda:\Shv_{\et}\left(\Sch_S^{\ft}\right) \to \Shv\left(S_{\et}\right).$$ I.e., $$\rho_!=\lambda^*.$$
\end{proposition}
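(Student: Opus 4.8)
The plan is to recognize $\rho_!$ as the inverse image functor of a geometric morphism by checking that it is left exact, after which $\i$-connectedness is immediate from the fully faithfulness recorded in Proposition~\ref{prop:Lff}. By Lemma~\ref{lemma:Lcolim}, $\rho_!$ preserves small colimits, and both $\Shv\left(S_{\et}\right)$ and $\Shv_{\et}\left(\Sch_S^{\ft}\right)$ are presentable $\i$-topoi, so the adjoint functor theorem \cite[Corollary 5.5.2.9]{htt} produces a right adjoint $\lambda_*$ of $\rho_!$, unique up to a contractible space of choices. If in addition $\rho_!$ is left exact, then the adjunction $\rho_!\dashv\lambda_*$ is exactly the data of a canonical geometric morphism $\lambda:\Shv_{\et}\left(\Sch_S^{\ft}\right)\to\Shv\left(S_{\et}\right)$ with $\lambda^*=\rho_!$; and then $\lambda$ is $\i$-connected by definition, since $\lambda^*=\rho_!$ is fully faithful by Proposition~\ref{prop:Lff}. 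Thus the entire content is the left-exactness of $\rho_!$.

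To prove this I would use the description of $\rho_!$ from Proposition~\ref{prop:Lff}: it is the equivalence $\Shv\left(S_{\et}\right)\stackrel{\sim}{\longrightarrow}\mathfrak{DMST}^{\et/S}_\i$ followed by the full inclusion $\mathfrak{DMST}^{\et/S}_\i\hookrightarrow\Shv_{\et}\left(\Sch_S^{\ft}\right)$. Since an equivalence preserves all limits, it suffices to show that $\mathfrak{DMST}^{\et/S}_\i$ is closed under finite limits in $\Shv_{\et}\left(\Sch_S^{\ft}\right)$. The terminal object of $\Shv_{\et}\left(\Sch_S^{\ft}\right)$ is $S$ itself, which is \'etale over $S$ and so lies in $\mathfrak{DMST}^{\et/S}_\i$. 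For pullbacks, let $\cX\to\cZ\leftarrow\cY$ be maps of $S$-Deligne--Mumford $\i$-stacks \'etale over $S$; then $\cX\to\cZ$ and $\cY\to\cZ$ are \'etale by the cancellation property of Proposition~\ref{prop:6.3.5.9} (in its evident refinement to strictly Henselian ringed $\i$-topoi, via Definition~\ref{defn:dm-stk}), so the projection $\cX\times_{\cZ}\cY\to\cY$ is \'etale by stability of \'etale maps under base change, and composing with $\cY\to S$ shows $\cX\times_{\cZ}\cY\to S$ is \'etale; as this pullback is again a Deligne--Mumford $\i$-stack locally of finite type over $S$, it lies in $\mathfrak{DMST}^{\et/S}_\i$. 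Hence $\mathfrak{DMST}^{\et/S}_\i$ is closed under finite limits and $\rho_!$ is left exact.

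Alternatively one can argue site-theoretically: the inclusion $\rho:S_{\et}\hookrightarrow\Sch_S^{\ft}$ preserves the finite limits present in $S_{\et}$ (the terminal object $S$ goes to the terminal object, and a fibre product of \'etale $S$-schemes stays \'etale over $S$), so $y\circ\rho:S_{\et}\to\Shv_{\et}\left(\Sch_S^{\ft}\right)$ is left exact, and by \cite[Proposition 6.1.5.2]{htt} its left Kan extension $\LKE_{y_{S_{\et}}}\left(y\circ\rho\right):\PShv\left(S_{\et}\right)\to\Shv_{\et}\left(\Sch_S^{\ft}\right)$ is left exact; this functor equals $\rho_!\circ L$ for $L$ the sheafification functor (both are colimit-preserving and agree on representables, cf.\ \cite[Theorem 5.1.5.6]{htt}), so $\rho_!\simeq\LKE_{y_{S_{\et}}}\left(y\circ\rho\right)\circ\iota$ for $\iota:\Shv\left(S_{\et}\right)\hookrightarrow\PShv\left(S_{\et}\right)$ the inclusion, which is left exact being a right adjoint, whence $\rho_!$ is left exact. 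In either case, once left-exactness is in hand the proposition follows as in the first paragraph, and by uniqueness of adjoints $\lambda_*$ is identified with the restriction functor $\rho^*$ of the preceding proposition, so $\lambda$ is just the adjoint triple $\rho_!\dashv\rho^*\dashv\rho_*$ read with $\rho_!$ as an inverse image. There is no substantive obstacle here; the only thing requiring care is the bookkeeping needed to match the two presentations of $\rho_!$ and to keep the variances straight.
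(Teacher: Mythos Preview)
Your proposal is correct, and the overall architecture matches the paper's: reduce to showing $\rho_!$ is left exact, then invoke Lemma~\ref{lemma:Lcolim} for colimits and Proposition~\ref{prop:Lff} for fully faithfulness. The difference is entirely in how left-exactness is established. The paper exploits the concrete formula already proved in Proposition~\ref{prop:Lff}, namely $\map\left(X,\rho_!\left(\cF\right)\right)\simeq\Gamma_X\left(p^*\cF\right)$ for each $S$-scheme $p:X\to S$; since finite limits in $\Shv_{\et}\left(\Sch_S^{\ft}\right)$ can be tested after mapping in representables, and $\Gamma_X$ and $p^*$ are each left exact, left-exactness of $\rho_!$ follows in one line. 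Your two arguments are instead structural: the first shows the essential image $\mathfrak{DMST}^{\et/S}_\i$ is closed under finite limits by juggling cancellation and base-change for \'etale morphisms, and the second is the standard site-theoretic maneuver via \cite[Proposition 6.1.5.2]{htt}, using that $S_{\et}\hookrightarrow\Sch_S^{\ft}$ preserves finite limits. Both of your routes are sound; the site-theoretic one is probably the cleanest standalone argument. The paper's route is shorter in context because the formula $\Gamma_X\circ p^*$ is already in hand and is exactly what gets used again downstream in Theorem~\ref{thm:concrete_relative}, so it doubles as a warm-up for that computation.
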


\begin{proof}
By Lemma \ref{lemma:Lcolim}, $\rho_!$ preserves colimits and by Proposition \ref{prop:Lff} it is fully faithful. Hence, it suffices to prove that $\rho_!$ preserves finite limits. For this, it is enough to check that for each $S$-scheme $p:X \to S,$ the functor $$\map_{\Shv_{\et}\left(\Sch_S^{\ft}\right)}\left(X,\rho_!\left(\mspace{3mu}\cdot\mspace{3mu}\right)\right):\Shv\left(S_{\et}\right) \to \Spc$$ preserves finite limits. However, by Proposition \ref{prop:Lff}, this functor can be identified with $\Gamma \circ p^*,$ where $\Gamma$ is the global sections functor of $\Shv\left(X_{\et}\right),$ and since $\Gamma$ and $p^*$ are each left exact, this finishes the proof.
\end{proof}

\begin{proposition} \label{sec:small-vs-big}
The geometric morphism $$\lambda:\Shv_{\et}\left(\Sch_S^{\ft}\right) \to \Shv\left(S_{\et}\right)$$ is a section of $$\rho:\Shv\left(S_{\et}\right) \to \Shv_{\et}\left(\Sch_S^{\ft}\right).$$
\end{proposition}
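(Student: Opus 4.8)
The plan is to obtain this as a formal consequence of results already in hand, chiefly Proposition~\ref{prop:infconn}; all the substance — the construction of $\rho_!$ together with its full faithfulness, left exactness and colimit preservation — is contained in Proposition~\ref{prop:Lff}, Lemma~\ref{lemma:Lcolim} and Proposition~\ref{prop:infconn}. The first thing I would recall is that a geometric morphism of $\infty$-topoi is determined up to a contractible space of choices by its inverse image functor, and that two geometric morphisms are equivalent precisely when their inverse image functors agree as left exact colimit preserving functors (cf.\ the identification $\map_{\Topi}(\cX,\cY)\simeq(\Fun^{LE}(\cY,\cX))^{\times}$ from the conventions). So it suffices to produce the desired equivalence after passing to inverse image functors.

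Next I would assemble the relevant adjunctions. The inverse image of $\rho$ is the restriction functor $\rho^*\colon\Shv_{\et}(\Sch_S^{\ft})\to\Shv(S_{\et})$, and $\rho_!$ is fully faithful (Proposition~\ref{prop:Lff}) and left adjoint to $\rho^*$; by Proposition~\ref{prop:infconn} the geometric morphism $\lambda$ satisfies $\lambda^*=\rho_!$, whence $\lambda_*=\rho^*$ by uniqueness of adjoints. Since the inverse image of a composite of geometric morphisms is the composite of inverse images in the opposite order, the endomorphism $\lambda\circ\rho\colon\Shv(S_{\et})\xrightarrow{\rho}\Shv_{\et}(\Sch_S^{\ft})\xrightarrow{\lambda}\Shv(S_{\et})$ has inverse image functor $(\lambda\circ\rho)^*\simeq\rho^*\circ\lambda^*=\rho^*\circ\rho_!$.

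The crucial point is then that the unit $\eta\colon\id_{\Shv(S_{\et})}\to\rho^*\circ\rho_!$ of the adjunction $\rho_!\dashv\rho^*$ is an equivalence — which is exactly the statement that the left adjoint $\rho_!$ is fully faithful, i.e.\ Proposition~\ref{prop:Lff}. As $\rho_!$ is also left exact (Proposition~\ref{prop:infconn}) and colimit preserving (Lemma~\ref{lemma:Lcolim}) and $\rho^*$ is left exact and colimit preserving (being an inverse image functor), $\eta$ is an equivalence of left exact colimit preserving endofunctors of $\Shv(S_{\et})$, and transporting it back along the identification of geometric morphisms with their inverse images yields an equivalence $\lambda\circ\rho\simeq\id_{\Shv(S_{\et})}$. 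That is, composing $\rho$ with $\lambda$ recovers the identity on $\Shv(S_{\et})$, which is the assertion that $\lambda$ is a section of $\rho$.

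I do not anticipate any genuine difficulty here: the argument is essentially one line once the earlier propositions are in place. The only point needing care is the bookkeeping of composition directions, so that the equivalence in play is correctly identified as the unit of $\rho_!\dashv\rho^*$ — an equivalence because $\rho_!$ is fully faithful — rather than the counit $\rho_!\circ\rho^*\to\id_{\Shv_{\et}(\Sch_S^{\ft})}$, which is emphatically not an equivalence, since $\rho_!$ takes values in the proper full subcategory $\mathfrak{DMST}^{\et/S}_\i\subsetneq\Shv_{\et}(\Sch_S^{\ft})$.
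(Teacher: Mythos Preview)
Your proof is correct and follows the same strategy as the paper: identify the composite geometric morphism via its inverse image functor and reduce to the full faithfulness of $\rho_!$. In fact your bookkeeping is more careful than the paper's printed proof. The paper computes $(\rho\circ\lambda)^*\simeq\lambda^*\circ\rho^*\simeq\rho_!\circ\rho^*$ and then asserts that ``the unit $\rho_!\circ\rho^*\to\id$'' is an equivalence because $\rho_!$ is fully faithful; but $\rho_!\rho^*\to\id$ is the \emph{counit}, and full faithfulness of the left adjoint $\rho_!$ makes the \emph{unit} $\id\to\rho^*\rho_!$ an equivalence, not the counit. Your computation of $(\lambda\circ\rho)^*\simeq\rho^*\circ\rho_!$ and your invocation of the unit is the correct way to run the argument, and your closing paragraph anticipates precisely the slip that occurs in the paper. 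The only residual wrinkle is terminological: what you (and the paper) actually establish is $\lambda\circ\rho\simeq\id_{\Shv(S_{\et})}$, which under the standard convention says that $\rho$ is a section of $\lambda$ (equivalently, $\lambda$ is a retraction of $\rho$), not the other way around; but this is the intended content, and the mathematics in your argument is sound.
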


\begin{proof}We have that
\begin{eqnarray*}
\left(\rho \circ \lambda\right)^*&\simeq& \lambda^* \circ \rho^*\\
&\simeq& \rho_! \circ \rho^*
\end{eqnarray*}
and since $\rho_!$ is fully faithful, and is left adjoint to $\rho^*$, it follows that the unit $$\rho_! \circ \rho^* \to id$$ is an equivalence. Hence $$\rho \circ \lambda \simeq id.$$
\end{proof}

\begin{remark}
Since $\lambda$ is uniquely determined by $\lambda^*=\rho_!,$ and $\rho_!$ is uniquely determined by its right adjoint $\rho^*,$ which in turn uniquely determines $\rho,$ we can conclude that $\rho$ comes equipped with a canonical distinguished section, $\lambda$. Conversely, $\lambda$ uniquely determines $\rho.$
\end{remark}

After our discussion, we obtain the following explicit description of the relative \'{e}tale homotopy type.

\begin{theorem}\label{thm:concrete_relative}
Let $\cX$ be a stack in $\Shv_{\et}\left(\Sch^{\ft}_S\right).$ Viewed as a left exact functor $$\Shv\left(S_{\et}\right) \to \Spc,$$ the $S$-relative \'etale homotopy type $\Pi^{S,\et}_{\i}\left(\cX\right)$ can be identified with the functor $$\map_{\Shv_{\et}\left(\Sch^{\ft}_S\right)}\left(\cX,\rho_!\left(\mspace{3mu}\cdot\mspace{3mu}\right)\right).$$
\end{theorem}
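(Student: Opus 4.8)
The plan is to unwind the definition of $\Pi^{S,\et}_{\i}$ as the composite $\Shape_S \circ \Shv\left(\left(\mspace{3mu}\cdot\mspace{3mu}\right)_{\et}\right)/S$ and then apply the description of the relative shape functor from Proposition~\ref{prop:shape-adj} together with the section/fully-faithfulness results of the previous subsection. First I would recall that for $\cX \in \Shv_{\et}\left(\Sch^{\ft}_S\right)$, the object $\Shv\left(\left(\mspace{3mu}\cdot\mspace{3mu}\right)_{\et}\right)/S(\cX)$ is the geometric morphism obtained by applying $\Shv\left(\left(\mspace{3mu}\cdot\mspace{3mu}\right)_{\et}\right)$ to the unique map $\cX \to S$; call this geometric morphism $f_{\cX}\colon \cF_{\cX} \to \Shv\left(S_{\et}\right)$. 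By Proposition~\ref{prop:shape-adj}, $\Pi^{S,\et}_{\i}\left(\cX\right) = \Shape_S\left(f_{\cX}\right)$ is, as a pro-object in $\Shv\left(S_{\et}\right)$, the left exact accessible functor $\Gamma_{\cF_{\cX}} \circ f_{\cX}^*\colon \Shv\left(S_{\et}\right) \to \Spc$. So the theorem amounts to the identification of functors $\Shv\left(S_{\et}\right) \to \Spc$:
\[
\Gamma_{\cF_{\cX}}\bigl(f_{\cX}^*(\mspace{3mu}\cdot\mspace{3mu})\bigr) \;\simeq\; \map_{\Shv_{\et}\left(\Sch^{\ft}_S\right)}\left(\cX,\rho_!\left(\mspace{3mu}\cdot\mspace{3mu}\right)\right).
\]

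The key step is to reduce to the case where $\cX$ is (the sheaf represented by) an $S$-scheme. Both sides of the desired equivalence are, after fixing the argument $F \in \Shv\left(S_{\et}\right)$, functors of $\cX$ that turn colimits in $\Shv_{\et}\left(\Sch^{\ft}_S\right)$ into limits in $\Spc$: the right-hand side is $\map_{\Shv_{\et}\left(\Sch^{\ft}_S\right)}\left(-,\rho_!(F)\right)$, which is continuous by Yoneda; the left-hand side is continuous in $\cX$ because $\Shv\left(\left(\mspace{3mu}\cdot\mspace{3mu}\right)_{\et}\right)/S$ is colimit preserving (it is induced by the colimit-preserving functor of \cite[Lemma 2.2.6]{dave-etale}), $\Shape_S$ is a left adjoint hence colimit preserving by Proposition~\ref{prop:shape-adj}, and evaluation $\Pro\left(\Shv\left(S_{\et}\right)\right) \to \Fun\left(\Shv\left(S_{\et}\right),\Spc\right)^{\op}$ at $F$ sends colimits of pro-objects to limits of spaces (this is how colimits in $\Pro$ are computed, cf.\ Proposition~\ref{prop:limcolim}). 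Since every object of $\Shv_{\et}\left(\Sch^{\ft}_S\right)$ is a colimit of representables $X \in \Sch^{\ft}_S$, it suffices to establish the equivalence when $\cX = y(X)$ for an $S$-scheme $p\colon X \to S$ of finite type, compatibly with the site maps.

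For $\cX = y(X)$, the geometric morphism $f_{y(X)}$ is precisely $p_{\et}\colon \Shv\left(X_{\et}\right) \to \Shv\left(S_{\et}\right)$, so the left-hand side evaluated at $F$ is $\Gamma_{\Shv(X_{\et})}\bigl(p_{\et}^* F\bigr) = \Gamma\left(p^* F\right)$. The right-hand side is $\map_{\Shv_{\et}\left(\Sch^{\ft}_S\right)}\left(X,\rho_!(F)\right)$. But this is exactly the content of the last assertion of Proposition~\ref{prop:Lff}: for $F \in \Shv\left(S_{\et}\right)$ and $p\colon X \to S$ an $S$-scheme, $\map\left(X,\rho_!(F)\right) \simeq \Gamma\left(p^* F\right)$ — there it is stated for $0$-truncated $F$, so I would first extend it to all $F \in \Shv\left(S_{\et}\right)$ by writing $F$ as a colimit of representables and using that $\rho_!$ preserves colimits (Lemma~\ref{lemma:Lcolim}), that $\map(X,-)$ sends these colimits in the slice $\mathfrak{DMST}^{\et/S}_\i$ to limits (using $X$ is representable, hence compact in an appropriate sense, or more simply just using the explicit identification of $\rho_!$ with $\lambda^*$ from Proposition~\ref{prop:infconn} and the base-change square from \cite[Remark 2.3.4]{dagv} as in the proof of Proposition~\ref{prop:Lff}), and that $\Gamma \circ p^*$ is left exact. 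Naturality in $X$ of both identifications is automatic from the constructions, and this naturality is what allows the reduction in the previous paragraph to go through.

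The main obstacle I anticipate is the bookkeeping in the reduction step: one must check that the equivalence for representables is natural in $\cX \in \Sch^{\ft}_S$ and that both sides genuinely send arbitrary small colimits of representables in $\Shv_{\et}\left(\Sch^{\ft}_S\right)$ to the same limits — in particular that the left-hand side, built out of the composite $\Shape_S \circ \Shv\left(\left(\mspace{3mu}\cdot\mspace{3mu}\right)_{\et}\right)/S$ followed by evaluation at $F$, really is continuous as a functor of $\cX$. The potential subtlety is that $\Pro\left(\Shv\left(S_{\et}\right)\right)$ is not presentable and $\Pro$ applied to a colimit-preserving functor need not preserve colimits in general; here it is fine because $\Shape_S$ is exhibited as a left adjoint in Proposition~\ref{prop:shape-adj}, but this point should be invoked explicitly rather than glossed over. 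Once naturality and continuity are in hand, the identification for schemes via Proposition~\ref{prop:Lff} closes the argument.
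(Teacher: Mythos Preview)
Your overall strategy---show that both $\Pi^{S,\et}_{\i}$ and the assignment $\cX \mapsto \map\left(\cX,\rho_!\left(\mspace{3mu}\cdot\mspace{3mu}\right)\right)$ are colimit-preserving functors $\Shv_{\et}\left(\Sch^{\ft}_S\right) \to \Pro\left(\Shv\left(S_{\et}\right)\right)$, then invoke Proposition~\ref{prop:Lff} on representables---is exactly what the paper does. The difference lies in how colimit-preservation of the right-hand side is established. You argue directly: colimits in $\Pro\left(\Shv\left(S_{\et}\right)\right)$ are computed as pointwise limits in $\Fun\left(\Shv\left(S_{\et}\right),\Spc\right)$, and $\map\left(\colim_i \cX_i,\rho_!\left(F\right)\right) \simeq \lim_i \map\left(\cX_i,\rho_!\left(F\right)\right)$ by Yoneda. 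The paper instead packages the right-hand side as a composite $\Theta$ through slice $\i$-topoi, namely $\Shv_{\et}\left(\Sch^{\ft}_S\right) \simeq \Topi^{\et}/\Shv_{\et}\left(\Sch^{\ft}_S\right) \to \Topi/\Shv_{\et}\left(\Sch^{\ft}_S\right) \to \Topi/\Shv\left(S_{\et}\right) \to \Pro\left(\Shv\left(S_{\et}\right)\right)$, where the last two arrows are composition with the geometric morphism $\lambda$ of Proposition~\ref{prop:infconn} followed by $\Shape_S$. This buys the paper colimit-preservation by composing left adjoints and the colimit-preserving functor of \cite[Theorem 6.3.5.13]{htt}, and simultaneously makes the identification $\Theta\left(\cX\right)\left(\cF\right) \simeq \map\left(\cX,\rho_!\left(\cF\right)\right)$ transparent via $\lambda^* = \rho_!$. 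Your route is more elementary and avoids introducing $\Theta$, but you should make explicit that $\map\left(\cX,\rho_!\left(\mspace{3mu}\cdot\mspace{3mu}\right)\right)$ really lands in $\Pro\left(\Shv\left(S_{\et}\right)\right)$---this uses that $\rho_!$ is left exact, i.e.\ Proposition~\ref{prop:infconn}, which you do not cite.

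One correction: you misread the subscript in Proposition~\ref{prop:Lff}. In the paper's conventions, $\sC_0$ denotes the objects (the $0$-simplices) of an $\i$-category $\sC$, not the $0$-truncated objects. So $F \in \Shv\left(S_{\et}\right)_0$ just means $F$ is an arbitrary object of $\Shv\left(S_{\et}\right)$, and Proposition~\ref{prop:Lff} already gives $\map\left(X,\rho_!\left(F\right)\right) \simeq \Gamma\left(p^*F\right)$ for all $F$; your extension argument is unnecessary.
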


\begin{proof}
On one hand, by definition, the functor $$\Pi^{S,\et}_{\i}\left(\mspace{3mu}\cdot\mspace{3mu}\right):\Shv_{\et}\left(\Sch^{\ft}_S\right) \to \Pro\left(\Shv\left(S_{\et}\right)\right)$$ is the composite 
$$\Shv_{\et}\left(\Sch^{\ft}_S\right) \stackrel{\Shv\left(\left(\mspace{3mu}\cdot\mspace{3mu}\right)_{\et}\right)/S}{\longlonglongrightarrow} \Topi/\Shv\left(S_{\et}\right) \stackrel{\Shape_S}{\longlongrightarrow} \Pro\left(\Shv\left(S_{\et}\right)\right)$$ of two colimit preserving functors, hence it is colimit preserving. On the other hand, we claim that the construction $$\cX \mapsto \map_{\Shv_{\et}\left(\Sch_S^{\ft}\right)}\left(\cX,\rho_!\left(\mspace{3mu}\cdot\mspace{3mu}\right)\right)$$ can be described as the composite $\Theta$ of functors
$$\resizebox{6.5in}{!}{$\Shv_{\et}\left(\Sch^{\ft}_S\right) \stackrel{\sim}{\longrightarrow} \Topi^{\et}/\Shv_{\et}\left(\Sch^{\ft}_S\right) \rightarrow \Topi/\Shv_{\et}\left(\Sch^{\ft}_S\right) \stackrel{\tilde \lambda}{\longrightarrow} \Topi/\Shv\left(S_{\et}\right) \stackrel{\Shape_S}{\longlongrightarrow} \Pro\left(\Shv\left(S_{\et}\right)\right),$}$$
where $\tilde \lambda$ is induced by composition with the geometric morphism $\lambda$ of Proposition \ref{prop:infconn}. Let us justify this claim.

Suppose that $\cX$ in $\Shv_{\et}\left(\Sch^{\ft}_S\right)$. Then $\Theta$ computes the $S$-shape of the composite geometric morphism $$\Shv_{\et}\left(\Sch^{\ft}_S\right)/\cX \stackrel{\pi_\cX}{\longrightarrow} \Shv_{\et}\left(\Sch^{\ft}_S\right) \stackrel{\lambda}{\longrightarrow} \Shv\left(S_{\et}\right).$$ Let $\cF$ be an \'etale sheaf on $S_{\et},$ then
\begin{eqnarray*}
\Shape_S\left(\lambda \circ \pi_{\cX}\right)\left(\cF\right) &\simeq& \Gamma_{\cX}\left(\pi_{\cX}^*\lambda^*\cF\right)\\
&\simeq& \Gamma_{\cX}\left(\pi_{\cX}^*\rho_!\cF\right)\\
&\simeq& \map_{\Shv_{\et}\left(\Sch^{\ft}_S\right)/\cX}\left(1,\pi_{\cX}^*\rho_!\cF\right)\\
&\simeq& \map_{\Shv_{\et}\left(\Sch^{\ft}_S\right)}\left(\left(\pi_{\cX}\right)_!\left(1\right),\rho_!\cF\right)\\
&\simeq& \map_{\Shv_{\et}\left(\Sch^{\ft}_S\right)}\left(\cX,\rho_!\cF\right),\\
\end{eqnarray*}
where we have used Proposition~\ref{prop:infconn} in the second equivalence. 

Notice that the functor $\tilde \lambda$ has a right adjoint, which sends an $\i$-topos $\cE \to \Shv\left(S_{\et}\right)$ over $\Shv\left(S_{\et}\right)$ to the pullback $\i$-topos $\Shv_{\et}\left(\Sch^{\ft}_S\right) \times_{\Shv\left(S_{\et}\right)} \cE \to \Shv_{\et}\left(\Sch^{\ft}_S\right).$ Observe further that, in light of \cite[Theorem 6.3.5.13]{htt}, the composite $$\Topi^{\et}/\Shv_{\et}\left(\Sch^{\ft}_S\right) \to \Topi/\Shv_{\et}\left(\Sch^{\ft}_S\right) \to \Topi$$ preserves colimits, since it factors as $$\Topi^{\et}/\Shv_{\et}\left(\Sch^{\ft}_S\right) \to \Topi^{\et} \to \Topi.$$ Hence, by \cite[Proposition 1.2.13.8]{htt}, we have that $$\Topi^{\et}/\Shv_{\et}\left(\Sch^{\ft}_S\right) \to \Topi/\Shv_{\et}\left(\Sch^{\ft}_S\right)$$ preserves colimits. It follows that $\Theta$ preserves colimits as well. 

Now, $\Theta$ and $\Pi^{S,\et}_{\i}$ are both colimit preserving functors $\Shv_{\et}\left(\Sch^{\ft}\right) \to \Pro\left(\Shv\left(S_{\et}\right)\right),$ so to conclude that they are equivalent functors, it suffices to show that they agree along representables, i.e. along $S$-schemes $p:X \to S$ of finite type. But this follows immediately from Proposition \ref{prop:Lff}, as $\Gamma \circ p^*$ is precisely $\Shape_S\left(\Shv\left(X_{\et}\right)\right).$
\end{proof}

The following fact will be useful:

\begin{proposition} \label{prop:right-adjoint} The functor $\Pi^{S,\et}_{\i}:\Shv_{\et}\left( \Sch^{\ft}_S \right) \to \Pro\left(\Shv\left(S_{\et}\right)\right)$ admits a right adjoint, which identifies with the composite 
\[
R:\Pro\left(\Shv\left(S_{\et}\right) \right) \stackrel{\Pro\left(\rho_!\right)}{\longlongrightarrow} \Pro\left(\Shv_{\et}\left( \Sch^{\ft}_S \right) \right) \stackrel{\lim}{\longlongrightarrow} \Shv_{\et}\left( \Sch^{\ft}_S \right)
\].
\end{proposition}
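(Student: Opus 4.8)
The plan is to exhibit $R$ as a right adjoint by directly checking the adjunction isomorphism on mapping spaces, reducing everything to facts already established. First I would recall that $\Pi^{S,\et}_{\i}$ is a colimit-preserving functor between the $\infty$-categories $\Shv_{\et}(\Sch^{\ft}_S)$ and $\Pro(\Shv(S_{\et}))$; the former is presentable, and by Proposition~\ref{prop:limcolim} the latter is cocomplete. In fact, since we have the explicit description of Theorem~\ref{thm:concrete_relative}, writing $\Pi^{S,\et}_{\i}(\cX)$ as the left exact functor $\cF \mapsto \map_{\Shv_{\et}(\Sch^{\ft}_S)}(\cX,\rho_!\cF)$ on $\Shv(S_{\et})$, we already have a concrete handle on it. The strategy is then: (1) since $\Shv_{\et}(\Sch^{\ft}_S)$ is presentable and $\Pi^{S,\et}_{\i}$ preserves all small colimits, the adjoint functor theorem guarantees a right adjoint $R$ exists; (2) compute what $R$ must be by the defining universal property and identify it with the stated composite.

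For step (2), I would proceed as follows. Fix a pro-object $T = \lim_j \cF_j$ in $\Pro(\Shv(S_{\et}))$, and for an object $\cX \in \Shv_{\et}(\Sch^{\ft}_S)$ unwind $\map_{\Pro(\Shv(S_{\et}))}(\Pi^{S,\et}_{\i}(\cX), T)$. Using the mapping-space formula~\eqref{eq:maps} for pro-categories, together with the fact that $\Pi^{S,\et}_{\i}$ is colimit-preserving (so it is determined by its values on representable $S$-schemes, which are cocompact in $\Pro(\Shv(S_{\et}))$ by the remark following Proposition~\ref{prop:lex}), one gets
\[
\map_{\Pro(\Shv(S_{\et}))}(\Pi^{S,\et}_{\i}(\cX), T) \simeq \lim_j \map_{\Pro(\Shv(S_{\et}))}(\Pi^{S,\et}_{\i}(\cX), j(\cF_j)) \simeq \lim_j \map_{\Shv_{\et}(\Sch^{\ft}_S)}(\cX, \rho_!\cF_j),
\]
where the last identification uses Theorem~\ref{thm:concrete_relative} (the value of $\Pi^{S,\et}_{\i}(\cX)$ as a left exact functor, evaluated at $\cF_j$, is exactly $\map_{\Shv_{\et}(\Sch^{\ft}_S)}(\cX, \rho_!\cF_j)$, and mapping \emph{into} a representable pro-object $j(\cF_j)$ extracts this value). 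Now pull the limit inside the hom: since $\Shv_{\et}(\Sch^{\ft}_S)$ has all small limits, $\lim_j \map_{\Shv_{\et}(\Sch^{\ft}_S)}(\cX, \rho_!\cF_j) \simeq \map_{\Shv_{\et}(\Sch^{\ft}_S)}(\cX, \lim_j \rho_!\cF_j)$. Finally, $\lim_j \rho_!\cF_j$ is precisely $\lim \circ \Pro(\rho_!)$ applied to $T = \lim_j j(\cF_j)$, i.e. $R(T)$. This chain of natural equivalences exhibits $R$ as right adjoint to $\Pi^{S,\et}_{\i}$.

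There is one point deserving care: the identification $\Pro(\rho_!)(\lim_j j(\cF_j)) \simeq \lim_j j(\rho_!\cF_j)$ requires that $\Pro(\rho_!)$ preserve cofiltered limits, which it does by functoriality of $\Pro$ (it is the unique cofiltered-limit-preserving extension of $\rho_!$ along the Yoneda embedding), and that the subsequent $\lim: \Pro(\Shv_{\et}(\Sch^{\ft}_S)) \to \Shv_{\et}(\Sch^{\ft}_S)$ is the genuine limit functor, well-defined because $\Shv_{\et}(\Sch^{\ft}_S)$ is complete. One should also confirm that the equivalences assembled above are natural in both $\cX$ and $T$; naturality in $\cX$ is automatic from Theorem~\ref{thm:concrete_relative}, and naturality in $T$ follows since each step (the pro-mapping-space formula, pulling the limit through $\map$, and the description of $\Pro(\rho_!)$) is natural in the indexing diagram. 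I expect the main obstacle to be purely bookkeeping: making sure the cocompactness/colimit-preservation argument that lets us reduce to representable pro-objects $j(\cF_j)$ is deployed correctly, and that the interchange of the cofiltered limit with the mapping space is justified on the nose rather than merely up to homotopy — but both are standard consequences of the pro-object formalism recalled in \S\ref{subsec:proobj}, so no genuinely new input is needed.
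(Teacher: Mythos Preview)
Your proposal is correct and follows essentially the same route as the paper: unwind $\map_{\Pro(\Shv(S_{\et}))}(\Pi^{S,\et}_{\i}(\cX),\lim_j j(\cF_j))$ as a limit, use Theorem~\ref{thm:concrete_relative} to identify each term with $\map(\cX,\rho_!\cF_j)$, and pull the limit inside the hom. One small caveat: your step~(1) invoking the adjoint functor theorem is both unnecessary and not directly applicable, since $\Pro(\Shv(S_{\et}))$ is not presentable (cf.\ the remark preceding Proposition~\ref{prop:ladj}); fortunately your step~(2) already exhibits the adjunction by hand, so nothing is lost.
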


\begin{proof} 
Let $\underset{i}\lim j\left(F_i\right)$ be an arbitrary pro-object of $\Pro\left(\Shv\left(S_{\et}\right)\right)$ and $\cX$ an \'etale sheaf. Then
\begin{eqnarray*}
 \map_{\Pro\left(\Shv_{\et}\left( \Sch^{\ft}_S \right) \right)}\left(\Pi^{S,\et}_{\i}\left(\cX\right),\underset{i}\lim j\left(F_i\right)\right) &\simeq& \underset{i}\lim \map_{\Pro\left(\Shv_{\et}\left( \Sch^{\ft}_S \right) \right)}\left(\Pi^{S,\et}_{\i}\left(\cX\right), j\left(F_i\right)\right)\\
 &\simeq& \underset{i} \lim \Pi^{S,\et}_{\i}\left(\cX\right)\left(F_i\right)\\
 &\simeq& \underset{i} \lim \map_{\Shv_{\et}\left( \Sch^{\ft}_S \right)}\left(\cX,\rho_!\left(F_i\right)\right)\\
  &\simeq&  \map_{\Shv_{\et}\left( \Sch^{\ft}_S \right)}\left(\cX,\underset{i} \lim\rho_!\left(F_i\right)\right)\\
  &\simeq&  \map_{\Shv_{\et}\left( \Sch^{\ft}_S \right)}\left(\cX, \lim\left(\Pro\left(\rho_!\right)\left(\underset{i} \lim j\left(F_i\right)\right)\right)\right),
\end{eqnarray*}
where the second equivalence is the content of Theorem \ref{thm:concrete_relative}.
\end{proof}

\subsection{Relative \'{e}tale realization of motivic spaces} \label{subsec:rel-real}

We now come to the main construction of this paper. For $f:X \to Y$ a map of schemes, denote the direct and inverse image functors corresponding to the induced geometric morphism $$\Shv\left(f_{\et}\right):\Shv\left(X_{\et}\right) \to \Shv\left(Y_{\et}\right)$$ by $f_*$ and $f^*$  respectively.

The following is a relative version of Definition~\ref{defn:sa1}.

\begin{definition} \label{defn:a1s} An object $\cF \in \Shv\left(S_{\et}\right)$ is \textbf{$\bA^1$-invariant} if for all smooth $S$-schemes $p:X \to S$,
$$\eta_{p^*\cF}:p^*\cF \to \left(\pi_X\right)_*\left(\pi_X\right)^*p^*\cF$$ is an equivalence, where $\pi_X:X \times \bA^1 \to X$ is the canonical projection, and $\eta$ is the unit of the adjunction $$\left(\pi_X\right)^* \dashv \left(\pi_X\right)_*.$$ Denote by Let $S_{\bA^1}$ the full subcategory of $\Shv\left(S_{\et}\right)$ spanned by the $\bA^1$-invariant objects.
\end{definition}

We have the relative analog of Proposition~\ref{prop:sa1-ft}.


\begin{proposition}\label{prop:LAinv}
A sheaf $\cF$ in $\Shv\left(S_{\et}\right)$ is $\mathbb{A}^1$-invariant if and only if the corresponding Deligne-Mumford stack $\rho_!\left(\cF\right)$ is $\mathbb{A}^1$-invariant in $\Shv_{\et}\left(\Sch^{\ft}\right),$ in the sense of Definition \ref{defn:sa1}.
\end{proposition}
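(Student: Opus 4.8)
The plan is to compare the two $\bA^1$-invariance conditions by transporting the defining equivalences through the adjunction $\rho_! = \lambda^* \dashv \rho^*$ and through the base-change properties of \'etale geometric morphisms. The key observation is Proposition~\ref{prop:Lff}: for an $S$-scheme $p:X \to S$ of finite type, $\map(X,\rho_!(\cF)) \simeq \Gamma(p^*\cF)$, and more generally, for any object $\cY$ of $\Shv_{\et}(\Sch^{\ft}_S)$ the mapping space $\map(X, \cY)$ is $\cY(X)$. So by Proposition~\ref{prop:sa1-ft}, $\rho_!(\cF)$ is $\bA^1$-invariant in the sense of Definition~\ref{defn:sa1} exactly when for all finite type $S$-schemes $Z$ the map $\rho_!(\cF)(Z) \to \rho_!(\cF)(Z \times \bA^1_S)$ is an equivalence.

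First I would reduce the ``for all finite type $S$-schemes'' clause of Definition~\ref{defn:sa1} (as reformulated via Proposition~\ref{prop:sa1-ft}) to the ``for all smooth $S$-schemes'' clause of Definition~\ref{defn:a1s}. The forward direction is immediate since smooth schemes are in particular of finite type. For the converse, I would observe that every finite type $S$-scheme $Z$ admits a Zariski (hence \'etale) cover by affine opens, and each affine scheme is a closed subscheme of affine space over $S$; but a cleaner route is to note that since $\rho_!(\cF)$ is an \'etale sheaf, and since $\bA^1$-invariance is tested against representables, it suffices — using that colimits are universal in the $\infty$-topos, exactly as in the proof of Proposition~\ref{prop:sa1-ft} — to test against a generating family. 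The generating family can be taken inside $\Sm_S$: indeed $\rho_!(\cF)$ lies in $\mathfrak{DMST}^{\et/S}_\i$, i.e. it is \'etale over $S$, so by Proposition~\ref{prop:et-ft} and the identification $\map(X,\rho_!(\cF)) \simeq \Gamma(p^*\cF)$, its sections are controlled by pullback along \'etale, in particular smooth, $S$-schemes.

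The heart of the argument is then the identification, for $p:X \to S$ smooth, of $\rho_!(\cF)(X \times_S \bA^1)$ with $(\pi_X)_*(\pi_X)^* p^*\cF$ evaluated globally. Concretely: $\rho_!(\cF)(X \times_S \bA^1) \simeq \Gamma((p\circ \pi_X)^*\cF) \simeq \Gamma_X\big((\pi_X)_*(\pi_X)^* p^*\cF\big)$, using that $(p \circ \pi_X)^* = (\pi_X)^* p^*$ and that $\Gamma_{X\times\bA^1} = \Gamma_X \circ (\pi_X)_*$ by functoriality of global sections along the composite geometric morphism. Under this identification, the map $\rho_!(\cF)(X) \to \rho_!(\cF)(X \times_S \bA^1)$ becomes $\Gamma_X(p^*\cF) \to \Gamma_X\big((\pi_X)_*(\pi_X)^* p^*\cF\big)$, which is precisely $\Gamma_X$ applied to the unit $\eta_{p^*\cF}$ of Definition~\ref{defn:a1s}. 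So if $\cF$ is $\bA^1$-invariant, each $\eta_{p^*\cF}$ is an equivalence, hence so is its image under $\Gamma_X$, giving one direction. For the converse — that $\rho_!(\cF)$ being $\bA^1$-invariant forces each $\eta_{p^*\cF}$ to be an equivalence, not merely an equivalence on global sections — I would use that $\Gamma_X$ is conservative enough after slicing: for every \'etale $U \to X$, the same computation applied to $U$ (which is again smooth over $S$) shows $\Gamma_U(q^*\cF) \to \Gamma_U\big((\pi_U)_*(\pi_U)^* q^*\cF\big)$ is an equivalence; since these values together with descent recover the sheaf $(\pi_X)_*(\pi_X)^* p^*\cF$ over the small \'etale site of $X$, and the map of sheaves $\eta_{p^*\cF}$ is an equivalence iff it is an equivalence on all such sections, we conclude.

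The main obstacle I anticipate is precisely this last point: passing from ``equivalence of global sections for every smooth $X$'' to ``equivalence of sheaves $\eta_{p^*\cF}$ on $X$'', since $\Gamma$ alone is not conservative on an $\infty$-topos. The resolution is to run the global-sections computation not just for $X$ but simultaneously for all objects $U \to X$ in the small \'etale site $X_{\et}$ — equivalently, to check the map $\eta_{p^*\cF}$ after slicing $\Shv(X_{\et})$ over each representable $U$ — and then invoke that a map of \'etale sheaves on $X$ which is an equivalence after evaluation on all representables is an equivalence (by the Yoneda-type argument of Proposition~\ref{prop:sa1-ft}). I would also need to be slightly careful that $\pi_U : U \times_S \bA^1 \to U$ is the pullback of $\pi_X$, so that $(\pi_U)^* q^* \cF$ is the restriction of $(\pi_X)^* p^* \cF$ and base change holds; this is standard smooth base change for \'etale topoi, which is harmless here since $\pi_X$ is even smooth (an $\bA^1$-bundle).
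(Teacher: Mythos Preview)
Your approach matches the paper's: both directions hinge on the identification $\rho_!(\cF)(X) \simeq \Gamma_X(p^*\cF)$ from Proposition~\ref{prop:Lff}, so that the $\bA^1$-invariance map for $\rho_!(\cF)$ becomes $\Gamma_X$ applied to the unit $\eta_{p^*\cF}$, and the converse is established by evaluating $\eta_{p^*\cF}$ on every \'etale $u:U \to X$. The paper carries out the converse via an explicit chain of adjunctions---computing $(\pi_X)^*(u) \simeq u \times \id_{\bA^1}$, then using $(u\times\id)_! \dashv (u\times\id)^*$ to land at $\Gamma_{U\times\bA^1}((p\circ u\circ\pi_U)^*\cF) \simeq \map(U\times\bA^1,\rho_!(\cF))$---rather than invoking smooth base change as you suggest, but either route is valid and the paper's avoids any appeal to base change theorems.

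The one step in your argument that does not go through as written is the attempted reduction from ``all finite-type $S$-schemes'' (Definition~\ref{defn:sa1} via Proposition~\ref{prop:sa1-ft}) to ``all smooth $S$-schemes'' (Definition~\ref{defn:a1s}). Smooth schemes do not \'etale-locally generate $\Sch^{\ft}_S$, and the fact that $\rho_!(\cF)$ lies in $\mathfrak{DMST}^{\et/S}_\i$ is irrelevant here: the value $\rho_!(\cF)(Z) = \Gamma_Z(q^*\cF)$ is computed on the small \'etale site of $Z$, not of $S$, so knowing $\rho_!(\cF)$ is \'etale over $S$ says nothing about which test schemes $Z$ suffice. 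That said, the paper's forward direction is equally terse on exactly this point---it passes from ``$\eta_{p^*\cF}$ is an equivalence'' (granted only for smooth $p$ by Definition~\ref{defn:a1s}) straight to ``$\rho_!(\cF)$ is $\bA^1$-invariant'' without comment---so you have at least flagged a subtlety that the paper leaves implicit.
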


\begin{proof}
Notice that
\begin{eqnarray*}
\map_{\Shv\left(X_{\et}\right)}\left(1,\left(\pi_X\right)_*\left(\pi_X\right)^*p^*\cF\right) &\simeq& \map_{\Shv\left(\left(X\times\mathbb{A}^1\right)_{\et}\right)}\left(\left(\pi_X\right)^*\left(1\right),\left(\pi_X\right)^*p^*\cF\right)\\
&\simeq& \map_{\Shv\left(\left(X\times\mathbb{A}^1\right)_{\et}\right)}\left(1,\left(\pi_X\right)^*p^*\cF\right).
\end{eqnarray*}
So, if $\cF$ is $\mathbb{A}^1$-invariant, it follows that the canonical map $$\Gamma_{X \times \mathbb{A}^1} \left(p \circ \pi_X\right)^*\cF \to \Gamma_{X} p^*\cF$$ is an equivalence. Hence, by Proposition \ref{prop:Lff}, it follows that $\rho_!\left(\cF\right)$ is $\mathbb{A}^1$-invariant.

Conversely, suppose that $\rho_!\left(\cF\right)$ is $\mathbb{A}^1$-invariant. Let $u:U \to X$ be an \'etale map, i.e. a representable object in $\Shv\left(X_{\et}\right).$ We want to show that the canonical map $$p^*\cF\left(u\right) \to \left(\pi_X\right)_*\left(\pi_X\right)^*p^*\cF\left(u\right)$$ is an equivalence. Notice that $p\circ u:U \to S$ is an $S$-scheme locally finite type, and hence also an object of $\Shv_{\et}\left(\Sch^{\ft}\right).$ Moreover, the canonical geometric morphism $$\Shv\left(u_{\et}\right):\Shv\left(U_{\et}\right) \to \Shv\left(X_{\et}\right)$$ is \'etale, so the inverse image functor $u^*$ has a left adjoint, which is induced by composition with $u.$ In particular, $u_!\left(1\right)\simeq u \in \Shv\left(X_{\et}\right).$ Hence
\begin{eqnarray*}
p^*\cF\left(u\right) &\simeq& \map_{\Shv\left(X_{\et}\right)}\left(u_!\left(1\right),p^*\cF\right)\\
&\simeq&\map_{\Shv\left(U_{\et}\right)}\left(1,u^*p^*\cF\right)\\
&\simeq& \Gamma_U\left(p\circ u\right)^*\cF\\
&\simeq& \map_{\Shv_{\et}\left(\Sch^{\ft}\right)}\left(U,\rho_!\left(\cF\right)\right).
\end{eqnarray*}
Consider the following pullback diagram
$$\xymatrix@C=2.5cm{U \times \mathbb{A}^1 \ar[d]_-{\pi_U} \ar[r]^-{u\times id_{\mathbb{A}^1}} & X \times \mathbb{A}^1 \ar[d]^-{\pi_X}\\
U \ar[r]^-{u} & X.}$$
Similarly, to above
\begin{eqnarray*}
\left(\pi_X\right)_*\left(\pi_X\right)^*p^*\cF\left(u\right) &\simeq& \map_{\Shv\left(X_{\et}\right)}\left(u,\left(\pi_X\right)_*\left(\pi_X\right)^*p^*\cF\right)\\
&\simeq& \map_{\Shv\left(\left(X \times \mathbb{A}^1\right)_{\et}\right)}\left(\left(\pi_X\right)^*\left(u\right),\left(\pi_X\right)^*p^*\cF\right)\\
&\simeq& \map_{\Shv\left(\left(X \times \mathbb{A}^1\right)_{\et}\right)}\left(u\times id_{\mathbb{A}^1},\left(\pi_X\right)^*p^*\cF\right)\\
&\simeq& \map_{\Shv\left(\left(X \times \mathbb{A}^1\right)_{\et}\right)}\left(\left(u\times id_{\mathbb{A}^1}\right)_!\left(1\right),\left(\pi_X\right)^*p^*\cF\right)\\
&\simeq& \map_{\Shv\left(\left(X \times \mathbb{A}^1\right)_{\et}\right)}\left(1,\left(u\times id_{\mathbb{A}^1}\right)^*\left(\pi_X\right)^*p^*\cF\right)\\
&\simeq& \map_{\Shv\left(\left(X \times \mathbb{A}^1\right)_{\et}\right)}\left(1,\left(\pi_U\right)^*u^*p^*\cF\right)\\
&\simeq& \Gamma_{U \times \mathbb{A}^1}\left(\left(p\circ u\circ \pi_U\right)^*\cF\right)\\
&\simeq& \map_{\Shv_{\et}\left(\Sch^{\ft}\right)}\left(U \times \mathbb{A}^1,\rho_!\left(\cF\right)\right).
\end{eqnarray*}
The result now follows.
\end{proof}

\begin{proposition} \label{prop:sa1isbetter} The $\infty$-category $S_{\bA^1}$ has finite limits and is accessible. Hence the fully faithful embedding $\isa: \Pro\left(S_{\bA^1}\right) \hookrightarrow \Pro\left(\Shv\left(S_{\et}\right)\right)$ admits a left adjoint: $$\isa^*: \Pro\left(\Shv\left(S_{\et}\right)\right) \rightarrow \Pro\left(S_{\bA^1}\right),$$
given by sending a pro-object, regarded as an left-exact accessible functor $$Z:\Shv\left(S_{\et}\right) \to \Spc,$$ to the composite $$S_{\bA^1} \stackrel{\isa}{\longhookrightarrow} \Shv\left(S_{\et}\right) \stackrel{Z}{\longrightarrow} \Spc.$$
\end{proposition}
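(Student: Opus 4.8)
The plan is to mirror the proofs of Proposition~\ref{prop:sa1good} and Corollary~\ref{prop:sa1isloc} in the relative setting, using Proposition~\ref{prop:LAinv} to transport the $\bA^1$-invariance condition back and forth between $\Shv\left(S_{\et}\right)$ and $\Shv_{\et}\left(\Sch^{\ft}_S\right)$, and then invoking Proposition~\ref{prop:ladj} together with the explicit description of pro-objects as left-exact accessible functors.

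First I would show that $S_{\bA^1}$ has finite limits. It obviously contains the terminal object, so it suffices to show it is closed under pullbacks inside $\Shv\left(S_{\et}\right)$. Given a cospan $X \to Y \leftarrow Z$ in $S_{\bA^1}$, I would apply the fully faithful colimit-preserving embedding $\rho_!$ of Proposition~\ref{prop:Lff}; by Proposition~\ref{prop:infconn}, $\rho_! = \lambda^*$ is left exact, so it preserves the pullback $X \times_Y Z$. Since $\rho_!(X), \rho_!(Y), \rho_!(Z)$ are all $\bA^1$-invariant by Proposition~\ref{prop:LAinv}, and $\bA^1$-invariance in $\Shv_{\et}\left(\Sch^{\ft}_S\right)$ is preserved under pullbacks (the argument of Proposition~\ref{prop:sa1good}, using that $\map\left(-,-\right)$ and $\times \bA^1_S$ preserve limits), we conclude $\rho_!\left(X \times_Y Z\right)$ is $\bA^1$-invariant, hence so is $X \times_Y Z$ by Proposition~\ref{prop:LAinv} again. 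For accessibility, by \cite[Corollary 5.4.3.6]{htt} it is enough to check idempotent completeness, i.e.\ closure under retracts in $\Shv\left(S_{\et}\right)$; here I would run exactly the retract-of-an-arrow argument from Proposition~\ref{prop:sa1good}, applying $\rho_!$ (which preserves retracts, being a functor) and using that equivalences are closed under retracts. Alternatively, one can note that $S_{\bA^1}$ is the essential image of a reflective localization of the presentable $\infty$-category $\Shv\left(S_{\et}\right)$ at the $\bA^1$-projections, but the retract argument is cleanest given what is already set up.

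Once $S_{\bA^1}$ is accessible with finite limits, the inclusion $\isa^{0}: S_{\bA^1} \hookrightarrow \Shv\left(S_{\et}\right)$ is accessible and preserves finite limits, so Proposition~\ref{prop:ladj} applies directly: $\Pro\left(\isa^{0}\right) = \isa : \Pro\left(S_{\bA^1}\right) \hookrightarrow \Pro\left(\Shv\left(S_{\et}\right)\right)$ is fully faithful and admits a left adjoint $\isa^*$. The explicit formula is the one recorded in Proposition~\ref{prop:ladj} (and its proof): under the identification of pro-objects with left-exact accessible functors to $\Spc$ (Proposition~\ref{prop:lex}), $\isa^*$ sends $Z: \Shv\left(S_{\et}\right) \to \Spc$ to $Z \circ \isa^{0}: S_{\bA^1} \to \Spc$, which is still left exact and accessible since $\isa^{0}$ is; this is exactly the composite displayed in the statement.

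I do not expect a serious obstacle here: the only mild point is making sure the relative $\bA^1$-invariance condition of Definition~\ref{defn:a1s}, phrased via units of adjunctions on small \'etale topoi of smooth $S$-schemes, is genuinely transported by $\rho_!$ — but this is precisely the content of Proposition~\ref{prop:LAinv}, so the heavy lifting has already been done. If one wanted to avoid Proposition~\ref{prop:LAinv} entirely, the pullback-closure and retract-closure could also be checked directly from Definition~\ref{defn:a1s} using that $p^*$, $\left(\pi_X\right)^*$ are left exact and $\left(\pi_X\right)_*$ preserves limits, so that being $\bA^1$-invariant is a condition cut out by equivalences in an arrow category; this is the genuine analogue of the argument in Proposition~\ref{prop:sa1good}, and I would present it as the fallback.
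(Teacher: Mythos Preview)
Your proposal is correct. The paper's own proof is a single sentence: ``The proof is completely analogous to Proposition~\ref{prop:sa1good}.'' That analogous argument is precisely your fallback: check closure under pullbacks and retracts directly from Definition~\ref{defn:a1s}, using that $p^*$, $(\pi_X)^*$ are left exact and $(\pi_X)_*$ preserves limits, so the $\bA^1$-invariance condition is an equivalence-of-arrows condition stable under finite limits and retracts; then invoke Proposition~\ref{prop:ladj} for the left adjoint and its explicit description.

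Your \emph{main} route, transporting along $\rho_!$ via Proposition~\ref{prop:LAinv} and running the argument in $\Shv_{\et}\left(\Sch^{\ft}_S\right)$, is a perfectly valid alternative but is more circuitous than what the paper has in mind: it reduces the relative statement to (a mild extension of) the absolute one rather than reproving it in parallel. The payoff of your detour is that it makes explicit how the small-site and big-site notions of $\bA^1$-invariance control each other, but since Proposition~\ref{prop:LAinv} already encapsulates that, the direct argument is shorter and is what ``completely analogous'' is pointing at. Either route is fine; you have both.
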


\begin{proof} The proof is completely analogous to Proposition \ref{prop:sa1good}.
\end{proof}

\begin{definition}
We will denote the localization functor $\isa^*$ by $\widehat{\left(\mspace{3mu}\cdot\mspace{3mu}\right)}_{S_{\bA^1}}.$
\end{definition}

\subsubsection{Construction of the functor} \label{subsec:rel}Now we are ready to define our relative \'{e}tale realization functor. 

\begin{theorem}\label{thm:main-functor} Let $S$ be a scheme, then there exists a colimit preserving functor $$\Et^S_{\bA^1}: \Spc\left( S \right) \rightarrow \Pro\left(S_{\bA^1}\right)$$ whose value on $\LL_{\mot}\left(X\right)$ where $X$ is a smooth $S$-scheme is given by the $S_{\bA^1}$-localization of the $S$-relative \'etale homotopy type of $X$: $$\widehat{\Pi^{S,\et}_\i\left(X\right)}_{S_{\bA^1}}.$$
\end{theorem}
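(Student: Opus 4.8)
The plan is to mimic the proof of Theorem~\ref{thm:absolute_realization}, replacing the absolute shape functor $\Pi^{\et}_\i$ by the relative shape functor $\Pi^{S,\et}_\i$ and the localization $\cs$ by $\widehat{(\blank)}_{S_{\bA^1}}$. First I would recall from the proof of Theorem~\ref{thm:absolute_realization} the left-exact, colimit-preserving functor $\LL_{\et}:\Shv_{\Nis}\left(\Sm_S\right) \to \Shv_{\et}\left(\Sch^{\ft}_S\right)$ obtained by left Kan extending $y\circ q:\Sm_S \hookrightarrow \Shv_{\et}\left(\Sch^{\ft}_S\right)$ along the Yoneda embedding $y_{\Sm}:\Sm_S \hookrightarrow \PShv\left(\Sm_S\right)$ and restricting to Nisnevich sheaves; this functor sends any smooth $S$-scheme to itself, is left exact because $\Sm_S$ has finite limits preserved by $y\circ q$ (invoking \cite[Proposition 6.1.5.2]{htt}), and in particular satisfies $\LL_{\et}\left(\cX \times \bA^1\right) \simeq \LL_{\et}\left(\cX\right)\times \bA^1$, exactly as in equation~\eqref{eq:oknis}.

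Next I would define $\widetilde{\Et^S_{\bA^1}}$ as the composite of colimit-preserving functors
$$\Shv_{\Nis}\left(\Sm_S\right) \xrightarrow{\ \LL_{\et}\ } \Shv_{\et}\left(\Sch^{\ft}_S\right) \xrightarrow{\ \Pi^{S,\et}_\i\ } \Pro\left(\Shv\left(S_{\et}\right)\right) \xrightarrow{\ \widehat{(\blank)}_{S_{\bA^1}}\ } \Pro\left(S_{\bA^1}\right),$$
where the middle functor is colimit-preserving by construction (it is $\Shape_S \circ \Shv\left((\blank)_{\et}\right)/S$, a composite of left adjoints, cf.\ the discussion after Definition~\ref{defn:rel-et-type}) and the last is a left adjoint hence colimit-preserving by Proposition~\ref{prop:sa1isbetter}. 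Since $\Spc\left(S\right)$ is the $\bA^1$-localization of $\Shv_{\Nis}\left(\Sm_S\right)$, by \cite[Proposition 5.5.4.20]{htt} it suffices to check that $\widetilde{\Et^S_{\bA^1}}$ inverts the $\bA^1$-local equivalences, i.e.\ that for all $\cX \in \Shv_{\Nis}\left(\Sm_S\right)$ the projection induces an equivalence $\widetilde{\Et^S_{\bA^1}}\left(\cX \times \bA^1\right) \to \widetilde{\Et^S_{\bA^1}}\left(\cX\right)$; then the desired $\Et^S_{\bA^1}$ is the essentially unique colimit-preserving functor factoring $\widetilde{\Et^S_{\bA^1}}$ through $\LL_{\bA^1}$, and its value on $\LL_{\mot}\left(X\right)$ is $\widehat{\Pi^{S,\et}_\i\left(X\right)}_{S_{\bA^1}}$ since $\LL_{\et}$ fixes $X$.

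The heart of the argument — and the step I expect to be the main obstacle — is verifying the $\bA^1$-invariance. Using~\eqref{eq:oknis} this reduces to showing that $$\Pi^{S,\et}_\i\left(\LL_{\et}\left(\cX\right)\times \bA^1\right) \to \Pi^{S,\et}_\i\left(\LL_{\et}\left(\cX\right)\right)$$ becomes an equivalence after $S_{\bA^1}$-localization. Here I would invoke the explicit description of the relative \'etale homotopy type from Theorem~\ref{thm:concrete_relative}: viewing both sides as left-exact functors $\Shv\left(S_{\et}\right) \to \Spc$, and since $\widehat{(\blank)}_{S_{\bA^1}}$ restricts a pro-object $Z$ to the composite $S_{\bA^1} \xhookrightarrow{\isa} \Shv\left(S_{\et}\right) \xrightarrow{Z} \Spc$ (Proposition~\ref{prop:sa1isbetter}), it is enough to show that for every $\cF \in S_{\bA^1}$ the map $$\map_{\Shv_{\et}\left(\Sch^{\ft}_S\right)}\left(\LL_{\et}\left(\cX\right) \times \bA^1, \rho_!\left(\cF\right)\right) \to \map_{\Shv_{\et}\left(\Sch^{\ft}_S\right)}\left(\LL_{\et}\left(\cX\right), \rho_!\left(\cF\right)\right)$$ is an equivalence. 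By Proposition~\ref{prop:LAinv}, $\cF \in S_{\bA^1}$ implies $\rho_!\left(\cF\right)$ is $\bA^1$-invariant in $\Shv_{\et}\left(\Sch^{\ft}_S\right)$ in the sense of Definition~\ref{defn:sa1}, so $\map\left(-, \rho_!\left(\cF\right)\right)$ sends $-\times \bA^1_S$-projections to equivalences; this closes the argument. The subtle point to get right is the bookkeeping across the small/big site adjunction $\rho_! = \lambda^*$ and making sure the localization $\widehat{(\blank)}_{S_{\bA^1}}$ is correctly identified with the restriction-along-$\isa$ functor so that the criterion only needs to be tested against objects of $S_{\bA^1}$ rather than all of $\Shv\left(S_{\et}\right)$.
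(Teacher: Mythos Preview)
Your proposal is correct and follows essentially the same approach as the paper's own proof: define $\widetilde{\Et^S_{\bA^1}}$ as the composite $\widehat{(\blank)}_{S_{\bA^1}} \circ \Pi^{S,\et}_\i \circ \LL_{\et}$, use \eqref{eq:oknis} together with Theorem~\ref{thm:concrete_relative} to reduce the $\bA^1$-invariance check to the mapping-space statement against $\rho_!(\cF)$ for $\cF \in S_{\bA^1}$, and conclude via Proposition~\ref{prop:LAinv}. Your write-up is slightly more explicit about the role of Proposition~\ref{prop:sa1isbetter} in identifying the localization with restriction along $\isa$, but the argument is otherwise identical.
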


\begin{proof}
Consider the functor $$\LL_{\et}:\Shv_{\Nis}\left(\Sm_S\right) \to \Shv_{\et}\left(\Sch^{\ft}_S\right)$$ from the proof of Theorem \ref{thm:absolute_realization}, and denote by $\widetilde{\Et^S_{\bA^1}}$ the composite of colimit preserving functors
$$\xymatrix@C=2cm{\Shv_{\Nis}\left(\Sm_S\right) \ar[r]^-{\LL_{\et}} & \Shv_{\et}\left(\Sch^{\ft}_S\right) \ar[r]^-{\Pi^{S,\et}_\i} & \Pro\left(\Shv\left(S_{\et}\right)\right) \ar[r]^-{\widehat{\left(\mspace{3mu}\cdot\mspace{3mu}\right)}_{S_{\bA^1}}} & \Pro\left(S_{\bA^1}\right).}$$ By an analogous argument as in the proof of Theorem \ref{thm:absolute_realization}, in order to show that there is a colimit preserving lift
$$\xymatrix{\Spc\left(S\right) \ar@{-->}[r]^-{\Et^S_{\bA^1}} & \Pro\left(S_{\bA^1}\right),\\
\Shv_{\Nis}\left(\Sm_S\right) \ar[u]^-{\LL_{\mathbb{A}^1}} \ar[ru]_-{\widetilde{\Et^S_{\bA^1}}} &}$$
it suffices to show that $\widetilde{\Et^S_{\bA^1}}$ sends $\mathbb{A}^1$-local equivalences to equivalences. By equation (\ref{eq:oknis}) of the proof of Theorem \ref{thm:absolute_realization}, in light of Theorem \ref{thm:concrete_relative}, this is if and only if for all $\cX$ in $\Shv_{\et}\left(\Sch^{\ft}\right),$ and for all $\cZ$ in $S_{\bA^1},$ the canonical map
$$\map_{\Shv_{\et}\left(\Sch^{\ft}\right)}\left(\LL_{\et}\left(\cX\right),\rho_!\left(\cZ\right)\right) \to \map_{\Shv_{\et}\left(\Sch^{\ft}\right)}\left(\LL_{\et}\left(\cX\right) \times \mathbb{A}^1,\rho_!\left(\cZ\right)\right)$$ is an equivalence. The result now follows from Proposition \ref{prop:LAinv}.
\end{proof}

\begin{remark}\label{rmk:Li2}
Unwinding the definitions, we see that for $\cX$ a motivic space, $$\Et^S_{\bA^1}\left(\cX\right)=\reallywidehat{\Pi_{\i}^{S,\et}\left(\LL^{\et}i\left(\cX\right)\right)}_{S_{\bA^1}},$$ where $$i:\Spc\left( S \right) \hookrightarrow \Shv_{\Nis}\left(\Sm_S\right)$$ is the canonical inclusion.
\end{remark}

\subsubsection{A concrete description of the relative realization functor} 

Denote by $\Shv_{\et}\left(\Sch^{\ft}_S\right)_{\bA^1},$ the full subcategory of $\Shv_{\et}\left(\Sch^{\ft}_S\right)$ on the $\mathbb{A}^1$-invariant objects, i.e. it is the $\i$-category of $\mathbb{A}^1$-invariant presheaves of spaces on $\Sch^{\ft}_S$ which satisfy \'etale descent. This $\infty$-category fits in the following commutative diagram
\[
\xymatrix{
\PShv(\Sch_S^{\ft}) \ar[r] & \PShv(\Sm_S) & \\
\Shv_{\et}(\Sch^{\ft}_S)_{\bA^1} \ar[r] \ar@{^{(}->}[u] & \Spc_{\et,S}^{\bA^1} \ar@{^{(}->}[u] \ar@{^{(}->}[r] &  \Spc\left( S \right) \ar@{^{(}->}[ul].
}
\]

We define the horizontal composite as $$R_{\mathbb{A}^1}:\Shv_{\et}\left(\Sch^{\ft}_S\right)_{\bA^1} \to \Spc\left( S \right).$$ Notice furthermore that by Proposition \ref{prop:LAinv}, the functor $$\rho_!:\Shv\left(S_{\et}\right) \hookrightarrow \Shv_{\et}\left(\Sch_S^{\ft}\right)$$ of Proposition \ref{prop:Lff}
restricts to a functor 
$$\rho_{!,\bA^1}:\Shv\left(S_{\et}\right)_{\bA^1} \hookrightarrow \Shv_{\et}\left(\Sch_S^{\ft}\right)_{\bA^1}.$$

\begin{proposition}\label{prop:smm}
The composite $R_{\mathbb{A}^1} \circ \rho_{!,\bA^1}:\Shv\left(S_{\et}\right)_{\bA^1} \to  \Spc\left( S \right)$ is fully faithful, hence one may view an $\bA^1$-invariant \'etale sheaf $\cZ$ on $S$ as a motivic space in a natural way.
\end{proposition}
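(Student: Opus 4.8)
The plan is to combine full faithfulness of $\rho_!$ (Proposition~\ref{prop:Lff}) with the fact that restriction of \'etale sheaves along the inclusion $q\colon\Sm_S\hookrightarrow\Sch^{\ft}_S$ preserves colimits. First I would unwind definitions. Since $S_{\bA^1}\subseteq\Shv\left(S_{\et}\right)$ is a full subcategory, for $\cZ,\cW\in S_{\bA^1}$ we must show that the composite
\[
\map_{\Shv\left(S_{\et}\right)}\left(\cZ,\cW\right)\;\xrightarrow{\ \sim\ }\;\map_{\Shv_{\et}\left(\Sch^{\ft}_S\right)}\left(\rho_!\cZ,\rho_!\cW\right)\;\xrightarrow{\ q^*\ }\;\map_{\Spc\left(S\right)}\left(q^*\rho_!\cZ,\,q^*\rho_!\cW\right)
\]
is an equivalence, where the first arrow is the equivalence of Proposition~\ref{prop:Lff} and the second is restriction along $q$. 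Here I am using that $R_{\bA^1}\circ\rho_{!,\bA^1}$ sends $\cZ$ to $q^*\rho_!\left(\cZ\right)$ regarded inside $\Spc\left(S\right)$: by Proposition~\ref{prop:LAinv} the sheaf $\rho_!\left(\cZ\right)$ is $\bA^1$-invariant, hence so is $q^*\rho_!\left(\cZ\right)$ by Proposition~\ref{prop:sa1-ft}, and $q^*\rho_!\left(\cZ\right)$ is an \'etale, a fortiori Nisnevich, sheaf on $\Sm_S$, so it lies in $\Spc_{\et,S}^{\bA^1}\subseteq\Spc\left(S\right)$.

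The one non-formal point, and the step I expect to be the main obstacle, is that $q^*\colon\Shv_{\et}\left(\Sch^{\ft}_S\right)\to\Shv_{\et}\left(\Sm_S\right)$ is well defined and preserves all colimits. Both follow from the observation that $\Sm_S$ is stable under \'etale covers inside $\Sch^{\ft}_S$ — a scheme \'etale over a smooth $S$-scheme is itself smooth over $S$. Hence restriction along $q$ of an \'etale sheaf is again an \'etale sheaf, and every \'etale covering sieve of an object of $\Sm_S$ already restricts to an \'etale covering sieve in $\Sm_S$, so $q^*$ commutes with \'etale sheafification; since restriction of presheaves preserves all colimits (and all limits), so does $q^*$ on \'etale sheaves.

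Granting this, I would conclude by a colimit computation. Write $\cZ\simeq\underset{\alpha}{\colim}\,y_{S_{\et}}\left(U_\alpha\right)$ as a colimit of representables in $\Shv\left(S_{\et}\right)$; each $U_\alpha\to S$ is \'etale, hence $U_\alpha\in\Sm_S$. Since $\rho_!$ preserves colimits (Lemma~\ref{lemma:Lcolim}) and carries $y_{S_{\et}}\left(U_\alpha\right)$ to the representable $y\left(U_\alpha\right)$ on $\Sch^{\ft}_S$, we get $\rho_!\cZ\simeq\underset{\alpha}{\colim}\,y\left(U_\alpha\right)$, and applying $q^*$ gives $q^*\rho_!\cZ\simeq\underset{\alpha}{\colim}\,y_{\Sm_S}\left(U_\alpha\right)$ in $\Shv_{\et}\left(\Sm_S\right)$. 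Both $q^*\rho_!\cZ$ and $q^*\rho_!\cW$ are \'etale sheaves on $\Sm_S$ lying in $\Spc\left(S\right)$, so mapping spaces out of them agree whether computed in $\Spc\left(S\right)$, in $\PShv\left(\Sm_S\right)$, or in $\Shv_{\et}\left(\Sm_S\right)$. Therefore
\[
\map_{\Spc\left(S\right)}\left(q^*\rho_!\cZ,\,q^*\rho_!\cW\right)\;\simeq\;\underset{\alpha}{\lim}\,\bigl(q^*\rho_!\cW\bigr)\left(U_\alpha\right)\;=\;\underset{\alpha}{\lim}\,\bigl(\rho_!\cW\bigr)\left(U_\alpha\right)\;\simeq\;\map_{\Shv_{\et}\left(\Sch^{\ft}_S\right)}\left(\rho_!\cZ,\rho_!\cW\right),
\]
the middle equality holding because $q^*$ is just evaluation at objects of $\Sm_S$, and the outer two because $y\left(U_\alpha\right)$, resp.\ $y_{\Sm_S}\left(U_\alpha\right)$, corepresents evaluation at $U_\alpha$. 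Tracing through, the composite displayed at the outset is compatible with these equivalences and with the equivalence of Proposition~\ref{prop:Lff}, hence is itself an equivalence; thus $R_{\bA^1}\circ\rho_{!,\bA^1}$ is fully faithful.
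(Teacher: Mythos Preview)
Your proof is correct and takes a genuinely different route from the paper's argument. Both proofs reduce to showing that the composite
\[
\Shv\left(S_{\et}\right)\xrightarrow{\ \rho_!\ }\Shv_{\et}\left(\Sch^{\ft}_S\right)\xrightarrow{\ q^*\ }\Shv_{\et}\left(\Sm_S\right)
\]
is fully faithful, but the paper completes this step by invoking the Deligne--Mumford stack machinery developed earlier: it identifies $\rho_!$ with the equivalence $\Shv\left(S_{\et}\right)\simeq\mathfrak{DMST}^{\et/S}_\infty$, embeds the latter into $\mathfrak{DMST}^{\Sm/S}_\infty$, and then appeals to \cite[Theorem~5.2.2]{higherdave} (via the notion of a ``strong \'etale blossom'') to conclude that the functor of points $\mathfrak{DMST}^{\Sm/S}_\infty\to\Shv_{\et}\left(\Sm_S\right)$ is fully faithful. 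Your argument bypasses all of this: the key observation that $q$ is cocontinuous (because \'etale over smooth is smooth) gives colimit preservation of $q^*$ on \'etale sheaves directly, and then the colimit computation---writing $\cZ$ as a colimit of representables from the small \'etale site, all of which happen to land in $\Sm_S$---reduces full faithfulness to the Yoneda lemma on both sides. Your approach is more elementary and self-contained, requiring no external input beyond what is already established in the paper; the paper's approach, while heavier, situates the result within the functor-of-points picture for Deligne--Mumford $\infty$-stacks, which may be conceptually illuminating even if not strictly necessary here.
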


\begin{proof}
We will freely use notation from the proof of Theorem \ref{thm:absolute_realization}. It suffices to prove that the composite
$$\Shv\left(S_{\et}\right) \stackrel{\rho_!}{\longhookrightarrow} \Shv_{\et}\left(\Sch^{\ft}\right) \stackrel{R_{\et}}{\longrightarrow} \Shv_{\Nis}\left(\Sm_S\right)$$ is fully faithful. Since every \'etale sheaf is a Nisnevich sheaf, it furthermore suffices to prove that the composite
$$\Shv\left(S_{\et}\right) \stackrel{\rho_!}{\longhookrightarrow} \Shv_{\et}\left(\Sch^{\ft}\right) \stackrel{q^*}{\longrightarrow} \Shv_{\et}\left(\Sm_S\right)$$ is fully faithful. By Proposition \ref{prop:Lff}, $\rho_!$ restricts to an equivalence $\Shv\left(S_{\et}\right)\simeq \mathfrak{DMST}^{\et/S}_\i,$ and moreover, $\mathfrak{DMST}^{\et/S}_\i$ can be canonically identified with the full subcategory of the $\i$-category of Deligne-Mumford schemes on those which are \'etale over $\widetilde{\Spec}_{\et}\left(S\right),$ via Proposition \ref{prop:phifff}. Consider the analogously defined $\i$-category $\mathfrak{DMST}^{\Sm/S}_\i$ of Deligne-Mumford schemes smooth over $S.$ Notice that there is a canonical inclusion $$\mathfrak{DMST}^{\et/S}_\i \hookrightarrow \mathfrak{DMST}^{\Sm/S}_\i.$$ Hence it suffices to prove that the functor of points $$\mathfrak{DMST}^{\Sm/S}_\i \to \Shv_{\et}\left(\Sm_S\right)$$ is fully faithful. Denote by $\mathcal{C}_S$ the subcategory of Deligne-Mumford schemes on those of the form $\widetilde{\Spec}_{\et}\left(X\right)$ for $X$ a smooth $S$-scheme of finite type. Then $\mathcal{C}_S$ is a strong \'etale blossom in the sense of \cite[Definition 5.1.7]{dave-etale} (see \cite[Example 5.1.6]{dave-etale}), and hence it follows from \cite[Theorem 5.2.2]{dave-etale} that the functor of points $$\mathfrak{DMST}^{\Sm/S}_\i \to \Shv_{\et}\left(\Sm_S\right)$$ is fully faithful. 
\end{proof}

\begin{definition}
Denote the fully faithful functor $R_{\mathbb{A}^1} \circ \rho_{!,\bA^1}$ of Proposition \ref{prop:smm} by $$\psi:\Shv\left(S_{\et}\right)_{\bA^1} \hookrightarrow \Spc\left( S \right).$$
\end{definition}

\begin{proposition} \label{prop:explicit-a1}
Let $\cX$ be a motivic space. Then, viewed as a left exact functor $$S_{\bA^1} \to \Spc,$$ the pro-object $\Et^S_{\bA^1}\left(\cX\right)$ can be identified with the functor $\map_{\Spc\left( S \right)}\left(\cX,\psi\left(\mspace{3mu}\cdot\mspace{3mu}\right)\right).$ 
\end{proposition}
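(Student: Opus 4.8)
The plan is to deduce this from the explicit description of the \emph{non-relative-to-motivic} story already in hand, namely Theorem~\ref{thm:concrete_relative}, together with the adjunctions used in the proof of Theorem~\ref{thm:main-functor}. First I would reduce to the case where $\cX = \LL_{\mot}(X)$ for $X$ a smooth $S$-scheme: both $\Et^S_{\bA^1}(-)$ and $\map_{\Spc(S)}(-,\psi(-))$ are colimit-preserving functors $\Spc(S) \to \Pro(S_{\bA^1})$ (the first by Theorem~\ref{thm:main-functor}; the second because $\map_{\Spc(S)}(-,\psi(\mspace{3mu}\cdot\mspace{3mu}))$, being a mapping-space functor into a fixed object, sends colimits in the first variable to limits, and $\Pro(S_{\bA^1})$ is the opposite of left exact accessible functors $S_{\bA^1}\to\Spc$, so "turns colimits into limits of left exact functors" is exactly "colimit-preserving into $\Pro(S_{\bA^1})$"). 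Hence it suffices to exhibit a natural equivalence on representables, which then extends uniquely by the universal property of $\Spc(S)$ as a localization of $\PShv(\Sm_S)$.

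Next, for $X$ smooth over $S$ and $\cZ \in S_{\bA^1}$, I would compute both sides evaluated at $\cZ$. On one side, by Remark~\ref{rmk:Li2} and Theorem~\ref{thm:main-functor}, $\Et^S_{\bA^1}(\LL_{\mot}X)$ is $\widehat{\Pi^{S,\et}_\i(X)}_{S_{\bA^1}}$, and by Proposition~\ref{prop:sa1isbetter} the localization $\isa^*$ acts on a pro-object (viewed as a left exact accessible functor $\Shv(S_{\et})\to\Spc$) by restricting it along $\isa:S_{\bA^1}\hookrightarrow\Shv(S_{\et})$; combined with Theorem~\ref{thm:concrete_relative}, this identifies $\Et^S_{\bA^1}(\LL_{\mot}X)(\cZ)$ with $\map_{\Shv_{\et}(\Sch^{\ft}_S)}(X, \rho_!(\cZ))$. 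On the other side, I need $\map_{\Spc(S)}(\LL_{\mot}X, \psi(\cZ))$. Since $\psi(\cZ) = R_{\bA^1}\rho_{!,\bA^1}(\cZ)$ is $\bA^1$-invariant and $\rho_!(\cZ)$ satisfies \'etale descent (hence Nisnevich descent), the relevant adjunctions $\LL_{\mot}\dashv i$, $\LL_{\et}\dashv R_{\et}$ (and $q^*\dashv$ restriction) used in Theorem~\ref{thm:absolute_realization} give
$$\map_{\Spc(S)}(\LL_{\mot}X,\psi(\cZ)) \simeq \map_{\Shv_{\Nis}(\Sm_S)}(X, \psi(\cZ)) \simeq \map_{\Shv_{\et}(\Sch^{\ft}_S)}(\LL_{\et}X, \rho_!(\cZ)) \simeq \map_{\Shv_{\et}(\Sch^{\ft}_S)}(X,\rho_!(\cZ)),$$
using $\LL_{\et}X \simeq X$ (as $X$ is sent to itself) in the last step. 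This matches the first computation.

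The main obstacle, and the step requiring genuine care, is the middle equivalence on the $\psi$-side: one must check that under the chain of adjunctions the motivic mapping space into $\psi(\cZ)$ really is computed as an \'etale-sheaf mapping space into $\rho_!(\cZ)$, i.e. that $R_{\et}\rho_!(\cZ)$ (restricted appropriately to $\Sm_S$) represents $\psi(\cZ)$ as a Nisnevich sheaf, and that no $\bA^1$-localization is lost — this is precisely where Proposition~\ref{prop:smm} and its proof (via strong \'etale blossoms, \cite[Theorem 5.2.2]{dave-etale}) are needed to guarantee full faithfulness of $\psi$ and the compatibility of the functor of points with \'etale sheafification on $\Sm_S$. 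A secondary point to verify is naturality: all the identifications above are natural in $\cZ \in S_{\bA^1}$ and in $X$, so that the equivalence of functors $\Spc(S)^{\op}\times S_{\bA^1}\to\Spc$ genuinely promotes to an equivalence of functors $\Spc(S)\to\Pro(S_{\bA^1})$; this follows formally once the pointwise equivalence is produced via the adjunction units/counits rather than by an ad hoc argument, but it should be spelled out.
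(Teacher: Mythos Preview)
Your argument is correct and is at heart the same adjunction chase as the paper's, but the paper organizes it more efficiently. Rather than reducing to representables, the paper works directly with an arbitrary motivic space $\cX$: it packages the adjunctions into a single left adjoint $\LL^{\bA^1}_{\et}:\Spc(S)\to\Shv_{\et}(\Sch^{\ft}_S)_{\bA^1}$ (with right adjoint $R_{\bA^1}$), factored as $\LL^{\et}_{\mot}\circ\LL^{\et}\circ i$, and then runs the chain
\[
\map_{\Spc(S)}(\cX,\psi(\cZ))\simeq\map(\LL^{\bA^1}_{\et}\cX,\rho_{!,\bA^1}\cZ)\simeq\map_{\Shv_{\et}(\Sch^{\ft}_S)}(\LL^{\et}i(\cX),\rho_!\cZ)\simeq\Pi^{S,\et}_\i(\LL^{\et}i(\cX))(\cZ)\simeq\Et^S_{\bA^1}(\cX)(\cZ),
\]
the last two steps being Theorem~\ref{thm:concrete_relative} and Remark~\ref{rmk:Li2}. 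This buys you naturality in $\cX$ and $\cZ$ for free and avoids the colimit-preservation and extension-from-generators bookkeeping you flagged as a ``secondary point''. Your route, by contrast, uses only the raw adjunctions $\LL_{\mot}\dashv i$ and $\LL_{\et}\dashv R_{\et}$ without constructing $\LL^{\bA^1}_{\et}$, which is marginally more elementary but costs you the reduction step.

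One correction: the step you call the ``main obstacle'' is not one. You do \emph{not} need Proposition~\ref{prop:smm} (full faithfulness of $\psi$) here. The equivalence $\map_{\Shv_{\Nis}(\Sm_S)}(X,i\psi(\cZ))\simeq\map_{\Shv_{\et}(\Sch^{\ft}_S)}(\LL_{\et}X,\rho_!(\cZ))$ follows immediately once you observe that $i\psi(\cZ)$ and $R_{\et}\rho_!(\cZ)$ are literally the same Nisnevich sheaf on $\Sm_S$---both are just the restriction of $\rho_!(\cZ)$ along $\Sm_S\hookrightarrow\Sch^{\ft}_S$, by unwinding the definitions of $R_{\bA^1}$ and $R_{\et}$. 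No appeal to \'etale blossoms or full faithfulness is required.
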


\begin{proof}
Consider the composite of adjunctions $$\xymatrix@1{\Shv_{\et}\left(\Sch^{\ft}_S\right)_{\bA^1}\mspace{4mu} \ar@{^{(}->}[r]<-0.9ex>_-{\operatorname{R}_{\mot}^{\et}} & \ar@<-0.5ex>[l]_-{\LL^{\et}_{\mot}} \Shv_{\et}\left(\Sch^{\ft}_S\right)\mspace{4mu} \ar[r]<-0.9ex>_-{\operatorname{R}^{\et}}  & \Shv_{\Nis}\left(\Sm_S\right). \ar@<-0.5ex>[l]_-{\LL_{\et}}},$$
where the left adjoints are displayed as the top arrows.

Since $\LL^{\et}$ is left exact, the composite $\LL^{\et}_{\mot} \circ \LL^{\et}$ sends $\mathbb{A}^1$-local equivalences to equivalences. Hence, there is an induced colimit preserving functor $$\LL^{\bA^1}_{\et}:\Spc\left( S \right) \to \Shv_{\et}\left(\Sch^{\ft}_S\right)_{\bA^1}$$ whose right adjoint can be identified with $R_{\bA^1}.$ Moreover, there is a factorization
$$\LL^{\bA^1}_{\et}=\LL^{\et}_{\mot} \circ \LL^{\et} \circ i,$$ where $$i:\Spc\left( S \right) \hookrightarrow \Shv_{\Nis}\left(\Sm_S\right)$$ is the canonical inclusion. Fix $\cX$ a motivic space, and $\cZ$ an object $S_{\bA^1}.$ There is the following string of natural equivalences
\begin{eqnarray*}
\map_{\Spc\left( S \right)}\left(\cX,\psi\left(\cZ\right)\right)&\simeq& \map_{\Spc\left( S \right)}\left(\cX,R_{\bA^1}\rho_{!,\bA^1}\left(\cZ\right)\right)\\&\simeq & \map_{\Shv_{\et}\left(\Sch^{\ft}_S\right)_{\bA^1}}\left(\LL^{\bA^1}_{\et}\left(\cX\right),\rho_{!,\bA^1}\left(\cZ\right)\right)\\
&\simeq & \map_{\Shv_{\et}\left(\Sch^{\ft}_S\right)_{\bA^1}}\left(\LL^{\et}_{\mot} \left( \LL^{\et} \left( i\left(\cX\right)\right)\right),\rho_{!,\bA^1}\left(\cZ\right)\right)\\
&\simeq & \map_{\Shv_{\et}\left(\Sch^{\ft}_S\right)}\left(\LL^{\et} \left( i\left(\cX\right)\right),\operatorname{R}_{\mot}^{\et}\left(\rho_{!,\bA^1}\left(\cZ\right)\right)\right)\\
&\simeq & \map_{\Shv_{\et}\left(\Sch^{\ft}_S\right)}\left(\LL^{\et} \left( i\left(\cX\right)\right),\rho_!\left(\cZ\right)\right)\\
&\simeq& \Pi_{\i}^{\et,S}\left(\LL^{\et} \left( i\left(\cX\right)\right)\right)\left(\cZ\right)\\
&\simeq & \Et^S_{\bA^1}\left(\cX\right)\left(\cZ\right).
\end{eqnarray*}

Here, we use Theorem~\ref{thm:concrete_relative} for the second last equivalence.
\end{proof}

\subsubsection{Comparison with the absolute realization functor} 

\begin{lemma}
There is a (essentially unique) functor $\delta$ making the following diagram commute
\begin{equation}\label{eq:delta}
\xymatrix{\Spc \ar[r]^-{\Delta_S} & \Shv\left(S_{\et}\right) \\
\Spc_{S,\bA^1} \ar@{-->}[r]^-{\delta} \ar@{^{(}->}[u] & S_{\bA^1}, \ar@{^{(}->}[u]}
\end{equation}
and $\delta$ is moreover left exact.
\end{lemma}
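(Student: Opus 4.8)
The plan is to construct $\delta$ simply as the restriction of $\Delta_S$ to the subcategory $\Spc_{S,\bA^1}$, and then verify the two claims: (i) this restriction does in fact land in $S_{\bA^1}$, making the diagram commute; and (ii) the resulting functor is left exact.

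First I would address (i). Take $V \in \Spc_{S,\bA^1}$, i.e., a space $V$ such that $\Delta_S(V) \in \Shv_{\et}\left(\Sch^{\ft}_S\right)$ is $\bA^1$-invariant in the sense of Definition~\ref{defn:sa1} --- wait, one must be careful here, since Definition~\ref{defn:sa1} concerns objects of the \emph{big} site $\Shv_{\et}\left(\Sch^{\ft}_S\right)$, whereas $S_{\bA^1}$ (Definition~\ref{defn:a1s}) concerns objects of the \emph{small} site $\Shv\left(S_{\et}\right)$, and the two notions of ``$\Delta$'' differ. The cleanest route is to observe that the $\Delta_S: \Spc \to \Shv\left(S_{\et}\right)$ appearing in the top row of~\eqref{eq:delta} is the constant-sheaf functor for the small site, while the defining condition for membership of $V$ in $\Spc_{S,\bA^1}$ (Definition~\ref{defn:sa1}) is phrased via the constant sheaf $\Delta(V)$ on the big site. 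These are intertwined precisely by $\rho_!$: by Corollary~\ref{cor:repsgood} (or Corollary~\ref{cor:1}), $V \in \Spc_{S,\bA^1}$ iff $\Delta(V)(Z) \to \Delta(V)(Z \times \bA^1_S)$ is an equivalence for all finite-type $S$-schemes $Z$, and comparing this with Proposition~\ref{prop:LAinv} together with the formula $\map(X, \rho_!(\cF)) \simeq \Gamma(p^*\cF)$ of Proposition~\ref{prop:Lff} identifies $\rho_!\left(\Delta_S^{\mathrm{small}}(V)\right)$ with $\Delta^{\mathrm{big}}(V)$. Hence $\Delta_S^{\mathrm{small}}(V)$ is $\bA^1$-invariant in the small-site sense of Definition~\ref{defn:a1s} exactly when $\Delta^{\mathrm{big}}(V)$ is $\bA^1$-invariant in the big-site sense of Definition~\ref{defn:sa1}, which holds by hypothesis on $V$. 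So $\delta$ is well-defined as the corestriction of $\Delta_S|_{\Spc_{S,\bA^1}}$, and commutativity of~\eqref{eq:delta} is immediate since $\delta$ is a restriction of $\Delta_S$ and both vertical arrows are the inclusions.

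For (ii), left exactness of $\delta$: the functor $\Delta_S: \Spc \to \Shv\left(S_{\et}\right)$, being the inverse image part $e_S^*$ of the terminal geometric morphism, is left exact. Since $\Spc_{S,\bA^1}$ has finite limits and the inclusion $\is: \Spc_{S,\bA^1} \hookrightarrow \Spc$ preserves them (shown in the proof of Proposition~\ref{prop:sa1good}, where pullbacks and the terminal object in $\Spc_{S,\bA^1}$ are computed in $\Spc$), and likewise $S_{\bA^1}$ has finite limits with $\isa: S_{\bA^1} \hookrightarrow \Shv\left(S_{\et}\right)$ preserving them (Proposition~\ref{prop:sa1isbetter}), left exactness of $\delta$ follows formally: a finite limit diagram in $\Spc_{S,\bA^1}$ is sent by $\is$ to a finite limit diagram in $\Spc$, then by the left exact $\Delta_S$ to a finite limit diagram in $\Shv\left(S_{\et}\right)$ which, since all its objects lie in $S_{\bA^1}$, is also a finite limit diagram there; chasing through the commuting square~\eqref{eq:delta} gives that $\delta$ preserves finite limits.

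The main obstacle I anticipate is purely bookkeeping: pinning down the precise relationship between the two ``constant object'' functors --- the one $\Delta_S$ for the small \'etale $\infty$-topos $\Shv\left(S_{\et}\right)$ appearing in~\eqref{eq:delta}, versus the one $\Delta$ for the big \'etale $\infty$-topos $\Shv_{\et}\left(\Sch^{\ft}_S\right)$ used in Definition~\ref{defn:sa1} to cut out $\Spc_{S,\bA^1}$ --- and checking that $\rho_!$ intertwines them so that the $\bA^1$-invariance conditions match up. This is exactly where Propositions~\ref{prop:Lff}, \ref{prop:infconn}, and \ref{prop:LAinv} do the work: $\rho_! = \lambda^*$ is left exact and colimit-preserving, so $\rho_!(\Delta_S^{\mathrm{small}}(V)) \simeq \rho_! e_S^*(V) \simeq \lambda^* e_S^*(V) \simeq (e_S \circ \lambda)^*(V)$, and $e_S \circ \lambda$ is the terminal geometric morphism out of $\Shv_{\et}\left(\Sch^{\ft}_S\right)$, so this is $\Delta^{\mathrm{big}}(V)$. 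Everything else is routine.
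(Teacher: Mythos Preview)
Your proposal is correct and follows essentially the same approach as the paper: both arguments identify $\rho_! \circ \Delta_S \simeq \Delta$ via the factorization of the terminal geometric morphism through $\lambda$ (using $\lambda^* = \rho_!$ from Proposition~\ref{prop:infconn}), invoke Proposition~\ref{prop:LAinv} to conclude that $\Delta_S$ restricts to $\delta$, and deduce left exactness from that of $\Delta_S$ together with closure of the subcategories under finite limits. Your write-up is in fact more explicit than the paper's about distinguishing the two constant-sheaf functors and verifying the intertwining, which is exactly the substantive step.
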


\begin{proof}
Consider the geometric morphism $$\lambda:\Shv_{\et}\left(\Sch^{\ft}_S\right) \to \Shv\left(S_{\et}\right)$$ of Proposition \ref{prop:infconn}. Then, since $\Spc$ is the terminal $\i$-topos, (\ref{eq:geomterm}) factors as $$\Shv_{\et}\left(\Sch^{\ft}_S\right) \stackrel{\lambda^*}{\longrightarrow} \Shv\left(S_{\et}\right) \stackrel{s}{\longrightarrow} \Spc.$$ So if $\Delta_S:=s^*,$ then since $\lambda^*= \rho_!,$ it follows that $$\Delta\simeq \rho_! \circ \Delta_S.$$ It follows from Proposition \ref{prop:LAinv} that $\Delta_S$ restricts to a functor $$\delta:\Spc_{S,\bA^1} \to S_{\bA^1},$$ which moreover is unique since both vertical arrows in the diagram in the statement of the lemma are fully faithful. Since $\Delta_S$ is left exact, and $\Spc_{S,\bA^1}$ and $S_{\bA^1}$ are closed under finite limits in $\Spc$ and $\Shv\left(S_{\et}\right)$ respectively, it follows that $\delta$ is left exact.
\end{proof}

The following follows from Proposition \ref{prop:ladj}:

\begin{corollary}
There is an induced adjunction:
$$\xymatrix@C=2.3cm{\Pro\left(\Spc_{S,\bA^1}\right) \ar@<-0.5ex>[r]_-{\Pro\left(\delta\right)} & \Pro\left(S_{\bA^1}\right). \ar@<-0.5ex>[l]_-{\delta^*}}$$
\end{corollary}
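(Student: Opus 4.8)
The plan is to deduce the corollary directly from Proposition~\ref{prop:ladj}, whose hypotheses were arranged precisely by the results preceding this statement. Recall that Proposition~\ref{prop:ladj} asserts: if $f\colon \sD \to \sC$ is a functor between accessible $\infty$-categories with finite limits, and $f$ preserves finite limits, then $\Pro(f)\colon \Pro(\sD) \to \Pro(\sC)$ admits a left adjoint, given informally by precomposition $Z \mapsto Z\circ f$ on the level of left-exact accessible functors. So the entire task is to check that $f = \delta\colon \Spc_{S,\bA^1} \to S_{\bA^1}$ satisfies these hypotheses, with $\sD = \Spc_{S,\bA^1}$ and $\sC = S_{\bA^1}$.

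First I would observe that the source $\Spc_{S,\bA^1}$ is accessible and has finite limits by Proposition~\ref{prop:sa1good}, and the target $S_{\bA^1}$ is accessible and has finite limits by Proposition~\ref{prop:sa1isbetter}. Second, the functor $\delta$ is left exact, i.e.\ preserves finite limits, by the Lemma immediately preceding the corollary (where left-exactness of $\delta$ follows from left-exactness of $\Delta_S$ together with the fact that $\Spc_{S,\bA^1}$ and $S_{\bA^1}$ are closed under finite limits in $\Spc$ and $\Shv(S_{\et})$ respectively). With all three hypotheses verified, Proposition~\ref{prop:ladj} applies verbatim and yields that $\Pro(\delta)\colon \Pro(\Spc_{S,\bA^1}) \to \Pro(S_{\bA^1})$ has a left adjoint, which we name $\delta^*$; this is exactly the adjunction displayed in the corollary, with $\Pro(\delta)$ as the right adjoint and $\delta^*$ as the left adjoint.

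There is essentially no obstacle here: the statement is a formal consequence of the already-established Proposition~\ref{prop:ladj} and the structural facts about $\Spc_{S,\bA^1}$, $S_{\bA^1}$, and $\delta$ collected just above. The only point worth spelling out for the reader is the bookkeeping of which category plays the role of $\sC$ and which of $\sD$ in Proposition~\ref{prop:ladj}, so that the induced left adjoint $\delta^*$ indeed runs $\Pro(S_{\bA^1}) \to \Pro(\Spc_{S,\bA^1})$ as drawn. If desired, one may additionally record the explicit formula for $\delta^*$: a pro-object of $S_{\bA^1}$, viewed as a left-exact accessible functor $Z\colon S_{\bA^1} \to \Spc$, is sent to the composite $Z \circ \delta\colon \Spc_{S,\bA^1} \to \Spc$, matching the description in the proof of Proposition~\ref{prop:ladj}.
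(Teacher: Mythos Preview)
Your proof is correct and takes essentially the same approach as the paper, which simply records that the corollary follows from Proposition~\ref{prop:ladj}. You have merely made explicit the verification of the hypotheses (accessibility and finite limits for both categories via Propositions~\ref{prop:sa1good} and~\ref{prop:sa1isbetter}, and left-exactness of $\delta$ from the preceding lemma), together with the direction bookkeeping and the explicit formula for $\delta^*$.
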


\begin{proposition}\label{prop:deltasquare}
Let $s:\Shv\left(S_{\et}\right) \to \Spc$ be the (essentially) unique geometric morphism.
The following diagram commutes up to equivalence:
$$\xymatrix@C=1.5cm{\Pro\left(\Shv\left(S_{\et}\right)\right) \ar[r]^-{\Pro\left(s_!\right)} \ar[d]_-{\widehat{\left(\mspace{3mu}\cdot\mspace{3mu}\right)}_{S_{\bA^1}}} & \Pro\left(\Spc\right) \ar[d]^-{\widehat{\left(\mspace{3mu}\cdot\mspace{3mu}\right)}_{\Spc_{S,\bA^1}}}\\
\Pro\left(S_{\bA^1}\right) \ar[r]^-{\delta^*} & \Pro\left(\Spc_{S,\bA^1}\right).}$$
\end{proposition}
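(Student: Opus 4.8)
The plan is to prove the square commutes by passing to right adjoints, where it becomes a formal consequence of the commutativity of~\eqref{eq:delta} established in the Lemma above. First I would note that every functor in the square is a left adjoint: $\Pro(s_!)$ is left adjoint to $\Pro(s^*)$ by Proposition~\ref{prop:ladj} applied to the left-exact accessible functor $s^* = \Delta_S$; the localization $\widehat{\left(\,\cdot\,\right)}_{S_{\bA^1}} = \isa^*$ is left adjoint to the canonical embedding $\Pro(\isa)\colon \Pro(S_{\bA^1}) \hookrightarrow \Pro(\Shv(S_{\et}))$ by Proposition~\ref{prop:sa1isbetter}; the localization $\cs = \is^*$ is left adjoint to $\Pro(\is)\colon \Pro(\Spc_{S,\bA^1}) \hookrightarrow \Pro(\Spc)$ by Corollary~\ref{prop:sa1isloc}; and $\delta^*$ is left adjoint to $\Pro(\delta)$ by the Corollary immediately above. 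Since both composites $\delta^* \circ \widehat{\left(\,\cdot\,\right)}_{S_{\bA^1}}$ and $\cs \circ \Pro(s_!)$, viewed as functors $\Pro(\Shv(S_{\et})) \to \Pro(\Spc_{S,\bA^1})$, are then left adjoints, and a left adjoint is determined up to equivalence by its right adjoint, it suffices to exhibit an equivalence between their right adjoints.

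Next I would compute those right adjoints. The right adjoint of $\delta^* \circ \widehat{\left(\,\cdot\,\right)}_{S_{\bA^1}}$ is $\Pro(\isa) \circ \Pro(\delta)$, while the right adjoint of $\cs \circ \Pro(s_!)$ is $\Pro(s^*) \circ \Pro(\is) = \Pro(\Delta_S) \circ \Pro(\is)$. By functoriality of the $\Pro$-construction — immediate from the universal property in Definition~\ref{def:pro} — these identify with $\Pro(\isa \circ \delta)$ and $\Pro(\Delta_S \circ \is)$ respectively, where $\isa \circ \delta$ and $\Delta_S \circ \is$ are functors $\Spc_{S,\bA^1} \to \Shv(S_{\et})$. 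But the commutativity of~\eqref{eq:delta} is precisely the equivalence $\isa \circ \delta \simeq \Delta_S \circ \is$; applying $\Pro(-)$ gives $\Pro(\isa) \circ \Pro(\delta) \simeq \Pro(s^*) \circ \Pro(\is)$, and hence $\delta^* \circ \widehat{\left(\,\cdot\,\right)}_{S_{\bA^1}} \simeq \cs \circ \Pro(s_!)$.

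Alternatively, and more in the concrete spirit of the surrounding proofs, one could check the identity directly on pro-objects regarded as left-exact accessible functors: by Propositions~\ref{prop:ladj} and~\ref{prop:sa1isbetter}, a pro-object $Z\colon \Shv(S_{\et}) \to \Spc$ is carried by $\Pro(s_!)$ to $Z \circ s^* = Z \circ \Delta_S$, by $\widehat{\left(\,\cdot\,\right)}_{S_{\bA^1}}$ to $Z \circ \isa$, and by $\cs$ and $\delta^*$ to the evident precompositions with $\is$ and $\delta$; so both routes around the square send $Z$ to $Z \circ \Delta_S \circ \is \simeq Z \circ \isa \circ \delta$, once more by~\eqref{eq:delta}. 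I do not expect a genuine obstacle anywhere: the statement is purely formal. The only thing requiring attention is keeping the adjunction directions straight — in particular that $\Pro(s_!) \dashv \Pro(s^*)$ with $s^* = \Delta_S$, and that $\isa$, $\is$, $\delta$ at the base level yield, upon applying $\Pro$, exactly the fully faithful embeddings and the functor $\Pro(\delta)$ — so that the square of right adjoints is correctly assembled before~\eqref{eq:delta} is invoked.
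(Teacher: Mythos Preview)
Your proposal is correct and follows essentially the same approach as the paper: pass to right adjoints and observe that the resulting square is obtained by applying $\Pro$ to the commutative diagram~\eqref{eq:delta}. Your write-up is more detailed than the paper's (which simply states ``by uniqueness of adjoints'' and then applies $\Pro$ to~\eqref{eq:delta}), but the argument is the same.
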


\begin{proof}
By uniqueness of adjoints, it suffices to prove that
$$\xymatrix@C=2.5cm{\Pro\left(\Shv\left(S_{\et}\right)\right)  & \Pro\left(\Spc\right) \ar[l]_-{\Pro\left(s^*\right)=\Delta_S} \\
\Pro\left(S_{\bA^1}\right) \ar@{^{(}->}[u]^-{i_{S_{\bA^1}}} & \Pro\left(\Spc_{S,\bA^1}\right) \ar@{^{(}->}[u]_-{i_{\Spc_{S,\bA^1}}} \ar[l]^-{\Pro\left(\delta\right)}}$$
commutes up to equivalence. This follows by applying the functor $\Pro$ to the diagram (\ref{eq:delta}) of $\i$-categories.
\end{proof}

\begin{proposition}
The following diagram commutes up to equivalence:
$$\xymatrix{ \Spc\left( S \right) \ar[r]^-{\Et^S_{\bA^1}} \ar[rd]_-{\Et_{\bA^1}} & \Pro\left(S_{\bA^1}\right) \ar[d]^-{\delta^*}\\
& \Pro\left(\Spc_{S,\bA^1}\right).}$$
\end{proposition}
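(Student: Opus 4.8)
The plan is a direct diagram chase using the explicit presentations of the two realization functors obtained in the Remarks above. Both $\Et^S_{\bA^1}$ and $\Et_{\bA^1}$ preserve colimits (Theorems~\ref{thm:main-functor} and~\ref{thm:absolute_realization}), and by Proposition~\ref{prop:ladj} the functor $\delta^*$ is a left adjoint, so $\delta^*\circ\Et^S_{\bA^1}$ is colimit preserving as well; by the universal property of the localization $\Spc(S)$ (composition with $\LL_{\mot}$ being fully faithful on colimit-preserving functors) it would therefore suffice to exhibit a natural equivalence on the generators $\LL_{\mot}(X)$, $X\in\Sm_S$. I would however carry out the identification directly at the level of functors, which is cleaner.

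Set $F:=\LL^{\et}\circ i:\Spc(S)\to\Shv_{\et}(\Sch^{\ft}_S)$, where $i:\Spc(S)\hookrightarrow\Shv_{\Nis}(\Sm_S)$ is the canonical inclusion and $\LL^{\et}$ the left-exact localization from the proof of Theorem~\ref{thm:absolute_realization}. Remarks~\ref{rmk:Li2} and~\ref{rmk:Li1} state precisely that $\Et^S_{\bA^1}\simeq\isa^*\circ\Pi^{S,\et}_{\i}\circ F$ and $\Et_{\bA^1}\simeq\is^*\circ\Pi^{\et}_{\i}\circ F$, where $\isa^*:\Pro(\Shv(S_{\et}))\to\Pro(S_{\bA^1})$ and $\is^*:\Pro(\Spc)\to\Pro(\Spc_{S,\bA^1})$ are the localization functors. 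I would then feed in two inputs: first, Proposition~\ref{prop:deltasquare}, which gives $\delta^*\circ\isa^*\simeq\is^*\circ\Pro(s_!)$ with $s:\Shv(S_{\et})\to\Spc$ the terminal geometric morphism; and second, the functorial form of the identity recorded in Remark~\ref{rmk:recover}, namely $\Pro(s_!)\circ\Pi^{S,\et}_{\i}\simeq\Pi^{\et}_{\i}$, which follows from Proposition~\ref{prop:composable} applied with $\cE=\Spc$, $\cF=\Shv(S_{\et})$ to the composite of $\Pi^{S,\et}_{\i}$ with the forgetful functor $\Topi/\Shv(S_{\et})\to\Topi$. Composing these:
\begin{align*}
\delta^*\circ\Et^S_{\bA^1}&\simeq\delta^*\circ\isa^*\circ\Pi^{S,\et}_{\i}\circ F\\
&\simeq\is^*\circ\Pro(s_!)\circ\Pi^{S,\et}_{\i}\circ F\\
&\simeq\is^*\circ\Pi^{\et}_{\i}\circ F\\
&\simeq\Et_{\bA^1},
\end{align*}
which is the claimed commutativity.

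I do not expect a genuine obstacle: the statement is formal once Propositions~\ref{prop:deltasquare} and~\ref{prop:composable} and Remarks~\ref{rmk:Li1}--\ref{rmk:Li2} are in hand. The one point that needs care is that the equivalence $\Pro(s_!)\Pi^{S,\et}_{\i}(\cX)\simeq\Pi^{\et}_{\i}(\cX)$ of Remark~\ref{rmk:recover}, which was recorded object-wise there, must be used as an equivalence of functors; this is legitimate because it is produced from Proposition~\ref{prop:composable} by uniqueness of adjoints and is therefore automatically natural in $\cX$. If one prefers to sidestep this coherence, the reduction to generators from the first paragraph cuts the problem down to $\cX=\LL_{\mot}(X)$ with $X$ smooth over $S$, where Theorems~\ref{thm:main-functor} and~\ref{thm:absolute_realization} evaluate the two sides to $\widehat{\Pi^{S,\et}_{\i}(X)}_{S_{\bA^1}}$ and $\widehat{\Pi^{\et}_{\i}(X)}_{\Spc_{S,\bA^1}}$ respectively, and one concludes from Proposition~\ref{prop:deltasquare} together with the scheme case of Remark~\ref{rmk:recover}.
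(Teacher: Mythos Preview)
Your proof is correct and follows essentially the same route as the paper: both arguments factor $\Et^S_{\bA^1}$ and $\Et_{\bA^1}$ through the relative and absolute \'etale homotopy type functors (via Remarks~\ref{rmk:Li1}--\ref{rmk:Li2}), then invoke Proposition~\ref{prop:composable} to identify $\Pro(s_!)\circ\Pi^{S,\et}_{\i}$ with $\Pi^{\et}_{\i}$, and Proposition~\ref{prop:deltasquare} to commute $\delta^*$ past the completion functors. The paper presents this as a single commutative diagram, while you write it as a chain of equivalences, but the content is the same; your care about the naturality of the equivalence from Remark~\ref{rmk:recover} is well placed and correctly handled.
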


\begin{proof}
By definition, the following diagram commutes
$$\xymatrix@C=2.5cm{\Spc^{\bA^1}_{S} \ar@{^{(}->}[r]^-{i} \ar[rrrdd]_-{\Et^{S}_{\bA^1}} & \Shv_{\Nis}\left(\Sm_S\right) \ar[r]^-{L^{\et}} & \Shv_{\et}\left(\Sch^{\ft}_S\right) \ar[r]^-{\Shv\left(\mspace{3mu}\cdot\mspace{3mu}\right)/\Shv\left(S_{\et}\right)} & \Topi/\Shv\left(S_{\et}\right) \ar[d]^-{\Shape_S}\\
& & & \Pro\left(\Shv\left(S_{\et}\right)\right) \ar[d]^-{\widehat{\left(\mspace{3mu}\cdot\mspace{3mu}\right)}_{S_{\bA^1}}}\\
& & & \Pro\left(S_{\bA^1}\right).}$$
However, the following diagram also commutes
$$\xymatrix{\Spc^{\bA^1}_{S} \ar@{^{(}->}[r]^-{i} \ar[rrrdd]_-{\Et^{S}_{\bA^1}} & \Shv_{\Nis}\left(\Sm_S\right) \ar[r]^-{L^{\et}} & \Shv_{\et}\left(\Sch^{\ft}_S\right) \ar[r] & \Topi/\Shv\left(S_{\et}\right) \ar[d]_-{\Shape_S} \ar[r] & \Topi \ar[d]^-{\Shape}\\
& & & \Pro\left(\Shv\left(S_{\et}\right)\right) \ar[d]^-{\widehat{\left(\mspace{3mu}\cdot\mspace{3mu}\right)}_{S_{\bA^1}}} \ar[r]^-{\Pro\left(s_!\right)} & \Pro\left(\Spc\right) \ar[d]^-{\widehat{\left(\mspace{3mu}\cdot\mspace{3mu}\right)}_{\Spc_{S,\bA^1}}}\\
& & & \Pro\left(S_{\bA^1}\right) \ar[r]^-{\delta^*} & \Pro\left(\Spc_{S,\bA^1}\right),}$$
since the right most upper square commutes by Proposition \ref{prop:composable}, and the right most bottom square commutes by Proposition \ref{prop:deltasquare}. Now note that the composition of the top horizontal functors with the right most vertical functors is precisely the definition of $\Et_{\bA^1}.$
\end{proof}

\begin{remark}
An alternative proof may be given as follows:
\begin{eqnarray*}
\delta^*\left(\Et^S_{\bA^1}\left(\cX\right)\right)\left(V\right) & = & \Et^S_{\bA^1}\left(\cX\right)\left(\Delta_S\left(V\right)\right)\\
&\simeq& \map_{\Shv_{\et}\left(\Sch^{\ft}_S\right)}\left(\LL^{\et}i\left(\cX\right),L\Delta_S\left(V\right)\right)\\
&\simeq& \map_{\Shv_{\et}\left(\Sch^{\ft}_S\right)}\left(\LL^{\et}i\left(\cX\right),\Delta\left(V\right)\right)\\
&\simeq& \reallywidehat{\Pi_{\i}^{\et}\left(\LL^{\et}i\left(\cX\right)\right)}_{\Spc_{S,\bA^1}}\left(V\right)\\
&\simeq& \Et_{\bA^1}\left(\cX\right)\left(V\right).
\end{eqnarray*}
\end{remark}

\begin{remark}\label{realization:versus-comparison}
There is another adjunction that relates motivic homotopy theory with \'{e}tale homotopy theory \cite[Section 4.1]{morel-voevodsky}: $$\pi^*:\Spc\left( S \right) \rightleftarrows: \Spc^{\bA^1}_{S, \et}: \pi_*$$ where the $\infty$-category $\Spc^{\bA^1}_{S, \et}$ is constructed in the same way as in its Nisnevich counterpart, just replacing everything with the \'{e}tale topology. The way this relates to our realization functors is as follows: our functor is induced by the functor $$\Sm_S \rightarrow \Topiy \rightarrow \Pro(\Topiy^{\et} \simeq \sY)$$ $$X \mapsto \Shv_{\et}(X) \mapsto \Shape_{\Shv_{\et}(S)}(1_{\Shv_{\et}(X)})$$ 

while the functor above is induced by $$\Sm_S \rightarrow \Topiy \rightarrow \sY$$ $$X \mapsto \Shv_{\et}(X) \mapsto \Gamma_{\Shv_{\et}(S)}(1_{\Shv_{\et}(X)}).$$ In other words our functor is induced by the pro-left adjoint to the functor $$f^*: \Shv_{\et}(S) \rightarrow \Shv_{\et}(X),$$ while the other functor is induced by its right adjoint.
\end{remark}

Lastly, we have the following

\begin{proposition} \label{prop:right-adjoint-a1} The functor $\Et^S_{\bA^1}: \Spc\left( S \right) \rightarrow \Pro\left(S_{\bA^1}\right)$ admits a right adjoint, which identifies with the composite 
\[
R_{\bA^1}: \Pro\left(S_{\bA^1}\right) \rightarrow \Pro\left(\Spc\left(S \right)\right) \stackrel{\lim}{\longrightarrow}\Spc\left( S \right).
\]
\end{proposition}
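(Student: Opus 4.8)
The plan is to mimic the proof of Proposition~\ref{prop:right-adjoint}, transporting it across the localization $\widehat{\left(\mspace{3mu}\cdot\mspace{3mu}\right)}_{S_{\bA^1}}$ and the inclusion $\psi$. First I would observe that by Proposition~\ref{prop:explicit-a1}, for a motivic space $\cX$ the pro-object $\Et^S_{\bA^1}\left(\cX\right)$, regarded as a left exact functor $S_{\bA^1}\to\Spc$, is $\map_{\Spc\left(S\right)}\left(\cX,\psi\left(\mspace{3mu}\cdot\mspace{3mu}\right)\right)$. Now take an arbitrary pro-object $\underset{i}\lim j\left(\cZ_i\right)$ of $\Pro\left(S_{\bA^1}\right)$, where each $\cZ_i\in S_{\bA^1}$. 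Writing $R_{\bA^1}$ for the claimed composite, unwind it: $\Pro\left(S_{\bA^1}\right)\to\Pro\left(\Spc\left(S\right)\right)$ is $\Pro\left(R_{\mathbb{A}^1}\circ\rho_{!,\bA^1}\right)=\Pro\left(\psi\right)$, sending $\underset{i}\lim j\left(\cZ_i\right)$ to $\underset{i}\lim j\left(\psi\left(\cZ_i\right)\right)$, and then $\lim:\Pro\left(\Spc\left(S\right)\right)\to\Spc\left(S\right)$ sends this to $\underset{i}\lim\psi\left(\cZ_i\right)$ computed in $\Spc\left(S\right)$.

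Then I would run the computation of mapping spaces exactly as in Proposition~\ref{prop:right-adjoint}:
\begin{eqnarray*}
\map_{\Pro\left(S_{\bA^1}\right)}\left(\Et^S_{\bA^1}\left(\cX\right),\underset{i}\lim j\left(\cZ_i\right)\right)
&\simeq& \underset{i}\lim\map_{\Pro\left(S_{\bA^1}\right)}\left(\Et^S_{\bA^1}\left(\cX\right),j\left(\cZ_i\right)\right)\\
&\simeq& \underset{i}\lim \Et^S_{\bA^1}\left(\cX\right)\left(\cZ_i\right)\\
&\simeq& \underset{i}\lim\map_{\Spc\left(S\right)}\left(\cX,\psi\left(\cZ_i\right)\right)\\
&\simeq& \map_{\Spc\left(S\right)}\left(\cX,\underset{i}\lim\psi\left(\cZ_i\right)\right)\\
&\simeq& \map_{\Spc\left(S\right)}\left(\cX,R_{\bA^1}\left(\underset{i}\lim j\left(\cZ_i\right)\right)\right).
\end{eqnarray*}
The first equivalence is continuity of $\map$ in limits of the target; the second is the defining formula~\eqref{eq:maps} for pro-mapping spaces together with the fact that $j\left(\cZ_i\right)$ is corepresentable so the filtered colimit collapses (equivalently, $j$ is fully faithful into $\Pro$); the third is Proposition~\ref{prop:explicit-a1}; the fourth is again continuity of $\map$; the fifth is the unwinding of $R_{\bA^1}$ above. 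Since $\Spc\left(S\right)$ is presentable all limits in the target exist, so the expression is well-defined. Naturality in $\cX$ and in the pro-object is clear from the construction, and since $\Et^S_{\bA^1}$ is colimit preserving and every motivic space is a colimit of (motivic localizations of) smooth $S$-schemes, it suffices to have the natural equivalence on this generating subcategory, which is what the display gives; alternatively one checks naturality directly as the equivalences are all canonical.

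I do not expect a serious obstacle here: the statement is essentially formal once Proposition~\ref{prop:explicit-a1} is in hand, and the only mild subtlety is the bookkeeping that $\Pro\left(\psi\right)$ followed by $\lim$ really is the composite displayed and that it is well-defined as a functor into $\Spc\left(S\right)$ (no size issues, since $\Pro\left(S_{\bA^1}\right)$ consists of accessible left exact functors and $\psi$ is accessible, so $\Pro\left(\psi\right)$ lands in $\Pro$ of a presentable category where $\lim$ exists). The step most worth stating carefully is the second equivalence in the display, i.e.\ that mapping out of $\Et^S_{\bA^1}\left(\cX\right)$ into a representable pro-object recovers the value of $\Et^S_{\bA^1}\left(\cX\right)$ as a left exact functor --- this is exactly the translation between the two descriptions of $\Pro$ furnished by Proposition~\ref{prop:lex}, combined with Proposition~\ref{prop:explicit-a1}, and it is the hinge on which the whole argument turns.
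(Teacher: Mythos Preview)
Your proposal is correct and follows precisely the approach indicated in the paper: the paper's proof simply says it is ``analogous to that of Proposition~\ref{prop:right-adjoint},'' and you have written out that analogous argument in full, with Proposition~\ref{prop:explicit-a1} playing the role of Theorem~\ref{thm:concrete_relative} and $\psi$ playing the role of $\rho_!$. There is nothing to add.
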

whose proof is analogous to that of Proposition \ref{prop:right-adjoint}.

\subsubsection{$\bA^1$-invariant \'{e}tale stacks} \label{subsec:examples}

In this section, we give examples of objects in $S_{\bA^1}$, i.e., objects in $\Shv\left(S_{\et}\right)$ which are $\bA^1$-invariant. The target of our relative \'etale realization functors are constructed out of formal cofiltered limits of these concrete objects.

\begin{example} \label{em-examples} The first example comes from the classical theory of the derived category of \'etale sheaves with torsion coefficients. Let $S$ be any scheme and $\Lambda$ a commutative ring, then we may consider the $\infty$-category of \'etale sheaves valued in complexes of $\Lambda$-modules, i.e., presheaves of $H\Lambda$-modules satisfying \'etale descent. We denote this by $\Shv\left(S_{\et}, \Lambda \right)$. \footnote{A discussion of this $\infty$-category may be found in \cite[Section 2.2]{gaitsgory-lurie}, specifically \cite[Definition 2.2.1.2]{gaitsgory-lurie}, where it is proved that this $\infty$-category is obtained by taking the derived $\infty$-category (in the sense of \cite[Section 1.3.5]{higheralgebra}) of the abelian category of \'etale sheaves valued in $\Lambda$-modules.} We simply refer to $\Shv\left(S_{\et}, \Lambda \right)$ as the \emph{derived category of \'etale sheaf on $S$ with coefficients in $\Lambda$} and objects there as complexes of \'etale sheaves of $\Lambda$-modules.

We have an adjunction 
\[
C_*: \Shv\left(S_{\et} \right) = \Fun\left(\Et_S^{\op}, \Spc \right) \rightleftarrows \Fun\left(\Et_S^{\op}, \Mod_{H\Lambda} \right) = \Shv\left(S_{\et}, \Lambda \right): K
\]
induced by the usual adjunction
\[
H\Lambda \otimes -: \Spc \rightleftarrows \Mod_{H\Lambda}: K
\]
where $H \Lambda \otimes -$ takes a space to its free $H\Lambda$-module spectrum whose homotopy groups are just the homology of $X$ with coefficients in $\Lambda$ and $K$ is the generalized Eilenberg-Maclane space functor. In this, way, we can consider any complex of \'etale sheaf of $\Lambda$-modules as a sheaf of spaces on the small \'etale site.

\begin{proposition} \label{lem:lambda} Let $S$ be a locally Noetherian scheme. Let $P$ be a nonempty collection of primes and let $\Lambda$ be a ring which is $P$-torsion \footnote{This means that every element of $\Lambda$ is killed by an integer which is a product of primes in $P$}. Suppose that $S$ is a scheme such that every $p \in P$ is invertible in the residue fields of $S$. Then for any $M \in \Shv\left(S_{\et}, \Lambda \right)$, $K\left(M\right)$ is in $S_{\bA^1}$.
\end{proposition}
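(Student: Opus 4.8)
The plan is to reduce the statement to the $\bA^1$-invariance of étale cohomology with locally constant torsion coefficients, via the concrete description of $\bA^1$-invariance in $\Shv(S_{\et})$ given in Definition~\ref{defn:a1s} together with Proposition~\ref{prop:LAinv}. First I would unwind what needs to be shown: by Proposition~\ref{prop:LAinv}, $K(M)$ lies in $S_{\bA^1}$ iff $\rho_!(K(M))$ is $\bA^1$-invariant in $\Shv_{\et}(\Sch^{\ft}_S)$, which by Proposition~\ref{prop:sa1-ft} is equivalent to the statement that for every $S$-scheme of finite type $Z$, the map $K(M)(Z) \to K(M)(Z \times \bA^1)$ is an equivalence. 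Since the sheafy mapping space computing these values commutes with the functor $K$ (as $K$ is a right adjoint, hence a limit-preserving functor computed sectionwise), the value $K(M)(Z)$ is the space $\Omega^{\infty}$ of the mapping spectrum $\map_{\Shv(Z_{\et},\Lambda)}(\Lambda, M|_Z)$; concretely its homotopy groups are the (hyper)étale cohomology groups $H^{-i}_{\et}(Z, M)$. So the claim becomes: for $M \in \Shv(S_{\et},\Lambda)$ with $\Lambda$ a $P$-torsion ring and $P$ invertible in the residue fields of $S$, pullback along $\pi_Z\colon Z \times \bA^1 \to Z$ induces an isomorphism $H^{*}_{\et}(Z, M) \xrightarrow{\sim} H^{*}_{\et}(Z \times \bA^1, \pi_Z^* M)$.

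Next I would prove this cohomological statement. The key input is the homotopy invariance of étale cohomology with torsion coefficients prime to the residue characteristics, in the form of the base change / acyclicity statement for the affine line: for $\pi\colon \bA^1_Z \to Z$ the projection, the unit $\cN \to R\pi_* \pi^* \cN$ is an equivalence for any complex $\cN$ of torsion étale sheaves on $Z$ whose torsion is prime to the residue characteristics. This is the derived-category upgrade of \cite[XV Corollarie 2.2]{SGA4} (cited already in the excerpt for the $K(A,n)$ examples) — one reduces to the case of a single constructible torsion sheaf by writing $\cN$ as a filtered colimit of its truncations and constructible subsheaves, using that cohomology of $\bA^1$ over a qcqs base commutes with filtered colimits of sheaves, and then invokes the classical $\bA^1$-invariance for constructible torsion coefficients. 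Taking $Z$ to range over all finite-type $S$-schemes and $\cN = M|_Z$ (which is $P$-torsion since $M$ is a complex of $\Lambda$-modules and $\Lambda$ is $P$-torsion, and $P$ is invertible in the residue fields of $Z$ by Remark~\ref{rmk:ZS}), and applying $R\Gamma(Z, -)$, gives the desired isomorphism $R\Gamma_{\et}(Z, M|_Z) \xrightarrow{\sim} R\Gamma_{\et}(Z \times \bA^1, \pi_Z^* M)$.

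Finally I would assemble the pieces: the equivalence of derived global sections over $Z$ and $Z \times \bA^1$ translates, under $\Omega^{\infty} \circ K$, into an equivalence of the spaces $K(M)(Z) \to K(M)(Z \times \bA^1)$; since this holds for all finite-type $S$-schemes $Z$, Proposition~\ref{prop:sa1-ft} (applied to $\rho_!(K(M))$) and Proposition~\ref{prop:LAinv} give that $K(M) \in S_{\bA^1}$, as desired. One technical point worth being careful about: $\Shv(S_{\et})$ uses Čech-local (non-hypercomplete) sheaves of spaces, while the adjunction with $\Shv(S_{\et},\Lambda)$ and the identification of $K(M)$-sections with (derived) étale cohomology is cleanest hypercomplete; but $K$ as a right adjoint still lands in $\Shv(S_{\et})$ and is computed sectionwise, and since $M$ is a complex of $\Lambda$-modules with $\Lambda$ torsion, $M$ is bounded-above / eventually connective componentwise so no hypercompleteness subtlety arises — this is the spot where I would spend the most care, making sure the sectionwise formula for $K(M)(Z)$ and its homotopy groups as étale cohomology is correctly justified in the Čech-local setting. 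The main obstacle, then, is not the homotopy-invariance input (which is classical) but the bookkeeping identifying $K(M)$-sections with étale hypercohomology and checking that this identification is compatible with $\bA^1$-pullback.
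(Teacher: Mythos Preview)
Your proposal is correct and follows essentially the same strategy as the paper: both reduce to the $\bA^1$-invariance of the \'etale derived category with torsion coefficients invertible on $S$. The paper's proof is more direct in that it works straight from Definition~\ref{defn:a1s} (the unit map $p^*\cF \to (\pi_X)_*(\pi_X)^* p^*\cF$ for smooth $p:X\to S$) rather than passing through the big-site characterization via Propositions~\ref{prop:LAinv} and~\ref{prop:sa1-ft}; since $K$ is a right adjoint, this immediately reduces to checking that $p^*M \to (\pi_X)_*(\pi_X)^* p^*M$ is an equivalence in $\Shv(X_{\et},\Lambda)$, for which the paper simply cites \cite[Theorem 1.3.2]{etalemotives}. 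Working at the derived-category level in this way sidesteps the hypercompleteness bookkeeping you flag (and note that your parenthetical ``$M$ is bounded-above'' is not generally true for arbitrary $M\in\Shv(S_{\et},\Lambda)$, so that particular escape route would not work---but it is also not needed).
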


\begin{proof} Suppose that $p:X \rightarrow S$ is a smooth $S$-scheme and $\pi_X: X \times \bA^1 \rightarrow X$ is the projection map. We have an adjunction $\pi_X^*: \Shv\left(S_{\et}, \Lambda \right) \rightleftarrows \Shv\left(S \times \bA^1, \Lambda \right): \pi_{X*}$. It suffices to prove that the unit map $p^*M \rightarrow \pi_X^*\pi_{X*}p^*M$ is an equivalence which is proved in, for example, \cite[Theorem 1.3.2]{etalemotives}.
\end{proof}

Hence discrete sheaves of torsion abelian groups, with torsion prime to the residue characteristics of $S$, are example. A particularly important one, which is also a representable sheaf in $\Shv(S_{\et})$, is the sheaf classifying roots of unity: $\mu_{\ell}: T \mapsto \ker\left( \mathscr{O}_T \stackrel{\times \ell}{\rightarrow} \mathscr{O}_T \right)$; here $\ell$ is assumed to be invertible in the residue characteristics of $S$.

\end{example}

\begin{example} \label{example-strict} Recall the following terminology (and their \'etale variants) 

\begin{definition} Let $S$ be a base scheme and suppose that $\sC$ is either $\Sm_S$ or $\Et_S$ 

\begin{enumerate}
\item An $\mathcal{E}_1$-monoid $M \in \Mon\left(\Shv_{\Nis}\left(\sC \right) \right)$ (resp. $\Mon(\Shv_{\et}(\sC))$) is \emph{strongly $\bA^1$-invariant} (\emph{strongly $\bA^1$-invariant for the \'etale topology}) if it is $\bA^1$-invariant and $\B_{\Nis}M$ (resp. $\B_{\et}M$) is $\bA^1$-invariant.
\item An $\mathcal{E}_{\infty}$-monoid $M \in \CMon\left(\Shv_{\Nis}\left(\sC \right)\right)$ (resp. $\CMon\left(\Shv_{\et}\left(\sC\right)\right)$) is \emph{strictly $\bA^1$-invariant} (\emph{strictly $\bA^1$-invariant for the \'etale topology}) if it is $\bA^1$-invariant and $\B^n_{\Nis}M$  (resp. $\B_{\et}M$) is $\bA^1$-invariant for all $n \geq 1$.
\end{enumerate}

\end{definition}

Of course, the above definition applies for $M$ a sheaf of discrete groups/abelian groups. When $S$ is the spectrum of a perfect field, $\sC = \Sm_S$ and $\tau = \Nis$ we have a theorem of Morel asserting that an $\mathcal{E}_1$-monoid (resp. $\mathcal{E}_{\infty}$-monoid) $M$ is strongly (resp. strictly) $\bA^1$-invariant if and only if the discrete sheaf $\pi_0(M)$ is; see \cite[Theorem 3.1.12]{ehksy} for a proof. Unfortunately, the \'etale analogue of this result is not known.

\begin{lemma} \label{lem:post} Let $S$ be a base scheme and suppose that $\sC$ is either $\Sm_S$ or $S_{\et}$. Suppose that $F$ is a hypercomplete sheaf on $\sC$ in the Nisnevich (resp. \'etale) topology satisfying the following assumptions
\begin{enumerate}
\item The sheaf $F$ is connected, i.e., $\pi_0\left(F\right)$ is terminal.
\item The homotopy sheaf of groups $\pi_1\left(F\right)$ is strongly $\bA^1$-invariant (resp. strongly $\bA^1$-invariant for the \'etale topology).
\item For $k \geq 2$ the homotopy sheaf of abelian groups $\pi_k\left(F\right)$ is strictly $\bA^1$-invariant  (resp. strictly $\bA^1$-invariant for the \'etale topology).
\end{enumerate}
Then $F$ is $\bA^1$-invariant.
\end{lemma}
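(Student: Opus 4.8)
The plan is to prove Lemma~\ref{lem:post} by induction up the Postnikov tower of $F$, reducing the $\bA^1$-invariance of $F$ to that of its truncations, which are built from (de)loopings of the strongly/strictly $\bA^1$-invariant homotopy sheaves. First I would recall that since $F$ is hypercomplete, it is the limit of its Postnikov tower $F \simeq \lim_n \tau_{\leq n}F$; since $\bA^1$-invariant objects are closed under limits in $\Shv_\tau(\sC)$ (the condition is that mapping spaces out of representables and their $\bA^1$-products agree, and mapping spaces preserve limits — cf. the argument in Proposition~\ref{prop:sa1good}), it suffices to show each $\tau_{\leq n}F$ is $\bA^1$-invariant. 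I would then set up the induction on $n$. The base case $n=0$ is assumption (1): $\tau_{\leq 0}F \simeq \pi_0(F)$ is terminal, hence trivially $\bA^1$-invariant. The case $n=1$ is assumption (2) together with the observation that, since $F$ is connected, $\tau_{\leq 1}F \simeq \B\pi_1(F)$, which is $\bA^1$-invariant precisely because $\pi_1(F)$ is \emph{strongly} $\bA^1$-invariant (this is exactly the defining clause: both $\pi_1(F)$ and $\B\pi_1(F)$ are $\bA^1$-invariant).

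For the inductive step, suppose $\tau_{\leq n-1}F$ is $\bA^1$-invariant with $n \geq 2$. The truncation map fits into a pullback square (the $k$-invariant square)
\[
\xymatrix{
\tau_{\leq n}F \ar[r] \ar[d] & \ast \ar[d]\\
\tau_{\leq n-1}F \ar[r] & \B^{n+1}\pi_n(F),
}
\]
where $\pi_n(F)$ is a sheaf of abelian groups. Since $\pi_n(F)$ is \emph{strictly} $\bA^1$-invariant, the iterated delooping $\B^{n+1}\pi_n(F)$ is $\bA^1$-invariant (each $\B^m\pi_n(F)$, $m \geq 1$, is, by definition of strict $\bA^1$-invariance). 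The terminal object is $\bA^1$-invariant, and $\tau_{\leq n-1}F$ is by the inductive hypothesis. Since $\bA^1$-invariant sheaves are closed under finite limits (pullbacks) in $\Shv_\tau(\sC)$, we conclude $\tau_{\leq n}F$ is $\bA^1$-invariant. This closes the induction and, combined with the hypercompleteness reduction above, proves the lemma.

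Two points deserve care, and I expect the main subtlety to lie in the first. (a) One must make sure the $k$-invariant square really exists in the $\infty$-topos $\Shv_\tau(\sC)$ with $\B^{n+1}\pi_n(F)$ the relevant Eilenberg--MacLane object — this is standard obstruction theory in an $\infty$-topos (the map $\tau_{\leq n}F \to \tau_{\leq n-1}F$ is an $n$-gerbe banded by $\pi_n(F)$, classified by a map to $\B^{n+1}\pi_n(F)$, whose homotopy fiber recovers $\tau_{\leq n}F$), so I would simply cite the general theory of Postnikov towers in $\infty$-topoi. In the Nisnevich case over a general base this needs the site to have enough points / the relevant truncation functors to behave well, but hypercompleteness of $F$ is exactly what lets us run this. (b) In the \'etale case, the defining clauses of ``strongly/strictly $\bA^1$-invariant for the \'etale topology'' only demand $\B_{\et}M$ (not all $\B^n_{\et}M$) be $\bA^1$-invariant; I would note that for $M$ a sheaf of abelian groups, $\B^n_{\et}M \simeq \B_{\et}(\B^{n-1}_{\et}M)$, and since $\B^{n-1}_{\et}M$ is again a sheaf of (higher) abelian groups to which the hypothesis applies inductively, the required $\bA^1$-invariance of $\B^{n+1}_{\et}\pi_n(F)$ follows. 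The real content of the lemma is entirely contained in the definitions of strong/strict $\bA^1$-invariance plus closure of $\bA^1$-invariant objects under limits — so once those bookkeeping points are settled the proof is a short formal induction.
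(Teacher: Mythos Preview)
Your proposal is correct and follows essentially the same strategy as the paper: reduce to the truncations via hypercompleteness, then induct up the Postnikov tower using that $\bA^1$-invariant objects are closed under limits. The only difference is in how the inductive step is packaged. You invoke the $k$-invariant pullback square
\[
\xymatrix{
\tau_{\leq n}F \ar[r] \ar[d] & \ast \ar[d]\\
\tau_{\leq n-1}F \ar[r] & \B^{n+1}\pi_n(F),
}
\]
and conclude directly from closure under pullbacks; the paper instead uses the fiber sequence $\B^n\pi_n(F) \to \tau_{\leq n}F \to \tau_{\leq n-1}F$, applies the limit-preserving endofunctor $\pi_{X*}\pi_X^*$ to obtain a map of fiber sequences, and finishes with the five lemma on the long exact sequence of homotopy sheaves. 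The paper's route avoids your caveat~(a) entirely (no gerbe classification needed), and only requires $\B^n\pi_n(F)$ rather than $\B^{n+1}\pi_n(F)$ to be $\bA^1$-invariant, so your caveat~(b) becomes moot as well; conversely, your version is a bit more conceptual once the $k$-invariant square is granted. Either way the content is the same.
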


\begin{proof} We prove this by induction along the Postnikov tower of $F$ (in all the possible settings):
\[
\cdots \tau_{\leq k}F \rightarrow \cdots \rightarrow \tau_{\leq 1}F \rightarrow \tau_{\leq 0}F.
\]
Indeed, property of being $\bA^1$-invariant is closed under limits, hence it suffices to prove that for each $k$, $\tau_{\leq k}F$ is $\bA^1$-invariant and appeal to the fact that $F$ is hypercomplete so that $F \simeq \lim \tau_{\leq n}F$. The base case of the induction uses point $(1)$. We also have an equivalence $B_{\tau}\pi_1(F) \rightarrow \tau_{\leq 1}F$, whence item $(2)$ is used to conclude $\bA^1$-invariance for $\tau_{\leq 1}F$

In general, for $k \geq 2$, we assume that $\tau_{\leq n-1}F$ is $\bA^1$-invariant. We we have a fiber sequence (for $\tau = \et$ or $\Nis$)
\[
B^n\pi_n(F) \rightarrow \tau_{\leq n}F \rightarrow \tau_{\leq n-1}F,
\]
We want to prove that for any $X \rightarrow S$ a smooth $S$-scheme and $\pi_X: X \times \bA^1 \rightarrow X$, the map $\tau_{\leq n}F \rightarrow \pi_{X*}\pi_X^*\tau_{\leq n}F$ is an equivalence. Without loss of generality we can assume that $S = X$. Since $\pi_{X*}\pi_X^*$ preserves finite limits, we have a diagram where the rows are fiber sequences.
\[
\xymatrix{
B^n\pi_n(F) \ar[r] \ar[d] & \tau_{\leq n}F \ar[r] \ar[d] &  \tau_{\leq n-1}F \ar[d] \\
\pi_{S*}\pi_S^*B^n\pi_n(F)  \ar[r] &  \pi_{S*}\pi_S^*\tau_{\leq n}F  \ar[r] & \pi_{S*}\pi_S^*B^n\tau_{\leq n-1}F.
}
\]
The right vertical map is an equivalence by assumption and the left vertical map is an equivalence by point $(3)$. We thus conclude using the long exact sequence in homotopy sheaves \cite[Remark 6.5.1.5]{htt}.

\end{proof}

The above lemma gives us a mechanism to check that if a connected object in $\Shv(S_{\et})$ is indeed in $S_{\bA^1}$. Using Example~\ref{em-examples} above, if the homotopy sheaves of $F$ are sheaves of torsion abelian groups  with torsion prime to the residue characteristics of $S$, we get a whole family of examples over locally Noetherian bases. If we work over fields, then we have the following Lemma, using results of Voevodsky in \cite{pst-coh} and Suslin rigidity \cite[Lecture 7]{mvw}, which lets us work with integral coefficients (after inverting the exponential characteristic of $k$)
\begin{lemma} \label{lem:voev} [\cite[Lemma 4.3.7]{asok-morel}] Let $k$ be a field of exponential characteristic $c$. Suppose that $A$ is an $\bA^1$-invariant \'etale sheaf of $\mathbb{Z}[1/c]$-modules with transfers (in the sense of Suslin-Voevodsky \cite[Chapter 6]{mvw}\footnote{This means that $\pi_iF$ is the restriction of a sheaf $\tilde{F}: \Sm\Cor_k^{\op} \rightarrow \mathrm{Ab}$ along the map $\Et_k \hookrightarrow \Sm_k \rightarrow \Sm\Cor_k$; see \cite[Chapter 1]{mvw} for details on $\Sm\Cor_k$.}). Then $A$ is strictly $\bA^1$-invariant for the \'etale topology.
\end{lemma}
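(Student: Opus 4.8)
The plan is to unwind the definition of strict $\bA^1$-invariance for the \'etale topology into a statement about \'etale cohomology presheaves, to import the corresponding assertion for the Nisnevich topology from Voevodsky's theory of presheaves with transfers, and to pass between the two topologies using Suslin--Voevodsky rigidity. Concretely, since strict $\bA^1$-invariance for the \'etale topology asks that every delooping $\B^n_{\et}A$ be $\bA^1$-invariant, and since $\B^n_{\et}A$ has, over a smooth $k$-scheme $U$, homotopy groups $\pi_j = H^{n-j}_{\et}(U,A)$, the first reduction is: it suffices to prove that for every $i\geq 0$ the presheaf $U \mapsto H^i_{\et}(U,A)$ on $\Sm_k$ is $\bA^1$-invariant (the case $i=0$ being the hypothesis on $A$).

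Next I would record the Nisnevich input. The underlying Nisnevich sheaf with transfers of $A$ is $\bA^1$-invariant, so by Voevodsky's homotopy-invariance theorem for presheaves with transfers \cite{pst-coh} it is in fact \emph{strictly} $\bA^1$-invariant for the Nisnevich topology, and moreover each cohomology presheaf $U \mapsto H^p_{\Nis}(U,F)$ of such an $F$ is again an $\bA^1$-invariant Nisnevich sheaf with transfers. Over a perfect field this is immediate; over an arbitrary field of exponential characteristic $c$ one uses the hypothesis on coefficients: base change along the perfection $k^{\mathrm{perf}}/k$ is invertible on $\mathbb{Z}[1/c]$-modules with transfers, because the composite of a Frobenius transfer with the corresponding restriction is multiplication by a power of $c$ (this is exactly why ``$\bZ[1/c]$-coefficients'' appears in the statement).

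For the comparison of topologies, let $\varepsilon\colon (\Sm_k)_{\et} \to (\Sm_k)_{\Nis}$ be the canonical morphism of sites and consider the first-quadrant, natural-in-$U$ descent spectral sequence $E_2^{p,q} = H^p_{\Nis}\bigl(U, R^q\varepsilon_* A\bigr) \Rightarrow H^{p+q}_{\et}(U,A)$. The crux is to verify that each higher direct image $R^q\varepsilon_* A$ is again an $\bA^1$-invariant Nisnevich sheaf with transfers: transfers are transported along $\varepsilon_*$, and $\bA^1$-invariance of $R^q\varepsilon_* A$ is precisely a rigidity statement. I would obtain it from Suslin--Voevodsky rigidity \cite[Lecture 7]{mvw} after a d\'evissage of $A$ along $0 \to A_{\mathrm{tors}} \to A \to A/A_{\mathrm{tors}} \to 0$ and $A/A_{\mathrm{tors}} \hookrightarrow (A/A_{\mathrm{tors}})\otimes\mathbb{Q}$, staying inside $\bA^1$-invariant sheaves with transfers at each stage: for the rational part $R^q\varepsilon_*(-)$ vanishes for $q>0$ (divisibility kills the positive Galois cohomology of the residue fields), so \'etale and Nisnevich cohomology coincide there and one is done by the previous paragraph, while for the (prime-to-$c$) torsion part $R^q\varepsilon_*$ is governed by rigidity.

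Granting this claim, the $E_2$-terms $H^p_{\Nis}(-,R^q\varepsilon_* A)$ are $\bA^1$-invariant presheaves by the Nisnevich input, so the map of spectral sequences induced by $\pi_U\colon U\times\bA^1 \to U$ is an isomorphism on $E_2$, hence on the abutment; thus each $H^i_{\et}(-,A)$ is $\bA^1$-invariant and $A$ is strictly $\bA^1$-invariant for the \'etale topology. I expect the main obstacle to be exactly the $\bA^1$-invariance of the higher direct images $R^q\varepsilon_* A$ --- this is where ``with transfers'' and the $\mathbb{Z}[1/c]$-coefficients are genuinely used, and where care is needed to check that the d\'evissage (in particular the quotient $A/A_{\mathrm{tors}}$ and the rationalization) remains within the category of $\bA^1$-invariant sheaves with transfers, so that Voevodsky's theorem continues to apply to the pieces.
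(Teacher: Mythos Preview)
The paper does not give its own proof of this lemma: it is stated with a citation to \cite[Lemma 4.3.7]{asok-morel} and no argument is supplied beyond pointing to Voevodsky's results in \cite{pst-coh} and Suslin rigidity \cite[Lecture 7]{mvw} in the surrounding prose. So there is no in-paper proof to compare against; the paper simply imports the result.

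Your outline is the standard one and matches the ingredients the paper's citation gestures at: reduce strict $\bA^1$-invariance to $\bA^1$-invariance of the \'etale cohomology presheaves, use Voevodsky's strict $\bA^1$-invariance theorem on the Nisnevich side (with the perfection trick to handle non-perfect $k$, which is exactly where the $\bZ[1/c]$-hypothesis enters), and bridge to the \'etale side via the change-of-topology spectral sequence together with Suslin--Voevodsky rigidity. That is essentially the argument behind the cited lemma. One caution: your d\'evissage into $A_{\mathrm{tors}}$ and $A/A_{\mathrm{tors}}$ is the place where one has to be careful, since it is not automatic that these pieces remain $\bA^1$-invariant \'etale sheaves with transfers (the ``with transfers'' part is fine, but $\bA^1$-invariance of the torsion subsheaf and the quotient is not formal). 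In practice the references avoid this by either working directly with $\bZ/n$-coefficients and then passing to limits and rationalizations at the level of cohomology, or by invoking rigidity in a form that applies to $R^q\varepsilon_*A$ without first splitting $A$. You correctly flag this as the main obstacle; it is real, and the cleaner route is to bypass the d\'evissage of $A$ itself.
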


Hence, putting Lemmas~\ref{lem:voev} and Lemmas~\ref{lem:post} together, we get that the restriction of an \'etale sheaf of $\mathbb{Z}[1/c]$-modules transfers to the small \'etale site produces a class of examples of objects in $S_{\bA^1}$ whose homotopy sheaves need not be torsion.

There are also examples coming from algebraic groups using \cite[Lemme 3.1.2]{orgogozo}: if $G$ is a smooth commutative $k$-group scheme, then the underlying \'etale sheaf naturally has transfers. This means that if $k$ is of characteristic zero then $G$ is strictly $\bA^1$-invariant whence for all $n \geq 1$, $\B^n_{\et}G \in S_{\bA^1}$. 

\end{example}

\begin{example} We again work over a field $k$. The previous example discusses mainly \'etale sheaves whose homotopy groups are abelian groups. There are also interesting examples of objects in $S_{\bA^1}$ which are of the form $\B_{\et}G$ where $G$ is group which is not necessarily abelian. These are strongly $\bA^1$-invariant sheaves of groups as discussed above. Over a field $k$ of characteristic zero we have the following classification result due to Asok and Morel \cite[Proposition 4.4.3]{asok-morel}

\begin{proposition} \label{prop:asok-morel} Suppose that $k$ is a field of characteristic zero and $G$ is a smooth group scheme over $k$. Then $G$ is strongly $\bA^1$-invariant if and only if its connected component, $G^0$, is the extension of an abelian variety by a torus. 
\end{proposition}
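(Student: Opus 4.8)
The statement to prove is Proposition~\ref{prop:asok-morel}, attributed to Asok–Morel~\cite[Proposition 4.4.3]{asok-morel}, characterizing strongly $\bA^1$-invariant smooth group schemes $G$ over a field $k$ of characteristic zero as those whose identity component $G^0$ is an extension of an abelian variety by a torus. Since this is quoted from the literature, the plan is to indicate the shape of the argument rather than reprove it from scratch; the natural strategy is to reduce to the structure theory of algebraic groups and then invoke known $\bA^1$-invariance statements for the pieces. First I would reduce to the connected case: $G$ differs from $G^0$ by a finite étale group scheme $\pi_0(G)$, and using that $\B_{\Nis}$ commutes appropriately with the relevant localizations, $G$ is strongly $\bA^1$-invariant iff $G^0$ is (a finite constant group in characteristic zero contributes a $K(\pi,1)$ which is automatically fine). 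So assume $G$ connected and smooth.

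For the "if" direction, suppose $G$ is an extension $1 \to T \to G \to A \to 1$ with $T$ a torus and $A$ an abelian variety. Tori are strongly (indeed strictly) $\bA^1$-invariant: $\mathbb{G}_m$ is $\bA^1$-invariant with $\B_{\Nis}\mathbb{G}_m$ computing $\mathrm{Pic}$, which is $\bA^1$-invariant on smooth schemes, and one bootstraps to $\B^n$ via the étale-sheaf-with-transfers criterion of Lemma~\ref{lem:voev} applied through \cite{orgogozo}. Abelian varieties are strictly $\bA^1$-invariant for the same reason — by \cite[Lemme 3.1.2]{orgogozo} a smooth commutative $k$-group has transfers, and in characteristic zero Lemma~\ref{lem:voev} then forces strict $\bA^1$-invariance. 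Strong $\bA^1$-invariance is closed under extensions of sheaves of groups via the fiber sequence $\B T \to \B G \to \B A$ and a Postnikov/long-exact-sequence argument exactly as in Lemma~\ref{lem:post}, so $G$ and $\B G$ are $\bA^1$-invariant.

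For the "only if" direction, this is the substantive content and the main obstacle. One uses the Chevalley structure theorem: over a perfect field, $G^0$ sits in $1 \to L \to G^0 \to \mathcal{A} \to 1$ with $\mathcal{A}$ an abelian variety and $L$ a connected linear algebraic group, and $L$ is itself an extension of a reductive group by its unipotent radical $U$. The point is that if $G$ is strongly $\bA^1$-invariant then its sections over $\bA^1$ agree with its sections over $\Spec k$; but $U \cong \mathbb{G}_a^n$ as a scheme (in characteristic zero unipotent groups are split), and $\mathbb{G}_a$ is manifestly \emph{not} $\bA^1$-invariant since $\mathbb{G}_a(\bA^1) = k[t] \neq k = \mathbb{G}_a(\Spec k)$; so $U$ must be trivial. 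Similarly, the reductive quotient must be a torus: a nontrivial semisimple part contains a copy of $\mathbb{A}^1$ (e.g. a root subgroup $\cong \mathbb{G}_a$) inside $G$, again violating $\bA^1$-invariance of the sheaf of sections. Hence $L$ is a torus and $G^0$ is an extension of an abelian variety by a torus, as claimed. The delicate part here is making the "contains a copy of $\mathbb{A}^1$" argument precise at the level of Nisnevich sheaves of groups rather than just $k$-points — one wants $\mathbb{G}_a$ (or the relevant subquotient) to be a \emph{retract}, or at least to appear as a subsheaf detecting the failure of $\bA^1$-invariance functorially — and this is exactly where one leans on the detailed analysis in \cite{asok-morel}; I would cite their Proposition~4.4.3 for this step rather than reconstruct it.
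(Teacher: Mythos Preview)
The paper does not prove this proposition at all: it is simply stated as a classification result ``due to Asok and Morel'' with the citation \cite[Proposition 4.4.3]{asok-morel}, and no proof or sketch is given. Your proposal therefore goes well beyond what the paper does, and there is nothing in the paper to compare your argument against. Your sketch is a reasonable outline of how such a result is typically established (Chevalley decomposition, eliminating unipotent and semisimple pieces via the failure of $\bA^1$-invariance for $\mathbb{G}_a$, and handling the torus-by-abelian-variety case via transfers), and you correctly flag the point where one must lean on \cite{asok-morel} for the precise sheaf-level argument; but for the purposes of matching the paper, a bare citation would suffice.
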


Lastly, if $X \in S_{\bA^1}$, then consider the homotopy sheaf $\pi_1X$, i.e., the $\bA^1$-\'etale fundamental group. Unlike its Nisnevich counterpart \cite[Theorem 5.1]{morel-book}, it is \emph{not known} that $\pi_1X$ is a strongly $\bA^1$-invariant sheaf of groups, but expected \cite[Remark 5.1.6]{asok-morel}. These groups would provide an interesting class of examples of objects in $S_{\bA^1}$.

\end{example}

\section{Stable Realization Functor} \label{sect:stable}

We now discuss stabilizations of the previous construction. There are four parts to this section. Firstly, in subsection~\ref{sect:stable-prof}, we develop a fully $\infty$-categorical approach to stable profinite homotopy theory. From this discussion, we stabilize the functor from Theorem~\ref{thm:main-functor} in Theorem~\ref{thm:stable}. We finish off with two applications of our technology. In subsection~\ref{subsec:revisit} we revisit Dwyer and Friedlander's construction of \'etale $K$-theory and give a refinement of it. In subsection~\ref{subsec:noncomm} we define a new invariant  of an $R$-linear stable $\infty$-category $\sC$. We call this the (relative) \emph{Dwyer-Friedlander $K$-theory} associated to $\sC$, as well as its $\bA^1$-invariant analog. Using standard results in motivic homotopy theory, we compute the value of the connective, $\bA^1$-invariant version on the unit object of $\Cat_R$ in Proposition~\ref{thm:point}. In subsection~\ref{sec:blancdf}, we show that over a field and with finite coefficients (coprime to the characteristic of the field), this invariant agrees with usual algebraic $K$-theory. This is one of the expected properties that this theory should have.

%
%

\subsection{Stable Profinite Homotopy Theory} \label{sect:stable-prof}

A discussion of stable profinite homotopy theory using model categories was first due to Quick in \cite[Section 2.2]{Qu3}, and \cite[Section 2.8]{Qu}. An important aspect of the theory is that it is \emph{not} the category of pro-objects in (finite) spectra but, rather, spectra constructed from profinite spaces; in the language of higher algebra we mean \emph{spectrum objects} in profinite spaces. 

\subsubsection{Stable $\infty$-categories of profinite objects}

To begin we quickly review the process of stabilization as explained by Lurie in \cite[Section 1.4.2]{higheralgebra}. Let $\Cat_{\infty}^{\lex}$ be the $\infty$-category of small $\infty$-categories with finite limits and functors that preserve finite limits (in other words, left exact). There is a functor $$\Spt: \Cat_{\infty}^{\lex} \rightarrow \Cat_{\infty, \stab}^{\lex}$$ which assigns to $\sC$, $\Spt\left(\sC \right) $, its $\infty$-category of \emph{spectrum objects} \cite[Definition 1.4.2.8]{higheralgebra}. By definition $\Spt\left( \sC \right)$ is the full subcategory of $\Fun\left(\Spc_*^{\fin}, \sC\right)$ spanned by functors which 
\begin{itemize}
\item Takes the terminal object to the terminal object (\emph{reduced}), and
\item takes a pushout square to a pullback square (\emph{excisive}).
\end{itemize}

The $\infty$-category $\Spt \left(\sC \right)$ is stable \cite[Corollary 1.4.2.17]{higheralgebra} and there is a canonical finite limit-preserving functor $\Omega^{\infty}_{\sC}: \Spt \left(\sC \right) \rightarrow \sC$ satisfying the following universal property \cite[Corollary 1.4.2.23]{higheralgebra}: given a stable $\infty$-category $\sD$, then the induced functor $$\Fun^{\lex}\left(\sD, \Spt\left(\sC\right)\right) \rightarrow \Fun^{\lex}\left(\sD, \sC \right)$$ is an equivalence. Lastly, assuming that $\sC$ is furthermore \emph{pointed}, the $\infty$-category $\Spt\left(\sC\right)$ is calculated as the limit:  $$\cdots \rightarrow \sC \stackrel{\Omega_{\sC}}{\rightarrow} \sC \stackrel{\Omega_{\sC}}{\rightarrow} \sC $$ in $\Cat_{\infty}$  \cite[Proposition 1.4.2.24, Remark 1.4.2.25]{higheralgebra}. Here the functor $\Omega_{\sC}: \sC \rightarrow \sC$ is given by $$X \mapsto \lim\left( * \rightarrow X \leftarrow *\right),$$ where $*$ is the terminal object. The functor $\Omega^{\infty}_{\sC}: \Spt\left( \sC \right) \rightarrow \sC$ identifies with the tautological functor. 

\begin{remark} \label{rem:base-points} We remark that adding the basepoint is a harmless procedure from the point of view of stabilization. Indeed, if $\sC$ is an $\infty$-category with finite limits and $* \in \sC$ is the terminal object, then define $\sC_*:= \sC_{*/}$. In this case \cite[Remark 1.4.2.18]{higheralgebra} furnishes us with equivalences
\[
\Spt\left( \sC_* \right) \simeq \Spt\left(\sC\right)_* \simeq \Spt\left(\sC\right).
\]
\end{remark}

In the situation of interest, $\sC$ is accessible and has finite limits. Therefore, $\Pro\left(\sC\right)$ has all finite limits and $\sC \rightarrow \Pro\left( \sC \right)$ preserves finite limits by Proposition~\ref{prop:finlim}; in particular the terminal object of $\Pro \left( \sC \right)$ is the corepresented by the terminal object of $\sC$.

\begin{definition} \label{def:stables}We define the following stable $\infty$-categories
\begin{enumerate}
\item The $\infty$-category of \emph{profinite spectra} is the $\infty$-category $\Prof\left( \Spt \right) := \Spt{\left(\Pro\left(\Spc^\pi \right) \right)}$. 
\item The $\infty$-category of \emph{$p$-profinite spectra} is the $\infty$-category $\Prof_p\left( \Spt \right):= \Spt{\left( \Pro\left(\Spc^{p-\pi} \right) \right)}$.
\item Let $S$ be a scheme, the $\infty$-category of \emph{$S$-relatively $\bA^1$-invariant profinite spectra} is the $\infty$-category $\Prof_{S,\bA^1}\left( \Spt \right) :=  \Spt \left( \Pro\left( \Spc_{S,\bA^1} \right) \right)$.
\item  Let $S$ be a scheme, the $\infty$-category of \emph{$S$-profinite $\bA^1$-invariant spectra} is the $\infty$-category $\Prof_{S,\bA^1,\et}\left(\Spt \right):=\Spt{\left(\Pro\left(S_{\bA^1} \right) \right)}$.
 \end{enumerate}
\end{definition}

Let $\sC$ be any of the $\infty$-categories defined in~\ref{def:stables}. To proceed further, we define a left adjoint to $\Omega^{\infty}_{\Pro\left( \sC \right)}$. If $\sC$ is a presentable $\infty$-category, then by \cite[Proposition 1.4.4.4]{higheralgebra}, one obtains an adjunction
\begin{equation} \label{susp-loop}
\Sigma^{\infty}_+: \sC \rightleftarrows \Spt(\sC): \Omega^{\infty}.
\end{equation} This, of course, does not immediately apply to our situation since pro-categories are not, in general, presentable. 

\subsubsection{Co-spectra}
To construct the adjunction~\eqref{susp-loop} and various completions of spectra, it will be useful to consider the following notions

\begin{definition} \label{def:co-sp} Suppose that $\sC$ is an $\infty$-category with finite colimits and $\sD$ is an $\infty$-category with finite limits. Then, a functor $F: \sD \rightarrow \sC$ is
\begin{itemize}
\item \emph{coexcisive} if it takes a Cartesian square in $\sD$ to a coCartesian square in $\sC$. 
\item If $\sD$ furthermore admits initial objects, it is \emph{coreduced} if it takes an initial object to an initial object.
\end{itemize}

We denote by $\coExc\left(\sD, \sC\right)$ (resp. $\coExc_{\emptyset}\left(\sD, \sC\right)$ if $\sC$ has a final object) the full subcategory of $\Fun\left(\sC, \sD) \right)$ spanned by coexcisive (resp. coexcisive and coreduced) functors.
\end{definition}

\begin{definition} Suppose that $\sC$ is an $\infty$-category with finite colimits. The $\infty$-category of \emph{co-spectrum objects in $\sC$} is the full subcategory of $\Fun\left(\Spc^{\fin,\op}_*, \sC\right)$ spanned by functors which are coreduced and coexcisive, i.e.,
 \[
 \cSpt\left( \sC \right):=\coExc_{\emptyset}\left(\Spc^{\fin,\op}_*,\sC \right).
 \]
 We refer to objects in $\cSpt\left( \sC \right)$ simply as a \emph{cospectrum}.
\end{definition}

These definitions are motivated by the following observation

\begin{lemma} \label{prop:cosp}  Let $\sC$ be an $\infty$-category with finite limits and $\sD$ be an $\infty$-category with finite colimits and a terminal object, then there is a canonical equivalences
\[
\Exc\left(\sD, \sC\right) \simeq \coExc\left(\sD^{\op}, \sC^{\op}\right)^{\op},
\]
and
\[
\Exc_*\left(\sD, \sC\right)\simeq \coExc_{\emptyset}(\sD^{\op}, \sC^{\op})^{\op}.
\]
In particular, there is a canonical equivalence,
\[
 \left( \cSpt\left( \sC^{\op} \right) \right)^{\op} \simeq \Spt\left( \sC \right).
\]
\end{lemma}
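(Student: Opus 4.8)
The plan is to establish the two duality equivalences for excisive functors first, and then to obtain the statement about spectra and cospectra as a special case. The key elementary fact is the tautological one: if $\sD$ is an $\infty$-category and $\sE$ is any $\infty$-category, then $\Fun(\sD,\sE)^{\op}\simeq\Fun(\sD^{\op},\sE^{\op})$, sending a functor $F$ to the functor $F^{\op}$ which agrees with $F$ on objects. This equivalence is compatible with all limits and colimits in the following sense: a square in $\sD$ is a pullback (Cartesian) square if and only if the corresponding square in $\sD^{\op}$ is a pushout (coCartesian) square, and likewise a functor $F\colon\sD\to\sC$ sends a given square to a pullback square in $\sC$ if and only if $F^{\op}\colon\sD^{\op}\to\sC^{\op}$ sends the opposite square to a pushout square in $\sC^{\op}$. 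Similarly, $F$ preserves terminal objects iff $F^{\op}$ preserves initial objects.

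First I would spell out the definitions of $\Exc(\sD,\sC)$ and $\Exc_*(\sD,\sC)$: these are the full subcategories of $\Fun(\sD,\sC)$ on the reduced (resp. just excisive) functors, matching the conventions set in \S\ref{sect:stable-prof} where $\Spt(\sC)$ was defined as reduced excisive functors $\Spc^{\fin}_*\to\sC$. Under the tautological equivalence $\Fun(\sD,\sC)^{\op}\simeq\Fun(\sD^{\op},\sC^{\op})$, the full subcategory of excisive functors $\sD\to\sC$ corresponds, by the compatibility just described, exactly to the full subcategory of coexcisive functors $\sD^{\op}\to\sC^{\op}$; passing to opposite categories of these full subcategories gives $\Exc(\sD,\sC)\simeq\coExc(\sD^{\op},\sC^{\op})^{\op}$. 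Adding the condition of being reduced on the left corresponds to being coreduced on the right (here one uses that $\sD$ has a terminal object iff $\sD^{\op}$ has an initial object, which is part of the hypotheses, $\sD$ having finite colimits and a terminal object so that $\sD^{\op}$ has finite limits and an initial object), yielding $\Exc_*(\sD,\sC)\simeq\coExc_{\emptyset}(\sD^{\op},\sC^{\op})^{\op}$. This is a formal argument with no real content beyond carefully matching up the adjectives.

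For the final assertion, I would apply the second equivalence with $\sD=\Spc^{\fin}_*$ and with $\sC$ replaced by $\sC^{\op}$ — note the hypothesis there is that $\sC$ is an $\infty$-category with finite limits, so $\sC^{\op}$ has finite colimits as required in the definition of $\cSpt$. We have $(\Spc^{\fin}_*)^{\op}=\Spc^{\fin,\op}_*$ by notation, and $\Spc^{\fin}_*$ has finite colimits and a terminal object, so the hypotheses of the second equivalence (applied with the roles arranged so that the "$\sD$ with finite colimits and terminal object'' is $\Spc^{\fin}_*$ and the "$\sC$ with finite limits'' is $\sC$) are met. Then $\Exc_*(\Spc^{\fin}_*,\sC)=\Spt(\sC)$ by definition, and $\coExc_{\emptyset}(\Spc^{\fin,\op}_*,\sC^{\op})=\cSpt(\sC^{\op})$ by definition, so the equivalence reads $\Spt(\sC)\simeq\cSpt(\sC^{\op})^{\op}$, which is the claim after taking opposites on both sides.

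I do not expect a genuine obstacle here; the only point requiring a little care is bookkeeping the variance — making sure that "takes pushouts to pullbacks'' (excisive) on one side becomes "takes pullbacks to pushouts'' (coexcisive) on the other, rather than some mismatched combination — and checking that the finiteness/(co)completeness hypotheses on $\sC$ and $\sD$ are exactly what is needed so that all four named $\infty$-categories are well-defined. One should also note explicitly that full subcategories are preserved under $(-)^{\op}$ and under the tautological functor-category duality, so that restricting the equivalence $\Fun(\sD,\sC)^{\op}\simeq\Fun(\sD^{\op},\sC^{\op})$ to the relevant full subcategories is legitimate; this is where \cite[Proposition 1.2.13.8]{htt}-style reasoning about opposite categories, or simply the definition of opposite quasicategory, is invoked. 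If a cleaner reference is wanted, one can instead cite \cite[Remark 1.4.2.25]{higheralgebra} or the discussion of stabilization via opposite categories, but the direct argument above is short and self-contained.
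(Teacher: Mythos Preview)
Your proposal is correct and follows essentially the same approach as the paper: both use the tautological equivalence $\Fun(\sD,\sC)^{\op}\simeq\Fun(\sD^{\op},\sC^{\op})$ and observe that under $F\mapsto F^{\op}$, excisive corresponds to coexcisive and reduced corresponds to coreduced. The paper's proof is simply a terser version of what you wrote, and your added care about the hypotheses and the specialization to $\sD=\Spc^{\fin}_*$ is fine.
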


\begin{proof} We have a fully faithful embedding 
\[
\coExc_{\emptyset}\left(\sD^{\op}, \sC^{\op}\right)^{\op} \hookrightarrow \left( \Fun\left(\sD^{\op}, \sC^{\op} \right) \right)^{\op} \simeq \Fun\left( \sD, \sC \right),
\]
where the last equivalence is implemented by taking $F: \sD^{\op} \rightarrow \sC^{\op}$ to $F^{\op}: \sD \rightarrow \sC$. Now, by definition, we see that $F$ is coexcisive if and only $F^{\op}$ is excisive and $F$ is coreduced if and only if $F^{\op}$ is reduced.

\end{proof}

In particular, Proposition~\ref{prop:cosp} tells us that if $\sC$ be a small $\infty$-category, there is a canonical equivalence
\begin{equation} \label{eq:useful}
\left( \cSpt\left(\Ind\left(\sC^{\op} \right)\right) \right)^{\op} \simeq  \Spt \left( \Pro \left( \sC \right) \right).
\end{equation}
Hence, to construct various adjunctions for spectrum objects in pro-categories, we will first construct these adjunctions on the level of co-spectrum objects in $\Ind\left( \sC^{\op} \right)$ and pass to opposites. The main point is that co-spectra of $\Ind\left( \sC^{\op} \right)$ is presentable whenever $\sC$ is small, see Lemma~\ref{lem:coexc-pres} below.

Using Lemma~\ref{prop:cosp}, we deduce some basic properties of cospectra

\begin{proposition} \label{prop:list} Let $\sC$ be an $\infty$-category with finite colimits and $\sD$ be an $\infty$-category with finite limits and an initial object. Then the following properties hold:
\begin{enumerate}
\item Suppose that $\sD$ is furthermore pointed, then $\coExc_{\emptyset}\left(\sD, \sC\right)$ is pointed and admits finite colimits.
\item Let $K$ be a simplicial set and suppose that $\sC$ admits $K$-indexed colimits. Then the $\infty$-category $\coExc\left(\sD, \sC\right)$ is closed under $K$-indexed colimits. If $\sD$ furthermore has initial objects, then $\coExc_{\emptyset}\left(\sD, \sC\right)$ is also closed under $K$-indexed colimits. More precisely, the inclusions
\begin{equation*} 
\coExc_{\emptyset}\left(\sD, \sC\right) \hookrightarrow \coExc\left(\sD, \sC\right) \hookrightarrow \Fun\left(\sD, \sC\right),
\end{equation*}
preserves $K$-indexed colimits.
\item The $\infty$-category $\coExc_{\emptyset}(\sD, \sC)$ is stable.
\item Suppose that $\sD$ is furthermore pointed, there is a canonical equivalence $\coExc_{\emptyset}\left(\sD, \sC_*\right) \simeq \coExc_{\emptyset}\left(\sD, \sC\right)$. In particular, there is a canonical equivalence $\cSpt \left( \sC_* \right) \simeq \cSpt \left( \sC \right)$.
\end{enumerate}
\end{proposition}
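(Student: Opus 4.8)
The plan is to deduce all four statements directly from the duality equivalence of Lemma~\ref{prop:cosp}, which identifies $\coExc_{\emptyset}\left(\sD, \sC\right)$ with $\Exc_*\left(\sD^{\op}, \sC^{\op}\right)^{\op}$, together with the corresponding facts for excisive functors recorded by Lurie in \cite[Section 1.4.2]{higheralgebra}. Under this duality, $\sC$ having finite colimits means $\sC^{\op}$ has finite limits; $\sC$ having $K$-indexed colimits means $\sC^{\op}$ has $K^{\op}$-indexed limits (equivalently, $\sC^{\op}$ has limits indexed by the same shapes up to the usual op-conventions); and $\sD$ pointed means $\sD^{\op}$ is pointed. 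So in each case I would first dualize the statement, cite the known statement for $\Exc$ (or $\Spt$), and then dualize back, using that a stable $\infty$-category is stable iff its opposite is, that colimits in $\sX^{\op}$ are limits in $\sX$, and that $(\,\cdot\,)^{\op}$ is an (anti)equivalence preserving all the relevant structure.

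For part (1), I would dualize to: if $\sD^{\op}$ is pointed with finite limits then $\Exc_*\left(\sD^{\op}, \sC^{\op}\right)$ is pointed and admits finite limits; this holds because $\Exc_*$ of a category with finite limits into a category with finite limits is closed under finite limits in the functor category (finite limits of reduced excisive functors are reduced excisive), and passing back through $(\,\cdot\,)^{\op}$ turns finite limits into finite colimits. For part (2), dualize to the claim that $\Exc\left(\sD^{\op},\sC^{\op}\right) \hookrightarrow \Fun\left(\sD^{\op},\sC^{\op}\right)$ and $\Exc_*\left(\sD^{\op},\sC^{\op}\right) \hookrightarrow \Exc\left(\sD^{\op},\sC^{\op}\right)$ preserve the relevant limits: this is the standard fact that excisiveness and reducedness are limit-closed conditions (a limit of excisive functors is excisive since limits commute with limits, and pullback squares are computed pointwise), so the inclusions preserve limits, hence after dualizing the inclusions of cospectra preserve $K$-indexed colimits. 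For part (3), $\coExc_{\emptyset}(\sD,\sC)$ is stable because its opposite $\Exc_*(\sD^{\op},\sC^{\op})$ is stable by \cite[Corollary 1.4.2.17]{higheralgebra} (applied with $\sC^{\op}$ in place of the target, noting the hypotheses are met since $\sD^{\op}$ is pointed with finite limits, using Remark~\ref{rem:base-points} to reduce to the reduced-excisive description), and the opposite of a stable $\infty$-category is stable.

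For part (4), I would again dualize: $\coExc_{\emptyset}\left(\sD, \sC_*\right) \simeq \Exc_*\left(\sD^{\op}, \left(\sC_*\right)^{\op}\right)^{\op}$ and $\left(\sC_*\right)^{\op} = \left(\sC_{*/}\right)^{\op} \simeq \left(\sC^{\op}\right)_{/*} \simeq \left(\sC^{\op}\right)^{*}$ up to the evident slice/coslice duality, but the cleanest route is to invoke Remark~\ref{rem:base-points}, which already gives $\Spt\left(\sC_*\right) \simeq \Spt\left(\sC\right)$ for $\sC$ with finite limits; applying this with $\sC^{\op}$ and dualizing gives the cospectrum statement, and for the more general $\coExc_{\emptyset}$ claim one observes that when $\sD$ is pointed, a coreduced coexcisive functor $\sD \to \sC$ automatically lands in (a copy of) $\sC_*$ since the image of the zero object of $\sD$ is a zero object, exactly as in Lurie's argument for the spectrum case. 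The last sentence of (4) is the special case $\sD = \Spc^{\fin,\op}_*$.

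The main obstacle I anticipate is bookkeeping with the op-conventions: making sure that ``coexcisive'' really dualizes to ``excisive'' on the nose (Lemma~\ref{prop:cosp} already does this), that the pointedness/stability hypotheses on $\sD$ versus $\sD^{\op}$ line up (both are pointed iff either is, so this is harmless), and — most delicately — that in part (2) ``$\sC$ admits $K$-indexed colimits'' translates to the hypothesis actually needed to run Lurie's limit-closure argument for $\Exc$ in the target $\sC^{\op}$. Everything else is a formal transport of structure along an equivalence of $\infty$-categories, so I would keep the proof short, essentially: ``apply $(\,\cdot\,)^{\op}$, cite \cite{higheralgebra}, apply $(\,\cdot\,)^{\op}$ again.''
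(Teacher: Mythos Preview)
Your proposal is correct and matches the paper's approach almost exactly: for (1), (3), and (4) the paper likewise invokes Lemma~\ref{prop:cosp} to pass to $\Exc_*(\sD^{\op},\sC^{\op})^{\op}$ and then cites the corresponding results in \cite[Section 1.4.2]{higheralgebra} (specifically Lemma 1.4.2.10, Proposition 1.4.2.16, and the argument of Remark 1.4.2.18), exactly as you outline. The only deviation is in (2): rather than dualize to the limit-closure of $\Exc$, the paper argues directly that a pointwise colimit of coexcisive functors is coexcisive because colimits commute with colimits --- this is one line shorter and avoids your ``bookkeeping with the op-conventions'' worry about $K$ versus $K^{\op}$, but the content is the same.
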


\begin{proof} Lemma~\ref{prop:cosp} tells us that $\coExc_{\emptyset}\left(\sD, \sC \right) \simeq \left(\Exc_*\left(\sC^{\op}, \sD^{\op}\right)\right)^{\op}$.  For (1), we apply \cite[Lemma 1.4.2.10]{higheralgebra} to $\Exc_*\left(\sC^{\op}, \sD^{\op}\right)$ to conclude that it is pointed and admits finite limits, and then use the fact that the opposite of a pointed $\infty$-category is pointed. For (3), we apply~\cite[Proposition 1.4.2.16]{higheralgebra} to $\Exc_*\left(\sC^{\op}, \sD^{\op}\right)$ to conclude that it is stable, and then use the fact that the opposite of a stable $\infty$-category is stable \cite[Remark 1.1.1.13]{higheralgebra}. For (4), we argue as in \cite[Remark 1.4.2.18]{higheralgebra}: first note that there is a canonical isomorphism of simplicial sets~$\coExc_{\emptyset}\left(\sD, \sC_*\right) \simeq \coExc_{\emptyset}\left(\sD, \sC\right)_*$. Then use (1) to conclude that $\coExc_{\emptyset}\left(\sD, \sC\right)$ is already pointed.

To conclude (2), we argue as follows: suppose that $F: K \rightarrow \coExc\left(\sD, \sC\right)$ is diagram and $\colim F$ is the colimit in $\Fun\left(\sD, \sC\right)$. Then, since colimits in functor categories are computed pointwise, it is clear that $F$ is coexcisive. Similarly since an initial object is a colimit, the second claim follows.

\end{proof}

\newcommand{\ZZ}{\mathbb{Z}}
Recall that an $\infty$-category $\sC$ is \emph{differentiable} if (1) it admits finite limits, (2) it admits $\ZZ_{\geq 0}$-indexed colimits, and (3) the functor $\colim: \Fun\left( N\left( \ZZ_{\geq 0} \right), \sC\right) \rightarrow \sC$ is left exact. If $\sD$ is an $\infty$-category with finite colimits and final objects, then the differentiability hypotheses ensures that the inclusion $\Exc\left( \sD, \sC \right) \hookrightarrow \Fun\left(\sC, \sD \right)$ admits a left adjoint; this is the case $n=1$ of \cite[Theorem 6.1.1.10]{higheralgebra}.

\begin{lemma} \label{lem:coexc-coloc}  Let $\sC$ be an $\infty$-category such that $\sC^{\op}$ is differentiable and $\sD$ is a small $\infty$-category with finite limits. Then the inclusion $\coExc\left(\sD, \sC\right) \hookrightarrow \Fun\left(\sD, \sC\right)$ admits a right adjoint
\[
\mathrm{co}P_1:  \Fun\left(\sD, \sC\right) \rightarrow \coExc\left(\sD, \sC\right)
\]
\end{lemma}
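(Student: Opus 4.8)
The plan is to reduce the statement to the dual fact about excisive functors, via the canonical equivalence of Lemma~\ref{prop:cosp}, and then invoke \cite[Theorem 6.1.1.10]{higheralgebra} (the case $n=1$) on the opposite side. So first I would pass to opposites: Lemma~\ref{prop:cosp} gives a canonical equivalence $\coExc\left(\sD, \sC\right) \simeq \coExc\left((\sD^{\op})^{\op}, (\sC^{\op})^{\op}\right) \simeq \Exc\left(\sD^{\op}, \sC^{\op}\right)^{\op}$, compatible with the inclusions into the relevant functor categories, since $\left(\Fun\left(\sD^{\op}, \sC^{\op}\right)\right)^{\op} \simeq \Fun\left(\sD, \sC\right)$. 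Under this identification, the inclusion $\coExc\left(\sD, \sC\right) \hookrightarrow \Fun\left(\sD, \sC\right)$ becomes (the opposite of) the inclusion $\Exc\left(\sD^{\op}, \sC^{\op}\right) \hookrightarrow \Fun\left(\sD^{\op}, \sC^{\op}\right)$. A right adjoint to the former is precisely a left adjoint to the latter, read in opposite categories.

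Next I would check the hypotheses of \cite[Theorem 6.1.1.10]{higheralgebra} for the functor category $\Fun\left(\sD^{\op}, \sC^{\op}\right)$ and its subcategory of ($1$-)excisive functors. The theorem requires the source $\sD^{\op}$ to be a small $\infty$-category admitting finite colimits and a final object, and the target $\sC^{\op}$ to be differentiable. Here $\sD^{\op}$ is small and has finite colimits because $\sD$ is small with finite limits; it also has a final object, since $\sD$ has an initial object — but note the statement as written only assumes $\sD$ has finite limits, so I should either add the standing assumption that $\sD$ has an initial object (equivalently, is pointed, in the case of interest $\sD = \Spc^{\fin,\op}_*$, which does have one), or observe that the relevant application always has it; I would make this explicit in the write-up. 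The target differentiability is exactly the hypothesis ``$\sC^{\op}$ is differentiable'' in the statement. Granting these, \cite[Theorem 6.1.1.10]{higheralgebra} produces a left adjoint $P_1 : \Fun\left(\sD^{\op}, \sC^{\op}\right) \rightarrow \Exc\left(\sD^{\op}, \sC^{\op}\right)$ to the inclusion.

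Finally I would transport $P_1$ back through the equivalence of Lemma~\ref{prop:cosp}: setting $\mathrm{co}P_1 := (P_1)^{\op} : \Fun\left(\sD, \sC\right) \rightarrow \coExc\left(\sD, \sC\right)$, the adjunction $P_1 \dashv \left(\text{incl}\right)$ in $\Fun\left(\sD^{\op},\sC^{\op}\right)$ becomes, after applying $(-)^{\op}$, an adjunction $\left(\text{incl}\right) \dashv \mathrm{co}P_1$ in $\Fun\left(\sD,\sC\right)$, which is the claim. I expect the argument to be almost entirely formal; the only genuine point requiring care — and the ``main obstacle'' in the sense of bookkeeping — is making sure the duality in Lemma~\ref{prop:cosp} really does identify the two inclusion functors (not just the two subcategories) so that adjoints correspond, and verifying the existence of the final object in $\sD^{\op}$ so that \cite[Theorem 6.1.1.10]{higheralgebra} applies verbatim. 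Both are routine once spelled out, so the proof will be short.
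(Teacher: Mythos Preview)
Your proposal is correct and is essentially the paper's own argument: pass to opposites via Lemma~\ref{prop:cosp}, then invoke \cite[Theorem 6.1.1.10]{higheralgebra} (case $n=1$) for $\Exc(\sD^{\op},\sC^{\op}) \hookrightarrow \Fun(\sD^{\op},\sC^{\op})$ using that $\sD^{\op}$ has finite colimits and $\sC^{\op}$ is differentiable, and define $\mathrm{co}P_1 := (P_1)^{\op}$. Your observation that \cite[Theorem 6.1.1.10]{higheralgebra} also requires a final object in the source $\sD^{\op}$ (equivalently an initial object in $\sD$) is a genuine point the paper's proof does not make explicit; in the only application ($\sD = \Spc^{\fin,\op}_*$) this is satisfied, so the gap is harmless, but it is worth recording.
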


\begin{proof} To construct a right adjoint $\mathrm{co}P_1$ it suffices to construct a left adjoint after taking opposites
\[
P_1: \Fun\left(\sD, \sC\right)^{\op} \simeq \Fun\left(\sD^{\op}, \sC^{\op}\right) \rightarrow \coExc\left(\sD, \sC\right)^{\op} \simeq \Exc_*\left(\sD^{\op}, \sC^{\op}\right).
\]
The result is then an immediate consequence of \cite[Theorem 6.1.1.10]{higheralgebra} since $\sD^{\op}$ has finite colimits and $\sC^{\op}$ is differentiable.
\end{proof}

In other words, in the situation of Lemma~\ref{lem:coexc-coloc}, the $\infty$-category $\coExc\left(\sD, \sC\right)$ is a colocalization of $\Fun\left(\sD, \sC\right)$.

\begin{proposition} \label{prop:sigma-infty} 
Suppose that $\sC$ is a small, accessible $\infty$-category with finite limits and colimits, then the functor $\Omega^{\infty}_{\Pro(\sC)}: \Spt \left( \Pro\left( \sC \right) \right) \rightarrow \Pro \left(\sC \right)$ admits a left adjoint 
\[
\Sigma^{\infty}_{\Pro\left( \sC \right)+}:   \Pro \left(\sC \right) \rightarrow  \Spt \left( \Pro\left( \sC \right) \right)
\]
\end{proposition}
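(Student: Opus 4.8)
The strategy is to reduce the existence of $\Sigma^\infty_{\Pro(\sC)+}$ to a dual statement about co-spectra, where presentability is available and the adjoint functor theorem applies. Using the equivalence~\eqref{eq:useful}, we have $\Spt(\Pro(\sC))^{\op} \simeq \cSpt(\Ind(\sC^{\op}))$, and under this equivalence $\Omega^\infty_{\Pro(\sC)}$ corresponds to the opposite of the tautological ``evaluation at $S^0$'' functor $\Sigma^0_{\Ind(\sC^{\op})}: \cSpt(\Ind(\sC^{\op})) \to \Ind(\sC^{\op})$. So it suffices to produce a \emph{right} adjoint to this latter functor, and then pass to opposites.

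\textbf{Key steps.} First I would record that $\Ind(\sC^{\op})$ is presentable: this is \cite[Theorem 5.5.1.1]{htt} together with the hypothesis that $\sC$ is small and accessible (so $\sC^{\op}$ is essentially small with finite colimits, whence $\Ind(\sC^{\op})$ is presentable). Second, I would invoke Lemma~\ref{lem:coexc-pres} (referenced in the text as the statement that co-spectra of $\Ind(\sC^{\op})$ is presentable whenever $\sC$ is small) to conclude $\cSpt(\Ind(\sC^{\op}))$ is presentable. Third, I would check that the functor $\Sigma^0_{\Ind(\sC^{\op})}: \cSpt(\Ind(\sC^{\op})) \to \Ind(\sC^{\op})$ preserves all small colimits: it is evaluation at the object $S^0 \in \Spc^{\fin,\op}_*$, and colimits in co-spectra are computed pointwise inside $\Fun(\Spc^{\fin,\op}_*, \Ind(\sC^{\op}))$ by Proposition~\ref{prop:list}(2), so evaluation at any object preserves them. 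Since both source and target are presentable and the functor is colimit-preserving, the adjoint functor theorem \cite[Corollary 5.5.2.9]{htt} gives a right adjoint $G: \Ind(\sC^{\op}) \to \cSpt(\Ind(\sC^{\op}))$. Fourth, I would transport this adjunction across opposites: $G^{\op}: \Ind(\sC^{\op})^{\op} \to \cSpt(\Ind(\sC^{\op}))^{\op}$ is a \emph{left} adjoint, and under the identifications $\Ind(\sC^{\op})^{\op} \simeq \Pro(\sC)$ and $\cSpt(\Ind(\sC^{\op}))^{\op} \simeq \Spt(\Pro(\sC))$ it becomes the desired left adjoint $\Sigma^\infty_{\Pro(\sC)+}$ to $\Omega^\infty_{\Pro(\sC)}$. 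Finally I would note that $\Omega^\infty_{\Pro(\sC)}$ is indeed well-defined here: $\Pro(\sC)$ has finite limits by Proposition~\ref{prop:finlim}, its terminal object is corepresented by the terminal object of $\sC$ (which exists since $\sC$ is accessible with finite limits), so the stabilization machinery of \cite[Section 1.4.2]{higheralgebra} applies.

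\textbf{Main obstacle.} The genuinely load-bearing input is Lemma~\ref{lem:coexc-pres}, the presentability of $\cSpt(\Ind(\sC^{\op}))$ — this is precisely the step the authors thank Marc Hoyois for. The point is that $\cSpt(\Ind(\sC^{\op})) = \coExc_\emptyset(\Spc^{\fin,\op}_*, \Ind(\sC^{\op}))$ sits inside the presentable functor category $\Fun(\Spc^{\fin,\op}_*, \Ind(\sC^{\op}))$ as an accessible localization: by Lemma~\ref{lem:coexc-coloc} (applied with $\sC \rightsquigarrow \Ind(\sC^{\op})$, whose opposite $\Ind(\sC^{\op})^{\op} \simeq \Pro(\sC)^{?}$ one must verify is differentiable — it admits finite limits, $\ZZ_{\geq 0}$-indexed colimits, and the colimit functor is left exact because filtered colimits commute with finite limits in $\Ind$-categories after dualizing) the inclusion of coexcisive functors is a \emph{co}localization, and imposing coreducedness cuts out a further accessible subcategory. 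Granting that lemma, everything else is a routine assembly of the adjoint functor theorem with the opposite-category dictionary of Proposition~\ref{prop:cosp}, and I do not anticipate further difficulty. I will therefore present the argument assuming Lemma~\ref{lem:coexc-pres}, which is the natural companion statement that the surrounding text has set up.

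\begin{proof}
By Proposition~\ref{prop:finlim} the $\infty$-category $\Pro(\sC)$ has finite limits, and its terminal object is corepresented by the terminal object of $\sC$; hence $\Omega^\infty_{\Pro(\sC)}: \Spt(\Pro(\sC)) \to \Pro(\sC)$ is defined via the stabilization formalism of \cite[Section 1.4.2]{higheralgebra}. By the equivalence~\eqref{eq:useful} we have a canonical identification
\[
\Spt(\Pro(\sC))^{\op} \simeq \cSpt(\Ind(\sC^{\op})),
\]
and unwinding the definitions the functor $\Omega^\infty_{\Pro(\sC)}$ corresponds, under this identification, to the opposite of the functor
\[
\mathrm{ev}: \cSpt(\Ind(\sC^{\op})) \hookrightarrow \Fun(\Spc^{\fin,\op}_*, \Ind(\sC^{\op})) \xrightarrow{\ \mathrm{ev}_{S^0}\ } \Ind(\sC^{\op})
\]
given by evaluation at $S^0 \in \Spc^{\fin,\op}_*$.

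Since $\sC$ is small and accessible, $\sC^{\op}$ is essentially small and has finite colimits, so $\Ind(\sC^{\op})$ is presentable by \cite[Theorem 5.5.1.1]{htt}. By Lemma~\ref{lem:coexc-pres}, the $\infty$-category $\cSpt(\Ind(\sC^{\op}))$ is presentable as well. The functor $\mathrm{ev}$ preserves small colimits: by Proposition~\ref{prop:list}(2) the inclusion $\cSpt(\Ind(\sC^{\op})) \hookrightarrow \Fun(\Spc^{\fin,\op}_*, \Ind(\sC^{\op}))$ preserves colimits, and colimits in the functor category are computed pointwise, so evaluation at $S^0$ preserves them. By the adjoint functor theorem \cite[Corollary 5.5.2.9]{htt}, $\mathrm{ev}$ admits a right adjoint
\[
G: \Ind(\sC^{\op}) \to \cSpt(\Ind(\sC^{\op})).
\]

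Passing to opposite $\infty$-categories, $G^{\op}$ is left adjoint to $\mathrm{ev}^{\op}$. Under the identifications $\Ind(\sC^{\op})^{\op} \simeq \Pro(\sC)$ and $\cSpt(\Ind(\sC^{\op}))^{\op} \simeq \Spt(\Pro(\sC))$, the functor $\mathrm{ev}^{\op}$ is $\Omega^\infty_{\Pro(\sC)}$, so $G^{\op}$ furnishes the desired left adjoint
\[
\Sigma^\infty_{\Pro(\sC)+}: \Pro(\sC) \to \Spt(\Pro(\sC)). \qedhere
\]
\end{proof}
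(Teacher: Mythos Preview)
Your proof is correct and follows the same overall strategy as the paper: dualize to $\Ind(\sC^{\op})$ via~\eqref{eq:useful}, use presentability and the adjoint functor theorem to obtain a right adjoint to evaluation at $S^0$, then pass back to opposites. The only minor difference is in packaging: the paper first applies the adjoint functor theorem to evaluation on the \emph{full} functor category $\Fun(\Spc^{\fin,\op}_*,\Ind(\sC^{\op}))$ (which is presentable by \cite[Proposition 5.5.3.6]{htt}) and then post-composes with the colocalization $\mathrm{co}P_1$ of Lemma~\ref{lem:coexc-coloc} to land in coexcisive functors, whereas you invoke Lemma~\ref{lem:coexc-pres} to know $\cSpt(\Ind(\sC^{\op}))$ is presentable outright and apply the adjoint functor theorem there directly. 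Your route is slightly cleaner; the paper's route gives an explicit formula for the adjoint as $\mathrm{co}P_1 \circ \mathrm{co}\Sigma^{\infty'}_+$ and avoids forward-referencing Lemma~\ref{lem:coexc-pres}.
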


\begin{proof} We first remark that the functor $\Omega^{\infty}: \Spt \left( \Pro\left( \sC \right) \right) \rightarrow \Pro \left(\sC \right)$ is computed by the functor of evaluation at $S_0$:
\[
\mathrm{ev}_{S^0}: \Exc\left(\Spc^{\fin}_*, \Pro\left(\sC \right)\right) \rightarrow \Pro\left(\sC \right); F \mapsto F(S^0).
\]

To construct the left adjoint $\Sigma^{\infty}_{\Pro\left( \sC \right)+}$, it suffices to construct a right adjoint to $\left(\mathrm{ev}_{S^0}\right)^{\op}$ after taking opposites and applying~\eqref{eq:useful}. Hence our goal is to construct a functor
\[
\mathrm{co}\Sigma^{\infty}_{+}:\Ind\left(\sC^{\op}\right) \rightarrow \coExc\left( \Spc^{\fin,\op}_*, \Ind\left(\sC^{\op}\right)\right).
\]
First, observe that the evaluation functor $\left(\mathrm{ev}_{S^0}\right)^{\op}: \Fun\left( \Spc^{\fin,\op}_*, \Ind\left(\sC^{\op}\right)\right) \rightarrow \Ind\left(\sC^{\op}\right)$ preserves small colimits since colimits are computed pointwise in functor categories. The $\infty$-category $\Ind\left(\sC^{\op}\right)$ is presentable by definition and $\Fun\left( \Spc^{\fin,\op}_*, \Ind\left(\sC^{\op}\right)\right)$ is presentable by \cite[Proposition 5.5.3.6]{htt} and hence the adjoint functor theorem \cite[Corollary 5.5.2.9]{htt} applies to give us a right adjoint: $\mathrm{co}\Sigma^{\infty'}_{+}: \Ind\left(\sC^{\op}\right) \rightarrow \Fun\left( \Spc^{\fin,\op}_*, \Ind\left(\sC^{\op}\right)\right)$. Now the $\infty$-category $\Ind\left(\sC^{\op} \right)$ is differentiable by \cite[Remark 6.1.1.9]{higheralgebra}, hence Lemma~\ref{lem:coexc-coloc} gives us a colocalization functor $\mathrm{co}P_1$. The desired right adjoint is then the composite $\mathrm{co}P_1 \circ \mathrm{co}\Sigma^{\infty'}_{+}$.

\end{proof}

%

\begin{lemma} \label{lem:coexc-pres} Let $\sC$ be a presentable $\infty$-category and $\sD$ a small pointed $\infty$-category with finite colimits. Then the $\infty$-categories $\coExc\left(\sD, \sC\right)$ and $\coExc_{\emptyset}\left(\sD, \sC \right)$ are presentable.
\end{lemma}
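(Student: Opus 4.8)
The plan is to exhibit $\coExc(\sD,\sC)$ and $\coExc_\emptyset(\sD,\sC)$ as full subcategories of the presentable $\infty$-category $\Fun(\sD,\sC)$ which are both accessible and closed under all small colimits, and then invoke the fact that an $\infty$-category is presentable precisely when it is accessible and admits all small colimits \cite[Theorem 5.5.1.1]{htt}. That $\Fun(\sD,\sC)$ is presentable is \cite[Proposition 5.5.3.6]{htt}, using that $\sD$ is small and $\sC$ presentable. Closure of both subcategories under all small colimits is Proposition~\ref{prop:list}(2), applied with $K$ ranging over all small simplicial sets: this is legitimate because $\sC$, being presentable, admits all small colimits, and $\sD$, being pointed, has an initial object; moreover that proposition also records that the inclusions into $\Fun(\sD,\sC)$ preserve these colimits. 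So the entire content of the lemma is accessibility.

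To prove accessibility I would argue as follows. Write $Q=\Delta^1\times\Delta^1$ and let $\Lambda\subset Q$ be the full subcategory on the objects other than the terminal one, so that a functor $G\colon Q\to\sC$ is a pushout square exactly when it lies in the essential image $\mathrm{P}(\sC)\subseteq\Fun(Q,\sC)$ of the fully faithful left Kan extension functor $\Fun(\Lambda,\sC)\hookrightarrow\Fun(Q,\sC)$ \cite[\S4.3.2]{htt}. Then $\mathrm{P}(\sC)\simeq\Fun(\Lambda,\sC)$ is presentable, and it is closed under colimits in $\Fun(Q,\sC)$ since colimits commute with colimits; hence $\mathrm{P}(\sC)$ is an accessibly embedded accessible subcategory. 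Since $\sD$ is small, the Cartesian squares in $\sD$ form an essentially small family; choose a small set $T$ of representatives $\sigma\colon Q\to\sD$. Each restriction functor $\sigma^{*}\colon\Fun(\sD,\sC)\to\Fun(Q,\sC)$ preserves all limits and colimits, hence is accessible, and by definition $\coExc(\sD,\sC)=\bigcap_{\sigma\in T}(\sigma^{*})^{-1}\mathrm{P}(\sC)$. Preimages of accessibly embedded accessible subcategories under accessible functors are again of this type \cite[Proposition 5.4.6.6]{htt}, and a small intersection of accessibly embedded accessible subcategories of a fixed $\infty$-category is accessible \cite[Proposition 5.4.7.10]{htt}; this yields accessibility of $\coExc(\sD,\sC)$. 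For $\coExc_\emptyset(\sD,\sC)$ one intersects in addition with $\mathrm{ev}_{0}^{-1}(\sC_{\emptyset})$, where $0\in\sD$ is the zero object, $\mathrm{ev}_{0}\colon\Fun(\sD,\sC)\to\sC$ is evaluation (accessible, being colimit preserving), and $\sC_{\emptyset}\subseteq\sC$ is the full subcategory on the initial objects; here $\sC_{\emptyset}\simeq\ast$ is accessible and is accessibly embedded, so the same two citations apply again.

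I do not expect a genuine obstacle, only two bookkeeping points to be careful about. The first is verifying that $\mathrm{P}(\sC)$ is accessibly embedded in $\Fun(Q,\sC)$, which is exactly where one uses that left Kan extension along $\Lambda\hookrightarrow Q$ is fully faithful with the pushout squares as essential image and preserves colimits. The second is the accessible embeddedness of $\sC_{\emptyset}\hookrightarrow\sC$, which reduces to the elementary observation that, for $I$ nonempty, $\map(\operatorname{colim}_{I}\operatorname{const}_{\emptyset_{\sC}},-)\simeq\lim_{I}\ast\simeq\ast$, so the colimit of any diagram valued in initial objects is again initial. It is worth remarking that Lemma~\ref{lem:coexc-coloc} would instead present $\coExc(\sD,\sC)$ as a \emph{colocalization} of $\Fun(\sD,\sC)$, but that lemma requires $\sC^{\op}$ to be differentiable, which presentability of $\sC$ does not supply, so that shortcut is unavailable here and accessibility has to be obtained directly as above; everything else needed has already been recorded earlier in this section.
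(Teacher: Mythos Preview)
Your proof is correct and follows the same structural idea as the paper: both arguments identify the full subcategory $\mathrm{P}(\sC)\subseteq\Fun(\Delta^1\times\Delta^1,\sC)$ of pushout squares with $\Fun(\Lambda^2_0,\sC)$ via left Kan extension, and then describe $\coExc(\sD,\sC)$ as the simultaneous preimage of $\mathrm{P}(\sC)$ under the evaluation functors $\mathrm{ev}_\sigma$ indexed by a small set of Cartesian squares in $\sD$. The difference lies only in how presentability is extracted from this description. The paper packages the preimage as a single pullback square
\[
\coExc(\sD,\sC)\;\simeq\;\Fun(\sD,\sC)\;\times_{\prod_\sigma \Fun(\Delta^1\times\Delta^1,\sC)}\;\prod_\sigma \Fun(\Lambda^2_0,\sC)
\]
and invokes \cite[Proposition 5.5.3.13]{htt} (limits in $\mathfrak{Pr}^L$ are computed in $\Cat_\infty$), using only that the two legs are colimit-preserving between presentable $\infty$-categories. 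You instead verify accessibility and cocompleteness separately, appealing to Proposition~\ref{prop:list}(2) for closure under colimits and to the accessibility toolkit \cite[Propositions 5.4.6.6 and 5.4.7.10]{htt} for the intersection of accessibly embedded accessible subcategories, concluding via \cite[Theorem 5.5.1.1]{htt}. Your route is slightly longer but has the virtue of making the accessibility explicit and of treating the coreduced case $\coExc_\emptyset$ in exactly the same framework (as one further accessible preimage $\mathrm{ev}_0^{-1}(\sC_\emptyset)$), whereas the paper handles that case with an ad hoc sentence. Your closing remark that Lemma~\ref{lem:coexc-coloc} does not shortcut this (since presentability of $\sC$ does not give differentiability of $\sC^{\op}$) is well observed.
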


\begin{proof}Denote by $Q$ the set of all representatives of pullback diagrams in $\sD$. For any $\square \in Q$, we have the functor
\[
\mathrm{ev}_\square: \Fun\left(\sD, \sC\right) \rightarrow \Fun\left( \Delta^1 \times \Delta^1, \sC\right).
\]
By definition,  $\coExc\left(\sD, \sC\right)$ can presented as the following pullback in $\Cat_{\infty}$:

\begin{equation} \label{eq:coexc-as-pull}
\xymatrix{
\coExc\left(\sD, \sC\right) \ar[d] \ar[r] & \prod_{\square \in Q} \Fun\left( \Delta^1 \times \Delta^1, \sC \right) \ar[d]\\
\Fun\left(\sD, \sC \right) \ar[r]^-{\prod_{\mathrm{ev}_{\square}}} &  \prod_{\square \in Q} \Fun\left( \Lambda^2_0, \sC \right).
}
\end{equation}
Now, the $\infty$-categories $\Fun\left( \Delta^1 \times \Delta^1, \sC \right),  \Fun\left( \Lambda^2_0, \sC \right)$, and $\Fun\left(\sD, \sC \right) $ are all presentable by~\cite[Proposition 5.5.3.6]{htt} and the right vertical and bottom horizontal functors clearly preserve colimits since colimits are computed pointwise in functor categories. According to \cite[Proposition 5.5.3.13]{htt}, the $\infty$-category $\mathfrak{Pr}^L$ is closed under all limits and the forgetful functor $\mathfrak{Pr}^L \rightarrow \Cat_{\infty}$ preserves limits. Hence the square~\eqref{eq:coexc-as-pull} is Cartesian in $\mathfrak{Pr}^L$, whence $\coExc\left(\sD, \sC\right)$ is presentable.
Now the $\infty$-category $\Fun_{\emptyset}(\sD, \sC)$ of functors that preserves initial objects is also presentable as it is the full subcategory of functors that preserves the empty colimit and $\Exc_{\emptyset}\left(\sD, \sC \right) = \Exc\left(\sD, \sC \right) \cap \Fun_{\emptyset}(\sD, \sC)$. We thus conclude by applying~\cite[Proposition 5.5.3.13]{htt} again.

\end{proof}

\begin{lemma} \label{lem:stab-pro} Let $\sC$ be a presentable, differentiable $\infty$-category and let $\sD$ be a small $\infty$-category and suppose that we have an adjunction $F:\sC \rightleftarrows  \Pro\left(\sD\right): G$. Then there is an adjunction 
\begin{equation} \label{eq:stab-pro}
\partial F:\Spt \left(  \sC \right) \rightleftarrows \Spt \left(\Pro\left( \sD \right) \right): \partial G,
\end{equation}
fitting into the following commutative diagram
\begin{equation} \label{lefts}
\xymatrix{
\Spt \left(  \sC \right) \ar[r]^-{\partial F}  & \Spt \left(\Pro\left( \sD \right) \right)  \\
\sC \ar[r]^-{F} \ar[u]^{\Sigma^{\infty}_{\sC+}} & \Pro\left( \sD \right)\ar[u]_{\Sigma^{\infty}_{\Pro\left( \sD \right)+}}.
}
\end{equation}

\end{lemma}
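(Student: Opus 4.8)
The plan is to mimic the construction of $\Sigma^{\infty}_{\Pro(\sC)+}$ from Proposition~\ref{prop:sigma-infty}, but now carrying along the ambient adjunction $F \dashv G$. First I would observe that $G: \Pro(\sD) \to \sC$, being a right adjoint, preserves finite limits, and $F$ preserves finite colimits; consequently post-composition with $F$ and $G$ induces an adjunction on functor categories
\[
F_*: \Fun\bigl(\Spc^{\fin}_*, \sC\bigr) \rightleftarrows \Fun\bigl(\Spc^{\fin}_*, \Pro(\sD)\bigr): G_*,
\]
and because $G$ is left exact while $F$ need not be, $G_*$ restricts to a functor $\Spt(\Pro(\sD)) \to \Spt(\sC)$ on the reduced excisive subcategories. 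This restricted functor is our candidate $\partial G$. Its left adjoint $\partial F$ is then obtained by the usual colocalization trick: the restriction of $G_*$ to spectrum objects is the composite of the restriction of $G_*$ along the fully faithful inclusion $\Spt(\Pro(\sD)) \hookrightarrow \Fun(\Spc^{\fin}_*, \Pro(\sD))$ with $G_*$ itself; a left adjoint to the inclusion $\Spt(\sC) \hookrightarrow \Fun(\Spc^{\fin}_*, \sC)$ exists because $\sC$ is differentiable (this is the $n=1$ case of \cite[Theorem 6.1.1.10]{higheralgebra}), and a left adjoint $F_*$ to $G_*$ on functor categories exists because $F$ is a left adjoint. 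Composing the two left adjoints and the relevant inclusion gives $\partial F$.

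The cleaner way to package this, matching the paper's idiom, is to pass to opposites and use Lemma~\ref{prop:cosp} and equation~\eqref{eq:useful}: $\Spt(\Pro(\sD))^{\op} \simeq \cSpt(\Ind(\sD^{\op}))$, and similarly $\Spt(\sC)^{\op}$ identifies with the cospectra of $\sC^{\op}$, which is presentable since $\sC$ is. The adjunction $F \dashv G$ dualizes to an adjunction $G^{\op} \dashv F^{\op}$ between $\Ind(\sD^{\op})$ and $\sC^{\op}$, both presentable, so $\Fun(\Spc^{\fin,\op}_*, -)$ of it is an adjunction of presentable $\infty$-categories, and the coexcisive coreduced subcategory on each side is a localization (dual to Lemma~\ref{lem:coexc-coloc}, using that $\sC^{\op}$ and $\Ind(\sD^{\op})$ are differentiable via \cite[Remark 6.1.1.9]{higheralgebra}). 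Then $\partial G$ is the dual of the composite localization--restriction and $\partial F$ its right adjoint in the cospectrum picture, i.e. the dual of a left adjoint; passing back through $(-)^{\op}$ yields~\eqref{eq:stab-pro}.

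It remains to check the square~\eqref{lefts} commutes. Here I would argue with right adjoints: the right adjoint to $\Sigma^{\infty}_{\Pro(\sD)+}$ is $\Omega^{\infty}_{\Pro(\sD)}$, the right adjoint to $\Sigma^{\infty}_{\sC+}$ is $\Omega^{\infty}_{\sC}$, the right adjoint to $\partial F$ is $\partial G$, and the right adjoint to $F$ is $G$. Commutativity of~\eqref{lefts} is equivalent, by uniqueness of adjoints, to commutativity of the square
\[
\xymatrix{
\Spt(\sC) & \Spt(\Pro(\sD)) \ar[l]_-{\partial G} \\
\sC \ar[u]^{\Omega^{\infty}_{\sC}} & \Pro(\sD) \ar[l]^-{G} \ar[u]_{\Omega^{\infty}_{\Pro(\sD)}},
}
\]
and this is now immediate: $\Omega^{\infty}$ on either side is evaluation at $S^0$, $\partial G$ is $G_*$ restricted to spectrum objects, and $\mathrm{ev}_{S^0} \circ G_* = G \circ \mathrm{ev}_{S^0}$ because post-composition commutes with evaluation. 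The main obstacle is bookkeeping: one must be careful that $G_*$ genuinely lands in $\Spt(\sC)$ (this uses left-exactness of $G$, hence that $G$ is a right adjoint, not merely the data of an abstract adjunction), and that the colocalization/localization functors used to define $\partial F$ in the cospectrum picture are compatible with the inclusions so that the resulting triangle of left adjoints in~\eqref{lefts} strictly commutes rather than merely up to a non-invertible transformation; both points are handled by the opposite-category reformulation, where everything is a localization of presentable $\infty$-categories and \cite[Proposition 5.5.4.20]{htt}-type rigidity applies.
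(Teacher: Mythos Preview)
Your approach is essentially the paper's: define $\partial G$ by restricting $G_*$ to reduced excisive functors, pass to opposite categories and cospectra so that the domain $\cSpt(\Ind(\sD^{\op}))$ is presentable by Lemma~\ref{lem:coexc-pres}, and invoke the adjoint functor theorem to produce $\partial F$. The commutativity argument via the right-adjoint square is also what the paper does.

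That said, two points need correction. First, your claim that $\sC^{\op}$ is presentable (and differentiable via \cite[Remark 6.1.1.9]{higheralgebra}) is false in general: opposites of presentable $\infty$-categories are almost never presentable, and that remark concerns compactly generated $\infty$-categories, which $\sC^{\op}$ is not. Fortunately the argument does not need this. The paper only uses presentability of the \emph{domain} $\cSpt(\Ind(\sD^{\op}))$ of the colimit-preserving functor $\partial G^{\op}$; the adjoint functor theorem (in the form of \cite[Corollary 5.5.2.9, Remark 5.5.2.10]{htt}) then supplies a right adjoint without any presentability hypothesis on the codomain $\cSpt(\sC^{\op})$. Your first paragraph's ``colocalization trick'' has a related confusion: the left adjoint you need is to the inclusion $\Spt(\Pro(\sD)) \hookrightarrow \Fun(\Spc^{\fin}_*, \Pro(\sD))$, which would require $\Pro(\sD)$ (not $\sC$) to be differentiable; this is why the opposite-category maneuver is genuinely necessary rather than just cleaner.

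Second, in your right-adjoint square the arrows labeled $\Omega^{\infty}$ point the wrong way: $\Omega^{\infty}$ goes from spectrum objects to the underlying category (it is evaluation at $S^0$), so those arrows should point downward. Your verbal argument ``$\mathrm{ev}_{S^0} \circ G_* = G \circ \mathrm{ev}_{S^0}$'' is correct and makes clear you know this; just fix the diagram.
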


\begin{proof} Since $G$ is a right adjoint, the induced functor $G_*: \Fun\left(\Spc^{\fin}_*, \Pro\left( \sD\right) \right)  \rightarrow  \Fun\left(\Spc^{\fin}_*, \sC \right) $ factors through the subcategory of reduced, excisive functors, whence there is a limit-preserving functor $ \partial G: \Spt \left(\Pro\left( \sD \right) \right) \rightarrow \Spt \left(  \sC \right)$ which fits into a commutative diagram
\begin{equation} \label{eq:big-diagram}
\xymatrix{
 \Spt\left(\sC\right) \ar[d] & &  \Spt\left(\Pro \left( \sD \right)\right) \ar[d] \ar[ll]_-{\partial G }\\
 \Fun\left(\Spc^{\fin}_*, \sC \right)  \ar[d]_{\Omega_{\sC}^{\infty}} & & \Fun\left(\Spc^{\fin}_*, \Pro \left( \sD \right) \right)  \ar[d]^{\Omega^{\infty}_{\Pro \left( \sD \right)} }  \ar[ll]_-{ G_*}\\
\sC  & &  \Pro\left( \sD \right) \ar[ll].}
\end{equation}
By uniqueness of adjoints, it suffices to produce a left adjoint to $\partial G $, from which the commutativity of~\eqref{lefts} is automatic. Taking opposites of the top diagram, we get a commutative diagram of colimit preserving functors
\[
\xymatrix{
 \cSpt\left(\sC^{\op}\right) \ar[d] & &  \cSpt\left(\Ind \left( \sD^{\op} \right)\right) \ar[d] \ar[ll]_-{\partial  G^{\op}}\\
 \Fun\left(\Spc^{\fin,\op}_*, \sC^{\op} \right)  & & \Fun\left(\Spc^{\fin,\op}_*, \Ind \left( \sD^{\op} \right) \right)   \ar[ll]_-{ G_*}\\
}
\]
According to Lemma~\ref{lem:coexc-pres}, $\cSpt\left(\Ind \left( \sD^{\op} \right)\right)$ is presentable. By the adjoint functor theorem \cite[Corollary 5.5.2.9]{htt} and \cite[Remark 5.5.2.10]{htt}, we thus obtain right adjoints to $G_*$ and $\partial  G^{\op}$  whence a diagram of right adjoints (the vertical are the colocalizaton functors furnished by Lemma~\ref{lem:coexc-coloc})
\[
\xymatrix{
 \cSpt\left(\sC^{\op}\right)  \ar[rr]^{\partial F^{op}}& &  \cSpt\left(\Ind \left( \sD^{\op} \right)\right)  \\
 \Fun\left(\Spc^{\fin,\op}_*, \sC^{\op} \right)  \ar[u]  \ar[rr]^{F^{\op}} & & \Fun\left(\Spc^{\fin,\op}_*, \Ind \left( \sD^{\op} \right) \right).   \ar[u]\\
}
\]
The functor $\partial F:= \left(\partial F^{\op}\right)^{\op}$ is the desired left adjoint.

\end{proof}

\begin{proposition} \label{prop:stab-pro} Let $\sC$ be a presentable and differentiable $\infty$-category and $f:\sD \hookrightarrow \sC$ be a full subcategory which is accessible and closed under finite limits and colimits. Then there is an adjunction 
\begin{equation} \label{eq:stab-pro}
\partial f^*:\Spt \left(  \sC \right) \rightleftarrows \Spt \left(\Pro\left( \sD \right) \right): \partial \left( \Pro \left( f\right) \right)
\end{equation}
fitting into the following commutative diagram
\begin{equation} \label{lefts}
\xymatrix{
\Spt \left(  \sC \right) \ar[r]^-{\partial f^*}  & \Spt \left(\Pro\left( \sD \right) \right)  \\
\sC \ar[r]^-{f^*} \ar[u]^{\Sigma^{\infty}_{\sC+}} & \Pro\left( \sD \right)\ar[u]_{\Sigma^{\infty}_{\Pro\left( \sD \right)+}}.
}
\end{equation}
\begin{proof} This is an immediate consequence of Lemma~\ref{lem:stab-pro}, the adjunction obtained in Proposition~\ref{prop:ladj} and the fact that $\Pro\left( \sD \right) \stackrel{\Pro\left(f \right)}{\longlongrightarrow} \Pro\left( \sC \right) \stackrel{\lim}{\longrightarrow} \sC$ preserves limits.

\end{proof}

\end{proposition}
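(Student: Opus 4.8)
The plan is to deduce this formally from Lemma~\ref{lem:stab-pro}, whose only input is an adjunction between a presentable differentiable $\infty$-category and a pro-category; so the real task is to produce such an adjunction between $\sC$ and $\Pro(\sD)$. First I would note that, since $\sD$ is closed under finite limits in $\sC$, the inclusion $f:\sD\hookrightarrow\sC$ preserves finite limits, and both $\sD$ and $\sC$ are accessible with finite limits ($\sC$ being presentable). Hence Proposition~\ref{prop:ladj} applies and gives the adjunction
\[
f^*:\Pro(\sC)\rightleftarrows\Pro(\sD):\Pro(f).
\]
Next, because $\sC$ is presentable it is complete, and by Proposition~\ref{prop:finlim} the canonical functor $j:\sC\hookrightarrow\Pro(\sC)$ preserves finite limits while $\Pro(\sC)$ admits all limits; from the mapping space formula~\eqref{eq:maps} one reads off immediately that $j$ has a right adjoint $\lim:\Pro(\sC)\to\sC$, sending a pro-object to its genuine cofiltered limit in $\sC$. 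Composing, $F:=f^*\circ j:\sC\to\Pro(\sD)$ is a composite of left adjoints, hence a left adjoint, with right adjoint $G:=\lim\circ\Pro(f):\Pro(\sD)\to\sC$; here the observation that $\Pro(\sD)\stackrel{\Pro(f)}{\longrightarrow}\Pro(\sC)\stackrel{\lim}{\longrightarrow}\sC$ preserves limits (being a right adjoint) is exactly what identifies $G$ as the right adjoint of $F$.

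With the adjunction $F\dashv G$ in hand, the hypotheses of Lemma~\ref{lem:stab-pro} are satisfied — $\sC$ is presentable and differentiable by assumption — and the lemma produces the adjunction $\partial F:\Spt(\sC)\rightleftarrows\Spt(\Pro(\sD)):\partial G$ together with the square relating it to $\Sigma^\infty_{\sC+}$ and $\Sigma^\infty_{\Pro(\sD)+}$. Following the notational abuse already used in the statement — writing $f^*$ for $F=f^*\circ j$ and $\Pro(f)$ for $G=\lim\circ\Pro(f)$ — this is precisely the adjunction~\eqref{eq:stab-pro} and the commutative diagram~\eqref{lefts}, and commutativity is inherited from Lemma~\ref{lem:stab-pro} by uniqueness of adjoints, so nothing further needs checking.

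The one point I would be careful about is universe bookkeeping: Lemma~\ref{lem:stab-pro} and the presentability statement Lemma~\ref{lem:coexc-pres} that feeds it are phrased for a \emph{small} $\sD$, whereas here $\sD$ is only assumed accessible (e.g.\ $\Spc_{S,\bA^1}\subset\Spc$). I would handle this exactly as in the proof of Proposition~\ref{prop:limcolim}: fix Grothendieck universes $\cU\in\cV$ with $\sD$ locally $\cV$-small, carry out the whole argument with $\Pro_\cV$ and $\cV$-small spaces throughout, and then observe that the functors so obtained restrict to the $\cU$-small subcategories since the limits and colimits involved are all computed pointwise. Modulo this bookkeeping the proof is entirely formal, so I do not expect any genuine obstacle beyond verifying that each of the three cited inputs is invoked with its hypotheses met.
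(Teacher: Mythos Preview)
Your proposal is correct and follows essentially the same route as the paper: you produce the adjunction $F\dashv G$ with $F=f^*\circ j$ and $G=\lim\circ\Pro(f)$ using Proposition~\ref{prop:ladj} together with the observation that $\Pro(\sD)\xrightarrow{\Pro(f)}\Pro(\sC)\xrightarrow{\lim}\sC$ preserves limits, and then feed this into Lemma~\ref{lem:stab-pro}. The only difference is that you spell out the composition with $j\dashv\lim$ explicitly and flag the universe issue, whereas the paper leaves both implicit.
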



As the sum of our effort, we construct various ``profinite completions" on the stable level.

\begin{definition} \label{def:stables} We define the following functors of $\infty$-categories via the left adjoint furnished by Proposition~\ref{prop:stab-pro}
\begin{enumerate}
\item The functor of \emph{stable profinite completion} is the functor
\[\widehat{\left(\blank\right)}:= \partial f^*:\Spt \rightarrow  \Prof\left( \Spt \right) ,
\] for $f: \Spc^{\pi} \hookrightarrow \Spc$.
 \item The functor of \emph{stable $p$-profinite completion} is the functor
\[\widehat{\left(\blank\right)}_p:= \partial f^*:\Spt \rightarrow \Prof_p\left( \Spt \right) ,
\] for $f: \Spc^{p-\pi}\hookrightarrow \Spc$.
\item Let $S$ be a scheme, the functor of \emph{stable absolute $\bA^1$-completion} is the functor
\[
\widehat{\left(\blank\right)}_{S,\bA^1}:= \partial f^*:\Spt \rightarrow \Prof_{S,\bA^1}\left( \Spt \right),
\]
for $f: \Spc_{S,\bA^1} \hookrightarrow \Spc$.
\item Let $S$ be a scheme, the functor of \emph{stable relative $\bA^1$-completion} is the functor
\[\widehat{\left(\blank\right)}_{S, \bA^1, \et}:= \partial f^*:\Spt\left( \Shv\left(S_{\et} \right) \right) \rightarrow \Prof_{S,\bA^1,\et}\left(\Spt \right),
\] for $f:  \left( \Pro\left( \Spc_{S,\bA^1} \right) \right) \hookrightarrow \Shv\left(S_{\et} \right)$.

 \end{enumerate}
\end{definition}

Finally, we discuss the relationship between taking $\Pro$ and $\Spt$. Let $\sC$ be an accessible $\infty$-category with finite limits. Since the functor $j:\sC \hookrightarrow \Pro \left( \sC \right)$ preserves finite limits by Proposition~\ref{prop:finlim}, we obtain a functor $\partial j: \Spt(\sC) \rightarrow \Spt\left(\Pro \left( \sC \right)\right)$ such that the following diagram commutes
\[
\xymatrix{
 \Spt\left(\sC\right) \ar[r]^-{\partial j} \ar[d]_{\Omega_{\sC}^{\infty}} & \Spt\left(\Pro \left( \sC \right)\right) \ar[d]^{\Omega^{\infty}_{\Pro \left( \sC \right)}} \\
\sC \ar[r]^-{j} & \Pro\left( \sC \right).}
\]
By the universal property of $\Pro$, there exists a cofiltered limit-preserving extension of $\partial j$: 
\[
\Pro\left(\partial j\right): \Pro\left(\Spt\left(\sC\right)\right) \rightarrow \Spt\left(\Pro \left( \sC \right)\right).
\]
In other words, we can always bootstrap the completion functors of Definition~\ref{def:stables} to a functor out of pro-spectra.

\subsubsection{Relative \'{e}tale realizations of presheaves of spectra}

Suppose that $S$ is a base scheme, denote by $\PShv_{\Spt} \left(\Sch^{\ft}_S \right):= \Fun(\Sch^{\ft}_S, \Spt)$ the $\infty$-category of presheaves of spectra on finite type $S$-schemes. We have the relative \'{e}tale realization functor of Definition~\ref{defn:rel-et-type} $$\Pi^{S,\et}_{\i}:\Shv_{\et}\left( \Sch^{\ft}_S \right) \to \Pro\left(\Shv\left(S_{\et}\right)\right).$$ 

We denote the right adjoint from Proposition~\ref{prop:right-adjoint} by $R:  \Pro\left(\Shv\left(S_{\et}\right)\right) \rightarrow \Shv_{\et}\left( \Sch^{\ft}_S \right)$

\begin{proposition} \label{prop:stab-et-real} There is a canonical adjunction
\begin{equation} \label{eq:stab-adj}
\Pi^{S,\et}_{\i,\Spt}: \Shv_{\et,\Spt}\left( \Sch^{\ft}_S \right)  \rightleftarrows \Spt\left(\Pro\left(\Shv\left(S_{\et}\right)\right) \right): \partial R,
\end{equation}
such that the following diagram commutes
\begin{equation} \label{eq:susp-pi}
\xymatrix@C=2.5cm{
\Shv_{\et,\Spt}\left( \Sch^{\ft}_S \right)  \ar[r]^-{\Pi^{S,\et}_{\i,\Spt}}  &  \Spt\left( \Pro\left(\Shv\left(S_{\et}\right)\right) \right) \\
\Shv_{\et}\left( \Sch^{\ft}_S \right) \ar[r]^-{\Pi^{S,\et}_{\i,\Spt}} \ar[u]^{\Sigma^{\infty}_{+}} & \Pro\left(\Shv\left(S_{\et}\right)\right)\ar[u]_{\Sigma^{\infty}_{+}}.
}
\end{equation}
\end{proposition}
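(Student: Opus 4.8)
The plan is to reduce this statement directly to the general stabilization machinery developed in the preceding subsection, specifically to Lemma~\ref{lem:stab-pro}. Recall that by Corollary~\ref{cor:shapedesc} and Definition~\ref{defn:rel-et-type}, the relative \'etale homotopy type functor $\Pi^{S,\et}_{\i}$ is colimit preserving, and by Proposition~\ref{prop:right-adjoint} it admits a right adjoint $R$ which is the composite $\Pro(\rho_!)$ followed by $\lim$. Thus we are in the situation of an adjunction
\[
\Pi^{S,\et}_{\i}: \Shv_{\et}\left( \Sch^{\ft}_S \right)  \rightleftarrows \Pro\left(\Shv\left(S_{\et}\right)\right): R.
\]
The source $\Shv_{\et}\left( \Sch^{\ft}_S \right)$ is an $\infty$-topos, hence presentable; to invoke Lemma~\ref{lem:stab-pro} I need it to be differentiable. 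But sifted colimits (in particular $\mathbb{Z}_{\geq 0}$-indexed colimits) commute with finite limits in any $\infty$-topos, so any $\infty$-topos is differentiable. Hence the hypotheses of Lemma~\ref{lem:stab-pro} are satisfied with $\sC = \Shv_{\et}\left( \Sch^{\ft}_S \right)$ and $\sD = \Shv\left(S_{\et}\right)$.

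Concretely, first I would verify that $\Shv_{\et}\left( \Sch^{\ft}_S \right)$ is presentable and differentiable, citing \cite[Remark 6.1.1.9]{higheralgebra} or the fact that filtered colimits are left exact in a presentable $\infty$-topos. Next I would apply Lemma~\ref{lem:stab-pro} to the adjunction $\Pi^{S,\et}_{\i} \dashv R$, obtaining an adjunction
\[
\partial \Pi^{S,\et}_{\i}: \Spt\left(\Shv_{\et}\left( \Sch^{\ft}_S \right)\right) \rightleftarrows \Spt\left(\Pro\left(\Shv\left(S_{\et}\right)\right)\right): \partial R,
\]
together with the commuting square~\eqref{lefts} relating it to the unstable adjunction via the $\Sigma^{\infty}_+$ functors. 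Then I would invoke the identification, recorded in subsection~\ref{subsec:a1} (see the discussion following~\eqref{eq:stab-mot}), that $\Spt\left(\Shv_{\et}\left(\Sch^{\ft}_S\right)\right) \simeq \Shv_{\et,\Spt}\left(\Sch^{\ft}_S\right)$, the stable $\infty$-category of \'etale sheaves of spectra; this is the case of the general statement that $\Spt(\Shv_{\tau}(\sC)) \simeq \Shv_{\tau,\Spt}(\sC)$ applied to the big \'etale site. Setting $\Pi^{S,\et}_{\i,\Spt} := \partial \Pi^{S,\et}_{\i}$ under this identification yields the adjunction~\eqref{eq:stab-adj}, and the commuting square~\eqref{eq:susp-pi} is precisely the square~\eqref{lefts} furnished by Lemma~\ref{lem:stab-pro}.

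The only genuine subtlety — and hence the step I would treat most carefully — is confirming that the right adjoint appearing in Lemma~\ref{lem:stab-pro} really is $\partial R$ for the specific functor $R = \lim \circ \Pro(\rho_!)$ of Proposition~\ref{prop:right-adjoint}, i.e. that the abstract construction of $\partial G$ in the proof of Lemma~\ref{lem:stab-pro} (via $G_*$ on functor categories followed by restriction to reduced excisive functors) agrees with applying the $\Spt(-)$ functor to $R$ regarded as a finite-limit-preserving functor. This amounts to observing that $R$ preserves finite limits (it is a composite of $\Pro(\rho_!)$, which preserves limits by functoriality of $\Pro$, and $\lim: \Pro(\Shv_{\et}(\Sch^{\ft}_S)) \to \Shv_{\et}(\Sch^{\ft}_S)$, which preserves all limits), so that $\partial R$ is well-defined and is indeed the right adjoint produced by the lemma; uniqueness of adjoints then closes the argument. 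Everything else is a direct citation of the machinery already in place, so I expect no further obstacles.
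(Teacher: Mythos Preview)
Your proposal is correct and follows essentially the same approach as the paper: the paper's proof is a one-line invocation of Lemma~\ref{lem:stab-pro} applied to the adjunction $\Pi^{S,\et}_{\i} \dashv R$ from Proposition~\ref{prop:right-adjoint}, and you have simply spelled out the verification of the hypotheses (presentability and differentiability of the $\infty$-topos $\Shv_{\et}(\Sch^{\ft}_S)$, and the identification $\Spt(\Shv_{\et}(\Sch^{\ft}_S)) \simeq \Shv_{\et,\Spt}(\Sch^{\ft}_S)$) that the paper leaves implicit.
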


\begin{proof}  This is an immediate consequence of Lemma~\ref{lem:stab-pro} and the adjunction from Proposition~\ref{prop:right-adjoint}.
\end{proof}

\begin{definition} \label{defn:stab-et-real} The \emph{stable relative \'{e}tale realization functor} is the left adjoint of~\eqref{eq:stab-adj} 
\[\Pi^{S,\et}_{\i,\Spt}: \Shv_{\et,\Spt}\left( \Sch^{\ft}_S \right)  \rightarrow  \Spt \left( \Pro\left( \Shv\left(S_{\et}\right)\right) \right).
\]
\end{definition}

We will also have occasion to consider the stable realization of presheaves (res. Nisnevich sheaves) of spectra. We note that the sheafification functor $$\LL_{\et}: \PShv\left(\Sch^{\ft}\right) \rightarrow \Shv_{\et}\left(\Sch^{\ft}\right)$$ (resp. $\LL_{\et}:\Shv_{\Nis}\left( \Sch^{\ft} \right) \rightarrow \Shv_{\et}\left(\Sch^{\ft}\right)$)  stabilizes to a functor $$\LL_{\et}:  \PShv_{\Spt}(\Sch^{\ft}) \rightarrow \Shv_{\et,\Spt}\left( \Sch^{\ft}_S \right)$$ (resp. $\LL_{\et}:\Shv_{\Nis,\Spt}\left( \Sch^{\ft} \right) \rightarrow \Shv_{\et,\Spt}\left( \Sch^{\ft}_S \right)$). If $E$ is a presheaf of spectra (resp. Nisnevich sheaf of spectra) on $\Sch^{\ft}_S$, then the stable relative \'{e}tale realization of $E$ is understood to be $\Pi^{S,\et}_{\i,\Spt}\left(\LL_{\et}E\right)$, which we abusively write as  $\Pi^{S,\et}_{\i,\Spt}\left(E\right)$; the context will always be clear.

\begin{remark} There is another way to construct the stable \'{e}tale realization functor. We have the \emph{spectral Yoneda embedding} \footnote{note that this functor is, however, not an embedding} $y_{\Spt}: \Sch^{\ft}_S \rightarrow\PShv_{\Spt}\left(\Sch_S\right),$  which is just the composite $$\Sch^{\ft}_S \stackrel{y}{\rightarrow}  \Fun\left(\Sch_S, \Spc \right) \stackrel{\Sigma^{\infty}_+}{\longrightarrow} \Spt \left(  \Fun\left(\Sch^{\ft}_S, \Spc \right) \right) \simeq \PShv_{\Spt}\left(\Sch_S\right).$$
We have the relative \'{e}tale realization functor of Definition~\ref{defn:rel-et-type} $$\Pi^{S,\et}_{\i}:\Shv_{\et}\left( \Sch^{\ft}_S \right) \to \Pro\left(\Shv\left(S_{\et}\right)\right),$$ which we can postcompose with the stabilization functor obtained in Proposition~\ref{prop:sigma-infty}, $$\Sigma^{\infty}_+: \Pro \left(\Shv\left(S_{\et}\right)\right) \rightarrow  \Spt \left( \Pro\left( \Shv\left(S_{\et}\right)\right) \right).$$ We can then define the stable \'{e}tale realization functor as the left Kan extension,
$$
\xymatrix{
\Sch^{\ft}_S \ar[rrr]^{\Sigma^{\infty}_+ \circ \Pi^{S,\et}_{\i}}  \ar[dd]_{y_{\Spt}} & &  & \Spt \left( \Pro\left( \Shv\left(S_{\et}\right)\right) \right)\\
 & & &\\
\PShv_{\Spt} \left(\Sch^{\ft}_S \right)\ar@{.>}[uurrr]^{\Pi^{S,\et}_{\i,\Spt}}& & & .\\
}$$
Since the functor of Definition~\ref{defn:stab-et-real} preserves colimits and agrees with the value of the above functor on $\Sch^{\ft}_S$, we see that both functors are equivalent. This approach is closer to the definition of semi-topological $K$-theory as in \cite{blanc}.
\end{remark}

Lastly, we explain the relationship between $\mathcal{E}_{\infty}$-monoids and stabilization. If $\sE$ is an $\infty$-topos, then the usual ``recognition principle" states there is an equivalence of $\infty$-categories between group-like $\mathcal{E}_{\infty}$-monoids and connective spectra \cite[Theorem 5.2.6.15 and Remarks 5.2.6.12]{higheralgebra}. We will not prove this here for pro-objects in an $\infty$-topos. However, we will factor the diagram~\eqref{eq:susp-pi} through the free $\mathcal{E}_{\infty}$-monoids functor. 

Recall that if $\sC$ is an $\infty$-category with finite products and equipped with the Cartesian monoidal structure \cite[Section 2.4.1.1]{higheralgebra}, the $\infty$-category of \emph{commutative monoids} $\CMon\left(\sC^{\times} \right)$ is the full subcategory of $\Fun\left(\Fin_*, \sC^{\times} \right)$ satisfying the Segal condition (see, for example, \cite[Appendix C.1]{Bachmann:2017aa} and \cite[Section 2.4.2]{higheralgebra} for more details) where $\Fin_*$ is the discrete category of pointed finite sets. By Proposition~\ref{prop:finlim}, for any $\infty$-topos $\cE$, the $\infty$-category $\Pro\left(\cE \right)$ also has finite products. In particular, $\Pro\left(\Shv\left(S_{\et}\right)\right)^{\times}$ is a Cartesian symmetric monoidal $\infty$-category.

\begin{lemma} \label{lem:lax} The functor $\Pi^{S,\et}_{\i}: \Shv_{\et}\left( \Sch^{\ft}_S \right)^{\times} \rightarrow \Pro\left(\Shv\left(S_{\et}\right)\right)^{\times}$ is lax monoidal
\end{lemma}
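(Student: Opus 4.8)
\textbf{Proof proposal for Lemma~\ref{lem:lax}.}

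The plan is to exhibit $\Pi^{S,\et}_{\i}$ as a composite of lax monoidal functors, which by functoriality of the lax monoidal structure suffices. Recall that by definition $\Pi^{S,\et}_{\i} = \Shape_S \circ \Shv\!\left(\left(\mspace{3mu}\cdot\mspace{3mu}\right)_{\et}\right)/S$, and that by Theorem~\ref{thm:concrete_relative} it is equivalent to the composite $\Theta$ through $\Topi^{\et}/\Shv_{\et}\left(\Sch^{\ft}_S\right) \to \Topi/\Shv_{\et}\left(\Sch^{\ft}_S\right) \xrightarrow{\tilde\lambda} \Topi/\Shv\left(S_{\et}\right) \xrightarrow{\Shape_S} \Pro\left(\Shv\left(S_{\et}\right)\right)$. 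The starting point is that each of these $\infty$-categories carries a Cartesian symmetric monoidal structure (the product in $\Pro\left(\Shv\left(S_{\et}\right)\right)$ exists by Proposition~\ref{prop:finlim}, and similarly for the slice $\infty$-topos categories), and the fundamental fact I would invoke is that \emph{any} functor preserving finite products is automatically lax (indeed strong) symmetric monoidal for the Cartesian structures; this is \cite[Proposition 2.4.1.7]{higheralgebra} (or can be deduced from \cite[Corollary 2.4.1.9]{higheralgebra}). So the entire lemma reduces to checking that each functor in the composite preserves finite products.

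First I would observe that $\Shv_{\et}\left( \Sch^{\ft}_S \right) \xrightarrow{\sim} \Topi^{\et}/\Shv_{\et}\left(\Sch^{\ft}_S\right)$ is the equivalence $\chi_{\cE}$ of Proposition~\ref{lem:htt6.3.5.10}, hence preserves all limits; the inclusion $\Topi^{\et}/\cE \to \Topi/\cE$ preserves products because products in $\Topi^{\et}/\cE \simeq \cE$ agree with those computed in $\Topi/\cE$ (a product of \'etale geometric morphisms over $\cE$ is again \'etale, via $E \times_{\cE} E' \mapsto \cE/(E\times E')$, using Proposition~\ref{prop:6.3.5.9}). Next, $\tilde\lambda$ is the functor induced by composition with the geometric morphism $\lambda$, i.e.\ on objects it sends $\cF \to \Shv_{\et}\left(\Sch^{\ft}_S\right)$ to $\cF \to \Shv_{\et}\left(\Sch^{\ft}_S\right) \xrightarrow{\lambda} \Shv\left(S_{\et}\right)$; post-composition functors between slice categories preserve all limits that exist, in particular products. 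Finally, for $\Shape_S$: by Proposition~\ref{prop:shape-adj} it is a left adjoint, so one cannot directly conclude it preserves products. Here I would instead argue directly using the explicit formula~\eqref{eq:shape-def}: for a geometric morphism $f : \cF \to \Shv\left(S_{\et}\right)$, $\Shape_S(f)$ is the left exact functor $\Gamma_{\cF} \circ f^* : \Shv\left(S_{\et}\right) \to \Spc$, viewed as a pro-object; given two such morphisms $f, g$ with apex $\cF, \cG$, their product in $\Topi/\Shv\left(S_{\et}\right)$ has apex the fibre product $\i$-topos $\cF \times_{\Shv\left(S_{\et}\right)} \cG$, and one computes that $\Gamma \circ (\text{structure map})^*$ for this fibre product is the pointwise product $(\Gamma_{\cF}\circ f^*)\times(\Gamma_{\cG}\circ g^*)$ — this uses that the inverse image of the fibre product is computed by the relevant left Kan extension/pullback formula and that $\Gamma$ (a mapping space out of $1$) carries the relevant colimit over the product category to a product. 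In pro-object terms, the product of pro-objects is the pointwise product of the corresponding left exact functors, so this identification is exactly what is needed. The terminal object is handled similarly: the identity geometric morphism $\mathrm{id}_{\Shv\left(S_{\et}\right)}$ maps under $\Shape_S$ to $\Gamma \circ \mathrm{id}^* = \Gamma$, and $\rho_!$ being left exact (Proposition~\ref{prop:infconn}), one checks $\Gamma$ corresponds to the terminal pro-object $j(1)$ via the explicit description in Theorem~\ref{thm:concrete_relative}.

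The main obstacle I anticipate is precisely the verification that $\Shape_S$ sends products of \'etale-over-$\cE$ topoi to products of pro-objects, since $\Shape_S$ is a \emph{left} adjoint and left adjoints need not preserve limits in general. The resolution is that we are not asking $\Shape_S$ to preserve arbitrary limits — only finite products of objects in the image of $\chi_{\cE}$ composed into $\Topi/\Shv\left(S_{\et}\right)$, which correspond to \'etale geometric morphisms; for these, the fibre-product $\i$-topos is again of the form $\cE/(E \times E')$ by \cite[Remark 6.3.5.8]{htt} and the basic stability of \'etale morphisms, and the shape of a slice topos $\cE/E$ is computed by $\Gamma_{\cE}\left(\pi_E^*(\blank)\right)$ which visibly sends $E \times E'$ to the product of the two shapes since $\pi_{E\times E'}^* \simeq \pi_E^* \boxtimes \pi_{E'}^*$ in the appropriate sense and $\Gamma$ preserves products. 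Once this is in place, each functor in the composite defining $\Theta \simeq \Pi^{S,\et}_{\i}$ preserves finite products, hence each is lax (in fact strong) symmetric monoidal for the Cartesian structures, and so is their composite.
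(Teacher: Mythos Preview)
The paper's proof is a single sentence: since both source and target carry the Cartesian symmetric monoidal structure, \emph{any} functor whatsoever is lax symmetric monoidal, by \cite[Proposition 2.4.1.7]{higheralgebra}. No product preservation is needed; in Lurie's operadic framework a lax monoidal functor between Cartesian structures only needs to preserve inert morphisms, which are the projections $(X,Y)\mapsto X$ and $(X,Y)\mapsto Y$, and every functor does this automatically.

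You are instead trying to prove that $\Pi^{S,\et}_\i$ is \emph{strong} monoidal, i.e.\ preserves finite products. This is much stronger than the lemma asserts, and there is a genuine gap. Your claim that $\tilde\lambda$ (postcomposition along $\lambda$) preserves products is false: products in a slice $\Topi/\cE$ are fibre products over $\cE$, and postcomposing $A\times_{\cE} B \to \cE \to \cF$ does not give the fibre product $A\times_{\cF} B$ over $\cF$. Concretely, for $S$-schemes $X,Y$ your argument would force $\Pi^{S,\et}_\i(X\times_S Y)\simeq \Pi^{S,\et}_\i(X)\times \Pi^{S,\et}_\i(Y)$ in $\Pro(\Shv(S_{\et}))$, which is a K\"unneth statement that fails in general (indeed, the paper alludes to exactly this obstruction when discussing the difficulty of $\bP^1$-stabilization). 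Your attempted rescue in the last paragraph --- restricting to \'etale morphisms over $\cE$ and invoking $\cE/(E\times E')$ --- computes the fibre product over $\Shv_{\et}(\Sch^{\ft}_S)$, not over $\Shv(S_{\et})$, so it does not address the product in the slice where $\Shape_S$ is being applied.

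In short: you are working much too hard, and toward a statement that is both unnecessary and untrue. The lemma follows immediately from the universal property of products once one unwinds what ``lax monoidal'' means for Cartesian structures.
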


\begin{proof} Since both source and target are Cartesian symmetric monoidal, any functor is lax symmetric monoidal \cite[Proposition 2.4.1.7]{higheralgebra}; this just comes from the universal property of products.
\end{proof}

\begin{proposition} \label{prop:pro-recog} Let $S$ be a scheme. There is a functor 
\[
\B^{\infty}: \CMon\left(\Pro\left(\Shv\left(S_{\et}\right)\right) \right)^{\times} \rightarrow \Spt\left(\Pro\left( \left(\Shv\left(S_{\et}\right)\right) \right)\right),
\]
such that the following diagram commutes
\[
\xymatrix{
 \CMon\left(\Shv_{\et}\left( \Sch^{\ft}_S \right) ^{\times}\right) \ar[rr]^{\Pi^{S,\et}_{\i}} \ar[d]_{\B^{\infty}} & &  \CMon\left(\Pro\left(\Shv\left(S_{\et}\right)\right)^{\times} \right)\ar[d]^{\B^{\infty}}\\
 \Shv_{\et,\Spt}\left( \Sch^{\ft}_S \right) \ar[rr]^{\Pi^{S,\et}_{\i,\Spt}} & & \Spt\left(\Pro\left(\Shv\left(S_{\et}\right)\right) \right).
}
\]
\end{proposition}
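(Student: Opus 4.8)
The plan is to construct $\B^\infty$ as the composite of the stable relative \'etale realization (or rather its $\mathcal{E}_\infty$-monoidal promotion) with a ``recognition'' comparison, and then verify the commuting square essentially by naturality. First I would recall that for any $\infty$-category $\sD$ with finite products, there is a canonical forgetful/group-completion composite $\CMon(\sD^\times) \to \CMon(\sD^\times)^{\mathrm{gp}}$, and when $\sD$ is an $\infty$-topos $\sE$, Lurie's recognition principle \cite[Theorem 5.2.6.15]{higheralgebra} identifies $\CMon(\sE^\times)^{\mathrm{gp}} \simeq \Spt(\sE)_{\geq 0}$, yielding a functor $\B^\infty_\sE: \CMon(\sE^\times) \to \Spt(\sE)$ through connective spectrum objects. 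Applied to $\sE = \Shv_{\et}(\Sch^{\ft}_S)$ this produces the left vertical arrow. For the right vertical arrow, the target is $\Spt(\Pro(\Shv(S_{\et})))$ rather than spectrum objects in an $\infty$-topos, so I cannot invoke the recognition principle directly; instead I would define $\B^\infty$ on $\CMon(\Pro(\Shv(S_{\et}))^\times)$ by group-completing and then using the adjunction $\Sigma^\infty_{\Pro(\sC)+} \dashv \Omega^\infty_{\Pro(\sC)}$ of Proposition~\ref{prop:sigma-infty} (with $\sC = \Shv(S_{\et})$, which is small, accessible and bicomplete), sending a grouplike $\mathcal{E}_\infty$-monoid $M$ to a delooping built from the bar construction $\{B^n M\}_n$ viewed as a spectrum object via $\Omega$-spectrum data — concretely, the bar constructions assemble into an object of $\Fun(\Spc^{\fin}_*, \Pro(\Shv(S_{\et})))$ which is reduced, and excisiveness on the grouplike part follows because $B$ is an equivalence onto connected deloopings, so $\Omega B \simeq \mathrm{id}$ after group completion.

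Next I would assemble the square. The functor $\Pi^{S,\et}_\i: \Shv_{\et}(\Sch^{\ft}_S) \to \Pro(\Shv(S_{\et}))$ is colimit-preserving, preserves finite products (being colimit preserving and landing in a category where finite products commute with the relevant colimits; more directly, by Theorem~\ref{thm:concrete_relative} it is corepresented-type and one checks products on representables), and by Lemma~\ref{lem:lax} it is lax symmetric monoidal for the Cartesian structures, hence canonically \emph{symmetric monoidal} since both sides are Cartesian \cite[Proposition 2.4.1.7]{higheralgebra}. Therefore it induces a functor $\CMon(\Shv_{\et}(\Sch^{\ft}_S)^\times) \to \CMon(\Pro(\Shv(S_{\et}))^\times)$, which is the top arrow, and it preserves grouplike objects because it preserves finite products and $\pi_0$-invertibility is a finite-limit condition. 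The bottom arrow is $\Pi^{S,\et}_{\i,\Spt}$ of Definition~\ref{defn:stab-et-real}. To check commutativity, I would observe that both composites $\CMon \to \Spt(\Pro(\Shv(S_{\et})))$ are built out of (i) the bar construction $B$, which commutes with $\Pi^{S,\et}_\i$ up to coherent equivalence because $\Pi^{S,\et}_\i$ is colimit-preserving and product-preserving, so it commutes with the simplicial bar diagram and its geometric realization, and (ii) the passage from a sequence of deloopings to a spectrum object, which is a purely formal (limit-of-loop-spaces) recipe natural in product-and-colimit-preserving functors. Since $\Pi^{S,\et}_{\i,\Spt}$ was itself characterized (Proposition~\ref{prop:stab-et-real}, and the Remark giving it as a left Kan extension of $\Sigma^\infty_+ \circ \Pi^{S,\et}_\i$) as the unique colimit-preserving functor compatible with $\Sigma^\infty_+$ and $\Pi^{S,\et}_\i$, and the spectrum object produced by $\B^\infty$ on the right is likewise assembled from $\Sigma^\infty_+$-type data, the two agree; I would phrase this last step by exhibiting both as extensions of the same functor on $\CMon$ of representables and invoking uniqueness.

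The main obstacle I anticipate is the absence of a ready-made recognition principle for $\Spt(\Pro(\Shv(S_{\et})))$: one has it for spectrum objects in an $\infty$-topos, but $\Pro(\Shv(S_{\et}))$ is not an $\infty$-topos, and it is not even presentable, so the usual proof (via the $\infty$-topos structure and Blakers--Massey) is unavailable. The way around this is to not claim an equivalence $\CMon^{\mathrm{gp}} \simeq \Spt_{\geq 0}$ on the pro side at all — the statement only asks for a \emph{functor} $\B^\infty$ making the square commute — so I would build $\B^\infty$ on the pro side directly from the bar construction plus Proposition~\ref{prop:sigma-infty}, and avoid asserting it is an equivalence. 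A secondary technical point is ensuring $\Pi^{S,\et}_\i$ genuinely preserves finite products so that the Cartesian lax monoidal structure is symmetric monoidal; this follows from colimit-preservation together with the explicit description in Theorem~\ref{thm:concrete_relative}, but I would want to spell out why binary products of representables go to the right place, reducing as usual to the fact that $\Shv((X\times_S Y)_{\et})$ computes the fiber product of the corresponding slice $\infty$-topoi and that $\Shape_S$ sends such fiber products to products of pro-objects on representables. Beyond that, the coherence of the bar-construction-commutes-with-$\Pi^{S,\et}_\i$ comparison is routine $\infty$-categorical bookkeeping that I would not grind through in detail.
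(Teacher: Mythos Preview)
There is a genuine gap: you claim that $\Pi^{S,\et}_\i$ is \emph{symmetric} monoidal for the Cartesian structures, citing \cite[Proposition 2.4.1.7]{higheralgebra}, but that result only furnishes a \emph{lax} monoidal structure (this is exactly what Lemma~\ref{lem:lax} records). A functor between Cartesian monoidal $\infty$-categories is symmetric monoidal precisely when it preserves finite products, and $\Pi^{S,\et}_\i$ is a left adjoint with no reason to do so; in fact, preservation of binary products would be a K\"unneth isomorphism for relative \'etale homotopy types, which fails in general before completion (the introduction flags precisely this failure as the obstruction to a $\bP^1$-stable realization). Your verification of the square uses product preservation essentially: to make $\Pi^{S,\et}_\i$ commute with the bar construction you need $\Pi^{S,\et}_\i(M^n) \simeq \Pi^{S,\et}_\i(M)^n$ at each simplicial level, and lax monoidality yields only a comparison map $B(\Pi^{S,\et}_\i M) \to \Pi^{S,\et}_\i(B M)$, not an equivalence. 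The same issue undercuts your claim that $\Pi^{S,\et}_\i$ preserves grouplike objects.

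The paper sidesteps this entirely by never invoking the bar construction. It defines $\B^\infty$ uniformly, on both sides, as the left Kan extension $k_!$ along the inclusion $k:\Fin_* \hookrightarrow \Spc^{\fin}_*$: restriction $k^*$ carries reduced excisive functors to Segal objects, hence lands in commutative monoids, and the left adjoint $k_!$ exists by the co-spectrum machinery of Lemmas~\ref{lem:coexc-coloc} and~\ref{lem:coexc-pres}. The commuting square then drops out by applying $\Fun(-,\mspace{3mu}\cdot\mspace{3mu})$ to the triangle $\star \to \Fin_* \to \Spc^{\fin}_*$, postcomposing with the right adjoint $R$ of $\Pi^{S,\et}_\i$ from Proposition~\ref{prop:right-adjoint}, and passing to left adjoints throughout. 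Because the relationship between $\CMon$ and $\Spt$ is encoded at the level of indexing diagrams rather than through iterated products in the target, no K\"unneth-type statement is needed.
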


\begin{proof} Let $\star$ be the discrete category with a single object and no nontrivial morphism. We have the following commutative diagram of $\infty$-categories, where all the arrows are fully faithful
\begin{equation} \label{eq:pt-fin}
\xymatrix{
 &  \star \ar[dl]_{i} \ar[dr]^{j} & \\ 
\Fin_*  \ar[rr]^{k} & & \Spc^{\fin}_*
}
\end{equation}
We apply the triangle~\eqref{eq:pt-fin} to the functor
\[
\Fun\left(-, \Pro\left(\Shv\left(S_{\et}\right) \right)\right) \stackrel{R_*}{\longrightarrow} \Fun\left(-, \Shv_{\et,\Spt}\left(\Sch^{\ft} \right)\right)
\]
where $R$ of the right adjoint constructed in Proposition~\eqref{prop:right-adjoint}, to obtain a commutative diagram
\begin{equation} \label{eq:ppd}
\xymatrix{
 & \Pro\left(\Shv\left(S_{\et}\right) \right) \ar[ddd]_{R_*} |!{[d]}\hole    &  \\
\Fun\left(\Fin_*, \Pro\left(\Shv\left(S_{\et}\right) \right) \right)\ar[ddd]_{R_*} \ar[ur]^{i^*} & & \Fun\left( \Spc^{\fin}_*,\Pro\left(\Shv\left(S_{\et}\right) \right)\right) \ar[ddd]_{R_*}  \ar[ll]_(.45){k^*} \ar[ul]_{j^*} \\
& & \\
& \Shv_{\et}\left(\Sch^{\ft} \right)   & \\
\Fun\left(\Fin_*,\Shv_{\et}\left(\Sch^{\ft} \right)\right) \ar[ur]^{i^*} & & \Fun\left( \Spc^{\fin}_*, \Shv_{\et}\left(\Sch^{\ft} \right)\right) \ar[ll]_{k^*} \ar[ul]_{j^*}.
}
\end{equation}
Now, arguing as in Lemma~\ref{lem:stab-pro} with $\Fin_*$ in place of $\Spc^{\fin}_*$ we have that
\begin{itemize}
\item the arrows in~\eqref{eq:ppd} admits left adjoints; we denote them by $i_!, k_!$,  $j_!,$ and $R_!$. (Note also, that when restricting to sheaves, $R_!$ gets identified with $\Pi^{S,\et}_{\i}$ by uniqueness of adjoints.) 
\item The functor $k^*$ (for both  $ \Pro\left(\Shv\left(S_{\et}\right) \right)$ and $\Shv_{\et}\left(\Sch^{\ft} \right)$) takes commutative monoids to spectra by noting that $k^*$ takes excisive functors to functors satisfying the the Segal condition to be a commutative monoid, and thus its left adjoint $k_!$ takes commutative monoids to spectra.
\item By Lemma~\ref{lem:lax} the functor $\Pi^{S,\et}_{\i}$ is lax monoidal and hence induces a functor
\[
\Pi^{S,\et}_{\i}: \CMon\left(\Shv_{\et}\left(\Sch_S^{\ft} \right)^{\times}\right) \rightarrow \CMon\left( \Pro\left(\Shv\left(S_{\et}\right)^{\times} \right) \right).
\]
compatibly with the the left adjoint $i_!$.
\end{itemize}
As a result we obtain the following diagram of left adjoints
\begin{equation} \label{eq:ppd-left}
\xymatrix{
 & \Pro\left(\Shv\left(S_{\et}\right) \right)  \ar[dl]_{i_!} \ar[dr]^{j_!}  &  \\
\CMon\left( \Pro\left(\Shv\left(S_{\et}\right)^{\times} \right) \right)  \ar[rr]^{k_!}  & & \Spt\left(\Pro\left(\Shv\left(S_{\et}\right) \right) \right) \\
& & \\
& \Shv_{\et}\left(\Sch^{\ft} \right)  \ar[dl]_{i_!} \ar[dr]^{j_!} \ar[uuu]^{\Pi^{S,\et}_{\i}}  |!{[uu]}\hole & \\
\CMon\left(\Shv_{\et}\left(\Sch^{\ft} \right)^{\times}\right)  \ar[rr]^{k_!} \ar[uuu]^{R_!} & & \Shv_{\et,\Spt}\left(\Sch^{\ft} \right) \ar[uuu]^{\Pi^{S,\et}_{\i,\Spt}}.
}
\end{equation}
We set the functor $\B^{\infty}$ to be the uppermost $k_!$.
\end{proof}

\subsubsection{Relative \'etale realization functor for $S^1$-motivic spectra} We now have a stable version of Theorem~\ref{thm:main-functor}

\begin{theorem} \label{thm:stable} Let $S$ be a scheme, then there exists a colimit preserving functor 
\[
\Et^S_{\bA^1,\Spt}: \SH^{S^1}\left(S \right) \rightarrow \Spt \left( \Pro\left(S_{\bA^1}\right) \right)
\] 
such that the following diagram commutes
\[
\xymatrix{
 \SH^{S^1}\left(S \right) \ar[rr]^{\Et^S_{\bA^1,\Spt}} & &   \Spt \left( \Pro\left(S_{\bA^1}\right) \right)  \\
\Spc\left( S \right) \ar[rr]^{\Et^S_{\bA^1}} \ar[u]^{\Sigma^{\infty}_+}  &  & \Pro\left(S_{\bA^1}\right)\ar[u]_{\Sigma^{\infty}_+}  .
}
\]
In particular, the value on $\Sigma^{\infty}_{+,S^1}X$ where $X$ is a smooth $S$-scheme is given by $\Sigma^{\infty}_+\widehat{\Pi^{S,\et}_\i\left(X\right)}_{S_{\bA^1}}$. 
\end{theorem}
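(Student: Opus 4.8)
The plan is to mimic verbatim the proof of Theorem~\ref{thm:main-functor}, but now one level up, in the stable world, using the machinery assembled in subsection~\ref{sect:stable-prof}. First I would recall that we have, by the proof of Theorem~\ref{thm:main-functor}, a colimit-preserving functor $\widetilde{\Et^S_{\bA^1}}\colon \Shv_{\Nis}\left(\Sm_S\right)\to \Pro\left(S_{\bA^1}\right)$ which is $\LL_{\bA^1}\LL_{\mot}$-invariant; one should also note that its right adjoint is $R_{\bA^1}$ of Proposition~\ref{prop:right-adjoint-a1}. Applying the construction of Lemma~\ref{lem:stab-pro} to the adjunction $\Et^S_{\bA^1}\dashv R_{\bA^1}$ — here $\sC=\Spc\left(S\right)$ is presentable and differentiable (being an $\infty$-topos, or a localization of one), and $\sD=S_{\bA^1}\subset \Shv\left(S_{\et}\right)$ is accessible and closed under finite limits and colimits by Proposition~\ref{prop:sa1isbetter}, so Proposition~\ref{prop:stab-pro} applies — yields an adjunction
\[
\partial \Et^S_{\bA^1}\colon \Spt\left(\Spc\left(S\right)\right) \rightleftarrows \Spt\left(\Pro\left(S_{\bA^1}\right)\right)\colon \partial R_{\bA^1},
\]
fitting into the square relating the $\Sigma^\infty_+$'s to the unstable functor $\Et^S_{\bA^1}$. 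Since $\Spt\left(\Spc\left(S\right)\right)=\SH^{S^1}(S)$ by definition, setting $\Et^S_{\bA^1,\Spt}:=\partial\Et^S_{\bA^1}$ gives the desired colimit-preserving functor and the commuting square is exactly~\eqref{lefts}.

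For the final sentence, I would compute the value on $\Sigma^\infty_{+,S^1}X$ for $X$ a smooth $S$-scheme: by the commuting square,
\[
\Et^S_{\bA^1,\Spt}\left(\Sigma^\infty_{+,S^1}X\right)\simeq \Sigma^\infty_+\,\Et^S_{\bA^1}\left(\LL_{\mot}(X)\right)\simeq \Sigma^\infty_+\,\widehat{\Pi^{S,\et}_\i(X)}_{S_{\bA^1}},
\]
where the second equivalence is the defining property of $\Et^S_{\bA^1}$ from Theorem~\ref{thm:main-functor}. One subtlety I would flag: I must check the left adjoint $\partial\Et^S_{\bA^1}$ really is colimit-preserving — this follows since it is a left adjoint, and the diagram of Lemma~\ref{lem:stab-pro} shows that on $\Spt\left(\Spc\left(S\right)\right)$, presentable by~\cite[Example 4.8.1.23]{higheralgebra}, it is a left adjoint between stable presentable(-ish) categories; strictly we only need ``left adjoint $\Rightarrow$ preserves colimits'', which is automatic.

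The main obstacle, such as it is, is purely bookkeeping: verifying that the hypotheses of Proposition~\ref{prop:stab-pro} are met — i.e.\ that $\Spc\left(S\right)$ is differentiable (it is an $\infty$-topos, hence compactly generated and in particular differentiable, cf.~\cite[Remark 6.1.1.9]{higheralgebra}) and that $S_{\bA^1}\hookrightarrow \Shv\left(S_{\et}\right)$ is a full subcategory closed under finite limits and colimits (finite limits: Proposition~\ref{prop:sa1isbetter}; colimits and accessibility: again Proposition~\ref{prop:sa1isbetter}, since $S_{\bA^1}$ is the full subcategory of $\bA^1$-invariant objects and being $\bA^1$-invariant, phrased via the equivalence with $\map(-,\rho_!(\cdot))$ of Proposition~\ref{prop:LAinv}, is stable under colimits in $\Shv\left(S_{\et}\right)$). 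There is no genuinely new content; the work was all done in the unstable Theorem~\ref{thm:main-functor} and in the formal stabilization lemmas~\ref{lem:stab-pro} and~\ref{prop:stab-pro}, and the proof is a two-line invocation of these together with the observation $\Spt\left(\Spc\left(S\right)\right)=\SH^{S^1}(S)$.
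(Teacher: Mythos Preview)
Your approach is essentially identical to the paper's: apply Lemma~\ref{lem:stab-pro} to the adjunction $\Et^S_{\bA^1}\dashv R_{\bA^1}$ of Theorem~\ref{thm:main-functor} and Proposition~\ref{prop:right-adjoint-a1}, after checking that $\Spc(S)$ is differentiable.

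Two small corrections to your justifications. First, $\Spc(S)$ is \emph{not} an $\infty$-topos (the $\bA^1$-localization is not left exact), so that argument for differentiability doesn't fly; the paper instead observes that $\Spc(S)$ is compactly generated (by motivic localizations of smooth affine $S$-schemes) and hence differentiable. Second, you don't need Proposition~\ref{prop:stab-pro} and its hypothesis that $S_{\bA^1}$ be closed under finite colimits---which is dubious, since $\bA^1$-invariance is a limit-type condition, and your argument via $\map(-,\rho_!(\cdot))$ goes the wrong way. Lemma~\ref{lem:stab-pro} alone suffices, as it only asks for an adjunction $\sC\rightleftarrows\Pro(\sD)$ with $\sC$ presentable and differentiable.
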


\begin{proof} The $\infty$-category $\Spc\left(S \right)$ is compactly generated by motivic localization of smooth affine $S$-schemes see, for example \cite[Proposition 2.2]{elso}. Hence it is a differentiable $\infty$-category by \cite[Remark 5.5.2.10]{higheralgebra}. In this case, we may apply directly Lemma~\ref{lem:stab-pro} to the adjunction 
\[
\Et^S_{\bA^1}: \Spc\left(S \right) \rightleftarrows \Pro\left(S_{\bA^1}\right): R_{\bA^1},
\]
from Theorem~\ref{thm:main-functor} and Proposition~\ref{prop:right-adjoint-a1}.

\end{proof}

\subsection{Dwyer-Friedlander \'{e}tale $K$-theory revisited} \label{subsec:revisit}

%
\newcommand{\Zl}{\mathbb{Z}[\frac{1}{\ell}]}
\newcommand{\Vect}{\mathrm{Vect}}

In a series of papers, Friedlander and Dwyer-Friedlander defined \'{e}tale $K$-theory of schemes \cite{friedlander-ekt1}, \cite{friedlander-ekt2}, \cite{dwyer-friedlander} using \'{e}tale homotopy theory. We refine their definition in the relative setting.  Before doing so, let us phrase their definition in our language.  In \cite{dwyer-friedlander}, \'etale $K$-theory is defined in the following way: we have the presheaf 
\begin{eqnarray*}
\Mod^{\mathsf{fgp},\simeq}_{\Zl}: \Aff\Sch^{\ft}_{\Zl} &\rightarrow& \Spc\\
\Spec\,R &\mapsto& \Mod^{\mathsf{fgp},\simeq}_R:=\Mod^{\mathsf{fgp},\simeq}_{\Zl}\left(R\right),
\end{eqnarray*}
assigning spaces of finitely generated projective modules. They then apply the $\ell$-completed absolute \'etale homotopy type functor $\widehat{\left(\Pi^{\et}_\i\left(\Mod^{\mathsf{fgp},\simeq}_{\Zl}\right)\right)}_{\ell}$, obtaining an object in $\Prof_{\ell}\left(\Spc \right)$. By Lemma~\ref{lem:lax} and Proposition~\ref{prop:pro-recog}, one then obtains a spectrum object in $\Prof_{\ell}\left(\Spc \right)$, which we denote by
\[
K^{\DF}_{\ell}:= \B^{\infty}\widehat{\left(\Pi^{\et}_\i\left(\Mod^{\mathsf{fgp},\simeq}_{\Zl}\right)\right)}_{\ell}.
\]

The relative extension of their theory is easily defined in our language. Let $S$ be a base scheme, we have the stack $\Vect^{\simeq}: \Sch^{\ft}_S \rightarrow \Spc$ classifying vector bundles on $S$. It restricts to affine schemes to be the stack $\Mod^{\mathsf{fgp},\simeq}$ classifying finitely generated projective modules. 

\begin{definition} The \emph{relative $DF$-$K$-theory spectrum} of $S$ is defined to as
\[
K^{\DF}_S := \B^{\infty} \left(\mspace{3mu}\Pi^{S,\et}_{\i} \Vect^{\simeq} \mspace{3mu}\right) \in \Spt\left(\Pro\left(\Shv\left(S_{\et}\right)\right) \right). 
\]
\end{definition}

To recover the usual Dwyer-Friedlander spectrum from its relative version, we use the following compatibility result

\begin{lemma} \label{lem:push-b} Let $S$ be a scheme, then the following diagrams commutes:
\begin{equation} \label{eq:push-b}
\xymatrix{
 \CMon\left(\Pro\left(\Shv\left(S_{\et}\right) \right)^{\times}\right) \ar[rr]^{\B^{\infty}} \ar[d]_{\Pro\left(e_!\right)} & &  \Spt\left(\Pro\left( \Shv\left(S_{\et}\right) \right) \right) \ar[d]^{\Pro\left(e_!\right)}  \\
 \CMon\left(\Pro\left(\Spc \right)^{\times}\right)  \ar[rr]^{\B^{\infty}} & & \Spt\left(\Pro\left( \Spc \right) \right).}
\end{equation}

\end{lemma}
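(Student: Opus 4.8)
The plan is to use that both the horizontal and the vertical arrows of~\eqref{eq:push-b} are left adjoints, so that commutativity can be checked after passing to right adjoints, where it will be a tautology. First I would recall that $\B^{\infty}$ was built in Proposition~\ref{prop:pro-recog} as the left adjoint $k_{!}$ of the restriction functor $k^{*}$ along the fully faithful inclusion $k\colon\Fin_{*}\hookrightarrow\Spc^{\fin}_{*}$ of~\eqref{eq:pt-fin}; here $k^{*}$ sends a spectrum object, namely a reduced excisive functor $\Spc^{\fin}_{*}\to\Pro\left(\Shv\left(S_{\et}\right)\right)$ (respectively into $\Pro\left(\Spc\right)$), to its restriction along $k$, which automatically satisfies the Segal condition and hence is a commutative monoid. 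Next I would make precise that the functor written $\Pro\left(e_{!}\right)$ on the $\CMon$- and $\Spt$-edges of~\eqref{eq:push-b} is, by definition, the left adjoint of post-composition with $\Pro\left(e^{*}\right)=\Pro\left(\Delta_{S}\right)$: this post-composition is well defined on commutative monoids and on spectrum objects because $\Pro\left(\Delta_{S}\right)$ is $\Pro$ applied to the left-exact functor $e^{*}=\Delta_{S}$, hence a right adjoint by Proposition~\ref{prop:ladj}, hence preserves all finite limits, and so carries reduced excisive functors to reduced excisive functors and Segal objects to Segal objects. Existence of all the left adjoints involved is obtained exactly as in the proof of Proposition~\ref{prop:pro-recog}, via the presentability statement of Lemma~\ref{lem:coexc-pres} and the adjoint functor theorem; in particular, ``$\Pro\left(e_{!}\right)$'' on the two edges is \emph{not} a naive restriction of $\Pro\left(e_{!}\right)\colon\Pro\left(\Shv\left(S_{\et}\right)\right)\to\Pro\left(\Spc\right)$, which does not preserve finite products.

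Granting this, \eqref{eq:push-b} is a square of left adjoints, so by uniqueness of adjoints it suffices to show that the corresponding square of right adjoints commutes up to equivalence; its horizontal arrows are the two restriction functors $k^{*}$ and its vertical arrows are post-composition with $\Pro\left(\Delta_{S}\right)\colon\Pro\left(\Spc\right)\to\Pro\left(\Shv\left(S_{\et}\right)\right)$. This square commutes tautologically: for any functor $F\colon\Spc^{\fin}_{*}\to\Pro\left(\Spc\right)$ one has $\Pro\left(\Delta_{S}\right)\circ\left(F\circ k\right)=\left(\Pro\left(\Delta_{S}\right)\circ F\right)\circ k$ by associativity of composition, naturally in $F$; that is, restricting along $k$ and then post-composing with $\Pro\left(\Delta_{S}\right)$ agrees with post-composing and then restricting. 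Reassembling these adjoints yields $\B^{\infty}\circ\Pro\left(e_{!}\right)\simeq\Pro\left(e_{!}\right)\circ\B^{\infty}$, which is precisely the assertion of~\eqref{eq:push-b}. The same argument applied to the inclusion $\star\hookrightarrow\Fin_{*}$ of~\eqref{eq:pt-fin} records, if needed, the analogous compatibility of underlying objects.

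The upshot is that the genuine content is only bookkeeping: identifying the edge functors ``$\Pro\left(e_{!}\right)$'' at the monoid and spectrum levels as left adjoints of post-composition with $\Pro\left(\Delta_{S}\right)$, checking that $\Pro\left(\Delta_{S}\right)$ preserves the subcategory-defining conditions (reduced, excisive, Segal), and invoking the adjoint functor theorem to construct the left adjoints --- all of this running parallel to, and no more delicate than, the corresponding verifications in the proof of Proposition~\ref{prop:pro-recog}. Once that is in place the commutativity falls out of associativity of composition, so I do not expect a real obstacle here; the only step deserving care is the left exactness of $\Pro\left(\Delta_{S}\right)$ and the attendant identification of what ``$\Pro\left(e_{!}\right)$'' means on $\CMon$ and $\Spt$.
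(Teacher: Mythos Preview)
Your proposal is correct and is essentially the paper's own argument spelled out in detail: the paper's one-line proof says to apply $\Pro(e^*)_*$ to the triangle~\eqref{eq:pt-fin} and rerun the proof of Proposition~\ref{prop:pro-recog}, which amounts exactly to your observation that the right-adjoint square (restriction along $k$ versus post-composition with $\Pro(\Delta_S)$) commutes tautologically, whence the left-adjoint square~\eqref{eq:push-b} commutes by uniqueness of adjoints.
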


\begin{proof} This follows from the same argument as in Proposition~\ref{prop:pro-recog} by applying the functor
\[
\Pro\left(e^*\right)_*: \Fun\left(-,\Pro\left(\Spc\right)\right) \rightarrow \Fun\left(-,\Pro\left(\Shv\left(S_{\et}\right)\right)\right)
\]
to the triangle~\eqref{eq:pt-fin}.
\end{proof}

We have the stable version of the profinite completion functor defined in Definition~\ref{def:stables}, $\widehat{\left(\blank\right)}_{\ell}:\Spt \rightarrow \Spt\left( \Prof_{\ell}\left(\Spc\right)\right)$.  By the same argument as in Lemma~\ref{lem:push-b} there is a canonical equivalence $\widehat{\left(\blank\right)}_{\ell} \circ \B^{\infty} \simeq \B^{\infty} \widehat{\left(\blank\right)}_{\ell}:\CMon\left(\Pro\left(\Spc \right)^{\times}\right) \rightarrow \Spt\left( \Prof_{\ell}\left(\Spc\right)\right)$.

\begin{proposition} \label{prop:df-rel} There is a canonical equivalence $\widehat{\left(\blank\right)}_{\ell} \circ \Pro\left(e_!\right)\left( K^{\DF}_{\bZ[\frac{1}{\ell}]} \right) \simeq K^{\DF}_{\ell}$.
\end{proposition}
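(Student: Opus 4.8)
The statement is a compatibility between the relative construction $K^{\DF}_S$ for $S = \Spec\,\bZ[\tfrac{1}{\ell}]$ and the classical Dwyer--Friedlander spectrum $K^{\DF}_{\ell}$. The plan is to chase through the definitions, reducing everything to statements about the relative \'etale homotopy type functor $\Pi^{S,\et}_{\i}$, the pushforward $\Pro(e_!)$ to ordinary pro-spaces, and the $\ell$-completion, and then invoke the compatibilities already established. Concretely, $K^{\DF}_{\bZ[\frac{1}{\ell}]} = \B^{\infty}\left(\Pi^{S,\et}_{\i}\Vect^{\simeq}\right)$ with $S = \Spec\,\bZ[\tfrac{1}{\ell}]$, so the left-hand side is $\widehat{\left(\blank\right)}_{\ell}\circ \Pro(e_!)\left(\B^{\infty}\left(\Pi^{S,\et}_{\i}\Vect^{\simeq}\right)\right)$, while the right-hand side is $\B^{\infty}\widehat{\left(\Pi^{\et}_\i\left(\Mod^{\mathsf{fgp},\simeq}_{\Zl}\right)\right)}_{\ell}$.

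First I would commute $\B^{\infty}$ past $\Pro(e_!)$ using Lemma~\ref{lem:push-b}: this identifies $\Pro(e_!)\left(\B^{\infty}\left(\Pi^{S,\et}_{\i}\Vect^{\simeq}\right)\right)$ with $\B^{\infty}\left(\Pro(e_!)\left(\Pi^{S,\et}_{\i}\Vect^{\simeq}\right)\right)$ in $\Spt(\Pro(\Spc))$, where one must be slightly careful that $\Pro(e_!)$ is applied at the level of commutative monoid objects, which is fine since $e_!$ (and hence $\Pro(e_!)$) is a left adjoint between Cartesian symmetric monoidal $\infty$-categories, hence lax (indeed strong) monoidal. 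Next I would commute $\widehat{\left(\blank\right)}_{\ell}$ past $\B^{\infty}$ using the equivalence $\widehat{\left(\blank\right)}_{\ell}\circ\B^{\infty}\simeq \B^{\infty}\circ\widehat{\left(\blank\right)}_{\ell}$ noted in the paragraph just before Proposition~\ref{prop:df-rel} (proved by the same triangle argument as in Lemma~\ref{lem:push-b} applied to $\Pro(e^*)_*$ or the corresponding completion adjunction). After these two moves the left-hand side has been rewritten as $\B^{\infty}\left(\widehat{\left(\blank\right)}_{\ell}\Pro(e_!)\left(\Pi^{S,\et}_{\i}\Vect^{\simeq}\right)\right)$, so it remains to produce an equivalence of commutative monoid objects $\widehat{\left(\blank\right)}_{\ell}\Pro(e_!)\left(\Pi^{S,\et}_{\i}\Vect^{\simeq}\right)\simeq \widehat{\left(\Pi^{\et}_\i\left(\Mod^{\mathsf{fgp},\simeq}_{\Zl}\right)\right)}_{\ell}$ in $\CMon(\Pro(\Spc)^{\times})$, after which applying $\B^{\infty}$ finishes the proof.

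The remaining identification is where the geometric content sits, but it too follows from results in the excerpt. By Remark~\ref{rmk:recover} (or equivalently Proposition~\ref{prop:composable}), for $X$ a scheme over $S$ one has $\Pro(e_S)_!\left(\Pi^{S,\et}_{\i}(X)\right)\simeq \Pi^{\et}_{\i}(X)$, and since both $\Pro(e_S)_!$ and $\Pi^{S,\et}_{\i}$ are colimit-preserving (being left adjoints / built from left adjoints), this identity of functors on $\Sch^{\ft}_S$ extends to the identity of colimit-preserving functors $\Pro(e_S)_!\circ\Pi^{S,\et}_{\i}\simeq \Pi^{\et}_{\i}$ on all of $\Shv_{\et}(\Sch^{\ft}_S)$. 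Now $\Vect^{\simeq}$ restricts over affine schemes to $\Mod^{\mathsf{fgp},\simeq}$, and $\Pi^{\et}_{\i}$ computes the same pro-space whether one works on the big \'etale site of all finite-type $S$-schemes or its restriction to affines (colimit density of affines, plus $\Pi^{\et}_\i$ colimit-preserving, cf. Corollary~\ref{cor:shapedesc}), so $\Pro(e_S)_!\left(\Pi^{S,\et}_{\i}\Vect^{\simeq}\right)\simeq \Pi^{\et}_\i\left(\Mod^{\mathsf{fgp},\simeq}_{\Zl}\right)$ as commutative monoid objects in $\Pro(\Spc)$. Applying the stable (equivalently, unstable) $\ell$-completion functor $\widehat{\left(\blank\right)}_{\ell}$ to both sides gives exactly the desired equivalence, and feeding it through $\B^{\infty}$ yields $\widehat{\left(\blank\right)}_{\ell}\circ\Pro(e_!)\left(K^{\DF}_{\bZ[\frac{1}{\ell}]}\right)\simeq K^{\DF}_{\ell}$.

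\textbf{Main obstacle.} The only genuinely delicate point is bookkeeping: one must check that all the functors involved ($\B^{\infty}$, $\Pro(e_!)$, $\widehat{\left(\blank\right)}_{\ell}$) can be commuted past one another \emph{coherently at the level of $\mathcal{E}_\infty$-monoid objects}, not merely on underlying objects, and that the two a priori different ambient big sites ($\Sch^{\ft}$ versus $\Sch^{\ft}_S$ with $S=\Spec\,\bZ[\tfrac1\ell]$, which here coincide up to the choice of absolute versus relative base) give literally the same pro-space after applying $\Pro(e_S)_!$. Both are handled by the universal-property/uniqueness-of-adjoints style arguments already used repeatedly in the excerpt (Lemma~\ref{lem:push-b}, Proposition~\ref{prop:pro-recog}), so no new ideas are required; the proof is essentially a diagram chase assembling Lemma~\ref{lem:push-b}, the $\B^{\infty}/\widehat{\left(\blank\right)}_{\ell}$ compatibility, and Remark~\ref{rmk:recover}.
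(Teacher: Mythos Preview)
Your proposal is correct and follows exactly the paper's approach: the paper's proof simply cites the definition of $K^{\DF}_S$, Lemma~\ref{lem:push-b}, and Remark~\ref{rmk:recover}, together with the $\B^{\infty}/\widehat{\left(\blank\right)}_{\ell}$ compatibility noted just before the statement. You have unpacked precisely these ingredients and assembled them in the right order.
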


\begin{proof} This follows by the definition of $K^{\DF}_S$, Lemma~\ref{lem:push-b} and Remark~\ref{rmk:recover}.
\end{proof}

\subsection{Dwyer-Friedlander $K$-theory of Categories} \label{subsec:noncomm}

In this section, we apply the machinery that we have developed to extend the theory of \'{e}tale $K$-theory  as first developed by Dwyer and Friedlander to non-commutative schemes. This is the section where our efforts in constructing a functor out of the motivic homotopy and the $S^1$-stable motivic homotopy category pays off; we exploit techniques and results from motivic homotopy theory to prove the expected properties of this new invariant.

\subsubsection{Semi-topological $K$-theory of dg-categories after Blanc and Antieau-Heller} We are about to define the $\ell$-adic analogue of \emph{semi-topological $K$-theory} of dg-categories. This invariant was first constructed by Blanc \cite{blanc}. In the setting of complex varieties these invariants were introduced by Friedlander and Walker in \cite{fw1}, \cite{fw2}.

 For motivation, and for later use, let us briefly recall their construction. Let $\Cat^{\perf}_{\infty}$ be the (large) $\infty$-category of small idempotent complete stable $\infty$-categories. This stable $\infty$-category is symmetric monoidal under the Lurie tensor product (see, for example, \cite[Section 3]{bgt}), which we will use to discuss $\mathcal{E}_{\infty}$-monoids and modules over them.  We have the functor of \emph{algebraic $K$-theory}  $K:\Cat^{\perf}_{\infty} \rightarrow \Spt$ (as in \cite[Section 9]{bgt}), and the functor of \emph{connective algebraic $K$-theory} $K^{\conn}:\Cat^{\perf}_{\infty} \rightarrow \Spt$ (as in~\cite[Section 7]{bgt}).
  
 Each functor is the universal localizing (as in~\cite[Definition 8.1]{bgt}) and additive (as in~\cite[Definition 6.1]{bgt}) invariant respectively. Suppose that $X$ is a scheme \footnote{for definitions $X$ could easily be a derived scheme.}; we have the small, idempotent-complete stable $\infty$-category $\Perf_X$ of perfect complexes on $X$; this is a symmetric monoidal stable $\infty$-category whose tensor product commutes with colimits in each variable. Consequently $\Perf_X \in \CAlg\left(\Cat^{\perf, \times}_{\infty}\right)$, i.e., it is a presentably symmetric monoidal stable $\infty$-category. We define the $\infty$-category of \emph{$X$-linear stable $\infty$-categories} to be 
 \[
 \Cat_{X} := \Mod_{\Perf_X}\left( \Cat^{\perf,\times}_{\infty} \right).
 \]
When $X = \Spec\,R$ is affine, we write $\Cat_R := \Cat_X$. 


 \begin{construction} \label{constrct:twist}
Suppose that $X$ is a fixed scheme, and we are provided with a functor
\[
F: \Cat_{X} \rightarrow \Spt.
\]
The examples that we care about are taking $F$ to be the algebraic $K$-theory functor, and its connective variant, precomposed with the forgetful functor $\Cat_{X} \rightarrow \Cat^{\perf}_{\infty}$. 

Suppose that $\sC \in \Cat_{X}$. We define a new functor
\[
\underline{F}(\sC): \Sch^{\ft,\op}_{X} \rightarrow \Spt, T \mapsto F(\Perf_T \otimes \sC),
\]
where the symbol $\otimes$ indicate the tensor product of modules over an $\mathcal{E}_{\infty}$-algebra object. If $F$ is $K$-theory or connective $K$-theory, we call the presheaf $\underline{K}(\sC)$ (resp. $\underline{K}^{\conn}(\sC)$) \emph{$\sC$-twisted $K$-theory} (resp. \emph{$\sC$-twisted connective $K$-theory}). 
\end{construction}

For the purposes of semi-topological $K$-theory, we consider $X = \Spec\,\mathbb{C}$.  Recall that there is a \emph{Betti realization functor}
\[
\mathrm{Be}: \PShv_{\Spt}\left(\Sch^{\ft}_{\mathbb{C}}\right) \rightarrow \Spt
\]
which is a colimit preserving functor whose values on (suspension spectra) of representables are given by
\[
\mathrm{Be}(\Sigma^{\infty}_+ j(X)) \simeq \Sigma^{\infty}_+ X\left(\mathbb{C}\right)
\]
where $X\left(\mathbb{C}\right)$ is the \emph{analytification of $X$} --- it is a topological space whose points are the $\mathbb{C}$-points of $X$ and given the analytic topology; see \cite[Equation 1]{antieau-heller} for a quick definition or \cite[Section 3]{blanc} for a more extensive discussion.

\begin{definition} Let $\sC \in \Cat_{\mathbb{C}}$, then the \emph{semi-topological $K$-theory of $\sC$} (resp. \emph{connective semi-topological $K$-theory of $\sC$}) is the spectrum
$K^{\st}\left( \sC \right) := \mathrm{Be}\left(\underline{K}\left(\sC\right)\right)$ (resp. $\mathrm{Be}\left(\underline{K}^{\conn}\left(\sC\right)\right)$).
\end{definition}

Some of the salient properties of $K^{\st}\left( \sC \right)$ are as follows:

\begin{enumerate}
\item there is a canonical equivalence of spectra $K^{\st}\left(\mathbb{C}\right) \simeq ku$ where $ku$ is the connective topological $K$-theory spectrum \cite[Theorem 4.5]{blanc}.
\item There is a Chern character map $K^{\st}\left( \sC \right) \rightarrow HP\left(\sC \right)$ where $HP$ denotes periodic cyclic homology \cite[Section 4.4]{blanc}.
\item There is a canonical equivalence $K\left( \sC \right)/n \simeq K^{\st}\left( \sC \right)/n$, i.e., the torsion part of semi-topological $K$-theory and algebraic $K$-theory agrees \cite[Theorem 3.3]{antieau-heller}.
\end{enumerate}

We will soon prove analogues of (1) and (3), while postponing (2) and further explorations of this Chern character map to a sequel. We note that the $\bA^1$-localized version of this theory is representable by a motivic spectrum; when $\sC = \Perf_X$ we are considering Weibel's homotopy $K$-theory where representability is well-known (see, for example, \cite{cisinski-desc}).

\begin{proposition} \label{prop:representability} Let $S$ be a base scheme. The $\bA^1$-localization of the restriction of $\LL_{\bA^1}\underline{K}(\sC)$ to smooth $S$-schemes is representable by a motivic spectrum, i.e., there is a motivic spectrum $\KGL(\sC) \in \SH(R)$ and a functorial equivalence of spectra
\[
\map_{\SH(R)}(\Sigma^{\infty}_TX_+, \KGL(\sC)) \simeq \LL_{\bA^1}\underline{K}(\sC) \mid_{\Sm_S}(X),
\]
for any $X \in \Sm_R$.
\end{proposition}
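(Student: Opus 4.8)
The plan is to reduce the claim to the known representability of algebraic $K$-theory in the motivic stable category and then twist by $\sC$. First I would recall that the functor $\underline{K}(\sC)$, restricted to $\Sm_S$, is a presheaf of spectra; by Proposition~\ref{prop:la1-lnis} its $\bA^1$-localization $\LL_{\bA^1}\underline{K}(\sC)$ is automatically a Nisnevich sheaf provided $\underline{K}(\sC)$ is one, so the only genuine input beyond formal nonsense is Nisnevich (indeed cdh) descent for twisted $K$-theory. The key point here is that for $T \mapsto K(\Perf_T \otimes \sC)$, cdh descent is a theorem of Cisinski \cite{cisinski-desc} (and in the non-commutative generality one invokes the localization theorem of Blanc--Robalo--Tabuada--Vezzosi or the cited localizing-invariant formalism of \cite{bgt}): a localizing invariant applied to $\Perf_{(-)}\otimes \sC$ satisfies pro-cdh descent, and after $\bA^1$-localization genuine cdh (hence Nisnevich) descent. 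Thus $\LL_{\bA^1}\underline{K}(\sC)\mid_{\Sm_S}$ defines an object of $\SH^{S^1}(S)$.

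Next I would promote this from an $S^1$-spectrum to a $\bP^1$-spectrum, i.e.\ produce $\KGL(\sC) \in \SH(S)$. The mechanism is the projective bundle formula: $\Perf_{\bP^1_T} \simeq \Perf_T \oplus \Perf_T$ as $\Perf_T$-modules (Be{\u\i}linson's resolution of the diagonal), so $K(\Perf_{\bP^1_T}\otimes\sC)\simeq K(\Perf_T\otimes\sC)^{\oplus 2}$, which after $\bA^1$-localization exhibits $\LL_{\bA^1}\underline K(\sC)$ as a $(\bP^1,\infty)$-periodic object — equivalently, the bonding maps $\LL_{\bA^1}\underline K(\sC) \to \Omega_{\bP^1}\LL_{\bA^1}\underline K(\sC)$ are equivalences after inverting $\Sigma^\infty_T(\bP^1,\infty)$. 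Hence the image of $\LL_{\bA^1}\underline K(\sC)\mid_{\Sm_S}$ under $\Sigma^\infty_T\colon \SH^{S^1}(S)\to \SH(S)$ is already $\bP^1$-stable in the relevant sense, and I define $\KGL(\sC)$ to be this object (when $\sC = \Perf_S$ this recovers the homotopy $K$-theory spectrum $\KGL$, recovering the classical case cited from \cite{cisinski-desc}). The representability equivalence $\map_{\SH(S)}(\Sigma^\infty_T X_+, \KGL(\sC)) \simeq \LL_{\bA^1}\underline K(\sC)\mid_{\Sm_S}(X)$ is then the adjunction $\Sigma^\infty_T \dashv \Omega^\infty_T$ together with the fact that $\Omega^\infty_T\Sigma^\infty_T$ acts as the identity on $\bP^1$-periodic $\bA^1$-invariant Nisnevich sheaves of spectra; spelled out, $\Omega^\infty_T \KGL(\sC) \simeq \LL_{\bA^1}\underline K(\sC)\mid_{\Sm_S}$ and $\map_{\SH(S)}(\Sigma^\infty_T X_+,-)\simeq \map_{\SH^{S^1}(S)}(\Sigma^\infty_{S^1}X_+, \Omega^\infty_T -)\simeq \map(X,-)$ by Yoneda in $\Shv_{\Nis,\Spt}(\Sm_S)$.

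The main obstacle I anticipate is verifying cdh (or at least Nisnevich) descent for $\underline{K}(\sC)$ in the full non-commutative generality — in the connective case $K^{\conn}$ is not localizing, so one must either pass to non-connective $K$-theory or argue that after $\bA^1$-localization the discrepancy ($\mathrm{NK}$-type terms, $K_{-i}$) is controlled; the cleanest route is to work with the non-connective localizing invariant throughout, cite \cite{bgt} for the localization and the projective bundle formula for $\Perf_{(-)}\otimes\sC$, and invoke \cite{cisinski-desc} for the descent-after-$\bA^1$-localization statement, which is insensitive to the connective/non-connective distinction once $\bA^1$-inverted. A secondary technical point is ensuring that the base change $\Perf_{(-)}\otimes\sC$ is well-behaved (compatible with $\bA^1$ and with Nisnevich squares), which follows from the fact that $\Perf_{(-)}$ already is and $\otimes\sC$ preserves the relevant (co)limits since the tensor product on $\Cat^{\perf}_\infty$ is exact in each variable; I would record this as a short lemma before the main argument.
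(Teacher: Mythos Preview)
Your proposal is correct and follows essentially the same route as the paper: the paper's proof is a one-line citation to \cite[Proposition 3.2]{antieau-heller}, observing that their argument is independent of the base field being $\mathbb{C}$, and what you have written is precisely an unfolding of that argument (Nisnevich descent for the localizing invariant $K(\Perf_{(-)}\otimes\sC)$, the projective bundle formula giving $\bP^1$-periodicity, and the resulting lift to $\SH(S)$). Your additional caution about connective versus non-connective $K$-theory and about the exactness of $\otimes\,\sC$ is well placed but already handled in the cited reference.
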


\begin{proof} This follows by the same argument in \cite[Proposition 3.2]{antieau-heller}, noting that the fact that they are working over the complex numbers is not used for the proof of \emph{loc.cit}.
\end{proof}

\subsubsection{The construction}

\newcommand{\fp}{\mathbb{F}_p}

For the rest of the paper we work over some base commutative ring $R$; we write $\Sch^{\ft}_R$ (resp. $\Cat_R$) instead of $\Sch^{\ft}_{\Spec\,R}$ (resp. $\Cat_{\Spec\,R}$) and subcategories thereof. Following Construction~\ref{constrct:twist}, if $\sC$ is an $R$-linear stable $\infty$-category, we have the presheaves $ \underline{K}\left(\sC\right)$ and $\underline{K}^{\conn}\left(\sC\right)$ of spectra and connective spectra respectively.
Following the discussion on semi-topological $K$-theory above, we define some new natural invariants of $R$-linear stable $\infty$-categories.


\begin{definition} \label{def:df-k} Let $\sC \in \Cat_{R}$. We define the following invariants:
\begin{enumerate}
\item The \emph{Dwyer-Friedlander $K$-theory} of $\sC$ is defined to be 
\[K_{R}^{\DF}(\sC):= \Pi^{R,\et}_{\i,\Spt} \LL_{\et} \underline{K}(\sC) \in \Spt \left( \Pro\left( \Shv\left(S_{\et}\right)\right) \right).
\]
\item The \emph{$\bA^1$-local Dwyer-Friedlander $K$-theory} of $\sC$ is defined to be 
\[
K_{R,\bA^1}^{\DF}(\sC):= \Et^R_{\bA^1,\Spt}\LL_{\mot}\underline{K}(\sC)  \in \Prof_{S,\bA^1,\et}\left( \Spt \right).
\]
\end{enumerate}
\end{definition}

We will also consider the mod-$n$ variants of the above theories which we denote by
\[
K_{R}^{\DF}(\sC)/n := \Pi^{R,\et}_{\i,\Spt}\left( \LL_{\et}\underline{K}(\sC)/n  \right)
\]
and
\[
K_{R,\bA^1}^{\DF}(\sC)/n:= \Et^R_{\bA^1,\Spt}\left( \LL_{\mot}\underline{K}(\sC)/n \right).
\]

\begin{remark} Just as in \cite[Definition 4.1]{blanc}, we can define the connective variants of these theories, which we denote by $K_{R}^{\DF,\conn}(\sC)$ and $K_{R,\bA^1}^{\DF,\conn}\left(\sC\right)$
\end{remark}

The next proposition summarizes the basic properties of $K^{\DF}_R$

\begin{proposition} \label{lem:basics-df} Let $\sC \in \Cat_R$, then 
\begin{enumerate}
\item The presheaf $\underline{K}\left(\sC\right):\Sch_R^{\ft,\op} \rightarrow \Spt$ is a Nisnevich sheaf.
\item The presheaf $\LL_{\bA^1}\underline{K}\left(\sC\right):\Sch_R^{\ft,\op} \rightarrow \Spt$ is a cdh sheaf.
\item The canonical map $\LL_{\bA^1}\underline{K}\left(\sC\right) \rightarrow \LL_{\mot}\underline{K}\left(\sC\right)$ induces an equivalence
\[
\widehat{\left(\mspace{3mu}\Pi^{R,\et}_{\i,\Spt} \LL_{\bA^1}\underline{K}\left(\sC\right) \mspace{3mu}\right)}_{S_{\bA^1}} \simeq K_{\bA^1,R}^{\DF}\left(\sC\right).
\]
\item Suppose that $n$ is invertible in $R$, then the canonical map $\underline{K}\left(\sC\right)/n \rightarrow \LL_{\bA^1}\underline{K}(\sC)/n$ induces an equivalence
\[
K_{R}^{\DF}\left(\sC\right)/n \simeq K_{R,\bA^1}^{\DF}\left(\sC\right)/n.
\]
\end{enumerate}
\end{proposition}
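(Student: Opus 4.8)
\textbf{Proof plan for Proposition~\ref{lem:basics-df}.}

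The four items are largely bookkeeping once the right inputs are identified, so the plan is to dispatch them in the stated order, each by reducing to a result already available. For (1), the presheaf $\underline{K}(\sC)\colon T \mapsto K(\Perf_T \otimes_{\Perf_R} \sC)$ is a Nisnevich sheaf because $T \mapsto \Perf_T$ satisfies Nisnevich (indeed Zariski) descent by standard results on perfect complexes, tensoring with a fixed dualizable object $\sC$ preserves the relevant (co)limits since $\otimes$ commutes with colimits in each variable, and $K$ is a localizing invariant hence sends the Thomason--Neeman localization sequences attached to a Nisnevich square to fiber sequences; this is exactly the argument recalled in Construction~\ref{constrct:twist} and its references (\cite{bgt}, \cite{antieau-heller}), so I would simply cite it. For (2), I would use Proposition~\ref{prop:representability}: $\LL_{\bA^1}\underline{K}(\sC)\mid_{\Sm_R}$ is representable by a motivic spectrum $\KGL(\sC) \in \SH(R)$, and invariants representable in $\SH(R)$ satisfy cdh descent by \cite{cisinski-desc} (as noted at the end of subsection~\ref{subsec:a1}, cdh descent is equivalent to excision for Nisnevich squares together with abstract blow-up squares). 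One caveat: Proposition~\ref{prop:representability} is phrased for smooth $R$-schemes, so strictly one should either invoke that $\LL_{\bA^1}$ already lands in Nisnevich sheaves on all of $\Sch^{\ft}_R$ by Proposition~\ref{prop:la1-lnis} and then combine with the blow-up excision coming from representability, or observe that the cdh sheafification is determined by its restriction to smooth schemes when $R$ is suitably nice; I would state it with the hypothesis that makes the representability result directly applicable.

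For (3), the point is that $\Pi^{R,\et}_{\i,\Spt}$ followed by the $S_{\bA^1}$-localization $\widehat{(\blank)}_{S_{\bA^1}}$ is insensitive to the difference between $\LL_{\bA^1}$ and $\LL_{\mot}$. Concretely, by Theorem~\ref{thm:stable} the stable relative realization $\Et^R_{\bA^1,\Spt}$ fits into the square with $\Pi^{R,\et}_{\i,\Spt}$ and the stabilized $S_{\bA^1}$-completion, so $K_{\bA^1,R}^{\DF}(\sC) = \Et^R_{\bA^1,\Spt}\LL_{\mot}\underline{K}(\sC)$ is computed by first $\LL_{\et}$-sheafifying $\LL_{\mot}\underline{K}(\sC)$, applying $\Pi^{R,\et}_{\i,\Spt}$, and applying $\widehat{(\blank)}_{S_{\bA^1}}$. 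Since $\Pi^{R,\et}_{\i,\Spt}$ preserves colimits and the further Bousfield localization $\widehat{(\blank)}_{S_{\bA^1}}$ kills precisely the maps that become equivalences after testing against objects of $S_{\bA^1}$, one checks that the natural map $\LL_{\bA^1}\underline{K}(\sC) \to \LL_{\mot}\underline{K}(\sC)$ (which is an $\bA^1$-equivalence of Nisnevich sheaves, by Proposition~\ref{prop:la1-lnis}) induces an equivalence after $\widehat{\Pi^{R,\et}_{\i,\Spt}(\blank)}_{S_{\bA^1}}$: indeed $\LL_{\et}$ is left exact, so it sends $\bA^1$-local equivalences to $\bA^1$-local equivalences in $\Shv_{\et,\Spt}(\Sch^{\ft}_R)$, exactly as in equation~\eqref{eq:oknis} and its use in the proof of Theorem~\ref{thm:main-functor}, and by Proposition~\ref{prop:LAinv} such equivalences are detected as equivalences after pairing against $\rho_!(\cZ)$ for $\cZ \in S_{\bA^1}$, which is what $\widehat{\Pi^{R,\et}_{\i,\Spt}(\blank)}_{S_{\bA^1}}$ records via the explicit formula of Theorem~\ref{thm:concrete_relative} (and Proposition~\ref{prop:explicit-a1}).

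For (4), the key input is that mod-$n$ algebraic $K$-theory is $\bA^1$-invariant when $n$ is invertible in $R$ — i.e. the fiber of $\underline{K}(\sC)/n \to \LL_{\bA^1}\underline{K}(\sC)/n$ vanishes. This is the rigidity-type statement: with $\mathbb{Z}/n$-coefficients, $K$-theory of $R$-linear categories is already a cdh (hence $\bA^1$-invariant Nisnevich) sheaf; this is precisely \cite[Theorem 3.3]{antieau-heller} phrased there over $\mathbb{C}$ but whose proof, as they note, only uses that $n$ is invertible. Given that, $\underline{K}(\sC)/n \to \LL_{\mot}\underline{K}(\sC)/n$ is an equivalence of Nisnevich sheaves of spectra, so applying $\Pi^{R,\et}_{\i,\Spt}$ and comparing with (3) yields $K_R^{\DF}(\sC)/n = \Pi^{R,\et}_{\i,\Spt}\LL_{\et}(\underline{K}(\sC)/n) \simeq \Et^R_{\bA^1,\Spt}\LL_{\mot}(\underline{K}(\sC)/n) = K_{R,\bA^1}^{\DF}(\sC)/n$, using that $\LL_{\et}$, $\Pi^{R,\et}_{\i,\Spt}$ and the completion functors all commute with the cofiber of multiplication by $n$ (they are exact). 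The main obstacle is not any single computation but making sure the $\bA^1$-invariance-with-mod-$n$-coefficients input (item (4)) and the cdh-descent input (item (2)) are cited in the generality of an arbitrary base ring $R$ with $n \in R^\times$, since the references are written over a field or over $\mathbb{C}$; I would either restrict to the cases where \cite{antieau-heller} and \cite{cisinski-desc} apply verbatim or indicate the (mild) modifications, and otherwise the proposition follows formally from the functoriality and exactness already established in subsection~\ref{sect:stable-prof}.
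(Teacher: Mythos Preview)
Your arguments for (1) and (2) match the paper's: Nisnevich descent follows because $\underline{K}(\sC)$ is a localizing invariant restricted to schemes (the paper cites Land--Tamme), and cdh descent follows from representability in $\SH(R)$ together with Cisinski's descent theorem.

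For (3), however, you are working much harder than necessary, and in doing so you obscure the actual reason the statement holds. The paper's observation is simply this: by part (1), $\underline{K}(\sC)$ is already a Nisnevich sheaf, and by Proposition~\ref{prop:la1-lnis}, $\LL_{\bA^1}$ applied to a Nisnevich sheaf of spectra is again a Nisnevich sheaf. Hence $\LL_{\bA^1}\underline{K}(\sC)$ is both Nisnevich-local and $\bA^1$-invariant, so the map $\LL_{\bA^1}\underline{K}(\sC) \to \LL_{\mot}\underline{K}(\sC)$ is an \emph{equivalence already}, not merely an $\bA^1$-local equivalence. There is nothing for the realization or the $S_{\bA^1}$-completion to do. Your route through Theorem~\ref{thm:concrete_relative} and Proposition~\ref{prop:LAinv} to argue that the completed realization inverts $\bA^1$-equivalences is correct in spirit but entirely superfluous here; you even cite Proposition~\ref{prop:la1-lnis} in the middle of your argument without drawing its immediate consequence.

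For (4), your strategy is right --- one needs that mod-$n$ $K$-theory is $\bA^1$-invariant when $n$ is invertible --- but your citation is wrong. Theorem~3.3 of Antieau--Heller is the comparison $K(\sC)/n \simeq K^{\mathrm{st}}(\sC)/n$ over $\bC$, which is the \emph{analogue} of the result being proved, not the input to it. The actual input is Tabuada's result \cite[Theorem~1.2]{tab-1} (extending Weibel's \cite[Proposition~1.6]{weib-kh} for rings) that $\sC \mapsto K(\sC)/n$ is $\bA^1$-invariant for $n$ invertible in the base. With that in hand, $\underline{K}(\sC)/n \simeq \LL_{\bA^1}\underline{K}(\sC)/n$ and the rest is formal as you say. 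Your parenthetical ``cdh (hence $\bA^1$-invariant Nisnevich) sheaf'' also conflates two different properties; cdh descent does not imply $\bA^1$-invariance.
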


\begin{proof} For (1), the presheaf $\underline{K}\left(\sC\right)$ is the restriction of a localizing invariant to schemes of finite type, hence it is a Nisnevich sheaf by \cite[Lemma A.1]{tamme-land}. Point (2) comes from the fact that $\LL_{\bA^1}\underline{K}\left(\sC \right)$ is representable in $\SH(k)$ by Proposition~\ref{prop:representability} and \cite[Proposition 3.7]{cisinski-desc}.
The point of (3) is that for any Nisnevich sheaf of spectra $F$, the $\LL_{\bA^1}(F)$ is already a Nisnevich sheaf whence $\LL_{\bA^1}F \simeq \LL_{\mot}F$ by Proposition~\ref{prop:la1-lnis}, whence the claim is immediate (recall that there is an implicit \'etale sheafification to apply the realization functor). Point (4) follows from~\cite[Theorem 1.2]{tab-1} which asserts that the functor on $\Cat_R$ given by $\sC \mapsto K(\sC)$ is $\bA^1$-invariant as was proved by Weibel in the case of associative rings in \cite[Proposition 1.6]{weib-kh}.
\end{proof}

Here is a first computation --- the Dwyer-Friedlander $K$-theory of the unit object in $\Cat_k$; compare with \cite[Theorem 4.5]{blanc}. Over a base scheme $S$, we denote by $\Grass_S$ the infinite Grassmannian. This is the ind-scheme $\Grass_S := \colim_{n,k} \Grass_S\left(n,k \right)$ where each $\Grass_S\left(n, k\right)$ classifies locally free quotients of $\mathscr{O}^n$ of rank $n-k$. Since each $\Grass_S\left(n, k\right)$ is a smooth scheme, its relative \'etale homotopy type is explicit. The next theorem expresses the connective $\bA^1$-local Dwyer-Friedlander as ($\B^{\infty}$ of) the relative \'etale realization of the infinite Grassmannian.

%
%
%
%

\begin{proposition} \label{thm:point}  There is a canonical equivalence in $\Prof_{R,\bA^1,\et}\left( \Spt \right)$
\[
 K_{R,\bA^1}^{\DF,\conn}(\Perf_R) \simeq \B^{\infty} \left( \Et^S_{\bA^1} \left( \Grass_{R} \times \bZ \right) \right).
\]
\end{proposition}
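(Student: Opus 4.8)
The plan is to identify the presheaf $\underline{K}^{\conn}(\Perf_R)$ restricted to smooth $R$-schemes with the presheaf represented by the infinite Grassmannian (times $\bZ$), and then to chase this identification through the $\bA^1$-local realization functor and the recognition functor $\B^{\infty}$. First I would recall that connective algebraic $K$-theory of a scheme $X$ is, as a space after looping down, given by $\Omega^{\infty}K^{\conn}(X) \simeq \bZ \times \Vect^{\simeq}_X$ (group completion of the symmetric monoidal groupoid of vector bundles); for $\sC = \Perf_R$ and $T$ a smooth $R$-scheme, $\Perf_T \otimes_{\Perf_R} \Perf_R \simeq \Perf_T$, so $\underline{K}^{\conn}(\Perf_R)(T) \simeq K^{\conn}(T)$. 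The key input is then that, as a presheaf of $\mathcal{E}_{\infty}$-spaces on $\Sm_R$, the group completion $\bZ \times \Vect^{\simeq}$ is $\bA^1$-locally represented by $\bZ \times \Grass_R$: this is the standard motivic representability of algebraic $K$-theory (Morel--Voevodsky), valid because $\Grass_R(n,k)$ classifies rank-$(n-k)$ locally free quotients of $\mathscr{O}^n$ and every vector bundle on an affine scheme is such a quotient, so $\colim_{n,k}\Grass_R(n,k) \to \Vect^{\simeq}$ is an $\bA^1$-equivalence after Nisnevich sheafification and group completion. Thus $\LL_{\mot}\underline{K}^{\conn}(\Perf_R)\mid_{\Sm_R}$, viewed as a motivic $\mathcal{E}_\infty$-space, is $\LL_{\mot}(\bZ \times \Grass_R)$ (a colimit of the motivic localizations of smooth schemes $\Grass_R(n,k)$).

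Next I would assemble the diagram. By Definition~\ref{def:df-k}(2), $K_{R,\bA^1}^{\DF,\conn}(\Perf_R) = \Et^R_{\bA^1,\Spt}\,\LL_{\mot}\,\underline{K}^{\conn}(\Perf_R)$, and by the stable $S^1$-realization Theorem~\ref{thm:stable} this is $\B^{\infty}$ applied to the (unstable) relative realization of the underlying motivic $\mathcal{E}_\infty$-space, using Proposition~\ref{prop:pro-recog} together with its $\bA^1$-variant, i.e.\ the compatibility of $\B^{\infty}$ with $\Et^S_{\bA^1}$. Concretely: $\underline{K}^{\conn}(\Perf_R)$ is a Nisnevich sheaf of connective spectra (Proposition~\ref{lem:basics-df}(1)), equivalently a grouplike $\mathcal{E}_\infty$-monoid in $\Spc(S)$ after motivic localization; $\Et^S_{\bA^1}$ is lax symmetric monoidal (same argument as Lemma~\ref{lem:lax}, since source and target are Cartesian), hence carries this to an $\mathcal{E}_\infty$-monoid in $\Pro(S_{\bA^1})$; applying $\B^{\infty}: \CMon(\Pro(S_{\bA^1})^{\times}) \to \Spt(\Pro(S_{\bA^1}))$ (the $S_{\bA^1}$-version of Proposition~\ref{prop:pro-recog}) and using that $\B^{\infty}$ of a connective spectrum object recovers it, one gets $K_{R,\bA^1}^{\DF,\conn}(\Perf_R) \simeq \B^{\infty}(\Et^S_{\bA^1}(\text{the motivic }\mathcal{E}_\infty\text{-space }\bZ\times\Grass_R))$. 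Since $\Et^S_{\bA^1}$ preserves colimits (Theorem~\ref{thm:main-functor}) and $\Grass_R$ is the filtered colimit of the smooth schemes $\Grass_R(n,k)$, $\Et^S_{\bA^1}(\Grass_R) \simeq \colim_{n,k}\widehat{\Pi^{S,\et}_\i(\Grass_R(n,k))}_{S_{\bA^1}}$, so the right-hand side is exactly $\B^{\infty}(\Et^S_{\bA^1}(\Grass_R \times \bZ))$ as claimed. The required naturality/monoidality to push $\B^{\infty}$ past $\Et^S_{\bA^1}$ is the $S_{\bA^1}$-analogue of the diagram in Proposition~\ref{prop:pro-recog}, which I would state and prove by the identical triangle $\star \hookrightarrow \Fin_* \hookrightarrow \Spc^{\fin}_*$ argument applied to $R_{\bA^1}$ in place of $R$.

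The main obstacle I anticipate is making the identification of $\LL_{\mot}\underline{K}^{\conn}(\Perf_R)$ with $\LL_{\mot}(\bZ\times\Grass_R)$ precise \emph{as an $\mathcal{E}_\infty$-monoid} (not merely as a motivic space), because the realization functor and $\B^{\infty}$ only interact correctly with the symmetric monoidal structure: one needs that the $\mathcal{E}_\infty$-structure on $K$-theory coming from direct sum of vector bundles is the one that matches the $\mathcal{E}_\infty$-structure on $\bZ\times\Grass_R$ under the Morel--Voevodsky equivalence, and that $\B^{\infty}$ applied to the grouplike $\mathcal{E}_\infty$-monoid $\LL_{\mot}(\bZ\times\Vect^{\simeq})$ recovers the connective $K$-theory \emph{sheaf of spectra} $\underline{K}^{\conn}(\Perf_R)$. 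The first point is classical (representability of $K$-theory as an $\mathcal{E}_\infty$-object, or even $E_\infty$-ring, in $\SH(S)$; see the references to \cite{cisinski-desc} and the motivic literature) and I would cite it; the second is the recognition principle of \cite[Theorem 5.2.6.15]{higheralgebra} applied in $\Spc(S)$, which holds since $\Spc(S)$ is an $\infty$-topos. Beyond that, the argument is a formal diagram chase using colimit-preservation of $\Et^S_{\bA^1}$, lax monoidality, and the two Proposition~\ref{prop:pro-recog}-type compatibilities; a secondary minor point is checking that the connective (rather than nonconnective) $K$-theory enters, which is why the statement is phrased with $\Perf_R$ and $\conn$, and which is handled by the group-completion description above.
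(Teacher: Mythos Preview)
Your proposal is correct and follows essentially the same strategy as the paper: identify $\LL_{\mot}\underline{K}^{\conn}(\Perf_R)$ with $\LL_{\mot}(\Grass_R\times\bZ)$ via Morel--Voevodsky representability, then commute $\B^{\infty}$ past the realization functor using the Proposition~\ref{prop:pro-recog}-type compatibility. The only real difference is the route to the Morel--Voevodsky identification: the paper passes through $\mathrm{BGL}_\infty$ and the $+$-construction explicitly (connective $K$-theory of affines is $\B^{\infty}(\mathrm{BGL}_\infty^+\times\bZ)$, then $\mathrm{BGL}_\infty^+\simeq_{\bA^1}\mathrm{BGL}_\infty$, then $\mathrm{BGL}_\infty\simeq_{\bA^1}\Grass$ by \cite[Proposition~3.10]{morel-voevodsky}), whereas you go directly via group-completion of $\Vect^{\simeq}$. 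The paper's route has the advantage that the $\mathcal{E}_\infty$-structure is transparent on $\mathrm{BGL}_\infty\times\bZ$ (block sum), which addresses exactly the obstacle you flag about matching monoid structures; your route is slightly shorter but leans more heavily on citing the monoidal upgrade of representability.

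One small correction: you write that the recognition principle holds in $\Spc(S)$ ``since $\Spc(S)$ is an $\infty$-topos.'' It is not---the $\bA^1$-localization is not left exact. The recognition principle you need (equivalence between grouplike $\mathcal{E}_\infty$-monoids and connective spectrum objects) still holds in $\Spc(S)$ because it is presentable, but you should justify it differently (e.g.\ by presenting $\Spt(\Spc(S))$ as a localization of $\Spt(\Shv_{\Nis}(\Sm_S))$ and using that the recognition principle holds in the latter $\infty$-topos, or by invoking the general additive/prestable framework).
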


\begin{proof} By \cite[Proposition 3.10]{morel-voevodsky} there is an equivalence in $\Spc(S)$
\[
\LL_{\mot}\left(\Grass_S \times \bZ\right) \simeq \LL_{\mot}\left( \mathrm{BGL}_{\infty} \times \bZ \right),
\]
whence $\LL_{\mot}\left(\Grass_S \times \bZ\right)$ is a commutative monoid in $\Spc(S)^{\times}$.  In fact, this equivalence holds in $\underline{\Spc\left( S \right)}$ the $\infty$-category of $\bA^1$-invariant Nisnevich sheaves on $\Sch^{\ft}_S$, i.e., motivic homotopy theory built from $\Sch^{\ft}_S$ since both $\Grass$ and $\mathrm{BGL}$ are colimits of smooth schemes. Therefore we have the computation

\begin{eqnarray*}
 K_{R,\bA^1}^{\DF,\conn}(\Perf_R) & = & \Et^R_{\bA^1, \Spt}\LL_{\mot}\underline{K}^{\conn}\left( \Perf_R \right)\\
 & \simeq & \Pi^{k,\et}_{\i,\Spt}\LL_{\mot}\B^{\infty}\LL_{\mot}\left( \mathrm{BGL_{\infty}}^+ \times \bZ \right)\\
 & \simeq & \Pi^{k,\et}_{\i,\Spt}\B^{\infty}\LL_{\mot}\left( \mathrm{BGL}_{\infty} \times \bZ \right)\\
& \simeq & \Pi^{k,\et}_{\i,\Spt}\B^{\infty}\LL_{\mot}\left(\Grass_R \times \bZ\right)\\
& \simeq & \B^{\infty} \Pi^{R,\et}_{\i,\Spt}\LL_{\mot}\left(\Grass_R \times \bZ\right)\\
\end{eqnarray*}

Here, the first line is by definition, the second equivalence follows from the fact that connective $K$-theory is a Nisnevich sheaf of \emph{connective spectra} \footnote{We remark that $K^{\conn}$ is not a Nisnevich sheaf of spectra, but is a Nisnevich sheaf of connective spectra; indeed, this follows because the truncation functor preserves limits.} and that connective $K$-theory of affine schemes can be computed using the $+$-construction of $\mathrm{BGL}_{\infty}$. The third equivalence is the well known $\LL_{\bA^1}$-equivalence between $\mathrm{BGL}(R)$ and $\mathrm{BGL}(R)^+$ (see, for example, \cite{antieau-e}), the fourth equivalence comes from the equivalence discussed in the above paragraph, and the last equivalence follows from Proposition~\ref{prop:pro-recog}.

\end{proof}

\subsection{Blanc's conjecture for $K^{\DF}$} \label{sec:blancdf} In this final section, we prove Theorem~\ref{thm:agreement} which should be viewed as an analog of Blanc's conjecture/Antieau-Heller theorem \cite[Theorem 3.3]{antieau-heller} for $K^{\DF}$. This theorem states that, over a field, $K_k^{\DF}\left(\sC\right)/n$ is equivalent to $K\left(\sC\right)/n$ where $n$ is prime to the characteristic of $k$, a property also enjoyed by semi-topological $K$-theory of categories over the complex numbers.  After \cite[Corollary 4.4]{dave-etale}, which compares the \'etale homotopy type of a scheme (in fact a higher stack) with its analytification up to profinite completion (which generalizes the comparison theorem of Artin-Mazur \cite{ArtinMazur}), it is easy to see Theorem~\ref{thm:agreement} recovers \cite[Theorem 3.3]{antieau-heller} over the complex numbers.

To begin, we need the following crucial input: if $\ell$ is a prime invertible in $R$, then the spectrum $K_{R}^{\DF}\left(\sC\right)/\ell$ only depends on its restriction to smooth $R$-schemes. To formulate this, consider the stabilized adjunction
\[
j_!: \Shv_{\Nis,\Spt}\left(\Sm_R\right) \leftrightarrows \Shv_{\Nis,\Spt}\left(\Sch^{\ft}_R\right): j^*.
\]
The functor $j^*$ is given by restricting a presheaf to smooth $R$-schemes, while $j_!$ is obtained by left Kan extension. The functor $\Pi^{R,\et}_{\i,\Spt}$ of Definition~\ref{defn:stab-et-real} restricts along (the stabilized version of) $j^*$ to give a functor 
\[
\Pi^{R,\et}_{\i,\Spt}\mid_{\Sm}:  \Spt \left( \Shv_{\Nis}\left( \Sch^{\ft}_R \right) \right) \rightarrow \Spt \left( \Pro\left( \Shv\left(R_{\et}\right)\right) \right),
\] 
which factors through the $\infty$-category of \'etale sheaves of spectra. In general, there is no reason for the map $\Pi^{R,\et}_{\i,\Spt} j^*F \rightarrow \Pi^{R,\et}_{\i,\Spt}F$ to be an equivalence for an arbitrary Nisnevich sheaf of spectra $F$.

%
%
%
%
%

\begin{proposition} \label{prop:sm-res} Let $k$ be a field and $\ell$ be prime such that $1/\ell \in k$, then the canonical map
\[
\left( \Pi^{S,\et}_{\i,\Spt}\mid_{\Sm} j^*\underline{K}\left(\sC\right) \right)/\ell \rightarrow K_{k}^{\DF}(\sC)/\ell,
\]
is an equivalence.
\end{proposition}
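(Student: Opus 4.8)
The plan is to identify exactly what differs between $j^*\underline{K}(\sC)$ and $\underline{K}(\sC)$ after $\ell$-completion and \'etale sheafification, and to show these differences vanish. The cofiber $C$ of the counit $j_!j^*\underline{K}(\sC) \to \underline{K}(\sC)$ is a Nisnevich sheaf of spectra on $\Sch^{\ft}_k$ which, by construction, vanishes on smooth $k$-schemes; equivalently, its \'etale sheafification $\LL_{\et}C$ is supported on the singular locus. Since $\Pi^{k,\et}_{\i,\Spt}$ is colimit-preserving (Proposition~\ref{prop:stab-et-real}) and factors through $\LL_{\et}$, it suffices to show $\Pi^{k,\et}_{\i,\Spt}(\LL_{\et}C)/\ell \simeq 0$. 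First I would reduce, using the cdh-descent results of subsection~\ref{subsec:a1} and Proposition~\ref{lem:basics-df}(2), to a statement about the \'etale homotopy type of singular schemes: both $\underline{K}(\sC)/\ell$ and $\LL_{\bA^1}\underline{K}(\sC)/\ell$ are cdh sheaves of spectra (the former because, mod $\ell$, $K$ and $KH$ agree by Weibel's theorem together with rigidity — this is essentially Proposition~\ref{lem:basics-df}(4)), so the map $\underline{K}(\sC)/\ell \to \LL_{\bA^1}\underline{K}(\sC)/\ell$ is an equivalence of cdh sheaves. Hence mod $\ell$ we may replace $\underline{K}(\sC)$ by its $\bA^1$-local (indeed motivic) version throughout.

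The key point is then that $\Pi^{k,\et}_{\i,\Spt}$ applied to a \emph{motivic} sheaf of spectra does not distinguish the big site from the small: by Theorem~\ref{thm:concrete_relative} and Proposition~\ref{prop:explicit-a1}, evaluated at any $\cZ \in S_{\bA^1}$ the pro-spectrum $\Pi^{k,\et}_{\i,\Spt}(F)$ computes $\map_{\Shv_{\et}(\Sch^{\ft}_k)}(F, \rho_!(\cZ))$, and $\rho_!(\cZ)$ is itself $\bA^1$-invariant (Proposition~\ref{prop:LAinv}). Concretely, I would argue: for a motivic sheaf of spectra $F$ one has a natural equivalence $\map(F,\rho_!\cZ)\simeq \map(\LL_{\mot}j^*F, \psi(\cZ))$ by the adjunction chain in the proof of Proposition~\ref{prop:explicit-a1} (the restriction functor $j^*$ followed by motivic localization is left adjoint to $R^{\et}_{\mot}$, and $\rho_!\cZ$ lies in the essential image of $R^{\et}_{\mot}$ since it is $\bA^1$-invariant \'etale). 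Thus $\Pi^{k,\et}_{\i,\Spt}F \simeq \Pi^{k,\et}_{\i,\Spt}\!\mid_{\Sm} j^*F$ whenever $F$ is motivic. Applying this to $F = \LL_{\mot}\underline{K}(\sC)/\ell \simeq \underline{K}(\sC)/\ell$ (the last equivalence mod $\ell$, as above) yields the claim.

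The main obstacle I anticipate is the compatibility of the explicit-formula description with the restriction functor $j^*$: one must check carefully that $\LL^{\et}$ on the \emph{big} site, when restricted along $\Sm_k \hookrightarrow \Sch^{\ft}_k$, is compatible with the small-site realization — i.e. that the square relating $\Pi^{k,\et}_{\i,\Spt}$ and $\Pi^{k,\et}_{\i,\Spt}\!\mid_{\Sm}$ commutes on $\bA^1$-invariant objects. This comes down to the fact that $\rho_!$ lands in $\bA^1$-invariant \'etale stacks and that every smooth $k$-scheme, viewed in $\Shv_{\et}(\Sch^{\ft}_k)$, has the same mapping spaces into $\rho_!\cZ$ whether computed in the big or small framework — which is precisely Proposition~\ref{prop:Lff} combined with Proposition~\ref{prop:smm}. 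The only genuinely new input needed beyond citing these is the cdh-descent step that lets us pass from $\underline{K}(\sC)/\ell$ to a motivic sheaf before invoking the explicit formula; here I would lean on Proposition~\ref{lem:basics-df}(2) and~(4) and on rigidity in the form already used in the paper.
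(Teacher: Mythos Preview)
Your reduction to the $\bA^1$-local case via Proposition~\ref{lem:basics-df}(4) is correct and is exactly how the paper begins. The gap is in your ``key point.'' You claim that for a motivic $F$ on $\Sch^{\ft}_k$ one has $\map(F,\rho_!\cZ)\simeq\map(\LL_{\mot}j^*F,\psi(\cZ))$ by the adjunction chain of Proposition~\ref{prop:explicit-a1}, asserting that ``$j^*$ followed by motivic localization is left adjoint to $R^{\et}_{\mot}$.'' This has the direction backwards: in that proof $R_{\bA^1}$ \emph{is} restriction to $\Sm_k$ and is the \emph{right} adjoint, with left adjoint $\LL^{\bA^1}_{\et}$. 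Unwinding Proposition~\ref{prop:explicit-a1}, one gets $\map_{\Spc(k)}(j^*F,\psi(\cZ))\simeq\map_{\Shv_{\et}(\Sch^{\ft}_k)}(\LL_{\et}\, j_!j^*F,\rho_!\cZ)$, so your desired equivalence with $\map(\LL_{\et}F,\rho_!\cZ)$ amounts exactly to the counit $j_!j^*F\to F$ becoming an equivalence after \'etale sheafification. That is the substance of the proposition, not a formal consequence of the explicit formula; nothing about $\bA^1$-invariance of $F$ or of $\rho_!\cZ$ forces this counit to be an equivalence on singular schemes.

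The paper proves precisely that counit statement, and it is not soft. After passing to a perfect field and to $\LL_{\bA^1}\underline{K}(\sC)/\ell$, the argument shows $j_!j^*\LL_{\bA^1}\underline{K}(\sC)/\ell\to\LL_{\bA^1}\underline{K}(\sC)/\ell$ is an equivalence of Nisnevich sheaves on $\Sch^{\ft}_k$: by Gabber's alterations it is an $\ell\mathrm{dh}$-local equivalence, and both sides are shown to be $\ell\mathrm{dh}$-sheaves via the criterion of \cite[Lemma~4.8, Theorem~4.7, Theorem~4.14]{hoyois2013motivic}. This last step requires constructing a structure of traces on $\underline{K}(\sC)$ (from $f_*$ for finite flat $f$), invariance under nil-immersions (from cdh-excision), and unramifiedness of the homotopy sheaves on $\Sm_k$ (Morel's theory). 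None of these inputs is visible in your outline; the explicit-formula route cannot bypass them.
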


\begin{proof} By \'etale descent we may assume that $k$ is a perfect field. The key input is $\ell$dh-descent of Kelly \cite{kelly}; see \cite[Definition 2.1.11]{kelly} for a definition. In fact we will prove a more precise claim which we now formulate. We have an adjunction at the level of Nisnevich sheaves of spectra
\[
j_!: \Shv_{\Nis,\Spt}\left(\Sm_k\right) \rightleftarrows \PShv_{\Nis,\Spt}\left(\Sch_k^{\ft}\right): j^*.
\]
The $K$-theory sheaf is a Nisnevich sheaf of $\mathcal{E}_{\infty}$-ring spectra, whence we have an induced adjunction 
\[
j_!: \Mod_{K}\left(\Shv_{\Nis,\Spt}\left(\Sm_k\right)\right) \rightleftarrows \Mod_{K}\left(\Shv_{\Nis,\Spt}\left(\Sch^{\ft}_k\right)\right): j^*.
\]
We will in fact work with the $\ell$-localized version
\begin{equation} \label{eq:s1-kl}
j_!: \Mod_{K_{(\ell)}}\left(\Shv_{\Nis,\Spt}\left(\Sm_k\right)\right) \rightleftarrows \Mod_{K_{(\ell)}}\left(\Shv_{\Nis,\Spt}\left(\Sch^{\ft}_k\right)\right): j^*.
\end{equation}
Now, $\underline{K}\left(\sC\right)/\ell$ is an object of $\Mod_{\underline{K}_{(\ell)}}\left(\Shv_{\Nis,\Spt}\left(\Sch^{\ft}_k\right)\right)$ and we claim:
\begin{itemize} 
\item The counit map
\[
j_!j^*\underline{K}\left(\sC\right)/\ell \rightarrow \underline{K}\left(\sC\right)/\ell 
\]
induces an equivalence of $K_{(\ell)}$-modules.
\end{itemize} From Gabber's version of alterations and the fact that it is an equivalence when restricted to smooth $k$-schemes, we see that the counit map $j_!j^*\underline{K}(\sC) \rightarrow \underline{K}(\sC)$ is an $\ell$dh-local equivalence; see \cite[Corollary 4.6]{hoyois2013motivic} for a version of the statement that we need. To bridge the gap between an $\ell$dh-local equivalence and a $\Nis$-local equivalence we need to invoke motivic homotopy theory. In fact, using Proposition~\ref{lem:basics-df}.4, we may replace $\underline{K}\left(\sC\right)/\ell$ with $\LL_{\bA^1}\underline{K}(\sC)/\ell$. Using again Gabber's alterations and the fact that the it is an equivalence when restricted to smooth $k$-schemes, the map
\begin{equation} \label{eq:la1}
j_!j^*\LL_{\bA^1}\underline{K}\left(\sC\right)/\ell \rightarrow \LL_{\bA^1}\underline{K}\left(\sC\right)/\ell
\end{equation}
 of $\LL_{\bA^1}K$-modules is an $\ell$dh-local equivalence. Hence the claim above reduces to the following one:
 \begin{itemize}
 \item Both   $j_!j^*\LL_{\bA^1}\underline{K}\left(\sC\right)/\ell$ and $ \LL_{\bA^1}\underline{K}\left(\sC\right)/\ell$ are $\ell$dh sheaves on $\Sch^{\ft}_k$.
 \end{itemize}  
We use \cite[Lemma 4.8]{hoyois2013motivic} to show this. The first hypothesis of \emph{loc. cit} is verified using  Proposition~\ref{lem:basics-df} (for $j_!j^*\LL_{\bA^1}\underline{K}$ use again that it is an $\LL_{\bA^1}K$-module and the same argument as in \emph{loc. cit}), while the second is verified by  fact that the cdh cohomological dimension of a scheme is bounded above by its Krull dimension \cite{sv-bk}. We verify the last assumption: for all $X \in \Sch^{\ft}_k$ the canonical map 
\[
H^s_{cdh}(X; a_{cdh}\pi_t\LL_{\bA^1}\underline{K}\left(\sC\right)/\ell) \rightarrow H^s_{\ell dh}(X; a_{\ell dh}\pi_t\LL_{\bA^1}\underline{K}\left( \sC \right)/\ell) 
\]
is an isomorphism for all $p,q \in \bZ$ (and similarly for $j_!j^*\LL_{\bA^1}\underline{K}\left(\sC\right)/\ell$). A criterion for this is furnished in \cite[Theorem 4.7]{hoyois2013motivic}: we need $\pi_t\LL_{\bA^1}\underline{K}\left(\sC\right)/\ell$ (resp. $\pi_tj_!j^*\LL_{\bA^1}\underline{K}\left(\sC\right)/\ell$) to be a presheaf of $\bZ_{(\ell)}$-modules with transfers. It is obviously a $\bZ_{(\ell)}$-module since we are working mod $\ell$. To verify that it has transfers we use  \cite[Theorem 4.14]{hoyois2013motivic} which requires that $\pi_t\LL_{\bA^1}K/\ell$ (resp. $\pi_tj_!j^*\LL_{\bA^1}K/\ell$) (1) admits the structure of \emph{traces} ( in the sense of \cite[Definition 2.1.3]{kelly}), (2) is invariant for the map $X_{\red} \rightarrow X$ and (3) its restriction $\Sm_k$ is unramified in the sense of Morel \cite[Definition 2.1]{morel-book}.

Statement (2) is true for any cdh sheaf of spectra since it is excisive for the abstract blowup square
\[
\xymatrix{
\emptyset \ar[d] \ar[r] & X_{\red} \ar[d] \\
\emptyset \ar[r] & X.
}
\]
To verify (3), recall that the Nisnevich homotopy sheaves of any strictly $\bA^1$-invariant sheaf on $\Sm_k$ is unramified \cite[Lemma 6.4.4]{morel-conn}. Now, the Nisnevich sheaves of abelian groups on $\Sm_k$ given by $a_{\Nis}\pi_t\LL_{\bA^1}\underline{K}\left(\sC\right)/\ell \mid_{\Sm_k}$ and $a_{\Nis}j_!j^*\pi_t\LL_{\bA^1}\underline{K}\left(\sC\right)/\ell\mid_{\Sm_k}$ are isomorphic since $j_!$ is fully faithful by \cite[Proposition 4.4.4.8]{htt} while the former is unramified by \cite[Theorem 9]{morel-book} since they are $\bA^1$-homotopy sheaves.

It remains to construct a structure of traces on $\LL_{\bA^1}\underline{K}\left(\sC\right)/\ell$ and $j_!j^*\LL_{\bA^1}\underline{K}\left(\sC\right)/\ell$; the structure on the latter is induced from the former by naturality of the structure of traces on the former. The construction then follows from the construction of   \cite[Proposition 4.18]{hoyois2013motivic}, $\LL_{\bA^1}\underline{K}\left(\sC\right)/\ell$: given a finite flat morphism, $f: Y \rightarrow X$ in $\Sch^{\ft}_k$, we get a morphism in $\Cat^{\perf}_k$
\[
f_* \otimes \id:\Perf_X \otimes_{k} \sC \rightarrow \Perf_Y \otimes_k \sC,
\]
inducing the trace map
\[
f_*:\underline{K}(\sC)(X) \rightarrow \underline{K}(\sC)(Y).
\]
The verification that this is indeed a structure of traces follows exactly as in the first paragraph of \emph{loc. cit}.

To prove our proposition, it suffices by Proposition~\ref{lem:basics-df}.4 to prove the $\bA^1$-local version of the statement, i.e, the map 
\[
\left( \Et^R_{\bA^1,\Spt}\mid_{\Sm} \LL_{\bA^1}j^*\underline{K}\left(\sC\right) \right)/\ell \rightarrow K_{k,\bA^1}^{\DF}(\sC)/\ell.
\] is an equivalence. This is a matter of parsing the equivalences
\begin{eqnarray*}
K_{k,\bA^1}^{\DF}(\sC)/\ell & \simeq & \left( \Et^R_{\bA^1,\Spt} \LL_{\bA^1}\underline{K}/\ell\left(\sC\right) \right)\\
& \simeq & \left( \Et^R_{\bA^1,\Spt} j_!j^*\LL_{\bA^1}\underline{K}/\ell\left(\sC\right) \right)\\
& \simeq & \left( \Et^R_{\bA^1,\Spt}\mid_{\Sm} j^*\LL_{\bA^1}\underline{K}/\ell\left(\sC\right) \right)\\
& \simeq & \left( \Et^R_{\bA^1,\Spt}\mid_{\Sm} \LL_{\bA^1}j^*\underline{K}\left(\sC\right) \right)/\ell,
\end{eqnarray*}
where the work of the previous paragraphs goes into the second equivalence and the third equivalence follows from the commutativity of the following diagram of left adjoints
\[
\xymatrix{
\Shv_{\et,\Spt}(\Sm_k) \ar[rr]^{j_!} \ar[dr]_{\Pi^{S,\et}_{\i,\Spt}\mid_{\Sm}} & & \Shv_{\et,\Spt}(\Sch^{\ft}_k) \ar[dl]^{\Pi^{S,\et}_{\i,\Spt}}\\
& \Spt\left(\Pro\left(\Shv\left(S_{\et}\right)\right) \right) & 
}
\]
which follows from its unstable version and Proposition~\ref{prop:stab-et-real}.
%

\end{proof}

Lastly, we prove an analogue of Blanc's conjecture for semi-topological $K$-theory, see \cite[Theorem 3.3]{antieau-heller}.

\begin{theorem} \label{thm:agreement} Let $k$ be a field.  Let $e: \Shv\left(k_{\et}\right) \rightarrow \Spc$ be the terminal geometric morphism which induces a stabilized adjunction
\[
\partial e^*: \Spt \rightleftarrows \Spt \left( \Shv\left(k_{\et} \right) \right): \partial e_*.
\]
There is a canonical map 
\[
\partial e^*K\left(\sC \right) \rightarrow K_{k}^{\DF}\left(\sC\right),
\] such that when $n$ is invertible in $k$ we have a canonical equivalence
\[
\partial e^*K\left(\sC \right)/n \simeq K_{k}^{\DF}\left(\sC\right)/n.
\]
In particular if $k$ is a separably closed field, we have a canonical equivalence
\[
K\left( \sC \right)/n \simeq K_{k}^{\DF}\left(\sC\right)/n.
\]
\end{theorem}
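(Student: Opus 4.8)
The plan is to construct the comparison at the level of presheaves of spectra on $\Sch^{\ft}_k$ and then show it is a mod-$n$ equivalence by reducing successively to the separably closed case, where it becomes a noncommutative form of Suslin rigidity. Here is the construction of the map. For $T\in\Sch^{\ft}_k$, base change along $T\to\Spec k$ gives $\Perf_k\otimes\sC\to\Perf_T\otimes\sC$, hence maps $K(\sC)=K(\Perf_k\otimes\sC)\to K(\Perf_T\otimes\sC)=\underline{K}(\sC)(T)$ assembling into a natural transformation $c_\sC$ from the constant presheaf with value $K(\sC)$ to $\underline{K}(\sC)$. After étale sheafification the source becomes the constant étale sheaf of spectra, which by the factorization of the terminal geometric morphism $\Shv_{\et}(\Sch^{\ft}_k)\xrightarrow{\lambda}\Shv(k_{\et})\xrightarrow{e}\Spc$ together with Proposition~\ref{prop:infconn} ($\lambda^*=\rho_!$) is $\rho_!(\partial e^*K(\sC))$. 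By the stabilized form of Theorem~\ref{thm:concrete_relative} and full faithfulness of $\rho_!$ (Proposition~\ref{prop:Lff}), $\Pi^{k,\et}_{\i,\Spt}\rho_!(\partial e^*K(\sC))\simeq\partial e^*K(\sC)$, regarded in $\Spt(\Pro(\Shv(k_{\et})))$ via the fully faithful functor $\partial j$. Applying $\Pi^{k,\et}_{\i,\Spt}\circ\LL_{\et}$ to $c_\sC$ produces the asserted map $\partial e^*K(\sC)\to K^{\DF}_k(\sC)$.

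For the equivalence, observe first that a map in the stable $\infty$-category $\Spt(\Pro(\Shv(k_{\et})))$ is an equivalence mod $n$ as soon as it is an equivalence mod $\ell$ for every prime $\ell\mid n$ (the mod-$n$ Moore object admits a finite filtration with mod-$\ell$ subquotients), and since $n$ is invertible in $k$ so is each such $\ell$. Fix such an $\ell$. By Proposition~\ref{prop:sm-res} the map $(\Pi^{k,\et}_{\i,\Spt}\mid_{\Sm}j^*\underline{K}(\sC))/\ell\to K^{\DF}_k(\sC)/\ell$ is an equivalence, so it suffices to compare $\partial e^*K(\sC)/\ell$ with the relative étale realization of the restriction of $\underline{K}(\sC)$ to $\Sm_k$; by Proposition~\ref{lem:basics-df}(3)--(4) we may replace $\underline{K}(\sC)$ by its motivic localization, and by Proposition~\ref{prop:representability} the latter is the $S^1$-spectrum underlying $\KGL(\sC)\in\SH(k)$. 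Thus everything reduces to showing that the étale sheafification on $\Sm_k$ of $c_\sC/\ell$ is carried to an equivalence by the colimit-preserving, étale-sheaf-factoring functor $\Pi^{k,\et}_{\i,\Spt}\mid_{\Sm}$, equivalently by $\Et^k_{\bA^1,\Spt}\circ\LL_{\mot}$.

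The heart of the matter is the separably closed case. If $k$ is separably closed then $\Shv(k_{\et})\simeq\Spc$, $e$ is an equivalence and $\partial e^*=\mathrm{id}$, and the claim $K(\sC)/n\simeq K^{\DF}_k(\sC)/n$ is a rigidity statement: the proof of Proposition~\ref{prop:sm-res} exhibits $\pi_t\LL_{\bA^1}\underline{K}(\sC)/\ell\mid_{\Sm_k}$ as an $\bA^1$-invariant étale sheaf of $\mathbb{Z}/\ell$-modules with a structure of traces, so Suslin--Voevodsky rigidity (as already used around Lemma~\ref{lem:voev}) applies; combining this with the profinite comparison of étale homotopy types and analytifications \cite[Corollary 4.4]{dave-etale} — choosing an embedding $k\hookrightarrow\mathbb{C}$ in characteristic zero, and reducing to it via Gabber's rigidity for alterations in positive characteristic — identifies $K^{\DF}_k(\sC)/\ell$ with $K(\sC)/\ell$, this last step being exactly the noncommutative rigidity theorem of Antieau--Heller \cite[Theorem 3.3]{antieau-heller} of which the present statement over $\mathbb{C}$ is the analytic shadow. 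For general $k$ one passes to the separably closed case by étale descent, comparing the pullback of $c_\sC$ along the geometric point $\Spec k^{\sep}\to\Spec k$: granting that the relative étale realization over $k$ base-changes, compatibly with the Galois action, to the absolute realization over $k^{\sep}$ of the base-changed category, the previous case forces $c_\sC$ to be a mod-$\ell$ equivalence on the small étale site of $k$. I expect this last descent/base-change step to be the main obstacle, since it is precisely here that one must establish compatibility of $\Pi^{k,\et}_{\i,\Spt}$ with pullback along geometric points and naturality of the rigidity identification in the Galois action; the remaining steps are formal consequences of Sections~\ref{sect:relreal} and~\ref{sect:stable} together with Proposition~\ref{prop:sm-res}.

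Finally, the displayed special case needs no separate argument: for $k$ separably closed $\partial e^*$ is an equivalence, so the general equivalence specializes to $K(\sC)/n\simeq K^{\DF}_k(\sC)/n$.
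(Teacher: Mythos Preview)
Your construction of the comparison map and the initial reductions (to a prime $\ell$, to smooth schemes via Proposition~\ref{prop:sm-res}, to the $\bA^1$-local version via Proposition~\ref{lem:basics-df}, and to the separably closed case by \'etale descent) match the paper's argument essentially verbatim. The paper also reduces everything to showing that the \'etale sheafification of $c_\sC/\ell$ on $\Sm_k$ is an equivalence, and passes to $k$ separably closed exactly as you do. Incidentally, this means the step you flag as the ``main obstacle'' (base-change to $k^{\sep}$) is actually routine: one is checking that a map of \'etale sheaves of spectra on $\Sm_k$ is an equivalence, and this can be tested after pulling back along any \'etale cover, in particular along $\Spec k^{\sep}\to\Spec k$. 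No elaborate compatibility statement for $\Pi^{k,\et}_{\i,\Spt}$ is required.

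The genuine gap is in your treatment of the separably closed case. The paper does \emph{not} route through analytification or the Antieau--Heller theorem. Instead, once $k$ is separably closed, it checks the equivalence on stalks: for every strictly Henselian local $k$-algebra $R$ essentially smooth over $k$, the map
\[
\LL_{\bA^1}K(\sC)/\ell \;\longrightarrow\; \LL_{\bA^1}K(\Perf_R\otimes_k\sC)/\ell
\]
is an equivalence. This follows immediately from rigidity for presheaves representable by a motivic spectrum, namely \cite[Corollary~1.3]{an-dru}, applied thanks to Proposition~\ref{prop:representability}. Your proposed route---invoke the comparison of \'etale homotopy types with analytification and then cite \cite[Theorem~3.3]{antieau-heller}---only makes sense over $\mathbb{C}$, and your suggested bridge to positive characteristic (``reducing to it via Gabber's rigidity for alterations'') does not work: alterations do not carry you from characteristic $p$ to characteristic $0$, and nothing in the proof of Proposition~\ref{prop:sm-res} provides such a reduction. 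The paper's direct appeal to motivic rigidity handles all separably closed fields uniformly and avoids the circularity of deducing the theorem from the very Antieau--Heller result it is meant to generalize.
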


\begin{proof} 

 Without loss of generality we may assume that $n$ is a prime. Consider the geometric morphism $e: \Shv(k_{\et}) \rightarrow \Spc$. We have the stabilized functor $\partial e^*: \Spt \rightarrow \Spt \left( \Shv(k_{\et})  \right)$. We also have the geometric morphism $e': \Shv_{\Nis}\left(\Sch^{\ft}_k\right) \rightarrow \Spc$ and the stabilized functor $\partial e'^*: \Spt \rightarrow \Shv_{\Nis,\Spt}\left(\Sch^{\ft}_k \right)$. There is a map $c':\partial e'^* K\left( \sC\right) \rightarrow \underline{K}\left( \sC \right)$ in $\Shv_{\Nis,\Spt}\left(\Sch^{\ft}_k\right)$ which is the adjoint of the identity map in $\Spt$: $K\left( \sC \right) \rightarrow \partial e_* K\left( \sC \right) \simeq K\left(k \otimes_k \sC \right) \simeq K\left( \sC \right)$; We apply  $\Pi^{R,\et}_{\i,\Spt}$ to $c'$ obtain the map
 \[
 c:=  \Pi^{R,\et}_{\i,\Spt} \LL_{\et}c': \Pi^{R,\et}_{\i,\Spt}\LL_{\et}\partial e'^* K\left( \sC\right) \simeq \partial e^*K\left( \sC \right) \rightarrow K_{k}^{\DF}(\sC).
 \] In other words, $c$ is a map from the constant sheaf of spectra with value $K\left( \sC \right)$ to $K_{k}^{\DF}\left(\sC\right)/n$.

 After Proposition~\ref{prop:sm-res}, it suffices to prove the claim for $K$-theory restricted to smooth schemes, i.e., we may replace instances of $K$ with $j^*K$ in the notation of the proof of Proposition~\ref{prop:sm-res}. Furthermore since we are proving a claim mod $n$, we are free to $\bA^1$-localize after Proposition~\ref{lem:basics-df}.4 so that we may work with $\LL_{\bA^1}\underline{K}\left(\sC\right)/n$ instead. The upshot is that we have the full machinery of motivic homotopy theory in this situation.  The theorem thus boils down to the following claim

%

\begin{itemize}
\item the induced map $c/n:  \LL_{\et}\LL_{\bA^1}\partial e'^*K\left( \sC \right)/n \rightarrow \LL_{\et}\LL_{\bA^1}\underline{K}\left( \sC \right)/n$ is an equivalence in $\Shv_{\et,\Spt}\left( \Sm_k \right)$.
\end{itemize} 

By \'etale descent we may replace $k$ with its separable closure. To this end, it suffices to prove that for any Henselian local $k$-algebra $R$ with maximal ideal $\mathfrak{m}$ (in fact strictly Henselian is actually enough) which is essentially smooth over $k$ (remember that we have restricted $K$ to smooth $k$-schemes!) the canonical map
\[
\LL_{\bA^1}K\left( \sC \right)/n \simeq \LL_{\bA^1}K\left(\Perf_{R/\mathfrak{m}} \otimes \sC \right)/n  \rightarrow \LL_{\bA^1}K\left( \Perf_R \otimes_{k} \sC \right)/n
\]
 is an equivalence.  Now, since $\LL_{\bA^1}\underline{K}\left(\sC\right)$ is representable by a motivic spectrum by Proposition~\ref{prop:representability}, this follows immediately from from rigidity for presheaves representable by a motivic spectrum; more precisely we apply \cite[Corollary 1.3]{an-dru} to get an equivalence $\underline{K}\left( \sC \right)(R/\mathfrak{m}) \simeq \underline{K}\left(\sC\right)(R)$. Indeed, we can use \emph{loc.cit} because we have restricted ourselves to presheaves on $\Sm_k$.
 

\end{proof}

\bibliography{profinite}
\bibliographystyle{hplain}

\end{document}